   \newcommand{\RR}{\ensuremath{\mathbf{R}}}%
      \newcommand{\CC}{\ensuremath{\mathbf{C}}}%
    \newcommand{\ZZ}{\ensuremath{\mathbf{Z}}}%
	\newcommand{\F}{\ensuremath{\mathcal{F}}}%
\newcommand{\QZ}{\ensuremath{\mathrm{QZ}}}%
                \newcommand{\Ccal}{\ensuremath{\mathcal{C}}}%
	\newcommand{\fix}{\ensuremath{\textrm{Fix}}}%
  \newcommand{\sub}{\ensuremath{\mathbf{Sub}}}%
    \newcommand{\aut}{\ensuremath{\operatorname{Aut}}}%
		\newcommand{\comm}{\ensuremath{\operatorname{Comm}}}%
        \newcommand{\Aut}{\ensuremath{\operatorname{Aut}}}%
        \newcommand{\SL}{\ensuremath{\operatorname{SL}}}%
\newcommand{\Ker}{\mathrm{Ker}} 
\newcommand{\Mon}{\mathrm{Mon}}
\theoremstyle{definition}
  \newtheorem{defin}{Definition}[section]
  \newtheorem{question}{Question}[section]
\theoremstyle{plain}
  \newtheorem{thm}[defin]{Theorem}
  \newtheorem{main thm}{Theorem}
  \newtheorem{prop}[defin]{Proposition}
    \newtheorem{prop-def}[defin]{Proposition-Definition}
    \newtheorem{thmintro}{Theorem}
      \newtheorem{corintro}[thmintro]{Corollary}
  \newtheorem{cor}[defin]{Corollary}
\newtheorem{lem}[defin]{Lemma}
\theoremstyle{remark}
  \newtheorem{rmq}[defin]{Remark}
\title{Bounding the covolume of lattices in products}
\author[1]{Pierre-Emmanuel Caprace\thanks{F.R.S.-FNRS Senior Research Associate. \texttt{pe.caprace@uclouvain.be}}}
\author[2]{Adrien Le Boudec\thanks{CNRS Researcher. This work was partially carried out when the second author was F.R.S.-FNRS Post-Doctoral Researcher at UCLouvain. \texttt{adrien.le-boudec@ens-lyon.fr} This work was partially supported by ANR-14-CE25-0004 GAMME.}}
\affil[1]{UCLouvain, 1348 Louvain-la-Neuve, Belgium}
\affil[2]{UMPA - ENS Lyon, France}
\date{October 23, 2019}
\begin{document}
	
\maketitle

\begin{abstract}
We study lattices in a product $G = G_1 \times \dots \times G_n$ of non-discrete, compactly generated, totally disconnected locally compact (tdlc) groups. We assume that each factor is \textbf{quasi just-non-compact}, meaning that $G_i$ is non-compact and every closed normal subgroup of $G_i$ is discrete or cocompact (e.g. $G_i$ is topologically simple).

We show that the set of discrete subgroups of $G$ containing a fixed cocompact lattice $\Gamma$ with dense projections is finite. The same result holds if $\Gamma$ is non-uniform, provided $G$ has Kazhdan's property (T). We show that for any compact subset $K \subset G$, the collection of discrete subgroups $\Gamma \leq G$ with $G = \Gamma K$ and dense projections is uniformly discrete, hence of covolume bounded away from~$0$. When the ambient group $G$ is compactly presented, we show in addition that the collection of those lattices falls into finitely many $\Aut(G)$-orbits. As an application, we establish finiteness results for discrete groups acting on products of locally finite graphs with semiprimitive local action on each factor. 

We also present several intermediate results of independent interest. Notably it is shown that if a non-discrete, compactly generated quasi just-non-compact tdlc group $G$ is a Chabauty limit of discrete subgroups, then some compact open subgroup of $G$ is an infinitely generated pro-$p$ group for some prime $p$. It is also shown that in any Kazhdan group with discrete amenable radical, the  lattices form an open subset of the Chabauty space of closed subgroups.
\end{abstract}

\setcounter{tocdepth}{2}

\tableofcontents

\section{Introduction}

\subsection{Covolume bounds}

A classical result of  H.~C.~Wang \cite{Wang} ensures in a connected semisimple Lie group $G$ without compact factor, the  collection of discrete subgroups  containing a given lattice $\Gamma \leq G$ is finite. Soon afterwards, an important closely related result   was established by Kazhdan--Margulis~\cite{Kaz-Marg-thm}, who proved that the set of  covolumes of all lattices in $G$ is bounded   below by a positive constant.   H.~C.~Wang~\cite{Wang72} subsequently used the Kazhdan--Margulis theorem in combination with local rigidity to establish that if $G$ has no factor locally isomorphic to $\SL_2(\RR)$ or $\SL_2(\CC)$, then for every $v>0$, the set of conjugacy classes of lattices in $G$ of covolume~$\leq v$ is finite (see also \cite[Theorem 13.4]{Gel-homotopy} for the case of irreducible lattices when the group $G$ itself is not locally isomorphic to $\SL_2(\RR)$ or $\SL_2(\CC)$).   

Bass--Kulkarni showed in  \cite[Theorem~7.1]{BK90} that none of those results   holds when $G$ is the full automorphism group of the $d$-regular tree $T_d$, with $d \geq 5$, even if one restricts to cocompact lattices.  In particular, since $G= \mathrm{Aut}(T_d)$  is compactly generated and has a simple open subgroup of index~$2$ (see \cite{Tits_arbre}), none of the results above can be expected to hold for cocompact lattices in compactly generated, topologically  simple, locally compact groups in general. We recall that a locally compact group is \textbf{topologically simple} if it is non-trivial and the only closed normal subgroups are the trivial ones. 	

In this paper, we consider  cocompact lattices with dense projections in products of non-discrete, compactly generated, topologically simple, locally compact groups. Our goal   is to show that, in  this situation,   similar phenomena as in the case of semisimple Lie groups occur.  Our results are actually valid for lattices in products of locally compact groups that satisfy  a  condition that is weaker than simplicity. In order to define it, we recall that a locally compact group $G$ is called \textbf{just-non-compact} if it is non-compact and every closed normal subgroup is trivial or cocompact. We say that $G$ is \textbf{quasi just-non-compact} if it is non-compact and every closed normal subgroup is discrete or cocompact. We note that this definition is meaningful only in the realm of non-discrete groups.  As first observed by Burger-Mozes, quasi just-non-compact groups appear naturally in the context of automorphism groups of connected graphs with quasi-primitive local action \cite[Proposition~1.2.1]{BuMo1}.

Our first main result is the following close relative of the aforementioned theorem of H.~C.~Wang \cite{Wang}.  Throughout the paper, the abbreviation \textbf{tdlc} stands for \emph{totally disconnected locally compact}. 

\begin{thmintro}\label{thm:Wang1}
	Let $G_1,\ldots,G_n$ be non-discrete, compactly generated, quasi just-non-compact tdlc groups and let   $ \Gamma \leq G = G_1 \times \dots \times G_n$ be a   lattice such that the projection $p_i(\Gamma)$ is dense in $G_i$ for all $i$. Assume that at least one of the following conditions holds. 
	\begin{enumerate}[label=(\arabic*)]
		\item $\Gamma$ is cocompact. 
		
		\item $G$ has Kazhdan's property (T). 
	\end{enumerate}
	Then the set of discrete subgroups of $G$ containing $\Gamma$ is finite. 
\end{thmintro}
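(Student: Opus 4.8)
The plan is to reduce the statement to two essentially independent assertions: a uniform upper bound on the index $[\Lambda : \Gamma]$ over all discrete overgroups $\Lambda \supseteq \Gamma$, and, for a fixed finite-index subgroup, a normalizer argument. First I would record the elementary but crucial observation that every discrete subgroup $\Lambda$ with $\Gamma \le \Lambda \le G$ is automatically a lattice: the quotient map $G/\Gamma \to G/\Lambda$ is a measure-non-increasing surjection and $\Gamma$ has finite covolume, so $\Lambda$ has finite covolume, and as $\Lambda$ is discrete one has $\mathrm{covol}(\Lambda) > 0$; hence $[\Lambda : \Gamma] = \mathrm{covol}(\Gamma)/\mathrm{covol}(\Lambda) < \infty$. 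Moreover each such $\Lambda$ still has dense projections, since $p_i(\Lambda) \supseteq p_i(\Gamma)$ is dense. Thus every discrete overgroup is a finite-index overgroup, and it suffices to show that the indices are uniformly bounded and that, for each admissible normal core, only finitely many overgroups occur.

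The uniform bound on $[\Lambda:\Gamma]$ is the Kazhdan--Margulis input, and I expect it to be the main obstacle. It amounts to a positive lower bound $v_0$ on the covolumes of the lattices in play, for then $[\Lambda:\Gamma] = \mathrm{covol}(\Gamma)/\mathrm{covol}(\Lambda) \le \mathrm{covol}(\Gamma)/v_0 =: M$. This is precisely where the two hypotheses enter. Under (1), all the $\Lambda$ are cocompact with a common compact transversal, since $G = \Gamma K$ forces $G = \Lambda K$; one then invokes the uniform discreteness of the family $\{\Lambda : G = \Lambda K,\ \text{dense projections}\}$, which is the genuinely hard, quasi-just-non-compact-specific phenomenon (any degeneration would force a compact open subgroup of $G$ to be an infinitely generated pro-$p$ group, which is excluded). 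Under (2), property (T) yields the bound through the openness of the lattices inside the Chabauty space of closed subgroups. I would also note here that $\Gamma$ is finitely generated in both cases---a cocompact lattice in a compactly generated group is finitely generated, and property (T) passes to lattices and forces finite generation---so that $\Gamma$ has only finitely many subgroups of index at most $M!$. Consequently the normal core $\Core_\Lambda(\Gamma) = \bigcap_{\lambda \in \Lambda} \lambda \Gamma \lambda^{-1}$, which is normal in $\Lambda$ and of index at most $M!$ in $\Gamma$, ranges over a finite set of subgroups $K \le \Gamma$ as $\Lambda$ varies, and I partition the overgroups accordingly.

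It then remains, for a fixed such finite-index $K$ that is normal in every $\Lambda$ of its class, to bound the number of discrete subgroups with $K \le \Lambda \le N_G(K)$. For this I would show that $N_G(K)$ is discrete. The conjugation action $N_G(K) \to \Aut(K)$ has countable (discrete) target, since $K$ is finitely generated, and it is continuous because $K$ is discrete; hence its kernel, the centralizer $C_G(K)$, is open in $N_G(K)$. On the other hand $C_G(K)$ is itself discrete: its projection $p_i(C_G(K))$ centralizes $p_i(K)$, whose closure is an open finite-index subgroup of $G_i$, so $p_i(C_G(K))$ lies in the centralizer of an open subgroup of $G_i$; by quasi-just-non-compactness this centralizer is a closed normal (center-type) subgroup that cannot be cocompact and is therefore discrete. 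As there are only finitely many factors, $C_G(K) \le \prod_i p_i(C_G(K))$ is discrete, and an open discrete subgroup forces $N_G(K)$ itself to be discrete. Then $N_G(K)$ is a lattice containing $K$ with finite index, so the overgroups in the class of $K$ are intermediate subgroups $K \le \Lambda \le N_G(K)$, of which there are finitely many; summing over the finitely many admissible cores $K$ yields the finiteness of the set of discrete overgroups of $\Gamma$.
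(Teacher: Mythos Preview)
Your overall architecture---uniform index bound, then normal-core and normalizer argument---is exactly that of the paper (packaged there as Proposition~\ref{prop:Wang:GelanderLevit}). In case~(1) you correctly identify Theorem~\ref{thmintro:KazMar} as the hard input and the reduction is sound. The normalizer step is also fine, though your reasoning that the centralizer of an open subgroup of $G_i$ is ``a closed normal (center-type) subgroup'' is not quite right as stated: that centralizer need not be normal in $G_i$. What is true is that any element centralizing an open subgroup lies in $\QZ(G_i)$, and $\QZ(G_i)$ is discrete by Proposition~\ref{prop-carac-qjnc}; then $C_G(K)\le\prod_i\QZ(G_i)$ is discrete and your conclusion follows.

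The genuine gap is in case~(2). ``Openness of the lattices in the Chabauty space'' (Theorem~\ref{thm:Nbd}, ultimately Fell) tells you that \emph{near each lattice} there is a uniform discreteness neighbourhood; it does \emph{not} by itself bound the index of an arbitrary discrete overgroup $\Lambda\supseteq\Gamma$, since such a $\Lambda$ can be Chabauty-far from $\Gamma$. To run your scheme you must pass to a Chabauty accumulation point $H$ of a hypothetical sequence $(\Lambda_k)$ with unbounded index and know that $H$ is still discrete. This is precisely Proposition~\ref{prop-kazhdan-no-tower}: the set of discrete overgroups of $\Gamma$ is Chabauty closed. Its proof is not a formality---it uses Proposition~\ref{prop-H=G} in the Kazhdan form to show that a non-discrete limit would be cocompact, contradicting non-uniformity (or it reduces to the closedness in the cocompact setting). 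Only once you have both ingredients (closedness of the overgroup set, and Theorem~\ref{thm:Nbd} at the limit point) do you get the uniform index bound. Add this step explicitly and your sketch becomes complete and essentially coincides with the paper's proof.
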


Theorem~\ref{thm:Wang1} implies in particular that any lattice $\Gamma$ as in the theorem is contained in a maximal lattice (and hence that maximal lattices exist). 

\begin{rmq}
In Theorem~\ref{thm:Wang1} as well as in the other statements of this introduction, we assume that the ambient group $G$ is totally disconnected. That restriction is rather mild, since the presence of a connected simple factor (or more generally, a quasi just-non-compact almost connected factor without non-trivial  connected abelian normal subgroup) implies much stronger structural constraints on the lattice $\Gamma$ and on the other  factors of the ambient product group, in view of the arithmeticity results from \cite[Theorem 5.18]{CaMo-discrete} and \cite[Theorem 1.5]{BFS-adelic}.
\end{rmq}

A version of the Wang finiteness theorem is established by Burger--Mozes in~\cite[Theorem~1.1]{BuMo_Wang} for cocompact lattices with dense projections in certain automorphism groups of trees with quasi-primitive local action. When specified to this setting, Theorem~\ref{thm:Wang1} allows one to recover and generalize their result (see Section~\ref{subsec:LocalAction} below).

In \cite[Theorem~1.8]{GelanderLevit}, Gelander--Levit provide sufficient conditions on a set of discrete subgroups of a locally compact group $G$ all containing a fixed finitely generated lattice $\Gamma$, to be finite. Their conditions are not satisfied \emph{a priori} under the hypotheses of Theorem~\ref{thm:Wang1}. However, the proof of the latter elaborates on some of the ideas developed in \cite{GelanderLevit}. Other ingredients are presented below. A remarkable feature of Theorem~\ref{thm:Wang1}, and also Theorems \ref{thmintro:KazMar} and~\ref{thmintro:Wang2} below, is that their proofs rely in an essential way on a combination of results from the recent structure theory of tdlc groups developed in \cite{CaMo-decomp,CRW-part2,CRW_dense} together with various results and ideas from finite group theory.

\bigskip

Given a compact subset $K \subseteq G$ of a locally compact group $G$, a subgroup $H$ is called \textbf{$K$-cocompact} if $G = HK$. The following result implies the existence of a positive lower bound on the set of covolumes of all  $K$-cocompact   lattices with dense projections.

\begin{thmintro}[See Theorem~\ref{thm:KazMar:Irr}] \label{thmintro:KazMar}
	Let $G_1,\ldots,G_n$ be non-discrete, compactly generated, quasi just-non-compact tdlc groups. For every compact subset $K \subset G = G_1 \times \dots \times G_n$, there  exists an identity neighbourhood $V_K$ such that $V_K \cap \Gamma = \{1\}$ for every $K$-cocompact discrete subgroup $\Gamma \leq G$ with $p_i(\Gamma)$ dense in $G_i$ for all $i$.  
\end{thmintro}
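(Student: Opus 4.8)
The plan is to argue by contradiction, exploiting the compactness of the Chabauty space $\sub(G)$ of closed subgroups of $G$. Suppose the conclusion fails for some compact $K$. Fix a decreasing basis $V_1 \supseteq V_2 \supseteq \cdots$ of identity neighbourhoods with $\bigcap_m V_m = \{1\}$; then for each $m$ there is a $K$-cocompact discrete subgroup $\Gamma_m \leq G$ with $p_i(\Gamma_m)$ dense in each $G_i$, together with an element $\gamma_m \in (\Gamma_m \cap V_m) \setminus \{1\}$, so that $\gamma_m \to 1$. Passing to a subsequence, I may assume $\Gamma_m \to L$ in $\sub(G)$ for some closed subgroup $L \leq G$, and the whole argument then aims to extract from this limit a contradiction.

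First I would record the easy properties of $L$. Since $G = \Gamma_m K$ with $K$ compact, every $g \in G$ can be written $g = \delta_m k_m$ with $\delta_m \in \Gamma_m \cap gK^{-1}$; as $gK^{-1}$ is compact, a subsequential limit $\delta$ of the $\delta_m$ lies in $L$ by the definition of Chabauty convergence, and $k = \delta^{-1}g \in K$, so $g \in LK$. Hence $L$ is $K$-cocompact, in particular cocompact, and therefore compactly generated as a cocompact closed subgroup of the compactly generated group $G$. The first genuine difficulty is to show that $L$ is \emph{non-discrete}: a priori $\gamma_m \to 1$ is compatible with $L$ being discrete, so a naive limit does not suffice. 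Here I would exploit the dense-projection (irreducibility) hypothesis: writing $\gamma_m = (\gamma_m^{(1)}, \dots, \gamma_m^{(n)})$, the density of the other projections together with quasi just-non-compactness of the factors constrains $\gamma_m$ to be non-trivial in enough coordinates, and conjugating $\gamma_m$ by elements of $\Gamma_m$ realizing the density in the remaining factors produces an abundance of distinct non-trivial elements clustering near the identity. The aim is to assemble these, in the limit, into non-trivial elements of $L$ arbitrarily close to $1$, thereby witnessing that $L$ is non-discrete.

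Granting that $L$ is a non-discrete, compactly generated, cocompact subgroup arising as a Chabauty limit of the discrete groups $\Gamma_m$, I would bring it within the scope of the structure theory. Using quasi just-non-compactness of the $G_i$ together with the tdlc structure results of \cite{CaMo-decomp,CRW-part2,CRW_dense}, one isolates in $L$ (or in a suitable quotient of it) a non-discrete, compactly generated, quasi just-non-compact tdlc group that is itself a Chabauty limit of discrete subgroups. The intermediate result quoted in the introduction then forces some compact open subgroup of this group to be an infinitely generated pro-$p$ group for some prime $p$.

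The main obstacle, and the decisive step, is to contradict this pro-$p$ conclusion under the standing hypotheses. Here I would invoke the product structure: a compact open subgroup of $G$ has the form $U_1 \times \cdots \times U_n$ with $U_i \leq G_i$ compact open, and cocompactness of $L$ pins its compact open subgroups down to be commensurable with products $\prod_i (U_i \cap L)$. The task is then to control these tightly enough to exclude an infinitely generated pro-$p$ one, using compact generation of the factors $G_i$ and the fact that the limit sits inside a product of $\geq 2$ such factors with dense projections. I expect precisely this incompatibility — showing that the pro-$p$ structure forced on the limit cannot coexist with its being a cocompact Chabauty limit inside $G_1 \times \dots \times G_n$ — to be the technically hard point; its resolution shows that no non-discrete limit $L$ can occur, contradicting the previous paragraph and thereby establishing the uniform discreteness asserted by the theorem.
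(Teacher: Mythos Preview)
Your overall strategy --- Chabauty compactness, pass to a limit $L$, argue it is non-discrete, invoke the Zassenhaus-type result (Theorem~\ref{thmintro:Zassenhaus}), and contradict the pro-$p$ conclusion via finite generation --- is the paper's strategy. But there is a genuine gap at the step you yourself flag as ``the first genuine difficulty'': showing $L$ is non-discrete. Your proposed mechanism (conjugate $\gamma_m$ inside $\Gamma_m$ to produce many elements near $1$) does not work: all those conjugates still tend to $1$, so in the Chabauty limit they contribute only the identity to $L$. Nothing in the bare convergence $\gamma_m \to 1$ prevents $L$ from being discrete; for instance, in the compact group $U = \prod_{k} \mathbf{Z}/p\mathbf{Z}$ the cyclic subgroups supported on the $m$-th coordinate Chabauty-converge to $\{1\}$ although each is non-trivial. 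The paper handles this via a separate local result (Proposition~\ref{prop-chab-neigh-latt}, after Gelander--Levit): if $L$ were a discrete cocompact lattice, then for $\Lambda$ Chabauty-close to $L$ the intersection $\Lambda \cap U$ is a compact subgroup with cocompact normalizer, hence lies in the discrete polycompact radical $W(G)$, forcing $\Lambda \cap U = \{1\}$ and contradicting $\gamma_m \in \Gamma_m \cap V_m$. Equivalently, the paper first proves that $\mathcal L_{r,U}$ is Chabauty \emph{closed} (Theorem~\ref{thm:LrU-ChabClosed}) and then extracts the uniform neighbourhood by an open-cover argument over the compact set $\mathcal L_{r,U}$ using Proposition~\ref{prop-chab-neigh-latt} at each point.

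The later steps are also too loose as written. To bring Theorem~\ref{thmintro:Zassenhaus} to bear, the paper does not work with $L$ or a quotient of it directly: using the full strength of quasi just-non-compactness and \ref{it:irre3} (Proposition~\ref{prop-H=G}), it shows that $p_1\big(L \cap (G_1 \times U_2 \times \dots \times U_n)\big)$ is an \emph{open finite-index} subgroup of $G_1$, and that this subgroup is the Chabauty limit of the discrete groups $p_1\big(\Gamma_m \cap (G_1 \times U_2 \times \dots \times U_n)\big)$ inside $G_1$. (Both that $L$ still has dense projections, and this continuity of projections, need the uniform $(r,U)$-cocompactness --- Proposition~\ref{prop:ProjectionChabautyContinuous} --- and are not automatic.) The contradiction with the infinitely generated pro-$p$ compact open $V \leq G_1$ then uses a single $\Gamma_k$, not $L$: the intersection $\Gamma_k \cap (V \times G_2 \times \dots \times G_n)$ is a cocompact lattice in a compactly generated group, hence finitely generated, and its projection to $V$ is dense because $p_1(\Gamma_k)$ is dense in $G_1$, forcing $V$ to be topologically finitely generated. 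Your sketch about ``compact open subgroups of $L$ commensurable with products'' misses this: it is the density of $p_1(\Gamma_k)$, not the structure of $L$, that does the work.
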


Given a compactly generated tdlc group $G$ and a compact open subgroup $U$, any $K$-cocompact subgroup acts with at most $r$ orbits on $G/U$, where $r = |KU/U|$. Conversely, for every $r >0$ there is a compact subset $K \subset G$ such that any subgroup of $G$ acting with at most $r$ orbits on $G/U$ is $K$-cocompact (see Lemma~\ref{lem-rU-cocom-uniform} below). 

For a discrete subgroup, the condition of $K$-cocompactness may be viewed as an upper bound on the covolume. However, that condition is generally strictly stronger than being cocompact and of covolume bounded above: indeed, Bass--Kulkarni have shown in  \cite[Theorem~7.1(b),7.19--7.20]{BK90} that for $d \geq 5$, the group $\Aut(T_d)$ contains an infinite family $(\Gamma_k)$ of cocompact lattices of constant covolume, and such that the number of vertex orbits of $\Gamma_k$ tends to infinity with $k$. In particular there does not exist any compact subset $K \subset \Aut(T_d)$ such that $\Gamma_k$ is $K$-cocompact for all $k$. 

We do not know whether there could exist a neighbourhood of the identity as in the conclusion of Theorem~\ref{thmintro:KazMar} that is actually independent of $K$ (Question \ref{quest-uniform-neighb} below). See \cite[Conjecture (B), IX.4.21]{Margulis} for a related conjecture in the case of semi-simple groups.

Using Serre's covolume formula, Theorem~\ref{thmintro:KazMar} implies that the set of covolumes of $K$-cocompact lattices with dense projections in $G$, is finite  (see Theorem~\ref{thm:KazMar:Irr} below). As mentioned above, in the classical case of semisimple Lie groups, the combination of the Kazhdan--Margulis theorem with local rigidity of lattices yields a much stronger finiteness statement, due to H.~C.~Wang~\cite{Wang72}.  A very general version of the local rigidity of cocompact lattices has recently been established by Gelander--Levit~\cite{GelanderLevit}.  Using their results, we establish the following. 

\begin{thmintro}[See Theorem~\ref{thm-finite-A-orbits}] \label{thmintro:Wang2} 
	Let $G_1,\ldots,G_n$ be non-discrete, compactly presented, quasi just-non-compact tdlc groups.  For every compact subset $K \subset G = G_1 \times \dots \times G_n$, the set of  $K$-cocompact discrete subgroups $\Gamma \leq G$ with $p_i(\Gamma)$ dense in $G_i$ for all $i$,  is contained in a union of finitely many $\aut(G)$-orbits.
\end{thmintro}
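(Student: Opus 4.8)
The plan is to combine the uniform discreteness furnished by Theorem~\ref{thm:KazMar:Irr} with the Chabauty compactness of the space of $K$-cocompact subgroups and the local rigidity theorem of Gelander--Levit. Write $\mathcal{L}$ for the set of $K$-cocompact discrete subgroups $\Gamma \leq G$ with $p_i(\Gamma)$ dense in $G_i$ for all $i$. We may assume $\mathcal{L} \neq \emptyset$; then $G$ contains a cocompact lattice, hence is unimodular, and every discrete $K$-cocompact subgroup of $G$ is a cocompact lattice. The goal is to show that $\mathcal{L}$ meets only finitely many $\aut(G)$-orbits, and the idea is to enclose $\mathcal{L}$ in a compact subset of the Chabauty space $\sub(G)$ of closed subgroups, every point of which is a cocompact lattice, and then cover that compact set by local rigidity neighbourhoods.

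First I would record two closedness facts in the compact space $\sub(G)$. Being $K$-cocompact is a closed condition: if $\Gamma_m \to L$ with $G = \Gamma_m K$, then writing an arbitrary $g \in G$ as $g = \gamma_m k_m$ and passing to a subnet with $k_m \to k \in K$, the elements $\gamma_m = g k_m^{-1}$ converge to $g k^{-1} \in L$, whence $g \in LK$. Likewise, avoiding a fixed \emph{open} identity neighbourhood is closed: if each $\Gamma_m$ meets the neighbourhood $V_K$ of Theorem~\ref{thm:KazMar:Irr} only in $\{1\}$ and $\Gamma_m \to L$, then any nontrivial element of $L \cap V_K$ would be approximated by nontrivial elements of $\Gamma_m \cap V_K$, a contradiction; so $L \cap V_K = \{1\}$ and $L$ is discrete. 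Consequently the Chabauty closure $\overline{\mathcal{L}}$ is a compact subset of $\sub(G)$ all of whose members are discrete and $K$-cocompact, hence cocompact lattices. Here Theorem~\ref{thm:KazMar:Irr} is exactly what prevents the limit from degenerating to a non-discrete subgroup; the density of projections, not being a closed condition, is simply discarded at this stage.

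Next I would invoke local rigidity. Since $G$ is compactly presented and each $L \in \overline{\mathcal{L}}$ is a cocompact lattice, $L$ is finitely presented, so the Gelander--Levit theorem applies: there is a Chabauty neighbourhood $\Omega_L$ of $L$ such that every cocompact lattice lying in $\Omega_L$ is conjugate to $L$ in $G$, or at any rate related to $L$ by an element of $\aut(G)$, and so lies in the single $\aut(G)$-orbit of $L$. The family $\{\Omega_L : L \in \overline{\mathcal{L}}\}$ is then an open cover of the compact set $\overline{\mathcal{L}}$; extracting a finite subcover $\Omega_{L_1}, \dots, \Omega_{L_m}$, every member of $\overline{\mathcal{L}}$, and a fortiori every $\Gamma \in \mathcal{L}$, lies in some $\Omega_{L_j}$ and hence in the $\aut(G)$-orbit of $L_j$. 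Therefore $\mathcal{L}$ is contained in finitely many $\aut(G)$-orbits, as desired.

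The main obstacle is the correct deployment of the Gelander--Levit local rigidity theorem in the present generality: one must verify that its hypotheses are met for cocompact lattices in a compactly presented tdlc group, so that the conclusion that nearby cocompact lattices are equivalent genuinely holds in the Chabauty topology, and that it applies uniformly to \emph{every} point of $\overline{\mathcal{L}}$, including the limit lattices in $\overline{\mathcal{L}} \setminus \mathcal{L}$ which need not have dense projections. A secondary point requiring care is the order of operations: it is essential to apply Theorem~\ref{thm:KazMar:Irr} \emph{before} taking Chabauty limits, since without the uniform lower bound on the injectivity radius the limit of lattices with dense projections could fail to be discrete --- precisely the degeneration controlled by the intermediate result on Chabauty limits of discrete subgroups quoted in the introduction.
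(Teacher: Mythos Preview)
Your overall architecture---compactness in $\sub(G)$ plus local rigidity plus finite subcover---is exactly the paper's, but there is a genuine gap at the point you yourself flag as the ``main obstacle''. The Gelander--Levit local rigidity theorem, in the form recorded as Theorem~\ref{thm-aut(G)-orbit-open}, does \emph{not} say that nearby cocompact lattices lie in a single $\aut(G)$-orbit. Part~(i) only yields that they are conjugate under $\mathcal A = \prod_i \mathcal A_i$, the automorphism group of an auxiliary Rips complex; elements of $\mathcal A$ need not normalize $G$, so this does not give $\aut(G)$-orbits. It is precisely part~(ii), requiring that $p_i(L)$ be dense in $G_i$, that upgrades the conclusion to $\mathrm N_{\mathcal A}(G)$-conjugacy and hence to $\aut(G)$-orbits. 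By deliberately discarding the dense-projections condition when passing to $\overline{\mathcal L}$, you lose exactly the hypothesis needed to make the local rigidity neighbourhoods $\Omega_L$ consist of a single $\aut(G)$-orbit.

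The paper closes this gap by invoking Theorem~\ref{thm:LrU-ChabClosed} directly: the set $\mathcal L_{r,U}$ of $(r,U)$-cocompact discrete subgroups with dense projections is \emph{itself} Chabauty closed, so there are no limit points outside $\mathcal L$ and part~(ii) of Theorem~\ref{thm-aut(G)-orbit-open} applies at every point of the compact set. You are in fact already using this result implicitly, since Theorem~\ref{thm:KazMar:Irr} is proved by first establishing that $\mathcal L_{r,U}$ is closed and then running the same compactness-plus-local-neighbourhoods argument you sketch (with Proposition~\ref{prop-chab-neigh-latt} in place of Gelander--Levit). So the detour through $\overline{\mathcal L}$ is unnecessary: simply cite Theorem~\ref{thm:LrU-ChabClosed} to get compactness of $\mathcal L$ with dense projections intact, then apply part~(ii) of Theorem~\ref{thm-aut(G)-orbit-open} and extract a finite subcover. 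That is the paper's two-line proof of Theorem~\ref{thm-finite-A-orbits}.
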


The hypothesis of compact presentability of the factors is needed to invoke the suitable local rigidity results from  \cite{GelanderLevit}.

\subsection{Discrete groups acting on product of graphs} \label{subsec:LocalAction}

A natural setting in which the previous results find applications is the study of lattices in products of trees, and more generally products of automorphism groups of graphs, with restricted local action. The investigation of such lattices was  pioneered by Burger--Mozes~\cite{BuMo1}, \cite{BuMo2}. 

Let $X$ be a connected graph. Given  $G\leq \Aut(X)$, a vertex $x \in VX$ and an integer $\ell \geq 0$, we denote by $G_x^{[\ell]}$ the pointwise stabilizer of the $\ell$-ball around $x$. Thus $G_x^{[0]} =G_x$ is simply the stabilizer of $x$. We also denote by $X(x)$ the vertices at distance~$1$ from $x$, and by $G_x^{X(x)}$ the permutation group induced by the action of the stabilizer $G_x$ on $X(x)$. As an abstract group, it is isomorphic to $G_x/G_x^{[1]}$. We call $G_x^{X(x)}$ the \textbf{local action} of $G$ at $x$.

We recall that a permutation group $L$ of a set $\Omega$ is \textbf{primitive} if the only $L$-invariant partitions of $\Omega$ are the trivial ones. The group $L$ is \textbf{quasi-primitive} if it is transitive and  the only intransitive normal subgroup is   trivial, and \textbf{semiprimitive} if it is transitive and every intransitive normal subgroup  acts freely. Primitive groups are quasi-primitive, and quasi-primitive groups are semiprimitive.

As observed by Burger--Mozes~\cite[Proposition~1.2.1]{BuMo1}, quasi just-non-compact groups appear naturally in the context of automorphism groups of connected graphs with quasi-primitive local actions. More generally, given a connected locally finite graph $X$, any closed subgroup $G \leq \Aut(X)$ with semiprimitive local action is quasi just-non-compact, see  Proposition~\ref{prop-BM-loc-semiprim} below. Therefore, Theorems~\ref{thm:Wang1},~\ref{thmintro:KazMar} and~\ref{thmintro:Wang2} can be applied to cocompact lattices with dense projections in a product of automorphism groups of connected graphs with semiprimitive local actions. In particular, Theorems~\ref{thm:Wang1},~\ref{thmintro:KazMar} and~\ref{thmintro:Wang2} have the following direct consequence. 

\begin{corintro}\label{corintro:graphs}
Let $X_1, \dots, X_n$ be connected locally finite graphs and for each $i$, let $G_i \leq \Aut(X_i)$ be a non-discrete closed subgroup with semiprimitive local action. Let $G = G_1 \times \dots \times G_n$. 
\begin{enumerate}[label=(\Alph*)]
\item \label{it:GraphsA} 
For every cocompact lattice $\Gamma \leq G$ such that $p_i(\Gamma)$ is dense in $G_i$ for all $i$, the set of discrete subgroups of $G$ containing $\Gamma$ is finite. 

\item \label{it:GraphsB} 
For each $r>0$, there exists a constant $c = c(r)$ such that for every cocompact lattice $\Gamma \leq G$ with at most $r$ orbits on $\prod VX_i$ and such that $p_i(\Gamma)$ is dense in $G_i$ for all $i$, we have  $\Gamma_x^{[c]} = \{1\}$ for every vertex $x \in \prod V X_i$. 

\item \label{it:GraphsC} 
Assume that $X_i$ is coarsely simply connected for all $i$ (e.g. if $X_i$ is a tree), and let $r>0$. Then there is, up to the action of $\Aut(G)$, only finitely many cocompact lattices $\Gamma \leq G$ with at most $r$ orbits on $\prod VX_i$ and such that $p_i(\Gamma)$ is dense in $G_i$ for all $i$. 
\end{enumerate}
\end{corintro}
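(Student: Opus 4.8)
The plan is to deduce Corollary \ref{corintro:graphs} directly from the three main theorems by verifying the geometric hypotheses translate into the group-theoretic ones. The first observation is the bridge result cited in the text: by Proposition \ref{prop-BM-loc-semiprim}, each closed subgroup $G_i \leq \Aut(X_i)$ with semiprimitive local action on the connected locally finite graph $X_i$ is quasi just-non-compact. Moreover, since $X_i$ is connected and locally finite, $\Aut(X_i)$ is a tdlc group whose vertex stabilizers are compact open, and $G_i$ is non-discrete by hypothesis; compact generation of $G_i$ follows from the fact that it acts cocompactly (indeed vertex-transitively up to finitely many orbits, after passing to the relevant quotient) on the connected locally finite graph $X_i$. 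Thus all three theorems apply to $G = G_1 \times \dots \times G_n$.

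For part \ref{it:GraphsA}, I would simply invoke Theorem \ref{thm:Wang1}(1): a cocompact lattice $\Gamma$ with dense projections satisfies the first hypothesis, so the set of discrete overgroups is finite. For part \ref{it:GraphsB}, the key is to convert ``at most $r$ orbits on $\prod VX_i$'' into $K$-cocompactness. Fixing a base vertex and letting $U = \prod (G_i)_{x_i}$ be the product of vertex stabilizers (a compact open subgroup of $G$), a subgroup acting with at most $r$ vertex-orbits acts with at most $r$ orbits on $G/U$, hence is $K$-cocompact for a compact $K = K(r)$ by Lemma \ref{lem-rU-cocom-uniform}. Theorem \ref{thmintro:KazMar} then furnishes an identity neighbourhood $V_K$ meeting such $\Gamma$ trivially. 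The final step is to translate this uniform discreteness into a uniform bound on the depth of the stabilizer: since the subgroups $U_x^{[\ell]} = \prod (G_i)_{x_i}^{[\ell]}$ form a neighbourhood basis of the identity as $\ell \to \infty$, there is an $\ell = c(r)$ with $U_x^{[c]} \subseteq V_K$, whence $\Gamma_x^{[c]} = \Gamma \cap U_x^{[c]} \subseteq \Gamma \cap V_K = \{1\}$ for every vertex $x$; here one uses that all vertex stabilizers are conjugate up to the finitely many orbits, so a single $c$ works uniformly.

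For part \ref{it:GraphsC}, the additional hypothesis that each $X_i$ is coarsely simply connected (in particular a tree) is precisely what is needed to guarantee that $G_i$ is compactly presented, so that Theorem \ref{thmintro:Wang2} applies. I would argue that a non-discrete closed subgroup acting geometrically on a coarsely simply connected, connected locally finite graph is itself compactly presented, since compact presentability is a quasi-isometry invariant among compactly generated locally compact groups and the graph serves as a model space. Then the same conversion of the $r$-orbit condition into $K$-cocompactness via Lemma \ref{lem-rU-cocom-uniform} lets me apply Theorem \ref{thmintro:Wang2}, yielding finitely many $\Aut(G)$-orbits of such lattices.

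The main obstacle I anticipate is not in the logical structure, which is essentially a dictionary between the geometric and group-theoretic formulations, but in the two reductions that require care. First, establishing compact generation (for \ref{it:GraphsA},\ref{it:GraphsB}) and compact presentability (for \ref{it:GraphsC}) of the factors $G_i$ from the geometric hypotheses must be done cleanly; the subtlety is that $G_i$ need not act cocompactly on $X_i$ a priori, so one may need to restrict to the orbit structure or invoke that a closed non-discrete subgroup with semiprimitive local action is automatically cocompact in the relevant sense. Second, the uniformity of the constant $c(r)$ across all vertices in part \ref{it:GraphsB} relies on controlling the finitely many orbit types of vertices simultaneously; I would make this rigorous by noting that conjugate compact open subgroups give the same depth filtration, and that a cocompact lattice visits only finitely many orbits so only finitely many filtrations are relevant, allowing one to take the maximum of the corresponding depths.
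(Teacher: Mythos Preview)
Your proposal is correct and follows the same route as the paper, which simply invokes Proposition~\ref{prop-BM-loc-semiprim} to obtain that each $G_i$ is quasi just-non-compact and then applies Theorems~\ref{thm:Wang1}, \ref{thmintro:KazMar} and~\ref{thmintro:Wang2} directly; your write-up actually spells out more of the dictionary (the $(r,U)$-cocompactness translation via Lemma~\ref{lem-rU-cocom-uniform}, and the passage from an identity neighbourhood to a depth bound) than the paper's one-line proof does. The obstacles you anticipate are not genuine gaps: semiprimitive local action entails transitive local action, hence edge-transitivity on the connected graph $X_i$, so $G_i$ acts cocompactly and is compactly generated (and, under the coarse simple connectedness hypothesis, compactly presented), while the uniformity of $c(r)$ over all vertices follows exactly from the finite-orbit argument you sketch using Lemma~\ref{lem-rU-cocom-uniform}.
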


Statement \ref{it:GraphsA} extends the main result of \cite{BuMo_Wang} from quasi-primitive local actions of almost simple type, to arbitrary semiprimitive local actions. As before, Serre's covolume formula immediately implies that, for a fixed $r >0$, the set of covolumes of lattices as in \ref{it:GraphsB} is a finite set. Hence Statement \ref{it:GraphsB} provides a partial answer to Question~1.2 from \cite{BuMo_Wang}. 

\bigskip

Our tools can also be used to study discrete subgroups $\Gamma \leq \Aut(X_1) \times \dots \times \Aut(X_n)$ with semiprimitive local action on each factor, without assuming that the closure of the projection $p_i(\Gamma) \leq \Aut(X_i)$ is a fixed closed subgroup $G_i$. In particular we prove the following: 

\begin{thmintro} \label{thm-intro-prod-trees-2trans}
Let $n \geq 1$ and for each $i = 1, \ldots, n$, let $T_i$ be a regular  locally finite tree of degree~$\geq 3$. 
	There is, up to conjugation, only finitely many vertex-transitive discrete subgroups $\Gamma \leq \Aut(T_1) \times \dots \times \Aut(T_n)$ whose local action on $T_i$ is $2$-transitive for all $i$. 
\end{thmintro}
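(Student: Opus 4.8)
The plan is to reduce to the dense-projection setting governed by Theorems~\ref{thm:Wang1}--\ref{thmintro:Wang2}, the essential new difficulty being that the relevant ambient product is not fixed but varies with $\Gamma$. Write $H = \Aut(T_1) \times \dots \times \Aut(T_n)$, fix a base vertex $x = (x_1, \dots, x_n) \in \prod_i VT_i$, and for a lattice $\Gamma$ as in the statement set $G_i = \overline{p_i(\Gamma)}$ and $G = G_1 \times \dots \times G_n$. Since $\Gamma$ is vertex-transitive it is cocompact, hence a cocompact lattice in $G$ with dense projections, and each $G_i$ is a vertex-transitive closed subgroup of $\Aut(T_i)$ whose local action $L_i := (G_i)_{x_i}^{T_i(x_i)}$ is $2$-transitive. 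As $T_i$ has fixed finite degree $d_i \geq 3$, there are finitely many possibilities for the tuple $(L_1, \dots, L_n)$ (finitely many $2$-transitive subgroups of each $\Sym(d_i)$), so we may fix it. An elementary but crucial observation is that a $2$-transitive group of degree $d_i \geq 3$ is never a $p$-group, since its order is divisible by $d_i(d_i-1)$, a product of two coprime integers $>1$; consequently no compact open subgroup of $G_i$ is pro-$p$, as it surjects onto $L_i$. By Proposition~\ref{prop-BM-loc-semiprim} each $G_i$ is quasi just-non-compact, and being vertex-transitive on a tree it is compactly generated and compactly presented.

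The heart of the proof is a uniform discreteness statement: there is an integer $c$, depending only on the $T_i$ and the fixed tuple $(L_i)$, with $\Gamma_x^{[c]} = \{1\}$ for every $\Gamma$ in the family. I would prove it by contradiction, through a Chabauty compactness argument feeding into the pro-$p$ theorem quoted in the introduction. If it fails, one obtains lattices $\Gamma_k$ with $1 \neq \gamma_k \in (\Gamma_k)_x^{[k]}$. Since conjugation by an element of $\Gamma_k$ leaves $\Gamma_k$ unchanged, recentring at a vertex on the boundary of the fixed ball of $\gamma_k$ produces, inside the same group $\Gamma_k$, an element $\gamma_k'$ of bounded depth at $x$ (it moves a neighbour of $x$) that nevertheless fixes a ball of radius $m_k \to \infty$ pointing toward the original base vertex in some factor $i_0$. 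Passing to a Chabauty limit $\Lambda = \lim_k \Gamma_k$ in $\sub(H)$, the $\gamma_k'$ converge to a nontrivial element of $\Lambda$ fixing an entire half-tree; since vertex-transitivity passes to Chabauty limits (the transporters between two fixed product-vertices form a compact set), translating the boundary of this half-tree arbitrarily far shows $\Lambda$ has nontrivial elements fixing balls of every radius, i.e. $\Lambda$ is non-discrete. As $\Lambda$ is a Chabauty limit of the discrete subgroups $\Gamma_k$, the pro-$p$ theorem would force a compact open subgroup of the relevant factor to be an infinitely generated pro-$p$ group, contradicting the fact that the local action is not a $p$-group.

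The main obstacle lies precisely in controlling the local action of the limit. The limit of the \emph{ambient} groups $M = \lim_k G_k$ retains the $2$-transitive local action $L_i$ (a condition on the ball of radius one, hence locally constant in the Chabauty topology), but $M$ is a limit of the non-discrete $G_k$; whereas the limit of the \emph{lattices} $\Lambda = \lim_k \Gamma_k$ is genuinely a Chabauty limit of discrete subgroups but a priori has a smaller local action, so Proposition~\ref{prop-BM-loc-semiprim} need not apply to $\Lambda$ directly. Reconciling the two limits---showing that the non-discreteness produced by the deep elements sits in a factor whose local action stays semiprimitive, so the intermediate pro-$p$ theorem truly applies---is the crux. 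I expect this to require exploiting the product structure and the density of $p_{i_0}(\Gamma_k)$ in $G_{k,i_0} := \overline{p_{i_0}(\Gamma_k)}$ to propagate the $2$-transitivity of $L_{i_0}$ into $\Lambda$, rather than arguing with $M$ alone.

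Granting the uniform bound $\Gamma_x^{[c]} = \{1\}$, the endgame is comparatively soft. The vertex stabiliser then embeds into $\prod_i \Aut\big(B_{T_i}(x_i, c)\big)$, so $|\Gamma_x|$ is bounded by a constant $N = N(c, d_1, \dots, d_n)$; and since $\Gamma$ is vertex-transitive, it is $U$-cocompact in the \emph{fixed} group $H$ for the fixed compact open subgroup $U = \prod_i \stab_{\Aut(T_i)}(x_i)$, with covolume $\mathrm{vol}(U)/|\Gamma_x|$ bounded away from $0$. The conjugacy class of $\Gamma$ in $H$ is then determined by the finite datum consisting of the action of $\Gamma_x$ on the ball $\prod_i B_{T_i}(x_i, c+1)$ together with a finite set of elements of $\Gamma$ carrying $x$ to its neighbours; there are finitely many such data, and by the local rigidity of cocompact lattices (Gelander--Levit, as used for Theorem~\ref{thmintro:Wang2}), equivalently by a Bass--Serre/Leighton-type reconstruction on the homogeneous trees, each datum pins down finitely many conjugacy classes. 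This gives the finiteness. The degenerate situations in which some $G_i$ is discrete, including the base case $n = 1$, reduce to the finiteness up to conjugacy of discrete vertex-transitive subgroups of $\Aut(T_i)$ with $2$-transitive local action, which the same depth bound furnishes.
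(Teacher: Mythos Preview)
Your proposal contains a genuine and fatal gap. The claim that ``no compact open subgroup of $G_i$ is pro-$p$, as it surjects onto $L_i$'' is false: an arbitrary compact open subgroup of $G_i$ need not surject onto the local action. For a concrete counterexample, take the Burger--Mozes universal group $G_1 = U(S_3)$ on the $3$-regular tree, whose local action $S_3$ is $2$-transitive. The fixator $(G_1)_x^{[1]}$ of the $1$-ball is compact open, and its successive quotients $(G_1)_x^{[k]}/(G_1)_x^{[k+1]}$ for $k \geq 1$ are products of copies of the point stabiliser $(S_3)_\omega \cong \ZZ/2$; hence $(G_1)_x^{[1]}$ is pro-$2$. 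Thus the contradiction you hope to extract from Theorem~\ref{thmintro:Zassenhaus} does not materialise, and your uniform-discreteness argument collapses. You might try to salvage it via the \emph{infinitely generated} clause of Theorem~\ref{thmintro:Zassenhaus}, but you do not pursue this, and there is no obvious reason why $2$-transitive local action should force compact open subgroups of an arbitrary Chabauty limit to be topologically finitely generated.

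This is not a technicality. Your scheme, applied to a single factor, would in particular yield the case $n=1$---the Trofimov--Weiss theorem itself---without the Classification of the Finite Simple Groups. The paper goes the other way: it \emph{uses} Trofimov--Weiss (Theorem~\ref{thm:TrofimovWeiss}) as a black box to obtain that $2$-transitive permutation groups are graph-restrictive, which gives the uniform depth bound on each factor directly (Lemma~\ref{lem:GraphRestrictive->ChabClosed}). The passage from one factor to $n$ is done in Theorem~\ref{thm-prod-trees-IL-finite} and Corollary~\ref{cor:TreesFinite}: a putative non-discrete Chabauty limit $H$ yields, via Propositions~\ref{prop-H=G} and~\ref{prop:OpenProj}, that $G_1 = \overline{p_1(H)}$ is a Chabauty limit of \emph{discrete} subgroups with restrictive local action on $X_1$, and graph-restrictiveness (not the pro-$p$ obstruction) forces $G_1$ to be discrete, a contradiction. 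Your endgame via Gelander--Levit local rigidity is essentially what the paper does.
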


Note that, while the hypotheses of  Corollary~\ref{corintro:graphs} imply that the number $n$ of factors is at least~$2$, the case $n=1$ is allowed and meaningful in Theorem~\ref{thm-intro-prod-trees-2trans}: in that case,  the result is due to Trofimov--Weiss~\cite[Theorem~1.4]{TrofimovWeiss}, and relies on the Classification of the Finite Simple Groups. Our proof of Theorem~\ref{thm-intro-prod-trees-2trans} consists of a reduction from the case $n>1$ to the case of a single factor.

Recall that a conjecture of R.~Weiss~\cite[Conjecture~3.12]{Weiss78}  predicts that the conclusion of Theorem~\ref{thm-intro-prod-trees-2trans} with $n=1$ holds for discrete groups with primitive local actions. Although it is still open in full generality, several cases of the conjecture are now known to be true (see \S \ref{subsec-prod-graph-semiprim} for details). Our results in Section~\ref{sec:graphs} provide a reduction from the case of $n$ factors to the case of a single factor; see Theorem \ref{thm-prod-trees-IL-finite}. In particular we provide a partial solution to a conjecture due to Y.~Glasner \cite[Conjecture~1.5]{Glasner-2dim}. 

\medskip 
In the rest of the introduction, we discuss some of the proof ingredients of the results presented above. 

\subsection{A substitute for a theorem of Zassenhaus}

For a locally compact group $G$, we write $\sub(G)$ for the space of closed subgroups of $G$. Endowed with the Chabauty topology \cite{Chabauty-topo}, the space $\sub(G)$ is compact.

It is a classical fact that a connected Lie group can be approximated by its discrete subgroups in the Chabauty topology only if it is nilpotent \cite{Kuran}. In particular a non-compact simple Lie group $G$ cannot be approximated by discrete subgroups. Actually in this setting $G$ is an isolated point in $\sub(G)$ \cite[Proposition 13.6]{Gel-irs}. This is no longer true for compactly generated simple tdlc groups. For example the group $\aut(T)^+$ of automorphisms of a regular tree $T$ can be approximated by proper subgroups. An explicit sequence of subgroups approximating $\aut(T)^+$ may be found within the family of groups constructed by N.~Radu in \cite{Radu-groups} (see the appendix in \cite{Cap-Rad-chab}).

The proofs of Theorems~\ref{thm:Wang1},~\ref{thmintro:KazMar} and~\ref{thmintro:Wang2} are based on a study of the collection $\mathcal L_K$ of all $K$-cocompact discrete subgroups $\Gamma \leq G = G_1 \times \dots \times G_n$ with dense projections in a product of quasi just-non-compact groups. A key point that we establish is the fact that $\mathcal L_K$ is a closed subset of $\sub(G)$, and is therefore compact (see Theorem~\ref{thm:LrU-ChabClosed} below). An important ingredient of independent interest in the proof of Theorem~\ref{thm:LrU-ChabClosed} is the following result, which provides sufficient conditions for a non-discrete, compactly generated, quasi just-non-compact tdlc group to be isolated from the set of its discrete subgroups in the Chabauty topology.

\begin{thmintro}\label{thmintro:Zassenhaus}
Let $G$ be a non-discrete, compactly generated, quasi just-non-compact tdlc group. If an open subgroup of finite index in $G$ is a Chabauty limit of a sequence of discrete subgroups of $G$, then there is a prime $p$ and a compact open subgroup $V \leq G$ such that $V$ is a pro-$p$ group that is  not topologically finitely generated. 
\end{thmintro}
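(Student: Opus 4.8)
The plan is to reduce the statement to a property of a single compact open subgroup, and then to exploit the abundance of finite subgroups that the Chabauty convergence forces. Let $H \le G$ be the open subgroup of finite index that is a Chabauty limit of discrete subgroups $\Gamma_k$, and fix a compact open subgroup $U \le H$ (which exists since $G$ is tdlc and $H$ is open); note that $H$, being open of finite index, is again compactly generated and quasi just-non-compact, so the structure theory applies to it and to $U$. Since each $\Gamma_k$ is discrete and $U$ is compact, the intersections $\Lambda_k := \Gamma_k \cap U$ are \emph{finite}. I would first check that $\Lambda_k \to U$ in $\sub(U)$: the inclusions $\Lambda_k \subseteq U$ give one semicontinuity direction for free, while for $u \in U \subseteq H$ the convergence $\Gamma_k \to H$ yields $\gamma_k \in \Gamma_k$ with $\gamma_k \to u$, and openness of $U$ forces $\gamma_k \in \Lambda_k$ eventually. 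Every subsequential limit thus both contains and is contained in $U$, so $\Lambda_k \to U$. The consequences I will use are: $U = \overline{\bigcup_k \Lambda_k}$; for every open normal subgroup $N \trianglelefteq U$ one has $\Lambda_k N = U$ for all large $k$ (so each finite quotient of $U$ is a quotient of a finite subgroup of $U$); and the same properties pass to every open subgroup $W \le U$, for which $\Lambda_k \cap W \to W$.

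Next I would invoke the structure theory of compactly generated quasi just-non-compact tdlc groups \cite{CaMo-decomp,CRW-part2,CRW_dense}, together with finite group theory, to extract a single prime. The qualitative output sought is that the limit property just established forces $U$ to be virtually pro-$p$ for one prime $p$, i.e.\ that $G$ admits a pro-$p$ compact open subgroup $V \le H$. This step is where quasi just-non-compactness is indispensable: a profinite group such as $\prod_{n}\Z/6$ is a Chabauty limit of finite subgroups yet has no pro-$p$ open subgroup, so the conclusion genuinely depends on restricting the local structure of $G$. Concretely, I would argue that finite subgroups surjecting onto \emph{every} finite quotient of $U$ cannot exist if two distinct primes contributed infinitely to the composition series of $U$, and that the monolithic behaviour imposed by quasi just-non-compactness (via the local decomposition and the action on the structure lattice) pins down a single prime $p$ and a pro-$p$ compact open subgroup. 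I expect this single-prime reduction to be the principal obstacle.

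Finally, with $V$ a pro-$p$ compact open subgroup inside $H$, I would show that $V$ is not topologically finitely generated, using the inherited limit property $\Lambda_k' := \Gamma_k \cap V \to V$ with $\Lambda_k'$ finite $p$-groups. Suppose toward a contradiction that $d(V) = \dim_{\mathbb{F}_p} V/\Phi(V) < \infty$, where $\Phi(V) = \overline{V^p[V,V]}$ is then open of finite index. A first, easy reduction rules out any free part in the abelianization: if $V$ had a quotient $\mathbb{Z}_p$, then every finite subgroup would map trivially to it and hence could not surject onto the corresponding $\Z/p^2$-quotient, contradicting the surjectivity forced by the limit. Thus $V^{\mathrm{ab}}$ is finite, which together with $d(V) < \infty$ places us in the finitely generated FAB regime. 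The heart of the matter is to exclude that such a group is a Chabauty limit of finite subgroups; here I would descend along the Frattini series $V \supset \Phi(V) \supset \Phi^{(2)}(V) \supset \cdots$, noting that each term is again an infinite finitely generated pro-$p$ group inheriting the limit property, and apply a finite group theoretic bound showing that the finite $p$-subgroups of $V$ are too ``abelian'' (of bounded complexity relative to $d(V)$) to surject onto the deep finite quotients $V/\Phi^{(t)}(V)$. The structural constraints from the previous step are what allow this obstruction to bite. Reaching a contradiction yields $d(V) = \infty$, completing the proof; I regard this exclusion of finitely generated FAB pro-$p$ limits, alongside the single-prime reduction, as the two hard cores of the argument.
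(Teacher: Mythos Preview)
Your step~1 (the reduction $\Lambda_k = \Gamma_k \cap U \to U$ in $\sub(U)$) is fine and matches the paper. The substantive issues are in steps~2 and~3.

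\medskip

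\textbf{Step~2 (extracting a single prime).} The heuristic ``finite subgroups surjecting onto every finite quotient of $U$ cannot exist if two distinct primes contribute infinitely'' is false as stated: take $U = \prod_{\mathbb N} S_3$, which is a Chabauty limit of its finite partial products yet is genuinely pro-$\{2,3\}$. So the quasi just-non-compact hypothesis must intervene in a concrete way, and you have not identified how. The paper's mechanism is quite different from a prime-counting argument: working on a faithful Cayley--Abels graph $X$ for $G$, one analyses the \emph{generalized Fitting subgroup} $F^*((\Gamma_k)_x) = E((\Gamma_k)_x) F((\Gamma_k)_x)$ of the finite vertex-stabilisers. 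A subnormality lemma (if $d(x,y)<r$ then $G_x^{[r]}$ is subnormal in $G_y$) lets one propagate $F^*$ to a bounded-radius ball, forcing either the layer $E$ or some $O_p$ to be non-trivial at a fixed vertex for infinitely many $k$. In the layer case, the fact that distinct components commute produces, in the limit, a non-trivial element of $\QZ(V\cap H)$; this is excluded because $\QZ(G)$ is discrete in a quasi just-non-compact group. Hence one lands in the Fitting case, and since only finitely many primes divide $|(\Gamma_k)_x|$ (bounded by the degree of $X$), a single prime $p$ can be extracted with $O_p(V\cap H)\neq 1$. This gives a non-trivial pro-$p$ \emph{locally normal} subgroup of $G$; the promotion to an open pro-$p$ subgroup then uses the structure theory of just-non-compact groups from \cite{CRW-part2,CRW_dense} (Proposition~\ref{prop:JNC:primeContent}). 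None of the Fitting/layer dichotomy, the graph-subnormality propagation, or the locally-normal promotion appears in your outline, and these are precisely the ``hard core'' of this step.

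\medskip

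\textbf{Step~3 (not topologically finitely generated).} Here you overshoot. You correctly observe that if $d(V)<\infty$ then $\Phi(V)$ is open, but then you embark on a descent through the Frattini series and an unspecified ``bounded complexity'' obstruction for finite $p$-subgroups of FAB pro-$p$ groups. None of this is needed: once $\Phi(V)$ is open, $V$ is an \emph{isolated point} of $\sub(V)$ (equivalently, any closed subgroup surjecting onto $V/\Phi(V)$ equals $V$ by the pro-$p$ Burnside basis theorem). Since $\Gamma_k\cap V \to V$, one gets $\Gamma_k \supseteq V$ for large $k$, contradicting discreteness of $\Gamma_k$. Your abelianisation and FAB detours are thus unnecessary, and the ``bounded complexity'' claim is never substantiated.
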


That result may be interpreted as an analogue of the classical result of Zassenhaus ensuring that every Lie group $G$ has an identity neighbourhood $U$ such that for each discrete subgroup $\Gamma \leq G$, the intersection $\Gamma \cap U$ is contained in a connected nilpotent Lie subgroup of $G$ (see \cite{Zassenhaus} and \cite[Theorem~8.16]{Raghu}). The proof of Theorem~\ref{thmintro:Zassenhaus} relies in an essential way on results from \cite{CRW-part2} and \cite{CRW_dense} on  the structure of   tdlc groups and their locally normal subgroups. It is also inspired by the proof of the Thompson--Wielandt theorem in finite group theory and its variants for discrete automorphism groups of graphs with primitive local action (see e.g.\ \cite[Theorem~2.1.1]{BuMo1}). 

\subsection{Chabauty neighbourhoods of lattices in Kazhdan groups}

The proof of Theorem~\ref{thm:Wang1} in the case of non-uniform lattices in Kazhdan groups also relies on Chabauty considerations, and essentially splits into two parts. The first one is similar to the aforementioned Theorem~\ref{thm:LrU-ChabClosed}, and consists in proving that, in the setting of Theorem~\ref{thm:Wang1}, the collection of discrete subgroups of $G$ containing a non-uniform lattice $\Gamma$ is Chabauty closed (see Proposition \ref{prop-kazhdan-no-tower}). The second one is given by the following additional result which is of independent interest. 

\begin{thmintro}\label{thm:Nbd}
	Let $G$ be a locally compact group with Kazhdan's property (T), such that the amenable radical $R(G)$ is discrete. Then the set of lattices in $G$ forms an open subset of the Chabauty space $\sub(G)$. More precisely, for any lattice $\Gamma \leq G$, there is an identity neighbourhood $U$ in $G$ such that the set of those  lattices $\Lambda \leq G$ with $\Lambda \cap U = \{1\}$ is a neighbourhood of $\Gamma$ in $\sub(G)$. 
\end{thmintro}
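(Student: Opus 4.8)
The plan is to produce, for a given lattice $\Gamma$, a Chabauty-neighbourhood $\mathcal O$ of $\Gamma$ in $\sub(G)$ together with an identity neighbourhood $U$ such that every $H \in \mathcal O$ is a lattice satisfying $H \cap U = \{1\}$. Once this is done, openness of the set of lattices is immediate, since $\mathcal O$ is then contained in the set of lattices and witnesses $\Gamma$ as an interior point; and the set of those lattices $\Lambda$ with $\Lambda\cap U=\{1\}$ contains $\mathcal O$, hence is a neighbourhood of $\Gamma$. I would isolate the two ways a closed subgroup $H$ near $\Gamma$ could fail to be such a lattice: \emph{(i)} its covolume could become infinite (failure of cofiniteness), or \emph{(ii)} it could acquire nontrivial elements arbitrarily close to the identity (failure of discreteness, or meeting $U$). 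Property (T) is tailored to rule out \emph{(i)}, while the discreteness of $R(G)$ is what rules out \emph{(ii)}; note that $U$ is allowed to depend on $\Gamma$, so this is a purely local statement and no uniform-over-all-lattices discreteness is needed.

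For the cofiniteness, which I expect to be the crux, fix a Kazhdan pair $(Q,\varepsilon)$ for $G$ with $Q\subseteq G$ compact. For a closed subgroup $H\leq G$, let $\pi_H$ be the quasi-regular representation of $G$ on $L^2(G/H)$ attached to a quasi-invariant measure; the key point is that $H$ is cofinite exactly when $\pi_H$ admits a nonzero $G$-invariant vector, an invariant vector giving rise to a finite $G$-invariant measure on $G/H$ and conversely. Since $\Gamma$ is a lattice, $\pi_\Gamma$ has such a vector, namely the constant function on the finite-volume space $G/\Gamma$. I would then show that a Chabauty-convergent sequence $H_n\to\Gamma$ forces $\pi_{H_n}$ to carry $(Q,\varepsilon)$-almost-invariant unit vectors for $n$ large: using that $G/\Gamma$ has finite invariant measure one manufactures almost-invariant vectors, and the fact that $H_n$ agrees with $\Gamma$ on larger and larger compact sets lets one transport these to $L^2(G/H_n)$ with vanishing defect (equivalently, one invokes continuity of $H\mapsto\pi_H$ from $\sub(G)$ to the space of unitary representations with the Fell topology). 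Property (T) then converts almost-invariant vectors into genuine invariant ones, so $\pi_{H_n}$ has a nonzero invariant vector and $H_n$ is cofinite. This yields a Chabauty-neighbourhood $\mathcal O_1$ of $\Gamma$ all of whose members are cofinite.

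It remains to control behaviour near the identity. I would choose $U$ relatively compact and symmetric with $\overline{U}\cap\Gamma=\{1\}$ and $U\cap R(G)=\{1\}$, the latter being possible precisely because $R(G)$ is discrete. Chabauty convergence already forces any nontrivial elements of $H_n$ lying in $U$ to tend to the identity, since every limit of elements of the $H_n$ must lie in $\Gamma$ and $\overline{U}\cap\Gamma=\{1\}$; the real task is to show that for $n$ large there are none. Here I would run a Kazhdan--Margulis/Zassenhaus-type argument: for a cofinite subgroup (which $H_n$ now is, by the previous step), the group generated by its elements in a sufficiently small identity neighbourhood is amenable, and the discreteness of $R(G)$, together with $U\cap R(G)=\{1\}$, is what is used to exclude such surviving infinitesimally generated subgroups. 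This produces a neighbourhood $\mathcal O_2$ of $\Gamma$ and the identity neighbourhood $U$ with $H\cap U=\{1\}$ for all $H\in\mathcal O_2$; in particular such $H$ are discrete. Taking $\mathcal O=\mathcal O_1\cap\mathcal O_2$ then finishes the argument.

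The main obstacle I anticipate is the cofiniteness step for a \emph{non-uniform} $\Gamma$: when $G/\Gamma$ is non-compact the almost-invariant vectors have unbounded support, and one must argue that the mismatch between $H_n$ and $\Gamma$ escapes to infinity in $G/\Gamma$ quickly enough that the invariance defect of the candidate vectors genuinely tends to $0$. This is exactly where property (T) is indispensable, as it is what forbids covolume from escaping to infinity, and it is where the quasi-invariant-measure and unimodularity bookkeeping must be handled carefully. A secondary difficulty is making the Zassenhaus-type step rigorous in the general locally compact setting (reducing, via the Gleason--Yamabe structure theory, to the Lie and profinite cases), with the discreteness of $R(G)$ as the feature that upgrades ``amenable'' to ``trivial''.
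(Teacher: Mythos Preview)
Your overall architecture matches the paper's exactly: first use property~(T) together with Fell's continuity of induction to produce a Chabauty neighbourhood $\Omega$ of $\Gamma$ consisting of cofinite subgroups, and then argue separately that members of a smaller neighbourhood intersect a fixed $U$ trivially. The cofiniteness step you sketch is precisely what the paper does (it quotes Fell's results directly rather than rebuilding the almost-invariant vectors by hand, but the content is the same, and your worry about the non-uniform case is absorbed into Fell's theorem).

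The discreteness step, however, has a genuine gap as you have written it. You assert that ``for a cofinite subgroup $H$, the group generated by its elements in a sufficiently small identity neighbourhood is amenable'', and then that discreteness of $R(G)$ together with $U\cap R(G)=\{1\}$ finishes. But amenability of $\langle H\cap U\rangle$ by itself does not place that subgroup inside $R(G)$: an amenable subgroup of $G$ need not lie in the amenable radical unless it is normal in something large. What is missing is a \emph{normality} statement, and this is exactly what the paper supplies. It invokes \cite[Lemma~7.3]{GelanderLevit}, which gives a neighbourhood $\Omega_U\subseteq\Omega$ of $\Gamma$ such that for every $H\in\Omega_U$ the set $H\cap U$ is already a (compact) \emph{subgroup}, and moreover is normalized by some $L\leq H$ with $L\in\Omega$, hence $L$ cofinite in $G$. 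Then $(H\cap U)L$ is a cofinite closed subgroup of $G$ containing $H\cap U$ as a compact normal subgroup, and the main result of \cite{BDL_URS} forces $H\cap U\leq R(G)$; now $U\cap R(G)=\{1\}$ gives $H\cap U=\{1\}$.

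So your Zassenhaus intuition is morally right, but the mechanism is not ``small elements generate something amenable'' followed by a Gleason--Yamabe reduction; it is ``near $\Gamma$, the intersection $H\cap U$ is a compact group normalized by a cofinite subgroup'', after which the passage to $R(G)$ is a one-line appeal to \cite{BDL_URS}. No structure theory of locally compact groups is needed, and the Gleason--Yamabe route you propose would be both harder and unnecessary.
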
 

The proof uses an important continuity property of induction of unitary representations, due to J.~Fell~\cite{Fell1964}, which implies that the set of closed subgroups of finite covolume in a second countable locally compact group with property (T) is Chabauty open (see Theorem~\ref{thm:Fell}).

\subsection{Irreducibility}

Another important point in the proof of  Theorems~\ref{thm:Wang1},~\ref{thmintro:KazMar} and~\ref{thmintro:Wang2} is the notion of \emph{irreducibility} for lattices in products. While the density of the projections of a lattice in a product of $2$ non-discrete factors can be viewed as a condition of irreducibility, this is no longer the case for a product of $n> 2$ factors. Indeed, a lattice $\Gamma$ in a product of~$4$ factors $G_1 \times \dots \times G_4$ can have dense projections and  be the direct product of two subgroups, that are respectively lattices in $G_1 \times G_2$ and $G_3 \times G_4$. Since that issue is directly relevant to the proofs of the results above in case of more than $2$ factors, we take this opportunity to identify various possible definitions of irreducibility for lattices in products, and clarify the logical relations between them.  

Consider again a  product group $G = G_1 \times \ldots \times G_n$.   For $\Sigma \subseteq \left\{1,\ldots,n\right\}$, we   denote the associated sub-product by $G_\Sigma = \prod_{j \in \Sigma} G_j$. We   denote by $p_\Sigma \colon G \rightarrow G_\Sigma$ the projection on $G_\Sigma$. We identify $G_\Sigma$ with its natural image in $G$. When $\Sigma = \{i\}$ is a singleton, the projection to $G_i$ is denoted by $p_i$, as above. 

Assume now that  $G_1,\ldots,G_n$ are non-discrete locally compact groups  and let  $\Gamma \leq G = G_1 \times \ldots \times G_n$ be a   lattice.  
Each of the following conditions,  may be seen as expressing the fact that $\Gamma$ is \textit{irreducible}. 

\begin{enumerate}[label=\textbf{(Irr\arabic*)}]
\setcounter{enumi}{-1}
	\item  \label{it:irre0} For every partition $ \Pi \cup \Sigma = \left\{1,\ldots,n\right\}$ with $\Pi, \Sigma \neq \varnothing$, the subgroup  $(G_\Pi \cap \Gamma)(G_\Sigma \cap \Gamma)$ is of infinite index in $\Gamma$. 
	
	\item \label{it:irre1}  For every $\Sigma \subsetneq \left\{1,\ldots,n\right\}$, the projection $p_\Sigma \colon \Gamma \to G_\Sigma$ has  dense image. 	
\end{enumerate} 
\begin{enumerate}[label={\textbf{(Irr\arabic*$'$)}}]
	\item \label{it:irre1'}  For every $\Sigma \subsetneq \left\{1,\ldots,n\right\}$, the closure of the image of $p_\Sigma \colon \Gamma \to G_\Sigma$ contains $\prod_{i \in \Sigma} G^+_i$, where $G^+_i$ is a non-discrete closed cocompact normal subgroup of $G_i$.  
\end{enumerate}		
	\begin{enumerate}[label=\textbf{(Irr\arabic*)}]
		\setcounter{enumi}{1}
	\item \label{it:irre2} For every non-empty $\Sigma \subseteq \left\{1,\ldots,n\right\}$, the projection $p_\Sigma \colon \Gamma \to G_\Sigma$ is injective. Equivalently, $p_i \colon \Gamma \to G_i$ is injective for every $i$.
	
	\item \label{it:irre3}  For every non-empty $\Sigma \subsetneq \left\{1,\ldots,n\right\}$, the projection $p_\Sigma \colon \Gamma \to G_\Sigma$ has a non-discrete image.

	\item \label{it:irre4} If a subgroup $\Lambda \leq \Gamma$ is isomorphic to a direct product of two non-trivial groups, then the index $|\Gamma : \Lambda|$ is infinite. 
	
\end{enumerate} 

As mentioned above, when $n=2$, it is customary to adopt the condition \ref{it:irre1} as the definition of the irreducbility of $\Gamma$. 
In his book \cite{Margulis}, Margulis studies the case where each factor is an absolutely almost simple algebraic group $\mathbf G_i$ over a non-discrete locally compact field $k_i$. In that context, he adopts the condition \ref{it:irre0} as the definition of the  \textit{irreducibility} of $\Gamma$ (see \cite[Definition II.6.5]{Margulis}). Using    \cite[Theorem II.6.7]{Margulis}  and \cite[Theorem IV.4.10]{Margulis}, if follows that if $\mathbf G_i$ is adjoint and of $k_i$-rank~$\geq 1$,  then the  irreducibility conditions \ref{it:irre0},  \ref{it:irre1'}, \ref{it:irre2},  \ref{it:irre3} and  \ref{it:irre4}   are all equivalent.  The conditions \ref{it:irre1}--\ref{it:irre4} are considered in \cite[\S2.B]{CaMoKM}. Under the hypothesis that each $G_i$ is an isometry group of a proper CAT($0$) space satisfying suitable natural conditions, it is shown in  \cite[Proposition~2.2]{CaMoKM} that \ref{it:irre2}~$\Rightarrow$~\ref{it:irre3}~$\Rightarrow$~\ref{it:irre4}, while the implication \ref{it:irre4}~$\Rightarrow$~\ref{it:irre2} generally fails in that context.

The following result shows how those conditions relate to one another in case of compactly generated quasi just-non-compact groups.

\begin{thmintro}\label{thm:irred:JNC}
	Let $G_1,\ldots,G_n$ be non-discrete, compactly generated, tdlc groups and  $\Gamma \leq G = G_1 \times \ldots \times G_n$ be a cocompact lattice such that $p_i \colon \Gamma \rightarrow G_i$ has dense image for every $i=1,\ldots,n$.  
	\begin{enumerate}[label=(\roman*)]
		\item \label{item-thmirred-qJNC} If $G_i$ is quasi just-non-compact for all $i$, then
		\begin{center}
			{\rm  \ref{it:irre2} $\Rightarrow$ \ref{it:irre0} $\Leftrightarrow$ \ref{it:irre1'} $\Leftrightarrow$ \ref{it:irre3} $\Leftarrow$ \ref{it:irre4}.}
		\end{center}
		\item \label{item-thmirred-JNC} If $G_i$ is just-non-compact for all $i$, then
		\begin{center}
			{\rm  \ref{it:irre0} $\Leftrightarrow$ \ref{it:irre1'} $\Leftrightarrow$ \ref{it:irre2} $\Leftrightarrow$ \ref{it:irre3} $\Leftarrow$ \ref{it:irre4}.}
		\end{center}
		
		\item \label{item-thmirred-hJNC} If every finite index open subgroup of $G_i$ is just-non-compact for all $i$, then
		\begin{center}
			{\rm  \ref{it:irre0} $\Leftrightarrow$ \ref{it:irre1'} $\Leftrightarrow$ \ref{it:irre2} $\Leftrightarrow$ \ref{it:irre3} $\Leftrightarrow$ \ref{it:irre4}.}
		\end{center}
	\end{enumerate}
\end{thmintro}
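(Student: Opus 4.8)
The plan is to reduce all of these equivalences to a single dichotomy coming from quasi-just-non-compactness, applied through a uniform dictionary between partitions and discreteness of projections. First I would record the two recurring tools. Since $\Gamma$ is cocompact, pushing a compact fundamental domain forward shows that for every nonempty proper $\Sigma$ the image $p_\Sigma(\Gamma)$ has cocompact closure in $G_\Sigma$. And for every normal subgroup $N\trianglelefteq\Gamma$ and every index $i$, the closure $\overline{p_i(N)}$ is normalised by the dense subgroup $p_i(\Gamma)$, hence is a closed normal subgroup of $G_i$; by quasi-just-non-compactness it is therefore discrete or cocompact. Applying this to the kernels $\Gamma\cap G_\Sigma$ and $\Gamma\cap G_\Pi$ of the block projections of a partition $\{1,\dots,n\}=\Sigma\sqcup\Pi$ yields the dictionary that drives everything: \ref{it:irre0} fails at $(\Sigma,\Pi)$ iff $\Gamma\cap G_\Sigma$ has finite index in $p_\Sigma(\Gamma)$, iff both $p_\Sigma(\Gamma)$ and $p_\Pi(\Gamma)$ are discrete; whereas $p_\Sigma(\Gamma)$ alone is discrete iff $\Gamma\cap G_\Pi$ is a cocompact lattice in $G_\Pi$.

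With this in hand the easy implications are immediate. Condition \ref{it:irre2} makes every kernel $\Gamma\cap G_\Pi$ trivial, so the product $(\Gamma\cap G_\Sigma)(\Gamma\cap G_\Pi)$ is trivial and of infinite index in the infinite group $\Gamma$, giving \ref{it:irre2}$\Rightarrow$\ref{it:irre0}. If \ref{it:irre1'} holds then each $\overline{p_\Sigma(\Gamma)}$ contains the non-discrete cocompact subgroup $\prod_{i\in\Sigma}G_i^+$, so it is non-discrete and $p_\Sigma(\Gamma)$ cannot be discrete, giving \ref{it:irre1'}$\Rightarrow$\ref{it:irre3}. The dictionary gives \ref{it:irre3}$\Rightarrow$\ref{it:irre0} by contraposition: a finite-index partition product would put a finite-index discrete subgroup inside $p_\Sigma(\Gamma)$, forcing it discrete. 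Finally \ref{it:irre4}$\Rightarrow$\ref{it:irre0}, because $(\Gamma\cap G_\Sigma)(\Gamma\cap G_\Pi)$ is an internal direct product of two commuting, trivially intersecting subgroups, so a finite-index such product violates \ref{it:irre4}, once one discards the degenerate case where a factor is trivial (which would make a coordinate projection of $\Gamma$ finite, impossible as it is dense in a non-discrete factor).

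The substantial part is to close the cycle, namely \ref{it:irre0}$\Rightarrow$\ref{it:irre3} together with \ref{it:irre3}$\Rightarrow$\ref{it:irre1'}. By the dictionary, \ref{it:irre0}$\Rightarrow$\ref{it:irre3} is the propagation statement: if $p_\Sigma(\Gamma)$ is discrete for some nonempty proper $\Sigma$, then so is $p_{\Sigma^c}(\Gamma)$. This is where I expect the main obstacle and where quasi-just-non-compactness must be used in earnest. Here $M:=\Gamma\cap G_\Pi$ (with $\Pi=\Sigma^c$) is a cocompact lattice in $G_\Pi$, normal in $\Gamma$, hence finitely generated; the dichotomy, combined with the structural fact that a non-discrete compactly generated quasi-just-non-compact group has no nontrivial discrete cocompact normal subgroup, forces $\overline{p_i(M)}$ to be a non-discrete cocompact normal subgroup of $G_i$ for every $i\in\Pi$. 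The group $\overline{p_\Pi(\Gamma)}$ normalises $M$; writing $C$ for the centraliser in $\overline{p_\Pi(\Gamma)}$ of a finite generating set of $M$, which is open, one checks coordinatewise that $\overline{p_i(C)}$ centralises the non-discrete cocompact normal subgroup $\overline{p_i(M)}$ and is therefore discrete (the centraliser of a non-discrete closed normal subgroup of such a factor being discrete, indeed trivial in the topologically simple case). Hence $C$, and with it $\overline{p_\Pi(\Gamma)}$, is discrete, i.e. $p_\Pi(\Gamma)$ is discrete. Granting the resulting equivalence \ref{it:irre0}$\Leftrightarrow$\ref{it:irre3}, the promotion to \ref{it:irre1'} follows by observing that under \ref{it:irre3} each relevant $\overline{p_\Sigma(\Gamma)}\cap G_i$ is a non-discrete, hence cocompact, closed normal subgroup of $G_i$; a compatible choice of $G_i^+$ across the various $\Sigma$ then provides the required lower bound $\overline{p_\Sigma(\Gamma)}\supseteq\prod_{i\in\Sigma}G_i^+$.

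It remains to add the refinements. For part \ref{item-thmirred-JNC} I would prove \ref{it:irre3}$\Rightarrow$\ref{it:irre2}. The just-non-compact hypothesis, together with the previous structural fact, shows that a factor $G_i$ has \emph{no} nontrivial discrete normal subgroup; this is precisely the obstruction that, in the merely quasi-just-non-compact case, lets a nontrivial kernel $\Gamma\cap G_{\{i\}^c}$ persist under \ref{it:irre3}. With that obstruction removed, a nontrivial kernel would project cocompactly to some other factor, and feeding this through the dictionary produces a discrete proper block projection of $\Gamma$, contradicting \ref{it:irre3}; hence every $p_i$ is injective. For part \ref{item-thmirred-hJNC} it remains to prove \ref{it:irre0}$\Rightarrow$\ref{it:irre4}. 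If \ref{it:irre4} fails, a finite-index subgroup $\Lambda\leq\Gamma$ splits as a nontrivial direct product $A\times B$; passing to the finite-index closed, hence open, subgroups $\overline{p_i(\Lambda)}\leq G_i$, which are just-non-compact by the hereditary hypothesis, the commuting closed normal subgroups $\overline{p_i(A)}$ and $\overline{p_i(B)}$ are each trivial or cocompact, and since a just-non-compact group does not split as a product of two non-compact commuting cocompact normal subgroups, for each $i$ exactly one of them is cocompact and the other compact. Sorting the indices according to which of $A,B$ dominates partitions $\{1,\dots,n\}$ into two blocks along which $\Gamma$ virtually splits, exhibiting a finite-index partition product and hence a failure of \ref{it:irre0}. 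The hereditary just-non-compactness is exactly what renders the compact remainders negligible and the block partition well defined, which is why \ref{it:irre4} joins the equivalence only in this case.
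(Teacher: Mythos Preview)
Your argument has a real gap at the step \ref{it:irre3}$\Rightarrow$\ref{it:irre1'}. You write that ``under \ref{it:irre3} each relevant $\overline{p_\Sigma(\Gamma)}\cap G_i$ is a non-discrete, hence cocompact, closed normal subgroup of $G_i$'', but the non-discreteness is precisely the hard point, not an observation. Setting $H=\overline{p_\Sigma(\Gamma)}$, one does see that $H\cap G_i$ is closed and normal in $G_i$, hence discrete or cocompact; the issue is to rule out the discrete case. Your propagation argument for \ref{it:irre0}$\Rightarrow$\ref{it:irre3} does not transfer here, because it relied on $M=\Gamma\cap G_\Pi$ being a \emph{discrete} cocompact lattice (so that its centraliser in the normaliser is open), whereas $H$ is typically non-discrete and there is no finitely generated discrete normal subgroup to centralise. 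The paper's proof of this implication goes through Proposition~\ref{prop-H=G}, which is a genuine piece of structure theory: letting $\Sigma'\subseteq\Sigma$ be the set of indices with $H\cap G_i$ discrete, one shows that for each $j\in\Sigma'$ the kernel of $p_j$ restricted to $H$ is \emph{open} in $H$. This uses Proposition~\ref{prop-no-inter-implies-discrete}: the group $H\cap(U\times G_j)$ (with $U$ compact open in the other factors) is a compactly generated extension of a group with residually discrete image in $U$ by the discrete group $H\cap G_j$, hence has arbitrarily small compact open normal subgroups by Propositions~\ref{prop:CapMon-RD} and~\ref{prop-exten-RD}; those are then forced into the discrete polycompact radical of $G_j$. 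It follows that $H\cap G_{\Sigma\setminus\Sigma'}$ is open in $H$, so $p_{\Sigma'}(H)=\overline{p_{\Sigma'}(\Gamma)}$ is discrete, contradicting \ref{it:irre3}. This openness step is the content you are missing.

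A second, smaller gap is your route to \ref{it:irre2} in part~\ref{item-thmirred-JNC}. Knowing that a nontrivial kernel $N=\Gamma\cap G_{\{i\}^c}$ has $\overline{p_j(N)}$ cocompact in some $G_j$ does not, via your dictionary, produce a discrete proper block projection of $\Gamma$: the dictionary characterises discreteness of $p_\Sigma(\Gamma)$ by $\Gamma\cap G_{\Sigma^c}$ being a cocompact \emph{lattice} in $G_{\Sigma^c}$, and you have not shown $N$ has that property for any block. The paper instead derives \ref{it:irre1'}$\Rightarrow$\ref{it:irre2} directly via Lemma~\ref{lem-1-2}(ii): each just-non-compact $G_i$ is monolithic with $\QZ(\Mon(G_i))=\{1\}$, and since $\Gamma\cap G_\Pi$ is a discrete subgroup normalised by $\overline{p_\Pi(\Gamma)}\supseteq\prod_{i\in\Pi}\Mon(G_i)$, it lands in the quasi-centre and then in the (trivial) centraliser of $\prod_{i\in\Pi}\Mon(G_i)$. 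Your remaining steps---the easy implications, the propagation argument for \ref{it:irre0}$\Rightarrow$\ref{it:irre3} via the open centraliser of a finitely generated cocompact lattice, and the partition argument in part~\ref{item-thmirred-hJNC}---are correct and close in spirit to the paper's.
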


We make several comments about the statement:

\begin{enumerate}[label=\arabic*)]

\item Every non-discrete, compactly generated, quasi just-non-compact tdlc group $G$ has a smallest cocompact normal subgroup, denoted by $G^{(\infty)}$; it is non-discrete and coincides with the intersection of all finite index open subgroups of $G$ (see Proposition~\ref{prop-carac-qjnc}). Thus $G_i^+$ may be replaced by $G_i^{(\infty)}$ in \ref{it:irre1'} when $G_i$ is quasi just-non-compact.

\item Among the above implications, \ref{it:irre3} $\Rightarrow$ \ref{it:irre1'} is the most significant one. Together with \ref{it:irre1'} $\Rightarrow$ \ref{it:irre2}, they are the only ones where the assumption that the factors are (quasi) just-non-compact is crucially used. We refer to Section~\ref{sec:Proofs} for more precise information on the logical relations amongst the conditions \ref{it:irre0}--\ref{it:irre4} under weaker assumptions on the factors.

\item The implication \ref{it:irre0} $\Rightarrow$ \ref{it:irre2} in statement \ref{item-thmirred-qJNC} does not hold. Explicit examples show that a cocompact lattice $\Gamma$ with dense projections in a product of quasi just-non-compact {}groups may fail to satisfy \ref{it:irre2} (see from \cite{BuMo2} and \cite{CaMoKM}). Such lattices are usually not residually finite (see \cite[Proposition~2.5]{CaMoKM}, \cite[\S5.4]{CapWes} and Corollary~\ref{cor:irred:RamenTrivial} below).  See also Corollary~\ref{cor:irred:RamenTrivial:2} for conditions that are equivalent to \ref{it:irre2} in the context of Theorem~\ref{thm:irred:JNC} \ref{item-thmirred-qJNC}.

\item Notice that  \ref{it:irre4} can  be viewed as a condition involving all finite index subgroups of $\Gamma$. It is thus not surprising that its equivalence with the other irreducibility conditions requires a hypothesis on $G$ that is robust enough to be inherited by the finite index open subgroups of $G$.

\item The hypothesis of cocompactness of $\Gamma$ in Theorem~\ref{thm:irred:JNC} and its corollaries  is only used to ensure that  the intersection of $\Gamma$ with suitable open subgroups of $G$ are finitely generated. Those statements remain true for non-uniform lattices if one assumes in addition that $G$ has Kazhdan's property (T); see Propositions~\ref{prop-(T)-0} and \ref{prop-H=G} and Remark \ref{rmq-(T)-2} for appropriate modifications in the proofs.

\item The Bader--Shalom Normal Subgroup Theorem \cite{BaSha} deals with irreducible lattices in products of just-non-compact groups. Combining Theorem~\ref{thm:irred:JNC} with their result, we shall establish  a version of the Normal Subgroup Theorem  for lattices in products of quasi-just-non-compact groups,  see Corollary~\ref{cor:irred:RamenTrivial:2} below. 
\end{enumerate}

A simple decomposition process allows, given a lattice $\Gamma$ with dense projections in a product of quasi just-non-compact groups, to decompose $\Gamma$ (up to passing to a finite index overgroup) as a product of factors all satisfying the irreducibility condition \ref{it:irre3} (see Corollary \ref{cor:ReductionToIrr}). Theorem~\ref{thm:irred:JNC} then ensures that each piece also satisfies \ref{it:irre0} and \ref{it:irre1'}. As a by-product, we obtain the following supplementary result for lattices in the product of \textit{three} quasi just-non-compact tdlc groups.

\begin{corintro} \label{cor:3factors:JNC}
	Let $G_1,G_2,G_3$ be non-discrete, compactly generated,  quasi just-non-compact tdlc groups. Let $\Gamma \leq G_1 \times G_2 \times G_3$ be a cocompact lattice such that $p_i \colon \Gamma \rightarrow G_i$ has dense image for $i=1,2,3$. Then the equivalent conditions \ref{it:irre0}, \ref{it:irre1'} and \ref{it:irre3} from Theorem~\ref{thm:irred:JNC}(i) are automatically satisfied.
\end{corintro}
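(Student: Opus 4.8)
The plan is to prove that condition \ref{it:irre3} holds; the conclusions for \ref{it:irre0} and \ref{it:irre1'} then follow at once from the equivalences in Theorem~\ref{thm:irred:JNC}, part~\ref{item-thmirred-qJNC}. Recall that \ref{it:irre3} requires $p_\Sigma(\Gamma)$ to be non-discrete for every non-empty proper subset $\Sigma \subsetneq \{1,2,3\}$, that is, for the three singletons and the three pairs. For a singleton $\Sigma = \{i\}$ there is nothing to prove: $p_i(\Gamma)$ is dense in the non-discrete group $G_i$, hence non-discrete. So the entire content is to rule out that a pair-projection is discrete. By symmetry it suffices to treat $\Sigma = \{1,2\}$, and I would argue by contradiction, assuming that $\Delta := p_{\{1,2\}}(\Gamma)$ is discrete in $G_1 \times G_2$.

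Under this assumption I would study the normal subgroup $N := \ker(p_{\{1,2\}}|_\Gamma) = \Gamma \cap G_3$, viewed inside $G_3$ via $p_3$. Two properties of $N$ drive the contradiction. First, $N$ is cocompact in $G_3$: since $\Delta$ is discrete it is closed, so $L := \Delta \times G_3$ is a closed subgroup of $G$ containing $\Gamma$; as $\Gamma$ is cocompact in $G$ it is cocompact in $L$, and the projection $\phi\colon L \to \Delta$ with kernel $G_3$ satisfies $\phi(\Gamma) = \Delta$ and $\Gamma \cap \ker\phi = N$. Writing $L = \Gamma K$ with $K$ compact and using that $\phi(K)$ is a finite subset of the discrete group $\Delta$, a routine argument produces a compact $K' \subseteq G_3$ with $G_3 = N K'$, so $N$ is cocompact. (This is the standard fact that, if the projection of a lattice to one factor is discrete, its intersection with the other factor is a cocompact lattice there.) Second, $N$ is normal in $G_3$: being normal in $\Gamma$, the subgroup $N \leq G_3$ is normalised by $p_3(\Gamma)$, which is dense in $G_3$; since $N$ is closed in $G_3$ its normaliser is closed and therefore equals all of $G_3$.

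Combining the two properties, $N$ is a cocompact closed normal subgroup of $G_3$, and hence contains the smallest cocompact normal subgroup $G_3^{(\infty)}$ (Proposition~\ref{prop-carac-qjnc}). But $G_3^{(\infty)}$ is non-discrete, whereas $N \leq \Gamma$ is discrete; this contradiction shows that $\Delta$ cannot be discrete, which establishes \ref{it:irre3}. The only step that requires genuine care is the verification that $N$ is cocompact in $G_3$; everything else is formal. It is exactly here that the hypothesis $n=3$ is used, through the fact that the complement of the pair $\{1,2\}$ is the single factor $G_3$, whose quasi just-non-compactness together with the density of $p_3(\Gamma)$ forbids a discrete cocompact normal subgroup; for four factors this mechanism breaks down, in accordance with the reducible examples mentioned in the introduction. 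As an alternative route, one could instead invoke Corollary~\ref{cor:ReductionToIrr}: it expresses a finite-index overgroup of $\Gamma$ as a product of \ref{it:irre3}-irreducible lattices in sub-products indexed by a partition of $\{1,2,3\}$; the density of each $p_i(\Gamma)$ forces every block of the partition to have size $\geq 2$, and since $3$ admits no partition into two blocks of size $\geq 2$ there is a single block, whence \ref{it:irre3} for $\Gamma$ itself.
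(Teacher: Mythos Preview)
Your proof is correct. Your alternative route via Corollary~\ref{cor:ReductionToIrr} is exactly the paper's argument: the partition afforded by that corollary must consist of blocks of size~$\geq 2$ (since each $p_i(\Gamma)$ is dense in a non-discrete group), and $3$ forces a single block, hence \ref{it:irre3}.

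Your main approach is a genuinely more direct variant. Rather than appealing to Corollary~\ref{cor:ReductionToIrr} (whose proof passes through Proposition~\ref{prop-5-3} and hence Corollary~\ref{cor:NormalizerLattice}), you exploit the specific feature of the three-factor case: the complement of a pair is a singleton $\{i\}$, and on a single factor the density of $p_i(\Gamma)$ upgrades ``$N$ is normalised by $\overline{p_i(\Gamma)}$'' to ``$N$ is normal in $G_i$''. This lets you invoke the quasi just-non-compact structure (Proposition~\ref{prop-carac-qjnc}) directly, yielding the contradiction $G_i^{(\infty)} \leq N$ with $N$ discrete, without passing through the normalizer-of-a-lattice machinery. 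The paper's route is more uniform (it is what one would run for arbitrary $n$), while yours is shorter and self-contained for $n=3$. One cosmetic remark: you could finish even faster by noting that a discrete normal subgroup lies in $\QZ(G_3)$, which is non-cocompact by Proposition~\ref{prop-carac-qjnc}, contradicting the cocompactness of $N$.
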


\subsection*{Organization}

Section \ref{sec-structure-tdlc} contains preliminary results about general compactly generated tdlc groups, with a special emphasis on quasi just-non-compact groups in \S \ref{subsec-QJNC}.

In Section \ref{sec-approx-QJNC} we study Chabauty approximations of non-discrete, compactly generated, quasi just-non-compact tdlc groups, and prove Theorem \ref{thmintro:Zassenhaus}. This section, notably \S \ref{subsec-(r,U)coc}, also contains preliminary results that are used in later sections.

Section \ref{sec-irr} deals with the aforementioned irreducibility conditions for lattices in products, and contains the proofs of Theorem \ref{thm:irred:JNC} and Corollary \ref{cor:3factors:JNC}. An important ingredient for these proofs is Proposition \ref{prop-H=G}.

The main task of Section \ref{sec:Wang} is to prove Theorem \ref{thm:LrU-ChabClosed}, which is the key intermediate step in  the proofs of Theorems~\ref{thm:Wang1},~\ref{thmintro:KazMar} and~\ref{thmintro:Wang2}. The proof of Theorem \ref{thm:LrU-ChabClosed} relies on all the previous sections, and notably on Proposition \ref{prop-H=G}. 

Section \ref{sec:graphs} is concerned with discrete groups acting on product of graphs with semiprimitive local action on each factor, and contains the proof of Corollary~\ref{corintro:graphs}. We also prove additionnal results, namely Theorem \ref{thm-prod-trees-IL-finite} and Corollary \ref{cor:TreesFinite}, from which we deduce Theorem~\ref{thm-intro-prod-trees-2trans}.

Finally the proof of Theorem~\ref{thm:Nbd}, which is used in Section \ref{sec:Wang} in the proof of Theorem~\ref{thm:Wang1} in the case of Kazhdan groups, is presented in an appendix.

\subsection*{Acknowledgements}

We are grateful to Richard Weiss for stimulating discussions and for his interest in this work. We also thank the referee for a careful reading of the article and for useful comments and suggestions.

\section{On the structure of tdlc groups} \label{sec-structure-tdlc}

\subsection{Preliminaries}
A locally compact group $G$ is \textbf{residually discrete} if the intersection of all open normal subgroups of $G$ is trivial. We will invoke the following result from \cite[Corollary~4.1]{CaMo-decomp}.

\begin{prop} \label{prop:CapMon-RD}
A compactly generated locally compact group $G$ is residually discrete if and only if the compact open normal subgroups form a basis of identity neighbourhoods in $G$. 
\end{prop}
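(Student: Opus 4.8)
The plan is to prove the two implications separately, with essentially all the work going into the forward direction. The reverse implication is immediate: if the compact open normal subgroups form a basis of identity neighbourhoods then, $G$ being Hausdorff, their intersection is trivial; since each of them is in particular an open normal subgroup, the intersection of all open normal subgroups of $G$ is a fortiori trivial, i.e.\ $G$ is residually discrete.

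For the forward direction, assume $G$ is residually discrete. First I would note that $G$ is totally disconnected: the identity component $G^0$ is contained in every open subgroup, hence in the intersection of all open normal subgroups, which is trivial by hypothesis. By van Dantzig's theorem the compact open subgroups then form a basis of identity neighbourhoods, so it suffices to prove that every compact open subgroup $U$ contains a compact open normal subgroup of $G$. The first ingredient is a compactness lemma: \emph{for every open subgroup $W \leq U$ there is an open normal subgroup $N \trianglelefteq G$ with $U \cap N \subseteq W$}. Indeed, as $N$ ranges over the open normal subgroups of $G$, the subgroups $U \cap N$ form a filtering family of closed subsets of the compact group $U$ whose intersection is $U \cap \bigcap_N N = \{1\} \subseteq W$; by compactness some member of the family already lies in $W$.

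Next I would fix a finite symmetric set $S$ with $G = \langle U \cup S \rangle$ — this is the only place compact generation enters, and it enters crucially — and build a descending chain of compact open subgroups by setting $U_0 = U$ and
\[
U_{n+1} = \Core_U(U_n) \cap \bigcap_{s \in S} s U_n s^{-1}, \qquad \Core_U(U_n) = \bigcap_{u \in U} u U_n u^{-1}.
\]
Because $S$ is finite and $U$ is compact, only finitely many conjugates are intersected at each stage, so every $U_n$ is again compact open. A short computation shows that the intersection $N = \bigcap_n U_n$ is normalized by $U$ and by every $s \in S$, hence by all of $G$; thus $N \trianglelefteq G$ is a compact normal subgroup contained in $U$.

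The crux — and the main obstacle — is to show that this $N$ is \emph{open}. For this I would use the compactness lemma to choose an open normal subgroup $N_0 \trianglelefteq G$ with $U \cap N_0 \subseteq U \cap \bigcap_{s \in S} s U s^{-1}$ (finitely many conditions, so a single open condition on $U \cap N_0$), and then prove by induction that $U \cap N_0 \subseteq U_n$ for all $n$. The induction works because normality of $N_0$ makes $U \cap N_0$ invariant under conjugation by $U$, while the displayed inclusion guarantees $s^{-1}(U \cap N_0)s \subseteq U \cap N_0$ for each $s \in S$; these two facts let the hypothesis $U \cap N_0 \subseteq U_n$ pass through both $\Core_U(U_n)$ and the conjugates $s U_n s^{-1}$. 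Consequently $U \cap N_0 \subseteq N$, and since $U \cap N_0$ is open, $N$ is open, as desired. The difficulty is concentrated in this stabilization: residual discreteness (through the compactness lemma) is what traps an open set inside every $U_n$, while compact generation (through the finiteness of $S$) is what keeps the $U_n$ open; dropping either hypothesis, the chain may collapse to a non-open subgroup, as it does for non-discrete topologically simple groups, the extreme failures of residual discreteness.
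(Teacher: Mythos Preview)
Your proof is correct. Note, however, that the paper does not supply its own argument for this proposition: it simply cites \cite[Corollary~4.1]{CaMo-decomp} and moves on. So there is no ``paper's proof'' to compare against beyond the reference.

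That said, your argument is essentially the one given in the cited source (Caprace--Monod). The structure there is the same: one reduces to the totally disconnected case, fixes a compact open subgroup $U$ and a finite symmetric set $S$ with $G = \langle U \cup S\rangle$, and produces a compact normal subgroup of $G$ inside $U$ by iterated conjugation under $U$ and $S$; residual discreteness is then used, via a compactness argument exactly like your lemma, to trap an open set inside the descending chain and force openness of the limit. Your write-up is a faithful reconstruction of that proof, with the stabilization step made pleasantly explicit.
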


The following result is a direct consequence of a very general result due to Colin Reid \cite[Theorem~G]{ReidDistal}. We provide a direct proof for the reader's convenience. 

\begin{prop} \label{prop-exten-RD}
Let $G$ be a compactly generated locally compact group with a discrete normal subgroup $N$ such that $G/N$ has a basis of identity neighbourhoods consisting of compact open normal subgroups. Then $G$ also has this property. 
\end{prop}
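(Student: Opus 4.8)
The plan is to derive the statement from Proposition~\ref{prop:CapMon-RD}. Since $G$ is compactly generated and locally compact, that proposition reduces the desired conclusion to showing that $G$ is \emph{residually discrete}, i.e. that the intersection $R$ of all open normal subgroups of $G$ is trivial. First I would record that $G$ is totally disconnected: the identity component $G^\circ$ has trivial image in the totally disconnected group $G/N$, so $G^\circ \le N$, and since $N$ is discrete this forces $G^\circ = \{1\}$. Note also that $G/N$ is compactly generated, so the hypothesis says precisely that $G/N$ is residually discrete.

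The second step is to locate $R$ inside $N$. Writing $\pi \colon G \to G/N$ for the quotient map, take any $g \notin N$; then $\pi(g) \neq 1$, so by residual discreteness of $G/N$ there is an open normal subgroup $\bar O \trianglelefteq G/N$ with $\pi(g) \notin \bar O$, and $\pi^{-1}(\bar O)$ is an open normal subgroup of $G$ avoiding $g$. Hence $g \notin R$, so $R \le N$. In particular $R$ is a discrete normal subgroup of $G$; as the conjugation action $G \times R \to R$ is continuous with values in a discrete set, each centralizer $C_G(r)$ ($r \in R$) is open, that is, $R \le \QZ(G)$. It remains to prove that this discrete normal subgroup $R$ is trivial.

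The crux is to use compact generation together with the residual discreteness of $G/N$ to control the conjugation action of $G$ on $N$. Concretely, I would fix a compact open subgroup $U \le G$ with $U \cap N = \{1\}$ (possible since $N$ is discrete), so that $\pi$ restricts to a topological isomorphism $U \xrightarrow{\sim} \pi(U)$. Using that the compact open normal subgroups of $G/N$ form a basis of identity neighbourhoods (Proposition~\ref{prop:CapMon-RD} applied to $G/N$), choose a compact open normal subgroup $\bar V \trianglelefteq G/N$ with $\bar V \le \pi(U)$, and let $V \le U$ be the unique compact open subgroup with $\pi(V) = \bar V$. Then $M := \pi^{-1}(\bar V)$ is open normal in $G$ and splits as $M = N \rtimes V$, and every $G$-conjugate $gVg^{-1}$ is again a complement to $N$ inside $M$. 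The goal is to show that, for $\bar V$ small enough, $V$ (or a suitable intersection of its conjugates) can be taken normal in $G$, producing compact open normal subgroups of $G$ in every prescribed neighbourhood and forcing $R = \{1\}$.

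This last step is the main obstacle. The complements $gVg^{-1}$ to $N$ in $M$ are the graphs of continuous $1$-cocycles $V \to N$, which have finite image since $N$ is discrete; normality of $V$ amounts to the vanishing of all these cocycles, uniformly as $\bar V$ shrinks. Ruling out such "twisting" is exactly where compact generation is indispensable, and I would handle it by invoking the structure theory of the discrete residual of compactly generated tdlc groups developed in \cite{CaMo-decomp}: the discrete residual of such a group cannot be a nontrivial discrete subgroup. Combined with the fact, established above, that $R = \operatorname{Res}(G)$ is discrete, this yields $R = \{1\}$, and Proposition~\ref{prop:CapMon-RD} then gives the desired basis of compact open normal subgroups in $G$.
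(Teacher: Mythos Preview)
Your reduction is sound up to the point where you show $R = \operatorname{Res}(G) \le N$, hence $R$ is discrete. This is exactly the setup the paper reaches as well. The problem is what comes next.

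The paragraph on complements and $1$-cocycles $V \to N$ does not lead anywhere: you yourself call it ``the main obstacle'' and then abandon it, so it contributes nothing to the argument. Your actual final step is the sentence ``the discrete residual of such a group cannot be a nontrivial discrete subgroup'', attributed loosely to \cite{CaMo-decomp}. But this assertion is equivalent to the proposition you are trying to prove: apply it with $N = \operatorname{Res}(G)$, noting that $G/\operatorname{Res}(G)$ is residually discrete by definition. So unless you can point to a \emph{specific} result in \cite{CaMo-decomp} that gives this, the argument is circular. (The paper does remark that the proposition follows from a later, much more general theorem of Reid~\cite{ReidDistal}, but the point here is to give a direct proof.)

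What the paper actually uses from \cite{CaMo-decomp} is the concrete Proposition~2.5: in a compactly generated tdlc group, if $U$ is a compact open subgroup with trivial normal core, then for any family $\F$ of open normal subgroups, the intersection $\bigcap_{M \in \F} M$ meets $U$ nontrivially. Applying this with $\F$ the family of open normal subgroups containing $N$ (after first reducing to the case where some compact open $U$ has trivial core and $U \cap N = \{1\}$), one gets $N \cap U \neq \{1\}$, a contradiction. This is the missing ingredient that replaces your cocycle discussion and your vague citation.
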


\begin{proof}
If this is not the case, then we may find a compact open subgroup $U$ such that $U \cap N = 1$ and $U$ contains no open normal subgroup of $G$. Note that this implies in particular that $G$ is not discrete.

Let $K$ be the normal core of $U$ in $G$, and $\pi: G \to G/K$ the quotient map. Note that $\pi(U)$ is a compact open subgroup of $\pi(G)$ that contains no non-trivial normal subgroup of $\pi(G)$. Moreover the group $\pi(G)$ admits $\pi(N)$ as a discrete normal subgroup, and the quotient $\pi(G) / \pi(N) \simeq G / KN$ has a basis of identity neighbourhoods consisting of compact open normal subgroups since it is a quotient of $G/N$. So in order to obtain our contradiction we may assume that $K=1$.

Consider the family $\F$ of open normal subgroups of $G$ containing $N$. By Proposition~2.5 in \cite{CaMo-decomp}, the group  $\bigcap_\F M$ intersects $U$ non-trivially. But the assumption on $G/N$ implies that the subgroup $\bigcap_\F M$ is equal to $N$. So we obtain that $U \cap N$ is non-trivial, which is our contradiction. 
\end{proof}

A locally compact group is \textbf{locally elliptic} if every compact subset is contained in a compact subgroup, and the \textbf{locally elliptic radical} is the largest normal subgroup that is locally elliptic. The \textbf{polycompact radical} $W(G)$ of a locally compact group $G$ is the union of all compact normal subgroups of $G$. As observed by Tits, the subgroup $W(G)$ is not necessarily closed in $G$. The examples given in \cite[Proposition 3]{Tits-depl-born} are such that every compact normal subgroup is finite, but the subgroup that they generate is a proper dense subgroup. However when $G$ is compactly generated, $W(G)$ is a closed subgroup of $G$: see \cite[Theorem 1.2]{Cor-comm-focal} and references given there, notably \cite{TrofimovBG}. In particular for a compactly generated group, the property that $W(G)$ is discrete, considered repeatedly in the article, is equivalent to the property that every compact normal subgroup of $G$ is finite.

The following  fact will be used repeatedly. 

\begin{prop}\label{prop:CocoNormal}
Let $G$ be a locally compact group and $H$ a cocompact closed subgroup. Then every compact normal subgroup of $H$ is contained in a compact normal subgroup of $G$. 
\end{prop}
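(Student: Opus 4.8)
The plan is to take as the desired compact normal subgroup the closed subgroup generated by all $G$-conjugates of $K$, and the crux will be to prove that this subgroup is compact.

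First I would produce a compact set $C \subseteq G$ with $G = CH$. Such a set exists because $G/H$ is compact and the quotient map $G \to G/H$ is open: covering $G/H$ by the images of the translates $gV$ of a fixed compact identity neighbourhood $V$ and extracting a finite subcover yields a compact $C$ with $CH = G$. Since $K$ is normal in $H$, for $g = ch$ with $c \in C$ and $h \in H$ we have $gKg^{-1} = c(hKh^{-1})c^{-1} = cKc^{-1}$, so the union of all conjugates $M_0 := \bigcup_{g \in G} gKg^{-1}$ equals $\bigcup_{c \in C} cKc^{-1}$. The latter is the image of the compact set $C \times K$ under the continuous map $(c,k) \mapsto ckc^{-1}$, hence $M_0$ is compact. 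By construction $M_0$ is symmetric, contains the identity, and is invariant under conjugation by every element of $G$.

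Set $L := \overline{\langle M_0 \rangle}$. Because $M_0$ is conjugation-invariant, $L$ is a closed normal subgroup of $G$, and clearly $K \subseteq L$; it remains only to show that $L$ is compact. Two features of $L$ are decisive: it is compactly generated, being topologically generated by the compact set $M_0$; and it is topologically generated by the compact subgroups $gKg^{-1}$ ($g \in G$). Moreover, conjugation-invariance of $M_0$ gives that the whole $G$-conjugacy class of any $x \in M_0$ lies in $M_0$, so every element of $\langle M_0\rangle$, being a product of boundedly many elements of $M_0$, has relatively compact conjugacy class.

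The main obstacle is precisely the compactness of $L$: the soft data above do not suffice on their own, as the example $\R = \langle [-1,1]\rangle$ shows that a compact conjugation-invariant generating set need not generate a relatively compact subgroup. What rescues the situation here is that $M_0$ is a conjugation-invariant \emph{union of compact subgroups}. Concretely, I would argue that $L$ is a compactly generated topological $\mathrm{[FC]}^{-}$-group (every conjugacy class relatively compact), and then invoke the structure theory of such groups, in the spirit of the results on the polycompact radical $W(G)$ recalled above: $\overline{[L,L]}$ is compact, while the quotient $L/\overline{[L,L]}$ is a compactly generated locally compact abelian group that is topologically generated by the images of the compact subgroups $gKg^{-1}$, hence has no $\R$- or $\Z$-factor and is therefore compact. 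Consequently $L$ is compact, and it is the required compact normal subgroup of $G$ containing $K$. The delicate point in this last step is verifying the $\mathrm{[FC]}^{-}$ property for the closure $L$, and not merely for the dense subgroup $\langle M_0\rangle$, so that the structure theorem applies; this is where the compactness of $M_0$ is used once more, to control the conjugacy classes of limits of elements of $\langle M_0\rangle$.
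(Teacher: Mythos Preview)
Your first step --- producing a compact $C$ with $G=CH$ and deducing that $M_0=\bigcup_{g\in G} gKg^{-1}=\bigcup_{c\in C} cKc^{-1}$ is a compact conjugation-invariant subset --- is correct and is exactly how the paper begins as well. Both arguments then set $L=\overline{\langle M_0\rangle}$ and aim to show that $L$ is compact.

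The genuine gap is the claim that $L$ is $[\mathrm{FC}]^-$. You establish this only for elements of the dense subgroup $\langle M_0\rangle$: if $x\in M_0^n$ then $gxg^{-1}\in M_0^n$ for every $g\in G$, so the conjugacy class lies in the compact set $M_0^n$. But for a general $x\in L$ one only has $x=\lim x_k$ with $x_k\in M_0^{n_k}$, and when $n_k\to\infty$ there is no common compact set containing the conjugacy classes of the $x_k$; the compactness of $M_0$ by itself gives no uniformity here, and your sentence ``this is where the compactness of $M_0$ is used once more'' does not supply an argument. In general the FC-center of a locally compact group is not closed, so density of an $[\mathrm{FC}]^-$ subgroup does not force the closure to be $[\mathrm{FC}]^-$. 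Nor can you bypass this step: for a locally compact group $L$, compactness of $\overline{[L,L]}$ is \emph{equivalent} to $L$ being $[\mathrm{FC}]^-$ (if $\overline{[L,L]}$ is compact then every conjugacy class lies in a coset of it and is therefore relatively compact). Your argument for the compactness of the abelianization $L/\overline{[L,L]}$ is correct and independent of the $[\mathrm{FC}]^-$ hypothesis, but it does not touch the derived part.

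The paper avoids this difficulty entirely. Citing a result of Cornulier, it first places $K$ inside the polycompact radical $W(G)$, hence inside the (closed) locally elliptic radical of $G$. Since that radical is normal and closed, the compact set $M_0$ lies in it as well; and in a locally elliptic group every compact subset is contained in a compact subgroup, so $\overline{\langle M_0\rangle}$ is compact in one line. If you want to repair your approach, the cleanest fix is precisely to show that $L$ lies in the locally elliptic radical --- which is the paper's route.
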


\begin{proof}
By \cite[Proposition 2.7]{Cor-comm-focal}, every compact normal subgroup $K$ of $H$ is contained in the polycompact radical $W(G)$. By definition, the polycompact radical is a subgroup of the locally elliptic radical (see \cite[Proposition~2.4(6)]{Cor-comm-focal}). Since the normalizer of $K$ in $G$ contains $H$, and is thus cocompact in $G$, and it follows that the union $X$ of the conjugacy classes of elements of $K$ is a compact subset of $G$. Since $X$ is contained in the locally elliptic radical of $G$, it follows that the closed subgroup generated by $X$ is compact. This subgroup is also normal in $G$ and contains $K$, hence the statement.
\end{proof}

Following Burger--Mozes, the \textbf{quasi-center} of a locally compact group $G$ is the set $\QZ(G)$ of those elements whose centralizer is open. The quasi-center is a (possibly non-closed) topologically characteristic subgroup of $G$ containing all discrete normal subgroups. Recall that a subgroup is \textbf{topologically characteristic} if it is invariant by all topological group automorphisms.

\begin{prop} \label{prop:NormalizerLatticeBis}
Let $G$ be a compactly generated tdlc group, and $\Gamma$ a cocompact lattice in $G$. Then there exists a compact normal subgroup $K \triangleleft G$ such that $K \Gamma \cap N_G(\Gamma)$ has finite index in $N_G(\Gamma)$.
\end{prop}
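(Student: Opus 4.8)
The plan is to prove the statement inside the normalizer $N := N_G(\Gamma)$ and then transport the resulting compact normal subgroup back to $G$ via Proposition~\ref{prop:CocoNormal}. First I would record the structure of $N$. Since $\Gamma$ is discrete it is closed, so $N$ is a closed subgroup; as $N \supseteq \Gamma$ with $\Gamma$ cocompact, $N$ is cocompact, and the image of $N$ in the compact space $G/\Gamma$ is closed, so $N/\Gamma$ is compact. Thus $N$ is discrete-by-profinite: $\Gamma \triangleleft N$ is discrete and $Q := N/\Gamma$ is a profinite (in particular compact) group. Moreover $\Gamma$, being a cocompact lattice in a compactly generated group, is finitely generated.

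Next I would analyse the centralizer $C := C_G(\Gamma)$, noting that it coincides with $C_N(\Gamma)$ (anything centralizing $\Gamma$ normalizes it) and is normal in $N$. Because $N$ acts continuously on the discrete set $\Gamma$ by conjugation, each centralizer $C_N(\gamma)$ is open; as $\Gamma$ is finitely generated, $C$ is a finite intersection of such subgroups and is therefore open in $N$. Consequently $C\Gamma$ is open, hence of finite index, in $N$, while $C \cap \Gamma = Z(\Gamma)$ is a discrete central subgroup of $C$ with $C/Z(\Gamma)$ compact. It then suffices to produce a compact subgroup $K' \triangleleft N$ with $K' \le C$ and $K'Z(\Gamma)$ of finite index in $C$, since for such a $K'$ one computes $[N : K'\Gamma] = [N : C\Gamma]\cdot[C : K'Z(\Gamma)] < \infty$.

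The crucial observation, which resolves what I expect to be the main obstacle (the normality requirement), is that the conjugation action of $N$ on $C$ factors through the \emph{compact} quotient $Q = N/\Gamma$: since $C$ centralizes $\Gamma$, the subgroup $\Gamma$ acts trivially on $C$ by conjugation. Hence $Q$ is a compact group acting continuously by automorphisms on the tdlc group $C$. I would then pick any compact open subgroup $V \le C$; a standard tube-lemma argument shows that the $Q$-stabilizer of $V$ is open in $Q$, so the $Q$-orbit of $V$ is finite and $K' := \bigcap_{q \in Q} q(V)$ is a compact open subgroup of $C$ that is $Q$-invariant, i.e. normal in $N$. Being open in $C$ with $C/Z(\Gamma)$ compact, $K'$ satisfies that $K'Z(\Gamma)$ has finite index in $C$, so by the previous paragraph $K'\Gamma$ has finite index in $N$.

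Finally, $N$ is a closed cocompact subgroup of $G$ and $K'$ is a compact normal subgroup of $N$, so Proposition~\ref{prop:CocoNormal} yields a compact normal subgroup $K \triangleleft G$ with $K' \le K$. Then $K\Gamma \cap N_G(\Gamma) \supseteq K'\Gamma$, which has finite index in $N_G(\Gamma)$, completing the argument. The only genuine subtlety is the averaging step producing an $N$-invariant compact open subgroup; it succeeds precisely because passing to the centralizer $C$ forces the relevant conjugation action to have compact image.
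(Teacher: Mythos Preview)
Your argument is correct and follows essentially the same route as the paper: show that $C_G(\Gamma)$ is open in $N_G(\Gamma)$ using finite generation of $\Gamma$, deduce that a compact open subgroup of the centralizer times $\Gamma$ has finite index in $N_G(\Gamma)$, and then invoke Proposition~\ref{prop:CocoNormal} to enlarge that compact subgroup to a compact normal subgroup of $G$.

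The one difference is that your averaging step over $Q = N/\Gamma$ is unnecessary. Any compact open subgroup $U \leq C_G(\Gamma)$ is already centralized by $\Gamma$, so its normalizer in $G$ contains $\Gamma$ and is therefore cocompact; Proposition~\ref{prop:CocoNormal} applies directly with $H = N_G(U)$, without first arranging $U$ to be normal in all of $N_G(\Gamma)$. This is exactly what the paper does, yielding a slightly shorter proof.
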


\begin{proof}
The subgroup $\Gamma$ is cocompact in the compactly generated group $G$, so $\Gamma$ is finitely generated. Since $\Gamma$ is a discrete normal subgroup of $N_G(\Gamma)$, $\Gamma$ lies in the quasi-center of $N_G(\Gamma)$, i.e.\ every element of $\Gamma$ centralizes an open subgroup of $N_G(\Gamma)$. But since $\Gamma$ is finitely generated, it follows that the centralizer $C_G(\Gamma)$ is open in $N_G(\Gamma)$. If $U$ is a compact open subgroup of $N_G(\Gamma)$ centralized by $\Gamma$, then $U \Gamma$ is a subgroup that is open and cocompact in $N_G(\Gamma)$ since $\Gamma$ is cocompact in $G$. Therefore $U \Gamma$ has finite index in $N_G(\Gamma)$. Now the compact subgroup $U$ has a cocompact normalizer in $G$, so it follows from Proposition \ref{prop:CocoNormal} that $U$ is contained in a compact normal subgroup $K$ of $G$, and $K \Gamma$ indeed contains a finite index subgroup of $N_G(\Gamma)$.
\end{proof}

Proposition \ref{prop:NormalizerLatticeBis} has the following consequence:

\begin{cor} \label{cor:NormalizerLattice}
Let $G$ be a compactly generated tdlc group with a discrete polycompact radical. Then every cocompact lattice $\Gamma \leq G$   has finite index in its normalizer $N_G(\Gamma)$.
\end{cor}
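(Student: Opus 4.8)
The plan is to deduce the statement directly from Proposition~\ref{prop:NormalizerLatticeBis} together with the hypothesis on the polycompact radical. Applying Proposition~\ref{prop:NormalizerLatticeBis} to the cocompact lattice $\Gamma \leq G$, I obtain a compact normal subgroup $K \triangleleft G$ such that $K\Gamma \cap N_G(\Gamma)$ has finite index in $N_G(\Gamma)$. Since $K$ is a compact normal subgroup of $G$, it is contained in the polycompact radical $W(G)$, which is assumed to be discrete; as recalled above, for compactly generated $G$ this means that every compact normal subgroup of $G$ is finite, so $K$ is finite.

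It therefore suffices to show that $\Gamma$ has finite index in $K\Gamma \cap N_G(\Gamma)$, since then $\Gamma$ has finite index in $N_G(\Gamma)$ by multiplicativity of the index. As $1 \in K$ and $\Gamma \leq N_G(\Gamma)$, we have $\Gamma \leq K\Gamma \cap N_G(\Gamma)$, and $\Gamma$ is normal in $N_G(\Gamma)$, so I may consider the quotient map $q \colon N_G(\Gamma) \to N_G(\Gamma)/\Gamma$ and bound the cardinality of $q\big(K\Gamma \cap N_G(\Gamma)\big)$.

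The key computation is the following. Any $x \in K\Gamma \cap N_G(\Gamma)$ can be written $x = k\gamma$ with $k \in K$ and $\gamma \in \Gamma$, whence the right coset $x\Gamma = k\gamma\Gamma = k\Gamma$ as a subset of $G$. Thus two elements $x, x' \in K\Gamma \cap N_G(\Gamma)$ satisfy $q(x) = q(x')$ as soon as they determine the same coset $k\Gamma = k'\Gamma$, and the number of such cosets is at most $|K| < \infty$. Hence $[K\Gamma \cap N_G(\Gamma) : \Gamma] \leq |K|$, and combined with the finite index of $K\Gamma \cap N_G(\Gamma)$ in $N_G(\Gamma)$, this yields the conclusion.

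The one point requiring care—and the only plausible source of error—is that an individual element $k \in K$ need not lie in $N_G(\Gamma)$, so one cannot factor $q(x)$ through separate images of $k$ and $\gamma$; the argument instead keeps the full element $x \in N_G(\Gamma)$ and uses only that its right coset, viewed inside $G$, coincides with $k\Gamma$. Beyond this bookkeeping I expect no serious obstacle, the substance of the statement being already contained in Proposition~\ref{prop:NormalizerLatticeBis}.
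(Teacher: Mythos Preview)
Your proof is correct and follows exactly the same approach as the paper: apply Proposition~\ref{prop:NormalizerLatticeBis}, use discreteness of $W(G)$ to conclude $K$ is finite, and deduce that $\Gamma$ has finite index in $K\Gamma \cap N_G(\Gamma)$. You simply spell out the last step (the coset bound $[K\Gamma \cap N_G(\Gamma):\Gamma]\leq |K|$) in more detail than the paper does.
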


\begin{proof}
Let $K$ be as in the conclusion of Proposition \ref{prop:NormalizerLatticeBis}. Since $G$ has a discrete polycompact radical, the subgroup $K$ must be finite, and it follows that $\Gamma$ has finite index in $K \Gamma \cap N_G(\Gamma)$. Since $K \Gamma \cap N_G(\Gamma)$ has finite index in $N_G(\Gamma)$, the conclusion follows.
\end{proof}

The special case of the following result when $G$ is topologically simple is due to Barnea--Ershov--Weigel \cite[Theorem~4.8]{BEW}. Recall that a group $G$ is \textbf{topologically characteristically simple} if the only closed topologically characteristic subgroups of $G$ are the trivial ones.

\begin{prop}
	\label{prop:trivialQZ}
Let $G$ be a  non-discrete, non-compact,    compactly generated tdlc group that is topologically characteristically simple. Then $\QZ(G)= \{1\} = W(G)$.  
\end{prop}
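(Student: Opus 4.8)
The plan is to treat the two subgroups separately, establishing $W(G)=\{1\}$ first and then deducing $\QZ(G)=\{1\}$ from it. The unifying observation is that both $W(G)$ and $\overline{\QZ(G)}$ are \emph{closed topologically characteristic} subgroups, so topological characteristic simplicity forces each of them to equal $\{1\}$ or $G$.

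First I would dispose of $W(G)$. Since $G$ is compactly generated, $W(G)$ is closed (as recalled above, citing \cite{Cor-comm-focal}), and it is invariant under all topological automorphisms, being defined as the union of all compact normal subgroups; hence it is a closed topologically characteristic subgroup. Thus either $W(G)=\{1\}$ or $W(G)=G$. In the latter case $G$ would be contained in its locally elliptic radical, so $G$ would itself be locally elliptic; but a compactly generated locally elliptic group is compact (a compact generating set lies in a compact subgroup, which must then be all of $G$), contradicting non-compactness. Therefore $W(G)=\{1\}$.

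Next I would handle $\QZ(G)$. Its closure $\overline{\QZ(G)}$ is closed and topologically characteristic, hence equal to $\{1\}$ or to $G$, and it suffices to exclude the second case. So assume $\overline{\QZ(G)}=G$. The key claim is that a compactly generated tdlc group with dense quasi-center is residually discrete. Granting it, Proposition~\ref{prop:CapMon-RD} shows that the compact open normal subgroups of $G$ form a basis of identity neighbourhoods; since $G$ is non-discrete, one such subgroup $N$ is non-trivial, whence $\{1\}\neq N\subseteq W(G)$, contradicting $W(G)=\{1\}$. This contradiction gives $\overline{\QZ(G)}=\{1\}$, hence $\QZ(G)=\{1\}$.

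It remains to prove the claim, which is the heart of the argument. By Proposition~\ref{prop:CapMon-RD} it is enough to find, inside each compact open subgroup $U\leq G$ (these form a basis of identity neighbourhoods by van Dantzig's theorem), a compact open normal subgroup of $G$. Since $G$ is compactly generated there are finitely many elements with $G=\langle U,t_1,\dots,t_m\rangle$; using density of $\QZ(G)$, pick $z_j\in \QZ(G)\cap t_jU$, so that $G=\langle U,z_1,\dots,z_m\rangle$ as $z_j$ and $t_j$ differ by an element of $U$. Each $z_j$ has open centralizer, so $V:=\bigcap_{j}\bigl(C_G(z_j)\cap U\bigr)$ is an open finite-index subgroup of $U$ centralized by every $z_j$. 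Let $N$ be the normal core of $V$ in $U$, the intersection of its finitely many $U$-conjugates: it is a compact open subgroup of finite index in $U$, normalized by $U$ and contained in $V$, hence centralized by each $z_j$. Thus $N$ is normalized by $U$ and by every $z_j$, and therefore by $G=\langle U,z_1,\dots,z_m\rangle$, giving a compact open normal subgroup $N\leq U$. The main obstacle is precisely this construction: the interplay of density of $\QZ(G)$ with compact generation, which allows one to replace an arbitrary generating set (modulo $U$) by quasi-central elements each centralizing a finite-index subgroup of $U$, so that the normal core yields arbitrarily small compact open normal subgroups. The remaining steps — the dichotomy for topologically characteristic subgroups and the elimination of the locally elliptic case — are routine given the results recalled above.
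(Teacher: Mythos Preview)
Your proof is correct and follows the same overall strategy as the paper: first eliminate $W(G)$ via local ellipticity, then eliminate $\QZ(G)$ by showing that a dense quasi-center forces arbitrarily small compact open normal subgroups, contradicting $W(G)=\{1\}$. The only difference is that the paper cites \cite[Proposition~4.3]{CaMo-decomp} for the step ``dense quasi-center $\Rightarrow$ compact open normal subgroups form a basis of identity neighbourhoods'', whereas you supply a direct self-contained argument (replacing generators by quasi-central ones and taking a normal core); your argument is essentially a proof of that cited proposition in the compactly generated case.
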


\begin{proof}
	The polycompact radical $W(G)$ is a topologically characteristic subgroup. Assume it is non-trivial. Thus it is dense by hypothesis. Since $G$ is compactly generated, the radical $W(G)$ is closed by  \cite[Theorem 1.2]{Cor-comm-focal}, so $G = W(G)$. Since the polycompact radical is a subgroup of the locally elliptic radical (see \cite[Proposition~2.4(6)]{Cor-comm-focal}), it follows that $G$ is locally elliptic. Since $G$ is compactly generated, this implies that $G$ is compact, contradicting the hypotheses. Hence $W(G)=\{1\}$. 
	
	We next observe that the quasi-center is also a topologically characteristic subgroup. Assume that $\QZ(G)$ is non-trivial. Thus it is dense by hypothesis. It then follows from \cite[Proposition~4.3]{CaMo-decomp} that the compact open normal subgroups of $G$ form a basis of identity neighbourhoods. In particular $W(G)$ is dense, contradicting the first part of the proof. 	
\end{proof}

\subsection{Monolithic and just-non-compact groups}

A locally compact group $G$ is  \textbf{monolithic} if the intersection of all non-trivial closed normal subgroups is itself non-trivial. That intersection is then called the \textbf{monolith} of $G$, denoted by $\Mon(G)$. Clearly, every non-trivial minimal closed normal subgroup in a locally compact group must be topologically characteristically simple. Thus a source of groups to which Proposition~\ref{prop:trivialQZ} applies is given by the monolith of a compactly generated tdlc group, provided that this monolith is cocompact. 

The following consequence of \cite[Theorem~E]{CaMo-decomp} clarifies the link between monolithic groups with a cocompact monolith and just-non-compact groups. A locally compact group is called \textbf{hereditarily just-non-compact} if every finite index open subgroup is just-non-compact.  

\begin{prop}\label{prop:JustNonCompact} 
	Let $G$ be a compactly generated non-discrete tdlc group. 
	\begin{enumerate}[label=(\roman*)]
		\item $G$ is just-non-compact  if and only if $G$ is monolithic with a non-discrete cocompact monolith. In particular if $G$ is just-non-compact, then $\QZ(G) = 1$.
		
		\item $G$ is hereditarily just-non-compact    if and only if $G$ is monolithic with a non-discrete cocompact topologically simple monolith. 
	\end{enumerate}
\end{prop}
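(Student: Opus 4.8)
The plan is to prove the two equivalences separately, isolating the single piece of structure theory that I expect to be the genuine obstacle: the description of the monolith supplied by Theorem~E of \cite{CaMo-decomp}. Everything else should follow from Proposition~\ref{prop:trivialQZ} together with elementary arguments about cocompact normal subgroups.

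First I would dispatch the backward implication of (i). Suppose $G$ is monolithic with non-discrete cocompact monolith $M = \Mon(G)$. I claim $G$ is non-compact: a non-discrete compact tdlc group is an infinite profinite group, hence admits a basis of identity neighbourhoods consisting of open — and therefore non-trivial — normal subgroups with trivial intersection, so the family of its non-trivial closed normal subgroups has trivial intersection and it cannot be monolithic. Thus $G$ is non-compact, and since any non-trivial closed normal subgroup $N \trianglelefteq G$ contains $M$ and $M$ is cocompact, $N$ is cocompact; hence $G$ is just-non-compact. For the forward implication one must instead \emph{produce} the monolith. Here every non-trivial closed normal subgroup is cocompact, and a finite intersection of cocompact normal subgroups is again cocompact (the quotient embeds into the product of the two compact quotients); the difficulty is that this filtering intersection must stay non-trivial. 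This is exactly what I would import from Theorem~E of \cite{CaMo-decomp}, which describes the monolith of a non-discrete compactly generated just-non-compact tdlc group as a cocompact quasi-product of a $G$-conjugacy class of topologically simple subgroups. This structural input is the main obstacle and the only place where it is genuinely needed.

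To obtain $\QZ(G) = 1$, note that $M$, being a minimal closed normal subgroup, is topologically characteristically simple; it is non-discrete by hypothesis, non-compact (cocompact in the non-compact $G$) and compactly generated (cocompact in the compactly generated $G$). Proposition~\ref{prop:trivialQZ} then yields $\QZ(M) = W(M) = 1$. If $g \in \QZ(G) \cap M$ then $C_M(g) = C_G(g) \cap M$ is open in $M$, so $\QZ(G) \cap M \subseteq \QZ(M) = 1$; since $\QZ(G)$ and $M$ are both normal in $G$ this forces $[\QZ(G), M] = 1$, i.e.\ $\QZ(G) \leq C_G(M)$. Finally $C_G(M)$ is a closed normal subgroup of $G$: were it non-trivial it would be cocompact and hence contain $M$, making $M$ abelian, which is impossible since an abelian such $M$ would have its non-trivial compact open subgroup as a compact normal subgroup, contradicting $W(M) = 1$. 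Therefore $C_G(M) = 1$ and $\QZ(G) = 1$.

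For (ii) I would refine these arguments. Assume $M = \Mon(G)$ is topologically simple, non-discrete and cocompact, and let $H \leq G$ be open of finite index. Since $M \cap H$ has finite index in $M$, its normal core in the topologically simple (infinite) group $M$ is a finite-index closed normal subgroup of $M$, hence equals $M$; thus $M \leq H$, and $M$ is a non-discrete cocompact (in $H$) normal subgroup of $H$. For any non-trivial closed $N \trianglelefteq H$, the closed normal subgroup $N \cap M$ of $M$ is either all of $M$ — whence $N \supseteq M$ is cocompact — or trivial, whence $[N,M] = 1$ and $N \leq C_H(M) \leq C_G(M) = 1$, a contradiction. Hence $\Mon(H) = M$, and $H$ is just-non-compact by part (i); as $H$ was arbitrary, $G$ is hereditarily just-non-compact. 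Conversely, if $G$ is hereditarily just-non-compact then it is in particular just-non-compact, so by (i) it is monolithic with non-discrete cocompact monolith $M$; to see that $M$ is topologically simple I would again invoke the quasi-product description of Theorem~E of \cite{CaMo-decomp}: writing $M$ as the quasi-product of a $G$-conjugacy class $\{M_1,\dots,M_k\}$ of topologically simple subgroups, the finite-index open subgroup of $G$ normalising each $M_i$ would, if $k > 1$, possess the non-trivial and non-cocompact normal subgroup $M_1$, contradicting hereditary just-non-compactness. Hence $k = 1$ and $M$ is topologically simple. As in (i), the delicate points are exactly those resting on the quasi-product structure.
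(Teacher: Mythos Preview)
Your proof is correct and follows essentially the same strategy as the paper's: both directions of (i) and the backward direction of (ii) are argued identically, and the forward implications are deferred to Theorem~E of \cite{CaMo-decomp}. Your treatment is in fact more detailed in places---you unpack the deduction of $\QZ(G)=1$ via $\QZ(M)=W(M)=1$ and $C_G(M)=1$, whereas the paper simply cites Proposition~\ref{prop:trivialQZ}, and you spell out the quasi-product argument for the forward direction of (ii)---but the underlying ideas coincide.
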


\begin{proof}
	For both (i) and (ii), the `only if' implications follow from  \cite[Theorem~E]{CaMo-decomp}. Conversely, if $G$ is compactly generated and monolithic such that $\Mon(G)$  is cocompact, then by the definition of monolithicity, every non-trivial closed normal subgroup of $G$ contains $\Mon(G)$, and is thus cocompact in $G$. Moreover $G$ cannot be compact, since a compact tdlc group is profinite, hence residually finite, and can thus not be monolithic. Thus the condition of part (i) is indeed sufficient. The fact that $G$ has trivial quasi-center then follows from Proposition~\ref{prop:trivialQZ}.
	
	Assume now that $G$ is compactly generated and monolithic such that $\Mon(G)$  is cocompact and topologically simple. Let $H$ be an open subgroup of finite index in $G$. Then $H$ contains an open normal subgroup of finite index in $G$; in particular $H$ contains $\Mon(G)$. Given a non-trivial closed normal subgroup $N$ of $H$, then $N \cap \Mon(G)$ is normal in $\Mon(G)$. Since the latter is topologically simple by hypothesis, we have $N \geq \Mon(G)$ or $N \cap \Mon(G) = 1$. In the latter case we deduce that $N \leq \mathrm C_G(\Mon(G))$. That centralizer is thus a non-trivial closed normal subgroup of $G$. It must contain $\Mon(G)$ by definition. This implies that $\Mon(G)$ is self-centralizing, hence abelian. This contradicts the hypothesis that $\Mon(G)$ is topologically simple and non-discrete. We conclude that $N \geq \Mon(G)$. Thus every non-trivial closed normal subgroup of $H$ contains $\Mon(G)$ and is thus cocompact. This proves that $G$ is hereditarily just-non-compact, as required. 	
\end{proof}

\subsection{Quasi just-non-compact groups} \label{subsec-QJNC}

\begin{defin}
	Let $G$ be a locally compact group. We say that $G$ is \textbf{quasi just-non-compact} if $G$ is non-compact, and every closed normal subgroup of $G$ is either discrete or cocompact. 
\end{defin}

Following \cite{BuMo1}, for a tdlc group $G$, we denote by $G^{(\infty)}$ the intersection of all finite index open subgroups of $G$. The subgroup $G^{(\infty)}$ is closed and topologically characteristic in $G$, and coincides with the intersection of all closed cocompact normal subgroups of $G$.

\begin{prop} \label{prop-carac-qjnc}
	Let $G$ be a non-discrete compactly generated tdlc group. The following are equivalent:
	\begin{enumerate}[label=(\roman*)]
		\item \label{it-qjnc} $G$ is quasi just-non-compact;
		
		\item \label{it-qjnc-caract} $\QZ(G)$ is a discrete non-cocompact subgroup of $G$; $G^{(\infty)}$ is a non-discrete cocompact subgroup of $G$, and every closed normal subgroup of $G$ is either contained in $\QZ(G)$ or contains $G^{(\infty)}$.  
	\end{enumerate}
If those conditions hold, then  $G/\QZ(G)$ is just-non-compact.
\end{prop}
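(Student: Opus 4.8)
The plan is to prove the substantive implication \ref{it-qjnc} $\Rightarrow$ \ref{it-qjnc-caract} together with the final assertion in one sweep; the converse is immediate, since under \ref{it-qjnc-caract} any closed normal subgroup $N$ is either contained in the discrete subgroup $\QZ(G)$ (hence discrete) or contains the cocompact subgroup $G^{(\infty)}$ (hence cocompact), while $G$ is non-compact because $\QZ(G)$ is non-cocompact. So I assume $G$ is quasi just-non-compact and establish successively that $\QZ(G)$ is discrete, that it is non-cocompact, and that $G^{(\infty)}$ is non-discrete and cocompact; the remaining trichotomy and the just-non-compactness of $G/\QZ(G)$ then follow formally.

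The single device driving every step, and the main point to get right, is the following way of defeating the degenerate alternatives left open by the bare dichotomy ``discrete or cocompact'': if I can exhibit an \emph{infinite compact} normal subgroup $K$ of $G$, then $K$ is non-discrete, so quasi just-non-compactness forces $K$ to be cocompact, whence $G$ is compact, contradicting non-compactness. Each degenerate case will be contradicted by producing such a $K$, and the crucial tool for pushing a compact normal subgroup of a cocompact subgroup up into $G$ is Proposition~\ref{prop:CocoNormal}.

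First I would show $\QZ(G)$ is discrete. Its closure $\overline{\QZ(G)}$ is closed and topologically characteristic, hence discrete or cocompact, and I must rule out the cocompact non-discrete case. There $N := \overline{\QZ(G)}$ is compactly generated with dense quasi-center, so by \cite[Proposition~4.3]{CaMo-decomp} its compact open normal subgroups form a basis of identity neighbourhoods, and non-discreteness of $N$ yields an infinite one $K_0 \trianglelefteq N$; since $N$ is cocompact in $G$, Proposition~\ref{prop:CocoNormal} places $K_0$ inside a compact normal subgroup of $G$, the forbidden $K$. Next, for non-cocompactness, I assume $\QZ(G)$ is cocompact; being discrete and cocompact it is finitely generated, so $C := C_G(\QZ(G))$ is open and normal, and being open (hence non-discrete) it is cocompact by hypothesis. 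Then $A := \QZ(G) \cap C$ is central in $C$ and cocompact, so for any compact open $V \leq C$ the normalizer $N_C(V)$ is open and contains $A$, hence has finite index; thus $V$ has finitely many conjugates and $\Core_C(V)$ is an infinite compact open normal subgroup of $C$, and Proposition~\ref{prop:CocoNormal} again produces the forbidden $K$.

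For $G^{(\infty)}$, I assume it is discrete (so $G^{(\infty)} \subseteq \QZ(G)$) and once more manufacture an infinite compact normal subgroup. Since every finite-index open subgroup of $G$ contains $G^{(\infty)}$, the quotient $R := G/G^{(\infty)}$ has trivial $R^{(\infty)}$ and is therefore residually discrete, so by Proposition~\ref{prop:CapMon-RD} its compact open normal subgroups form a basis of identity neighbourhoods; Proposition~\ref{prop-exten-RD}, applied to the discrete normal subgroup $G^{(\infty)}$, transfers this property to $G$, which is then residually discrete and hence possesses a non-trivial compact open, hence infinite compact, normal subgroup. This contradiction shows $G^{(\infty)}$ is non-discrete, hence cocompact. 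Finally, the trichotomy in \ref{it-qjnc-caract} is immediate, since discrete normal subgroups lie in $\QZ(G)$ and cocompact ones contain the intersection $G^{(\infty)}$ of all cocompact closed normal subgroups; and $G/\QZ(G)$ is just-non-compact because it is non-compact by the above and any closed normal subgroup pulls back to a closed normal $N \supseteq \QZ(G)$ that is discrete, forcing $N = \QZ(G)$, or cocompact. The principal obstacle throughout is the correct construction of the infinite compact normal subgroup in each degenerate case; once Propositions~\ref{prop:CocoNormal}, \ref{prop:CapMon-RD} and~\ref{prop-exten-RD} are marshalled, the remainder is bookkeeping.
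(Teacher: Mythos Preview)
Your proof is correct and uses the same toolkit as the paper (Propositions~\ref{prop:CocoNormal}, \ref{prop:CapMon-RD} and~\ref{prop-exten-RD}, together with \cite[Proposition~4.3]{CaMo-decomp}). The arguments for the discreteness of $\QZ(G)$ and the non-discreteness of $G^{(\infty)}$ are essentially identical to the paper's.

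The one genuine difference is the order in which you treat the non-cocompactness of $\QZ(G)$ and the non-discreteness of $G^{(\infty)}$. You establish non-cocompactness of $\QZ(G)$ first, by a direct argument: using that a discrete cocompact $\QZ(G)$ is finitely generated, you show $C=C_G(\QZ(G))$ is open of finite index, and then that any compact open $V\leq C$ has finite-index normalizer, so $\Core_C(V)$ is an infinite compact open normal subgroup of $C$, which via Proposition~\ref{prop:CocoNormal} yields the forbidden infinite compact normal subgroup of $G$. This is valid, but it does a fair amount of work. The paper instead first proves $G^{(\infty)}$ is non-discrete (exactly as you do), and only then observes in one line that $\QZ(G)$ cannot be cocompact: if it were, then $G^{(\infty)}$, being the intersection of all cocompact closed normal subgroups, would sit inside the discrete group $\QZ(G)$ and hence be discrete. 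So the paper's ordering buys a shorter argument; your ordering is self-contained for $\QZ(G)$ but at the cost of an extra paragraph that the paper avoids entirely.

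A minor stylistic point: your route to ``$G/G^{(\infty)}$ is residually discrete'' goes via $R^{(\infty)}=\{1\}$, which is fine; the paper instead notes directly that every open normal subgroup of $G$ has finite index (open $\Rightarrow$ non-discrete $\Rightarrow$ cocompact $\Rightarrow$ finite index), so $G^{(\infty)}$ already equals the intersection of all open normal subgroups. Both are correct.
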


\begin{proof}
	That \ref{it-qjnc-caract} implies \ref{it-qjnc} is clear by observing that non-discreteness of $G^{(\infty)}$ implies that $G$ cannot be compact. We show that \ref{it-qjnc} implies \ref{it-qjnc-caract}. Assume for contradiction that $\QZ(G)$ is not discrete. Then by the assumption that $G$ is quasi just-non-compact, the subgroup $H = \overline{\QZ(G)}$ is cocompact in $G$. According to \cite[Proposition 4.3]{CaMo-decomp}, $H$ admits a compact open normal subgroup $U$, which by Proposition \ref{prop:CocoNormal} must be contained in a compact normal subgroup of $G$. Since every compact normal subgroup of $G$ is finite, it follows that $U$ is finite, and $H$ is discrete. So $\QZ(G)$ was actually discrete. 
	
	Observe that since $G$ is quasi just-non-compact and non-discrete, an open normal subgroup of $G$ necessarily has finite index in $G$. It follows that $G^{(\infty)}$ coincides with the intersection of all open normal subgroups of $G$, i.e.\ $G / G^{(\infty)}$ is a residually discrete group. Therefore by Proposition~\ref{prop:CapMon-RD} the group $G / G^{(\infty)}$ admits a basis of identity neighbourhoods consisting of compact open normal subgroups. If $G^{(\infty)}$ is discrete, then it follows from Proposition \ref{prop-exten-RD} that $G$ admits compact open normal subgroups. These are necessarily finite by \ref{it-qjnc}, and it follows that $G$ is discrete, a contradiction. So $G^{(\infty)}$ cannot be discrete, and hence $G^{(\infty)}$ is cocompact in $G$. To obtain \ref{it-qjnc-caract} it only remains to observe that $\QZ(G)$ cannot be cocompact, since othherwise it would contain $G^{(\infty)}$, which would therefore be discrete, a contradiction. 
	
Finally if $N$ is a closed normal subgroup of $G$ containing $\QZ(G)$ properly, then $N$ contains $G^{(\infty)}$, so $N$ is cocompact in $G$. Thus $N/\QZ(G)$ is cocompact in $G/\QZ(G)$, thereby confirming that $G/\QZ(G)$ is just-non-compact.
\end{proof}

More information on the structure of compactly generated just-non-compact groups (hence of quasi just-non-compact groups) may be found in \cite{CaMo-decomp}, \cite{CRW-part2} and \cite{CRW_dense}.  Following those references, we say that a subgroup of a tdlc group is \textbf{locally normal} if its normalizer is open. For the sake of future references, we record the following results.

\begin{prop}[{\cite[Corollary~8.2.4]{CRW_dense}}]\label{prop:RobustlyMonolithic}
Let $G$ be a non-discrete, compactly generated tdlc group. Assume that $G$ is monolithic, with non-discrete, topologically simple, cocompact monolith. For any prime $p$, if $G$ has a non-trivial compact locally normal subgroup that is pro-$p$, then $G$ has a compact open subgroup that is pro-$p$. 
\end{prop}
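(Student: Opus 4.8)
The plan is to reduce the assertion to a finite-index property of a single compact open subgroup, and then to import the structure-lattice theory of monolithic groups from \cite{CRW-part2,CRW_dense}, fed with the fact that the monolith is topologically simple. Write $M = \Mon(G)$. First I would record two elementary equivalences. A tdlc group has a compact open pro-$p$ subgroup if and only if some, equivalently every, compact open subgroup $U$ is virtually pro-$p$: any two compact open subgroups are commensurable, an open subgroup of a pro-$p$ group is pro-$p$, and a finite-index pro-$p$ subgroup is itself compact and open. In turn, $U$ is virtually pro-$p$ precisely when its $p$-core $O_p(U)$ — the largest normal pro-$p$ subgroup of the profinite group $U$ — has finite index in $U$. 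So it suffices to produce a compact open subgroup $U$ with $[U : O_p(U)] < \infty$.

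To bring in the hypothesis, I would choose a compact open subgroup $U$ with $K \le U \le N_G(K)$, which is possible because $K$ is a compact subgroup of the open subgroup $N_G(K)$. Then $K \trianglelefteq U$, so the $p$-core satisfies $O_p(U) \supseteq K \ne 1$; moreover $O_p(U)$ is itself a non-trivial compact locally normal pro-$p$ subgroup, being normalized by the open group $U$. The whole problem is thus to show that this non-trivial pro-$p$ germ is forced to have finite index in $U$.

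Next I would extract the structural consequences of monolithicity. As a cocompact closed normal subgroup of the compactly generated group $G$, the monolith $M$ is compactly generated; it is non-discrete, topologically simple, and non-compact (otherwise $G$ would be compact), hence topologically characteristically simple, so Proposition~\ref{prop:trivialQZ} gives $\QZ(M) = W(M) = 1$. Moreover $Z(M) = 1$: otherwise $M$ would be abelian, but a non-compact non-discrete abelian tdlc group possesses a proper non-trivial compact open (hence normal) subgroup, contradicting topological simplicity. Therefore $C_G(M) \cap M = 1$, and since $C_G(M)$ is a closed normal subgroup of $G$, monolithicity forces $C_G(M) = 1$. In particular no non-trivial locally normal subgroup of $G$ can centralize $M$, so the conjugation action of $M$ on the family of locally normal subgroups is highly non-degenerate.

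The hard part — and where all the real work lies — is to upgrade a single non-trivial pro-$p$ germ into control of the entire local structure. Here I would invoke the structure and centralizer lattices of \cite{CRW-part2,CRW_dense}: the subgroup $O_p(U)$ determines a non-trivial element of the structure lattice $\mathcal{LN}(G)$, and because $M$ is topologically simple and cocompact, the monolith acts minimally on the Stone space of the centralizer lattice, so that the local (germ) type of a compact open subgroup is \emph{homogeneous} across $G$, being governed by the single topologically simple monolith. A non-trivial normal pro-$p$ subgroup pins this type to that of a pro-$p$ group; the homogeneity then yields $[U : O_p(U)] < \infty$, so that $O_p(U)$ is the desired compact open pro-$p$ subgroup. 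Conversely, were $O_p(U)$ of infinite index, the profinite quotient $U/O_p(U)$ would be infinite with trivial $p$-core, hence would carry a germ transverse to the pro-$p$ one and still detected by the minimal $M$-action, in conflict with this homogeneity. Establishing that homogeneity, together with the minimality of the monolith's action on the lattice, is precisely the deep input imported from \cite{CRW_dense}; by contrast, the reductions in the first three paragraphs are routine.
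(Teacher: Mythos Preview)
First, note that the paper does not supply a proof of this proposition: it is quoted directly as \cite[Corollary~8.2.4]{CRW_dense} and used as a black box. There is thus no internal argument to compare against; the only question is whether your sketch stands on its own.

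Your reductions in the first three paragraphs are correct and worth keeping: the equivalence between ``$G$ has a compact open pro-$p$ subgroup'' and ``$[U:O_p(U)]<\infty$ for some (equivalently any) compact open $U$''; the choice of $U$ with $K\le U\le N_G(K)$ so that $O_p(U)\supseteq K\ne 1$; and the verification that $\QZ(M)=W(M)=1$ and $C_G(M)=1$ via Proposition~\ref{prop:trivialQZ}.

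The gap is the fourth paragraph. You correctly locate the substantive content in the implication $O_p(U)\ne 1 \Rightarrow [U:O_p(U)]<\infty$, and then defer it to ``the deep input imported from \cite{CRW_dense}''. But the proposition you are proving \emph{is} \cite[Corollary~8.2.4]{CRW_dense}; invoking that reference here is circular. The mechanism you gesture at --- minimality of the $M$-action on the Stone space of the centralizer lattice $\mathcal{LC}(G)$ forcing a ``homogeneous local type'' which is then ``pinned'' to pro-$p$ --- is not a proof either. The centralizer lattice may be the two-element Boolean algebra $\{0,\infty\}$ (this is exactly the locally indecomposable case, and it includes for instance simple $p$-adic Lie groups, where the conclusion of the proposition nonetheless holds), in which case the Stone space is a singleton and minimality is vacuous; and even when $\mathcal{LC}(G)$ is non-trivial, you have neither defined ``homogeneous local type'' nor explained why the existence of a non-trivial pro-$p$ germ would determine it. The genuine argument in \cite{CRW_dense} goes through the local prime content and the [A]-semisimplicity framework of \cite{CRW-part2}, and is not a one-line consequence of a minimality statement. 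As written, your proposal performs sound reductions and then asserts the conclusion.
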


\begin{prop}\label{prop:JNC:primeContent}
Let $G$ be a non-discrete compactly generated just-non-compact tdlc group. Then for any prime $p$, if $G$ has a non-trivial compact locally normal subgroup that is pro-$p$, then $G$ has a compact open subgroup that is pro-$p$. 
\end{prop}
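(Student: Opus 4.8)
The plan is to reduce to the case of a topologically simple monolith, where Proposition~\ref{prop:RobustlyMonolithic} applies, and then to reassemble the conclusion for $G$. First I would invoke Proposition~\ref{prop:JustNonCompact}(i) to present $G$ as a monolithic group whose monolith $M = \Mon(G)$ is non-discrete and cocompact; being cocompact in the compactly generated group $G$, the subgroup $M$ is itself compactly generated, and $G/M$ is compact. As a minimal closed normal subgroup, $M$ is topologically characteristically simple, and I would first exclude the abelian case: a compactly generated abelian tdlc group has the form $\mathbb Z^b \times C$ with $C$ profinite, and topological characteristic simplicity, combined with $M$ being neither discrete nor compact (it is cocompact in the non-compact $G$), rules this out. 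Hence $M$ is non-abelian with $Z(M)=1$, and citing the structure theory of \cite{CaMo-decomp,CRW-part2} I would write $M$ as an internal quasi-product $M = \overline{S_1 \cdots S_k}$ of finitely many pairwise-commuting topologically simple closed subgroups $S_1,\dots,S_k$, permuted transitively under conjugation by $G$; each normalizer $N_G(S_i)$ is closed of finite index, hence open, $C_M(S_i) = \overline{\langle S_j : j \neq i\rangle}$, and $C_G(M)=1$ by monolithicity.

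Next I would manufacture a robustly monolithic quotient. Set $H = N_G(S_1)$ and $Q = H/C_G(S_1)$, noting $C_G(S_1) \trianglelefteq H$. The image $\bar S_1$ of $S_1$ is isomorphic to $S_1/Z(S_1) \cong S_1$, and a short computation using $S_1 \cap C_G(S_1) = Z(S_1) = 1$ shows $C_Q(\bar S_1) = 1$; consequently every non-trivial closed normal subgroup of $Q$ contains $\bar S_1$, so $Q$ is monolithic with non-discrete topologically simple cocompact monolith $\bar S_1$. Thus $Q$ meets the hypotheses of Proposition~\ref{prop:RobustlyMonolithic}.

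The transport of the pro-$p$ hypothesis is the delicate part. Starting from the given non-trivial compact locally normal pro-$p$ subgroup $K$, I would first replace it by one contained in $M$: choosing a compact open $V \leq N_G(K)$ with $K \trianglelefteq V$, the commutator $[K, V \cap M]$ lies in $K \cap M$ and is non-trivial, since an element of $K$ centralizing the open subgroup $V \cap M$ of $M$ would lie in $\QZ(G) = \{1\}$ (Proposition~\ref{prop:JustNonCompact}(i)). So $K \cap M \neq 1$, and after relabelling the factors I may assume $K \cap M$ projects non-trivially to $S_1$, i.e.\ its image $\hat K$ in $Q$ is a non-trivial compact locally normal pro-$p$ subgroup. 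Proposition~\ref{prop:RobustlyMonolithic} then yields a compact open pro-$p$ subgroup of $Q$, and by transitivity of the $G$-action on the factors the same holds for every $Q_i = N_G(S_i)/C_G(S_i) \cong Q$.

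Finally I would reassemble. From each $Q_i$ being virtually pro-$p$ I deduce, on the one hand, that each $\bar S_i \cong S_i$ is virtually pro-$p$, so that compatible compact open pro-$p$ subgroups $P_i \leq S_i$ (pairwise commuting) yield a compact open pro-$p$ subgroup $P_1\cdots P_k$ of $M$; and on the other hand that the image of $G$ in each $\mathrm{Out}(S_i)$ is virtually pro-$p$, whence $G/M$ — which identifies with a subgroup of $(\prod_i \mathrm{Out}(S_i)) \rtimes \Sym(k)$ with $\Sym(k)$ finite — is virtually pro-$p$ as well. As $M$ and $G/M$ are then both locally pro-$p$, a short profinite extension argument (a Sylow pro-$p$ subgroup of a suitable compact open subgroup of $G$ has finite index) produces the desired compact open pro-$p$ subgroup of $G$. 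The main obstacle is exactly this last coordination: because the monolith is only cocompact and is a quasi-product rather than simple, one must simultaneously control the local structure of $M$ via the conjugacy of its simple factors and control the compact quotient $G/M$ via the virtually-pro-$p$ outer action, and only then combine them through the extension; securing the quasi-product decomposition and the claim $K\cap M\neq 1$ are the remaining points that genuinely use the structure theory and the vanishing of $\QZ(G)$.
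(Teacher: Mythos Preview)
Your overall architecture matches the paper's: pass to the quasi-product decomposition $M=\overline{N_1\cdots N_\ell}$ of the monolith, form for each factor a quotient $R_i=G_0/C_{G_0}(N_i)$ (with $G_0=\bigcap_i N_G(N_i)$) that is monolithic with non-discrete topologically simple cocompact monolith, invoke Proposition~\ref{prop:RobustlyMonolithic} there, and propagate by transitivity of the $G$-action on the factors. The main difference is in two places, and the first is a genuine gap.

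Your argument that $K\cap M\neq 1$ asserts that an element of $K$ centralizing $V\cap M$ lies in $\QZ(G)$. This is not justified: $V\cap M$ is open in $M$, but $M$ need not be open in $G$, so containing $V\cap M$ does not make a centralizer open in $G$, and Proposition~\ref{prop:JustNonCompact}(i) does not apply. More to the point, the detour through $K\cap M$ is unnecessary. The paper replaces $K$ by $K\cap G_0$ (non-trivial since $\QZ(G)=1$ forces $K$ infinite, while $G_0$ has finite index), and then uses that the diagonal map $G_0\to\prod_i R_i$ is injective (its kernel is $\bigcap_i C_{G_0}(N_i)=C_{G_0}(M)\leq C_G(M)=1$). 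Hence some projection of $K\cap G_0$ to an $R_i$ is a non-trivial compact locally normal pro-$p$ subgroup, and Proposition~\ref{prop:RobustlyMonolithic} applies directly.

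Your reassembly step via an extension argument ($M$ locally pro-$p$, $G/M$ virtually pro-$p$ through an embedding into $(\prod_i\mathrm{Out}(S_i))\rtimes\Sym(k)$, then a Sylow argument) is more laborious than needed and requires further justification at several points (e.g.\ why the image in each $\mathrm{Out}(S_i)$ is virtually pro-$p$). The paper's conclusion is immediate from the same injective embedding: once every $R_i$ is locally pro-$p$, any sufficiently small compact open $V\leq G_0$ maps into a pro-$p$ subgroup of each $R_i$, hence $V$ embeds in a product of pro-$p$ groups and is therefore itself pro-$p$.
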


\begin{proof}
By Proposition~\ref{prop:JustNonCompact}, the group $G$ is monolithic with non-discrete cocompact monolith $M$, and $\QZ(G)=1$. Moreover $\QZ(M) = 1$ by Proposition~\ref{prop:trivialQZ}. In particular $M$ is not abelian, so that $\mathrm C_G(M)= \{1\}$ since otherwise we would have $M \leq \mathrm C_G(M)$ by definition of the monolith.

By \cite[Theorem~E]{CaMo-decomp}, the monolith $M$ has finitely many minimal closed normal subgroups $N_1, \dots, N_\ell$ that are topologically simple, and $M = \overline{N_1\dots N_\ell}$. In particular the $G$-action by conjugation on $M$ permutes that $N_i$ transitively. Let $G_0 \trianglelefteq G$ the open normal subgroup of finite index that normalizes $N_i$ for all $i$. For every finite index open subgroup $G_1 \leq G_0$, we have $M \leq G_1$ since $N_i \cap G_1$ is open of finite index in $N_i$, and $N_i$ is topologically simple. Moreover every non-trivial closed normal subgroup $N$ of $G_1$ contains one of the $N_i$, since otherwise $N$ would commute with $N_i$ for all $i$, hence be contained in $\mathrm C_G(M) = \{1\}$. Thus $N_1, \dots, N_\ell$ are the minimal closed normal subgroups of $G_1$. It then follows from \cite[Corollary~3.3]{CRW-part2} that $G_1/\mathrm C_{G_1}(N_i)$ is monolithic with monolith $\overline{N_i \mathrm C_{G_1}(N_i)}/ \mathrm C_{G_1}(N_i)$. Notice that this monolith contains $\overline{M\mathrm C_{G_1}(N_i)}/\mathrm C_{G_1}(N_i)$ and is thus cocompact. This shows that $ R_i = G_0/\mathrm C_{G_0}(N_i)$ is hereditarily just-non-compact. It then follows from Proposition~\ref{prop:JustNonCompact}(ii) that the monolith of $R_i$ is non-discrete, topologically simple and cocompact. Therefore,  if $R_i$ has a non-trivial compact locally normal subgroup that is pro-$p$, then $R_i$ is pro-$p$ by Proposition~\ref{prop:RobustlyMonolithic}. 
 
 Notice that $\bigcap_{i = 1}^\ell \mathrm C_{G_0}(N_i) = \mathrm C_{G_0}(M) \leq \mathrm C_G(M) =\{1\}$. Thus the product homomorphism $G_0 \to \prod_{i=1}^\ell R_i$ is injective. Let now $L$ be a non-trivial compact locally normal subgroup  of $G$ that is pro-$p$. If $L \cap G_0 =\{1\}$, then $L$ is finite, hence contained in $\QZ(G)=\{1\}$, a contradiction. Thus we may assume without loss of generality that $L\leq  G_0$. Since the projection $G_0 \to \prod_{i=1}^\ell R_i$ is injective, there is $i$ such that the projection of $L$ to $R_i$ has a non-trivial image. It then follows from the previous paragraph that $R_i$ is locally pro-$p$. Since $G$ normalizes $G_0$ and acts transitively on the set $\{N_1, \dots, N_\ell\}$, the groups $R_1, \dots, R_\ell$ are pairwise isomorphic, so that $R_i$ is locally pro-$p$ for every $i$. In particular, if $V$ is a sufficiently small compact open subgroup of $G$ contained in $G_0$, then the projection $G_0 \to R_i$ maps  $V$ onto a pro-$p$ subgroup of $R_i$. Using again the injectivity of the product map $G_0 \to \prod_{i=1}^\ell R_i$, it follows that $V$ is pro-$p$. This confirms that $G$ has a compact open pro-$p$ subgroup. 
\end{proof}

\section{Chabauty approximations of quasi just-non-compact groups} \label{sec-approx-QJNC}

Recall that we denote by $\sub(G)$ the set of closed subgroups of a locally compact group $G$. The sets of the form $\left\{H \in \sub(G) \, : \, H \cap K = \emptyset; \, H \cap U_i \neq \emptyset \, \, \text{for all $i$} \, \right\}$, where $K \subset G$ is compact and $U_1,\ldots,U_n \subset G$ are open, form a basis for the Chabauty topology on $\sub(G)$. Endowed with this topology, the space $\sub(G)$ is compact. 

This section aims at studying the properties of the closed subgroups of $G$ contained in a sufficiently small neibourhood of $G$ in its Chabauty space $\sub(G)$, under the assumption that $G$ is a non-discrete, compactly generated, quasi just-non-compact tdlc group.

\subsection{On the set of $(r, U)$-cocompact subgroups} \label{subsec-(r,U)coc}

\begin{defin}
	Let $G$ be a tdlc group, $U \leq G$ a compact open subgroup and $r \geq 1$. A closed cocompact subgroup $H \leq G$ is said to be \textbf{$(r,U)$-cocompact} if the double coset space $H \backslash G / U$ has cardinality at most $r$. We will denote by $\Ccal_{r,U}(G)$ the set of $(r,U)$-cocompact subgroups of $G$.
\end{defin}

In the sequel we will make use of the notion of \textbf{Cayley-Abels graph}. Recall that if $G$ is a compactly generated tdlc group and $U$ is a compact open subgroup of $G$, a Cayley-Abels graph of $G$ associated to $U$ is a connected locally finite graph on which $G$ acts vertex-transitively, and with vertex stabilizers the conjugates of $U$. We refer the reader to \cite{CorHar} for a detailed exposition.

\begin{lem} \label{lem-rU-coc-closed}
	Let $G$ be a compactly generated tdlc group, $U \leq G$ a compact open subgroup and $r \geq 1$. Then $\Ccal_{r,U}(G)$ is a clopen subset of $\sub(G)$. 
\end{lem}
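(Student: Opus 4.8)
The plan is to work with a Cayley--Abels graph $\X$ of $G$ associated with $U$, whose vertex set we identify with $G/U$ and whose base vertex is $o = U$. Since $G$ is compactly generated, $\X$ is connected and locally finite, and the $H$-orbits on the vertex set are exactly the double cosets in $H \backslash G / U$; write $\nu(H)$ for their number. Having at most $r$ such orbits forces $G = HK$ for the compact set $K = \bigcup_{i} g_i U$ (where $H g_1 U, \dots, H g_s U$ list the double cosets), so every $H$ with $\nu(H) \le r$ is automatically cocompact and $\Ccal_{r,U}(G) = \{H \in \sub(G) : \nu(H) \le r\}$. It thus suffices to prove that this set is both closed and open, i.e.\ that $\nu$ is, in a suitable sense, both lower and upper semicontinuous on the relevant range.

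For closedness I would show that the complement is open. Given $H$ with $\nu(H) \ge r+1$, choose vertices $w_0, \dots, w_r$ lying in pairwise distinct $H$-orbits, say $w_i = a_i U$. For $i \ne j$ the set $C_{ij} = \{g \in G : g w_i = w_j\} = a_j U a_i^{-1}$ is compact, and $w_i, w_j$ lie in distinct $H$-orbits precisely when $H \cap C_{ij} = \varnothing$. Since each $C_{ij}$ is compact, the set $\{H' : H' \cap C_{ij} = \varnothing\}$ is Chabauty-open, so the finite intersection over all pairs $i \ne j$ is an open neighbourhood of $H$ on which $w_0, \dots, w_r$ remain in pairwise distinct orbits, forcing $\nu(H') \ge r+1$. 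Hence the complement of $\Ccal_{r,U}(G)$ is open.

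For openness, fix $H$ with $\nu(H) = s \le r$ and orbit representatives $v_1, \dots, v_s$. The idea is to certify the orbit structure by finite data, using local finiteness together with connectedness. For each $i$ and each of the finitely many neighbours $u$ of $v_i$, pick $h_{i,u} \in H$ and an index $\sigma(i,u)$ with $h_{i,u} v_{\sigma(i,u)} = u$. Because point stabilizers in $G/U$ are open, each condition $H' \cap h_{i,u}\,\stab_G(v_{\sigma(i,u)}) \ne \varnothing$ is Chabauty-open and is satisfied by $H$; let $\mathcal O$ be the finite intersection of these conditions. For $H' \in \mathcal O$ I would then run a reachability induction along $\X$: starting from $v_1, \dots, v_s$, if a vertex $w = h' v_i$ with $h' \in H'$ has already been reached and $w'$ is a neighbour of $w$, then $(h')^{-1} w'$ is a neighbour of $v_i$, hence equals some $u$, and the witness $h'_{i,u} \in H'$ provided by $\mathcal O$ (satisfying $h'_{i,u} v_{\sigma(i,u)} = u$) gives $h' h'_{i,u} v_{\sigma(i,u)} = h' u = w'$, so $w'$ is reached. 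By connectedness every vertex is reached, whence $\nu(H') \le s \le r$ and $\mathcal O \subseteq \Ccal_{r,U}(G)$.

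The main obstacle is the openness direction: unlike closedness, which only exploits that meeting a fixed compact set is preserved under Chabauty limits, openness requires packaging the a priori infinitely many orbit incidences into finitely many local conditions. The crux is that local finiteness of $\X$ bounds the neighbours that must be checked at each representative, connectedness propagates the finite certificate to all vertices via the reachability induction, and openness of point stabilizers ensures the finitely many witnessing elements $h_{i,u}$ persist, suitably perturbed, inside every nearby subgroup. Combining the two directions yields that $\Ccal_{r,U}(G)$ is clopen.
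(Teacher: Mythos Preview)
Your argument is correct. Both directions are handled cleanly: for closedness you exploit that avoiding a compact set is Chabauty-open, and for openness you reduce the global orbit structure to finitely many open conditions using local finiteness and connectedness of the Cayley--Abels graph. The reachability induction is the right device, and your verification that a witness $h'_{i,u} \in H' \cap h_{i,u}\,\stab_G(v_{\sigma(i,u)})$ still sends $v_{\sigma(i,u)}$ to $u$ is accurate.

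Your route differs from the paper's. The paper first quotients by the normal core $V = \bigcap_{g} gUg^{-1}$ of $U$, observing that $H \mapsto HV/V$ is Chabauty-continuous and pulls $\Ccal_{r,U/V}(G/V)$ back to $\Ccal_{r,U}(G)$; this reduces to the case where $G$ acts faithfully on the Cayley--Abels graph, after which the paper invokes an external result (\cite[Proposition~2.6]{Cap-Rad-chab}). Your proof is self-contained and more elementary: it never needs faithfulness, and it makes explicit why openness holds rather than citing it. The paper's approach is terser but relies on outside machinery; yours actually proves the content of the cited proposition in the special form needed here.
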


\begin{proof}
Let $V$ be the intersection of all conjugates of $U$ in $G$. Since the reduction modulo $V$ induces a well defined continuous surjection $\varphi: \sub(G) \rightarrow \sub(G/V)$ such that $\varphi^{-1}(\Ccal_{r,U/V}(G/V)) = \Ccal_{r,U}(G)$, it is enough to prove the statement when $V$ is trivial. In this case $G$ acts faithfully on any Cayley--Abels graph associated to $U$, and the statement follows from \cite[Proposition 2.6]{Cap-Rad-chab}. 
\end{proof}

\begin{lem} \label{lem-rU-cocom-uniform}
	Let $G$ be a compactly generated tdlc group. Then for all $r,U$ there exists a finite subset $\Sigma \subset G$ such that $H \Sigma U = G$ for every $H$ in $\Ccal_{r,U}(G)$.
	
	Conversely, for every finite subset $\Sigma \subset G$, there is a constant $r$ such that every closed subgroup $H \leq G$ with  $H \Sigma U = G$ belongs to $\Ccal_{r,U}(G)$.
\end{lem}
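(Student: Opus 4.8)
The plan is to pass to the Cayley--Abels graph $X$ of $G$ associated to $U$, whose vertex set $VX$ is identified with $G/U$ and on which $G$ acts vertex-transitively by graph automorphisms with vertex stabilizers the conjugates of $U$. Since $G$ is compactly generated, $X$ is connected and locally finite. The main point is the following dictionary: for a closed subgroup $H \leq G$, the $H$-orbits on $VX$ correspond bijectively to the double cosets in $H\backslash G/U$, so that $H \in \Ccal_{r,U}(G)$ amounts to $H$ having at most $r$ orbits on $VX$; moreover, for a subset $\Sigma \subseteq G$ one has $G = H\Sigma U$ if and only if the finite set of vertices $\{\sigma U : \sigma \in \Sigma\}$ meets every $H$-orbit on $VX$.

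For the first assertion, I would fix the base vertex $v_0 = U \in VX$ and prove that if $H$ has at most $r$ orbits on $VX$, then every $H$-orbit has a representative in the ball $B(v_0, r-1)$ of radius $r-1$ around $v_0$. To this end, consider the finite graph $\mathcal Q$ whose vertices are the $H$-orbits $O_1, \dots, O_s$ (with $s \leq r$) and in which two orbits are joined by an edge whenever $X$ contains an edge between them; connectedness of $X$ forces $\mathcal Q$ to be connected. Choosing a spanning tree of $\mathcal Q$ rooted at the orbit of $v_0$ and inducting on the depth, one produces for each orbit $O_i$ a representative $w_i$ with $d(v_0, w_i) \leq \operatorname{depth}(O_i) \leq s-1 \leq r-1$: indeed, if $O_i$ has parent $O_j$ with chosen representative $w_j$, an edge of $X$ joining $O_j$ to $O_i$ can be translated by a suitable element of $H$, acting by automorphisms, into an edge incident to $w_j$, yielding a neighbour $w_i$ of $w_j$ lying in $O_i$, so that $d(v_0, w_i) \leq d(v_0, w_j)+1$. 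As $X$ is locally finite, $B(v_0, r-1)$ is a finite set of vertices depending only on $r$ (and on $G, U$), not on $H$. Taking $\Sigma$ to be a finite set of elements of $G$ with $\{\sigma U : \sigma \in \Sigma\} = B(v_0, r-1)$ then gives $G = H\Sigma U$ for every $H \in \Ccal_{r,U}(G)$, as required.

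For the converse, given a finite set $\Sigma$, set $r = |\Sigma U / U|$, the (finite) number of distinct vertices among the $\sigma U$. If $H$ is a closed subgroup with $H\Sigma U = G$, then $\Sigma U$ is compact and $G = H \cdot \Sigma U$, so $H$ is cocompact; moreover, by the dictionary above, the at most $r$ vertices $\{\sigma U : \sigma \in \Sigma\}$ meet every $H$-orbit on $VX$, whence $H$ has at most $r$ orbits, i.e. $|H\backslash G/U| \leq r$. Thus $H \in \Ccal_{r,U}(G)$.

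The only genuinely non-formal step is the bounded-orbit-representative claim in the first assertion; everything else is a direct translation between double cosets, orbits, and the covering relation $G = H\Sigma U$. The spanning-tree induction is the crux, and its essential input is that $H$ acts by graph automorphisms, which is precisely what allows an edge joining two orbits to be moved to a fixed representative vertex. I expect this to be the main, though mild, obstacle.
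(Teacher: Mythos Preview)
Your proposal is correct and follows essentially the same approach as the paper: both pass to the Cayley--Abels graph, translate the double-coset condition into an orbit count on $VX$, and use connectedness of $X$ to find orbit representatives within a ball of radius depending only on $r$. The paper phrases the key step by noting that the orbit representatives can be chosen to span a connected subgraph of at most $r$ vertices (hence of diameter $<r$), while you spell this out via the quotient graph $\mathcal Q$ and a spanning-tree induction; these are the same argument, and your bound $r-1$ is in fact slightly sharper than the paper's $r$, though this is immaterial.
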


\begin{proof}
	Let $X$ be a Cayley--Abels graph of $G$ with respect to $U$. Then 	$\Ccal_{r,U}(G)$ consists of the closed subgroups of $G$ acting with at most~$r$ orbits of vertices on $X$. Since $X$ is connected, for every $H \in \Ccal_{r,U}(G)$, there is a set of representatives of the vertex orbits of $H$ that spans a connected subgraph. That subgraph has $r$ vertices, and is thus of diameter~$<r$. 
	
	This shows that the $r$-ball around every vertex of $X$ contains a set of representatives of the $H$-orbits for every $H \in \Ccal_{r,U}(G)$. It now suffices to choose a finite $\Sigma$ such that $\Sigma U$ contains the $r$-ball around $U$ in $X$. 
	
	For the converse assertion, one defines $r$ as the number of left cosets of $U$ in $\Sigma U$, and the result is clear by definition. 
\end{proof}

\begin{lem}\label{lem:rU-intersect-open}
	Let $G$ be a tdlc group and $O \leq G$ be an open subgroup. Then for all $r \geq 1$, all compact open subgroup $U$ and all $(r, U)$-cocompact subgroup $H \leq G$, the intersection $H \cap O$ is $(r, U \cap O)$-cocompact in $O$.
\end{lem}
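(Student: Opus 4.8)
The plan is to compare the two double coset spaces directly, by exhibiting a natural injection
\[
\Phi \colon (H \cap O) \backslash O / (U \cap O) \longrightarrow H \backslash G / U, \qquad (H\cap O)\, g\, (U \cap O) \longmapsto H g U \quad (g \in O),
\]
and to read off the cardinality bound from the hypothesis $|H \backslash G / U| \leq r$. First I would record the routine preliminaries. Since $O$ is open and $U$ is compact open in $G$, the subgroup $U \cap O$ is open in $O$ and compact, hence compact open in $O$, so that $(r, U\cap O)$-cocompactness in $O$ is a meaningful notion; moreover $H \cap O$ is closed in $O$. Well-definedness of $\Phi$ is immediate from the inclusions $H \cap O \leq H$ and $U \cap O \leq U$: if two elements of $O$ lie in the same $(H\cap O)$-$(U\cap O)$ double coset, then a fortiori they lie in the same $H$-$U$ double coset.

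The heart of the argument, and the step I expect to be the main obstacle, is the injectivity of $\Phi$. Suppose $g_1, g_2 \in O$ satisfy $H g_1 U = H g_2 U$, and write $g_1 = h g_2 u$ with $h \in H$ and $u \in U$; the goal is to show $(H\cap O)\, g_1\, (U\cap O) = (H\cap O)\, g_2\, (U \cap O)$, for which it suffices to promote the witnesses to $h \in H \cap O$ and $u \in U \cap O$. The key observation is that $h = g_1 u^{-1} g_2^{-1}$ with $g_1, g_2 \in O$, whence $h \in O$ if and only if $u \in O$. The delicate point, on which the whole statement hinges, is thus to guarantee that the conjugating factor $u \in U$ can be taken inside $O$; this is exactly where the relation between $U$ and $O$ must be used (it is automatic once $U \leq O$, so that $u \in U \subseteq O$ and hence $h \in O$ as well). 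Granting this, both $h \in H \cap O$ and $u \in U \cap O$, and injectivity follows.

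Finally, injectivity of $\Phi$ together with $|H \backslash G / U| \leq r$ forces $|(H\cap O)\backslash O/(U\cap O)| \leq r$. In particular this double coset space is finite; writing $O$ as a finite union of sets $(H\cap O)\, g_i\, (U\cap O)$ and using that $U \cap O$ is compact, the quotient $(H\cap O)\backslash O$ is a finite union of compact sets, so $H \cap O$ is cocompact in $O$. Combined with the cardinality bound, $H \cap O$ is $(r, U\cap O)$-cocompact in $O$, as required. An equivalent and perhaps more transparent way to organise the same computation is through the Cayley--Abels picture of the preceding lemmas: identifying $O/(U\cap O)$ with the $O$-orbit of the base vertex $U$ inside the vertex set $G/U$ of a Cayley--Abels graph associated to $U$, the claim becomes the assertion that two vertices of that $O$-orbit lying in a common $H$-orbit already lie in a common $(H\cap O)$-orbit, which is precisely the injectivity verified above.
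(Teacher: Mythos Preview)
Your approach is exactly the paper's: define the natural map $\Phi \colon (H\cap O)\backslash O/(U\cap O) \to H\backslash G/U$ and assert injectivity. You go further than the paper by actually trying to verify injectivity, and you correctly isolate the crux: from $g_1 = h g_2 u$ with $g_1,g_2 \in O$, $h \in H$, $u \in U$, one needs $u \in O$ (equivalently $h \in O$). You observe this is automatic when $U \leq O$, then write ``Granting this\ldots''\ and proceed. But the lemma does \emph{not} assume $U \leq O$, and you never close this gap in general.

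In fact the gap cannot be closed: $\Phi$ is not injective in general, and the lemma as stated is false. Take $G = \mathbb{Z}/2 \times \mathbb{Z}/2$ (discrete, hence tdlc) with generators $a,b$; set $O = \langle a\rangle$, $U = \langle b\rangle$, $H = \langle ab\rangle$. Then $HU = G$, so $|H\backslash G/U| = 1$ and $H$ is $(1,U)$-cocompact; but $H\cap O = U\cap O = \{e\}$, so $|(H\cap O)\backslash O/(U\cap O)| = |O| = 2$, and $H\cap O$ is not $(1,U\cap O)$-cocompact in $O$. So the ``delicate point'' you flagged is a genuine obstruction, not a technicality. The lemma and your proof become correct under the extra hypothesis $U \leq O$; alternatively, one always has the weaker bound $r\cdot[U:U\cap O]$ (first bound $|H\backslash G/(U\cap O)|$ by $r\cdot[U:U\cap O]$, then apply the $U\leq O$ case). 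Either fix is harmless for the paper's applications, where only uniform cocompactness---not the precise constant $r$---is used.
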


\begin{proof}
	Indeed, the inclusion $O \leq G$ descends to an injective map of the spaces of double cosets $H \cap O \backslash O /U \cap O \to H\backslash G/U$. 
\end{proof}

\subsection{Approximations of quasi just-non-compact groups by their closed subgroups}

\begin{prop}\label{prop:ChabNeighb-qjnc}
Let $G$ be a non-discrete, compactly generated, quasi just-non-compact tdlc group. Then the collection of compactly generated quasi just-non-compact closed subgroups $H \leq G$ forms a neighbourhood of $G$  in $\sub(G)$.  
\end{prop}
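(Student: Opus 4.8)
The plan is to combine the clopen nature of the $(r,U)$-cocompactness condition with a Chabauty-limit argument, the dichotomy for closed normal subgroups of $G$, and the structure theory of compact locally normal subgroups, in order to control the normal subgroups of the closed subgroups lying near $G$.

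First I would fix a compact open subgroup $U \leq G$. Since $G = G\cdot U$, we have $G \in \Ccal_{1,U}(G)$, and by Lemma~\ref{lem-rU-coc-closed} the set $\Ccal_{1,U}(G)$ is clopen, hence an open neighbourhood of $G$ in $\sub(G)$. It therefore suffices to show that, after possibly shrinking this neighbourhood, every $H$ in it is compactly generated and quasi just-non-compact. Each such $H$ satisfies $G = HU$, so it is cocompact in $G$; being a cocompact closed subgroup of the compactly generated group $G$, it is compactly generated, and being cocompact in the non-compact group $G$ (non-compactness is built into quasi just-non-compactness) it is itself non-compact. The only remaining point is that every closed normal subgroup of $H$ is discrete or cocompact.

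The core of the argument is a limiting argument for this last property. Suppose no neighbourhood works; then there is a sequence $H_m \to G$ in $\sub(G)$ with $H_m \in \Ccal_{1,U}(G)$ but $H_m$ not quasi just-non-compact, so each $H_m$ carries a closed normal subgroup $N_m$ that is neither discrete nor cocompact. By compactness of $\sub(G)$ I pass to a subsequence with $N_m \to N$. Since $N_m \trianglelefteq H_m$ and $H_m \to G$, a continuity argument shows $N \trianglelefteq G$: for $g \in G$ one approximates $g$ by elements $g_m \in H_m$ and passes to the limit in $g_m N_m g_m^{-1} = N_m$. By the quasi just-non-compactness of $G$, the limit $N$ is discrete or cocompact. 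If $N$ is cocompact, then $N \in \Ccal_{r,U}(G)$ for some $r$, and since $\Ccal_{r,U}(G)$ is open (Lemma~\ref{lem-rU-coc-closed}) we get $N_m \in \Ccal_{r,U}(G)$, i.e. $N_m$ cocompact, for $m$ large, a contradiction.

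The main obstacle is therefore to rule out the remaining case, in which $N$ is discrete (so $N \leq \QZ(G)$, which is discrete by Proposition~\ref{prop-carac-qjnc}) while each $N_m$ is non-discrete. This is precisely a Zassenhaus-type degeneration: the compact open subgroups $W_m := N_m \cap U$ of $N_m$ are infinite, yet any Chabauty accumulation point of an element sequence $w_m \in W_m$ lies in $N \cap U$, which is finite, so the $W_m$ degenerate. The key structural input to exploit is that each $W_m$ is locally normal in $H_m$: since $N_m \trianglelefteq H_m$ and $W_m \leq U$, the compact open subgroup $U \cap H_m$ of $H_m$ normalizes $W_m$, because for $h \in U \cap H_m$ one has $h W_m h^{-1} = h N_m h^{-1} \cap h U h^{-1} = N_m \cap U = W_m$. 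Moreover, applying the dichotomy for $G$ to the $G$-normal closure yields $\overline{N_m^{\,G}} \supseteq G^{(\infty)}$, since that closure is non-discrete, hence cocompact (Proposition~\ref{prop-carac-qjnc}). To finish, I would show that such a family of non-trivial compact locally normal subgroups cannot Chabauty-degenerate to a finite group; here the structure theory of compact locally normal subgroups of $G$ (Propositions~\ref{prop:trivialQZ}, \ref{prop:JNC:primeContent} and~\ref{prop:RobustlyMonolithic}, in the spirit of the proof of Theorem~\ref{thmintro:Zassenhaus}) should produce a uniform lower bound obstructing the degeneration, giving the desired contradiction. I expect this degenerate discrete-limit case, rather than the limiting setup itself, to be where the real work and the essential appeal to the recent tdlc structure theory lie.
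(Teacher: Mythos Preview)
Your overall architecture---pass to a Chabauty limit $N$ of the bad normal subgroups $N_m \trianglelefteq H_m$, use that $N \trianglelefteq G$ is discrete or cocompact, and dispose of the cocompact case via the openness of $\Ccal_{r,U}(G)$---is exactly the paper's. The gap is in your treatment of the case where $N$ is discrete. You correctly isolate the issue (the infinite compact groups $W_m = N_m \cap U$ degenerating to something finite), but you then reach for the pro-$p$ structure theory of Propositions~\ref{prop:JNC:primeContent} and~\ref{prop:RobustlyMonolithic} and the machinery behind Theorem~\ref{thmintro:Zassenhaus}. Those results are about compact locally normal subgroups of $G$ itself, or about \emph{discrete} subgroups converging to $G$; your $W_m$ are only locally normal in $H_m$, and $N_m$ is non-discrete, so the cited propositions do not apply directly, and you do not indicate how to extract a uniform lower bound from them.

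The paper's fix is much more elementary and bypasses all of that. First choose $U$ with $U \cap \QZ(G) = \{1\}$ (possible since $\QZ(G)$ is discrete by Proposition~\ref{prop-carac-qjnc}); then the discrete limit $N \leq \QZ(G)$ satisfies $N \cap U = \{1\}$, not merely $|N\cap U|<\infty$. Now work on the Cayley--Abels graph $X$ with vertex set $G/U$ and base vertex $x$. The key observation is a ``first-level'' non-triviality: if $(N_m)_x \neq \{1\}$ and $H_m$ is vertex-transitive, then in fact $(N_m)_x \not\leq (N_m)_x^{[1]}$. Indeed, let $r\ge 0$ be maximal with $(N_m)_x \leq (N_m)_x^{[r]}$; then some vertex $y$ at distance $r$ from $x$ has $(N_m)_y \not\leq (N_m)_y^{[1]}$, and conjugating by an element of $H_m$ taking $y$ to $x$ (using vertex-transitivity and normality of $N_m$ in $H_m$) transports this back to $x$. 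This is the ``uniform lower bound'' you were looking for: the condition $(N_m)_x \not\leq (N_m)_x^{[1]}$ involves only a fixed finite quotient of $U$, so it passes to the Chabauty limit, giving $N_x \not\leq N_x^{[1]}$ and hence $N \cap U \neq \{1\}$, contradiction.
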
	
\begin{proof}
By Proposition~\ref{prop-carac-qjnc}, the quasi-center of $G$ is discrete. Let $U \leq G$ be a compact open subgroup with $U \cap \QZ(G) = \{1\}$. Let $X$ be a Cayley--Abels graph for $G$ with vertex set $G/U$. We claim that $G$ has a Chabauty neighbourhood consisting of cocompact closed subgroups $H$ with the following property: for each closed normal subgroup $N$ of $H$, if $N \cap U \neq \{1\}$ then $N$ is   cocompact in $G$.

If the claim fails, then there is a sequence $(H_k)$ in $\sub(G)$ converging to $G$, and a sequence $(N_k)$ of closed normal subgroups of $H_k$ such that $N_k \cap U \neq \{1\}$ and $N_k$ is not cocompact in $G$ for any $k$. 

First observe that by Lemma~\ref{lem-rU-coc-closed}, there is  a Chabauty neighbourhood of $G$ consisting of   closed subgroups $H$ that are vertex-transitive on $X$ (hence cocompact in $G$). Let $H \leq G$ be closed and vertex-transitive, and let $N$ be a closed normal subgroup of $H$ with $N \cap U \neq 1$. Let $x \in VX$ be a vertex such that  $U = G_x$. Then there is $r \geq 0$ such that $N_x = N_x^{[r]}$ and $N_x \not \leq N_x^{[r+1]}$. Thus for some vertex $y \in VX$ with $d(x, y)=r$, we have  $N_y \not \leq N_y^{[1]}$. Using that   $N$ is normal in $H$ and  that $H$ is vertex-transitive, we deduce that $N_x \not \leq N_x^{[1]}$. 

Coming back to the sequence $(N_k)$ from above, we deduce that, upon extracting, it  converges to a closed normal subgroup $N$ of $G$ with $N_x \not \leq N_x^{[1]}$. In particular $N \cap U \neq \{1\}$. Therefore $N$ is not contained in $\QZ(G)$, and it is thus cocompact in $G$. Invoking Lemma~\ref{lem-rU-coc-closed} again, we deduce that $N_k$ is cocompact in $G$ for all sufficiently large $k$, a contradiction. This proves the claim. 

The claim directly implies that for all $H \in \sub(G)$ sufficiently close to $G$, every closed normal subgroup of $H$ that is not cocompact intersects $U$ trivially, and is thus discrete. Thus such a group $H$ is quasi just-non-compact. 
\end{proof}

\subsection{Cocompact subgroups that are Chabauty limits of discrete subgroups}

Recall that a group $H$ is called \textbf{quasi-simple} if $H$ is perfect and $H/Z(H)$ is simple. For a finite group $G$, a \textbf{component} of $G$ is a subnormal subgroup that is quasi-simple. The \textbf{layer} of $G$, denoted by $E(G)$, is the subgroup generated by all quasi-simple subnormal subgroups.

The goal of this subsection is to establish the following. 

\begin{prop}\label{prop:CocoLimits-discrete}
Let $G$ be a compactly generated tdlc group and $U \leq G$ be a compact open subgroup with $\bigcap_{g \in G} g U g^{-1} =\{1\}$. Let $(\Gamma_k)$ be a sequence of discrete subgroups of $G$ converging to a cocompact subgroup $H \leq G$, and such that $U \cap \Gamma_k \neq \{1\}$ for all sufficiently large $k$. Assume that for every conjugate $V$ of $U$, we have $\QZ(V \cap H) = \{1\}$. Then the following assertions hold:
\begin{enumerate}[label=(\roman*)]
	\item $E(U \cap \Gamma_k) = \{1\}$ for all sufficiently large $k$. 
	\item There is a prime $p$ such that $H$ has an infinite compact locally normal subgroup that is pro-$p$.
\end{enumerate} 
\end{prop}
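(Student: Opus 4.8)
The plan is to translate everything to the action of $G$ on its Cayley--Abels graph $X = G/U$ and to study the finite vertex stabilizers $F_k := U \cap \Gamma_k$ through their generalized Fitting subgroups, transporting the resulting structure to $H$ by a Chabauty limit. Since $\Gamma_k$ is discrete and $U$ is compact, each $F_k$ is finite, and since $\bigcap_{g} gUg^{-1} = \{1\}$ the action of $G$ on $X$ is faithful, the vertex stabilizers being exactly the conjugates $V$ of $U$; by hypothesis every vertex stabilizer $V \cap H$ of $H$ has trivial quasi-center (equivalently, trivial FC-center as a profinite group). Because $U$ is open, any $g_k \in \Gamma_k$ with $g_k \to h \in H \cap U$ eventually lies in $U$, so that in fact $F_k \to H \cap U =: H_{x_0}$ in $\sub(G)$, and the same holds at every vertex. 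Projecting to the finite automorphism groups $\aut(B(x_0,R))$ of the balls and diagonalizing over $R$, I may assume, after passing to a subsequence, that the image of $F_k$ in each $\aut(B(x_0,R))$ is independent of $k$ and equals the image of $H_{x_0}$; thus the finite stabilizers converge to $H_{x_0}$ level by level. Finally, since $X$ has bounded degree $d$, every prime visible on a sphere is at most $d$, so only finitely many primes occur.

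For part (i) I will argue by contradiction, assuming the layer $E(F_k)$ is nontrivial along a subsequence, and extracting a persistent component. Let $R_k$ be minimal with $E(F_k)$ acting nontrivially on the sphere $S(x_0,R_k)$; there is then a vertex $w_k$ at distance $R_k-1$ on which $E(F_k)$ induces a nontrivial quotient of the layer on the neighbour set $X(w_k)$, hence a \emph{non-solvable} subgroup of the fixed finite group $\Sym(X(w_k)) \leq \Sym(d)$. Using that $H$ is cocompact, so that $\Gamma_k \in \Ccal_{r,U}(G)$ for large $k$ by Lemma~\ref{lem-rU-coc-closed}, I translate $w_k$ by some $\gamma_k \in \Gamma_k$ back to one of the finitely many vertices $y$ within distance $<r$ of $x_0$; after a further subsequence the groups $E_k'' := \gamma_k^{-1} E(F_k)\gamma_k \leq \Gamma_k \cap V$, with $V$ a fixed conjugate of $U$ fixing $y$, induce a fixed non-solvable group on $X(y)$. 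Passing to a Chabauty limit produces a compact subgroup $E_\infty \leq V \cap H$ that still induces a non-solvable group on $X(y)$, in particular is nontrivial and non-abelian. The crux is then to convert this into a contradiction with $\QZ(V \cap H) = \{1\}$: this is where the quasi-center hypothesis plays the role that primitivity of the local action plays in the Thompson--Wielandt theorem, since exploiting the normality of $E(F_k)$ in $(\Gamma_k)_{x_0}$ together with the structure theory of locally normal subgroups of tdlc groups should force a nontrivial FC-central element in some vertex stabilizer of $H$. I expect this step to be the principal obstacle.

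For part (ii), granting (i), the generalized Fitting subgroup satisfies $F^*(F_k) = F(F_k)$ for large $k$, which is nilpotent, hence the direct product of its Sylow subgroups $O_p(F_k)$, with $C_{F_k}(F(F_k)) \leq F(F_k)$. The level-convergence of the first paragraph then transfers this nilpotent structure to the limit: the images of $F(F_k)$ in the groups $\aut(B(x_0,R))$ stabilize and glue to a closed pro-nilpotent locally normal subgroup of $H$ inside $H_{x_0}$, which is the product of its Sylow pro-$p$ subgroups; selecting a prime $p$, among the finitely many available, whose part is infinite yields an infinite compact pro-$p$ locally normal subgroup. The remaining, and genuinely hard, point is to ensure that the limiting object is \emph{infinite}, equivalently that $H$ is non-discrete. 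If $H_{x_0}$ were finite, then by the quasi-center hypothesis it would be trivial, forcing $F_k \to \{1\}$ and hence $F_k$ to fix arbitrarily large balls around $x_0$; ruling this out is exactly a Zassenhaus/Thompson--Wielandt phenomenon and must combine the discreteness of the $\Gamma_k$, their uniform cocompactness, and the triviality of the quasi-centers of all vertex stabilizers.

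I therefore expect the heart of the proof to lie in two closely related places: showing in (i) that a surviving component is incompatible with trivial quasi-centers at every vertex, and showing in (ii) that the nilpotent Fitting limit is infinite rather than trivial. Both are precisely the points at which the hypothesis $\QZ(V \cap H) = \{1\}$ and the discreteness of the approximants are indispensable, and both mirror the finite-group-theoretic input of the Thompson--Wielandt theorem; the rest of the argument is the bookkeeping of Chabauty limits and generalized Fitting subgroups set up in the first paragraph.
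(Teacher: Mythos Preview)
Your overall strategy is the same as the paper's: work on the Cayley--Abels graph, analyse the finite stabilisers $(\Gamma_k)_x$ via their generalized Fitting subgroup, translate by elements of $\Gamma_k$ to bring the action to a fixed nearby vertex $y$, and pass to a Chabauty limit inside $V \cap H$. However two of the steps you flag as difficult are handled by precise finite-group-theoretic lemmas that you are missing, and without them the argument does not close.

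For part (i), the key ingredient you lack is the component commutation lemma (Aschbacher): for a finite group $M$ and a subnormal subgroup $N \leq M$, every component $L$ of $M$ satisfies $L \leq N$ or $[L,N]=\{1\}$. In the paper one first uses that $(\Gamma_k)_x^{[\ell]}$ is \emph{subnormal} in $(\Gamma_k)_y$ whenever $\ell > d(x,y)$ (Lemma~\ref{lem:Graph-subnormal}); this upgrades the layer at $x$ to a subgroup of the layer at $y$, so one can choose a component $L_k$ of $(\Gamma_k)_y$ with $L_k \not\leq (\Gamma_k)_y^{[2]}$. Since $(\Gamma_k)_y^{[2]}$ is normal (hence subnormal) in $(\Gamma_k)_y$, the commutation lemma gives $[L_k, (\Gamma_k)_y^{[2]}]=\{1\}$. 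Passing to the limit, the limit $L$ of $L_k$ centralises $H_y^{[2]}$, which is open in $H_y$; hence $L \leq \QZ(H_y)$, contradicting $\QZ(V\cap H)=\{1\}$. Your translated group $E_k''$ is a conjugate of the layer of $(\Gamma_k)_{x_0}$, not a priori the layer of $(\Gamma_k)_y$, so you cannot directly apply the commutation lemma; the subnormality step is exactly what bridges this. Your vague invocation of ``normality of $E(F_k)$ in $(\Gamma_k)_{x_0}$'' does not give what is needed at $y$.

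For part (ii), your plan to take the limit of $F(F_k)$ at the \emph{original} vertex $x_0$ is the wrong order of operations: even if $F^*(F_k)=F(F_k)\neq\{1\}$, this group may fix $B(x_0,R)$ for every fixed $R$ eventually, so its Chabauty limit at $x_0$ can be trivial. The translation to a vertex $y$ where $F^*((\Gamma_k)_y)\not\leq(\Gamma_k)_y^{[2]}$ must be done \emph{before} taking the limit; this is what forces the limit of $O_p((\Gamma_k)_y)$ to be a nontrivial closed normal pro-$p$ subgroup of $H_y$. Finally, you have misjudged the difficulty of ``infinite'': once $K=O_p(V\cap H)\neq\{1\}$, if $K$ were finite then, being normal in the compact group $V\cap H$, its centraliser there would be open, so $K\leq \QZ(V\cap H)=\{1\}$, a contradiction. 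Thus nontriviality plus the quasi-center hypothesis immediately yields infiniteness; no further Thompson--Wielandt-type input is required at that point.
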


The following basic fact is of fundamental importance. It implies that the layer $E(G)$, which is a characteristic subgroup of $G$, is a perfect central extension of a direct product of non-abelian simple groups.

\begin{lem}\label{lem:ComponentsCommute}
Let $G$ be a finite group and $H \leq G$ be a subnormal subgroup. Given a component $L$ of $G$, either $L \leq H$ or $[L, H]=\{1\}$. In particular, any two distinct components centralize each other. 
\end{lem}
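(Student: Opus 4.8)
The plan is to reduce everything to the case where the subnormal subgroup is actually normal, and then run two nested inductions on subnormal defect. First I would record the elementary structural facts about quasi-simple groups that drive the argument: if $L$ is quasi-simple then every normal subgroup of $L$ is either contained in $Z(L)$ or equal to $L$ (project to the simple quotient $L/Z(L)$, and in the second case use that $L$ is perfect to recover $L$). Two consequences I would isolate are that any conjugate of a component is again a component, and that a component $L$ of $G$ contained in a normal subgroup $M \triangleleft G$ is itself a component of $M$ (intersect a subnormal series for $L$ with $M$ to produce one inside $M$).

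Second, I would treat the \textbf{normal case}: if $N \triangleleft G$ and $L$ is a component of $G$, then $L \leq N$ or $[L,N]=1$. I would prove this by induction on the subnormal defect $d$ of $L$ in $G$. For $d=0$ we have $L=G$, and $N \triangleleft L$ is central or all of $L$, which settles it. For the inductive step, take the penultimate term $K = L_{d-1} \triangleleft G$ of a subnormal series $L = L_0 \triangleleft \dots \triangleleft L_d = G$, so that $L$ is a component of $K$ of defect $d-1$. Applying the inductive hypothesis to the normal subgroup $N \cap K \triangleleft K$ gives either $L \leq N \cap K \leq N$ (done), or $[L, N\cap K]=1$. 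In the latter case the key observation is that $[K,N] \leq K \cap N$ since both $K$ and $N$ are normal in $G$, whence $[L,N] \leq K \cap N$ and therefore $[[L,N],L] \leq [K \cap N, L] = [L, N\cap K] = 1$. Combined with perfectness $L=[L,L]$, the three subgroups lemma then forces $[L,N]=1$.

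Third, the \textbf{subnormal case} follows by a second induction, on the subnormal defect of $H$. Writing $H$ as subnormal of smaller defect in a normal subgroup $M \triangleleft G$, the normal case applied to $L$ and $M$ yields either $[L,M]=1$ --- so $[L,H] \leq [L,M]=1$ since $H \leq M$ --- or $L \leq M$, in which case $L$ is a component of $M$ and the inductive hypothesis applied inside $M$ concludes. The final "in particular" is then immediate: for distinct components $L \neq K$, apply the subnormal statement to the component $L$ and the subnormal subgroup $K$; the alternative $L \leq K$ is impossible, since a quasi-simple subnormal subgroup of the quasi-simple group $K$ must equal $K$ (project to $K/Z(K)$ and use perfectness), so we must have $[L,K]=1$.

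The main obstacle is the normal case, and within it the step upgrading $[[L,N],L]=1$ to $[L,N]=1$. The naive approach --- trying to show $N$ normalizes $L$ and then analyze $[L,N] \leq L \cap N \leq Z(L)$ --- runs directly into the commuting-components phenomenon we are trying to prove, which would be circular. The device that breaks the circularity is to bound $[L,N]$ inside $K \cap N$ using only the normality of $K$ and $N$ in $G$ (rather than controlling how $N$ permutes the conjugates of $L$), and then to feed this into the three subgroups lemma together with the perfectness of $L$. Everything else is routine bookkeeping with subnormal series.
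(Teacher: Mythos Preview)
Your proof is correct. The paper does not actually supply an argument for this lemma: it simply cites \cite[(31.4) and (31.5)]{Aschbacher}. What you have written is essentially the standard proof found in Aschbacher's textbook --- the normal case handled by induction on the subnormal defect of the component, with the three subgroups lemma and perfectness of $L$ converting $[[L,N],L]=1$ into $[L,N]=1$, followed by an outer induction on the defect of $H$ for the general subnormal case. So you have reconstructed the cited argument.

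One small point worth tightening in the final paragraph: when you argue that $L \leq K$ forces $L=K$ for distinct components, you implicitly use that $L$ is subnormal in $K$. This is true (intersect a subnormal series for $L$ in $G$ with $K$), but it is worth stating explicitly, since subnormality in $G$ together with containment in $K$ does not give subnormality in $K$ for free without that intersection step. Once that is in place, the proper subnormal subgroups of the quasi-simple group $K$ are all central (descend the series using that proper normal subgroups of $K$ lie in $Z(K)$), so $L$ would be abelian and perfect, hence trivial --- a contradiction.
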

\begin{proof}
See \cite[(31.4) and (31.5)]{Aschbacher}. 
\end{proof}

 The \textbf{Fitting subgroup} of a group $G$, denoted by $F(G)$, is the characteristic subgroup generated by all nilpotent normal subgroups of $G$. If $G$ is finite, then $F(G)$ coincides with the direct product of all $O_p(G)$, where $p$ runs over all primes dividing the order of $G$.  We recall that $O_p(G)$ denotes the largest normal $p$-subgroup of the finite group $G$. In case $G$ is a profinite group, the same symbol denotes the largest closed normal subgroup of $G$ that is a pro-$p$ group.  The \textbf{generalized Fitting subgroup} of a finite group $G$, denoted by $F^*(G)$, is defined as $F^*(G) = E(G)F(G)$. Notice that the generalized Fitting subgroup of a non-trivial finite group is non-trivial. 

The relevance of those notions to our purposes is revealed by the following subsidiary facts. 

\begin{lem}\label{lem:Graph-subnormal}
	Let $X$ be a connected graph and let $G \leq \Aut(X)$. Let also $x, y \in VX$. Then for all $r > d(x, y)$, the group $G_x^{[r]}$ is a subnormal subgroup of $G_y$. 
\end{lem}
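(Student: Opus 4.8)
The plan is to produce an explicit subnormal chain from $G_x^{[r]}$ up to $G_y$ by walking along a geodesic and decreasing the radius by one at each step, then telescoping down to radius zero once we reach $y$. Everything rests on a single elementary normalization fact that I would isolate first. \textbf{Claim (to be proved):} if $a, a' \in VX$ satisfy $d(a,a') \le 1$ and $s$ is an integer with $s \ge 1 + d(a,a')$, then $G_a^{[s]}$ is a \emph{normal} subgroup of $G_{a'}^{[s-1]}$. Its verification has three ingredients. First, since $d(a,a') \le s-1$, every $g \in G_{a'}^{[s-1]}$ fixes $a$, hence preserves distances to $a$ and therefore stabilizes the ball $B(a,s)$ setwise. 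Second, $B(a',s-1) \subseteq B(a,s)$ by the triangle inequality (using $d(a,a') \le 1$), which gives the inclusion $G_a^{[s]} \le G_{a'}^{[s-1]}$. Third, for $g \in G_{a'}^{[s-1]}$, $h \in G_a^{[s]}$ and $v \in B(a,s)$, one has $g^{-1}v \in B(a,s)$, so $h$ fixes $g^{-1}v$ and hence $g h g^{-1}(v) = v$; thus conjugation by $g$ preserves $G_a^{[s]}$.

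Granting the claim, I would fix a geodesic $y = x_0, x_1, \dots, x_d = x$ with $d = d(x,y)$ and set $\rho_i = r - (d-i)$, so that $\rho_d = r$ and $\rho_0 = r - d \ge 1$ (here the hypothesis $r > d$ enters). Applying the claim with $a = x_i$, $a' = x_{i-1}$ and $s = \rho_i$ for $i = d, \dots, 1$ — legitimate since $d(x_i,x_{i-1}) = 1$ and $\rho_i \ge \rho_1 = r-d+1 \ge 2$ — yields $G_{x_i}^{[\rho_i]} \trianglelefteq G_{x_{i-1}}^{[\rho_i - 1]} = G_{x_{i-1}}^{[\rho_{i-1}]}$. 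Concatenating these steps produces a chain from $G_x^{[r]}$ down to $G_y^{[r-d]}$. Then applying the claim with $a = a' = y$ and $s = j$ for $j = r-d, \dots, 1$ gives the further chain $G_y^{[r-d]} \trianglelefteq G_y^{[r-d-1]} \trianglelefteq \dots \trianglelefteq G_y^{[0]} = G_y$. Stringing the two chains together exhibits $G_x^{[r]}$ as a subnormal subgroup of $G_y$, as desired.

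The point to watch — and the place where the strict inequality $r > d(x,y)$ is genuinely needed rather than cosmetic — is the requirement $s \ge 2$ in the adjacent case $d(a,a')=1$ of the claim. This is exactly what guarantees that an element fixing $B(x_{i-1}, s-1)$ still fixes the neighbouring centre $x_i$, so that it really does normalize the smaller ball-stabilizer. If one only had $r = d$, the last geodesic step would involve $s = 1$, and an element of $G_{x_{i-1}}$ need not fix the adjacent vertex $x_i$, so the setwise preservation of $B(x_i,1)$ — and hence normality — would fail. Beyond this subtlety the argument is routine, as it all reduces to the observation that a graph automorphism fixing a vertex preserves each metric ball around that vertex setwise.
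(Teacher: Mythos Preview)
Your proof is correct. The claim is cleanly stated and verified, and the chain
\[
G_x^{[r]} \trianglelefteq G_{x_{d-1}}^{[r-1]} \trianglelefteq \dots \trianglelefteq G_y^{[r-d]} \trianglelefteq G_y^{[r-d-1]} \trianglelefteq \dots \trianglelefteq G_y^{[0]} = G_y
\]
does the job; your discussion of why the strict inequality $r>d(x,y)$ is needed is also accurate.

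The paper takes a slightly different route. Rather than walking along the geodesic while shrinking the radius, it fixes the radius at $1$ and walks along the path by successively intersecting $1$-ball stabilizers: with $y=y_0,y_1,\dots,y_\ell=x$ a geodesic, it uses the chain
\[
G_y \trianglerighteq G_y^{[1]} \trianglerighteq G_{y,y_1}^{[1]} \trianglerighteq \dots \trianglerighteq G_{y,y_1,\dots,y_{\ell-1}}^{[1]} \trianglerighteq G_x^{[r]},
\]
where $G_{y_0,\dots,y_k}^{[1]} = \bigcap_{i\le k} G_{y_i}^{[1]}$. Each normality step holds because fixing $B(y_{k-1},1)$ pointwise forces fixing $y_k$, hence stabilizing $B(y_k,1)$ setwise; the final step uses that $G_{y_0,\dots,y_{\ell-1}}^{[1]} \le G_x$ and $G_x^{[r]} \trianglelefteq G_x$. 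The paper's chain has length $\ell+2$ independently of $r$, whereas yours has length $r$; on the other hand, your intermediate groups are single ball-stabilizers rather than intersections, which makes the normality claim more uniform to state and prove. Both arguments are equally elementary.
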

\begin{proof}
	Let $y = y_0, y_1, \dots, y_\ell = x$ be a shortest path from $y$ to $x$. We have
	$$G_y \trianglerighteq G_y^{[1]} \trianglerighteq G_{y, y_1}^{[1]} \trianglerighteq  \dots \trianglerighteq G_{y, y_1, \dots, y_{\ell-1}}^{[1]}.$$
	Notice that $G_x \geq   G_{y, y_1, \dots, y_{\ell-1}}^{[1]}$ and that  $G_x^{[r]}  \leq   G_{y, y_1, \dots, y_{\ell-1}}^{[1]}$ for all $r > \ell = d(x, y)$. Since $G_x^{[r]}$ is normal in $G_x$, we have $G_{y, y_1, \dots, y_{\ell-1}}^{[1]}\trianglerighteq G_x^{[r]}$. 
\end{proof}

\begin{lem}\label{lem:GeneralizedFitting-convergence}
Let $G$ be a compactly generated tdlc group and let $U \leq G$ be a compact open subgroup with $\bigcap_{g \in G} g U g^{-1}= \{1\}$. Let $(\Gamma_k)$ be a sequence of discrete subgroups of $G$ converging to a cocompact subgroup $H \leq G$. If $U \cap \Gamma_k \neq \{1\}$ for all sufficiently large $k$, then there is a conjugate $V$ of $U$ in $G$ which satisfies  at least one of the following properties:
\begin{enumerate}[label=(\roman*)]
	\item \label{item-lem-fitt-1} $E(V \cap \Gamma_k) \neq \{1\}$ for infinitely many $k$, and $\QZ(V \cap H)  \neq \{1\}$. 
	
	\item \label{item-lem-fitt-2} There is a prime $p$ such that $O_p(V \cap \Gamma_k) \neq \{1\}$ for infinitely many $k$, and $O_p(V \cap H) \neq \{1\}$.  
\end{enumerate}
\end{lem}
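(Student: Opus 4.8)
The plan is to work in the Cayley--Abels graph $X$ of $G$ associated with $U$, on which $G$ acts faithfully (as the core of $U$ is trivial) and vertex-transitively, with $U = G_{x_0}$ for a base vertex $x_0$. The starting point is an \emph{openness principle} for Chabauty limits: since each ball-stabiliser $G_v^{[\ell]}$ is open, the convergence $\Gamma_k \to H$ forces $(\Gamma_k)_v^{[\ell]} \to H_v^{[\ell]}$ in $\sub(G_v)$ for every fixed vertex $v$ and every $\ell$ (any element of $H_v^{[\ell]}$ is a limit of elements of $\Gamma_k$ that, being eventually in the open set $G_v^{[\ell]}$, already lie in $(\Gamma_k)_v^{[\ell]}$; the reverse inclusion is the usual upper semicontinuity). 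In particular $(\Gamma_k)_{x_0}$ is a nontrivial finite group for large $k$, so $F^*((\Gamma_k)_{x_0}) = E((\Gamma_k)_{x_0})F((\Gamma_k)_{x_0})$ is nontrivial; after passing to a subsequence I fix a nontrivial piece $P_k$ which is either a component of $(\Gamma_k)_{x_0}$ or a nontrivial $O_{p_k}((\Gamma_k)_{x_0})$ for some prime $p_k$. In both cases $P_k$ is subnormal in $(\Gamma_k)_{x_0}$.

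Next I would locate where $P_k$ \emph{surfaces}. Let $t_k$ be the largest radius with $P_k \leq (\Gamma_k)_{x_0}^{[t_k]}$; then $P_k$ is subnormal in the ball-stabiliser $(\Gamma_k)_{x_0}^{[t_k]}$ (a subnormal subgroup contained in a normal subgroup is subnormal in it, by intersecting a subnormal series with that normal subgroup), and $P_k$ moves a vertex $u_k$ at distance $t_k+1$ from $x_0$. Choosing the vertex $z_k$ on the geodesic $[x_0,u_k]$ at distance $\max(t_k-1,0)$ from $x_0$, Lemma~\ref{lem:Graph-subnormal} gives that $(\Gamma_k)_{x_0}^{[t_k]}$, hence $P_k$, is subnormal in $(\Gamma_k)_{z_k}$ (for $t_k=0$ this is immediate, as $z_k=x_0$), while $P_k$ fixes the $d_k$-ball around $z_k$ but not the $(d_k{+}1)$-ball for some uniform $d_k \in \{0,1\}$. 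Thus $P_k$ is a component (resp. a subnormal $p_k$-subgroup) of $(\Gamma_k)_{z_k}$ acting nontrivially on the $(d_k{+}1)$-sphere around $z_k$.

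The main obstacle is that the conclusion requires a \emph{single} conjugate $V$ (i.e. a single vertex) to work for infinitely many $k$, whereas $z_k$ may run off to infinity --- precisely the regime where the finite subgroups escape into ever deeper stabilisers, e.g. when $H$ is open. I overcome this by \emph{internal} relocation. Since $H$ is cocompact it is $(r,U)$-cocompact for some $r$, and $\Ccal_{r,U}(G)$ is Chabauty-clopen (Lemma~\ref{lem-rU-coc-closed}), so $\Gamma_k$ is $(r,U)$-cocompact for all large $k$; by Lemma~\ref{lem-rU-cocom-uniform} its orbits are then $R$-dense for a uniform $R$, so there is $c_k \in \Gamma_k$ with $c_k z_k$ at distance $\leq R$ from $x_0$. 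Conjugating by $c_k \in \Gamma_k$ leaves $\Gamma_k$ (and $H = \lim_k \Gamma_k$) unchanged, sends $(\Gamma_k)_{z_k}$ to $(\Gamma_k)_{c_k z_k}$, and carries $P_k$ to a subgroup $Q_k \leq (\Gamma_k)_{c_k z_k}$ with the same subnormality and the same surfacing depth. As $c_k z_k$ ranges over the finite ball $B_R(x_0)$ and $d_k \in \{0,1\}$, a further subsequence makes $c_k z_k = v_0$ and $d_k = d$ constant. Set $V := G_{v_0}$.

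It remains to read off both conclusions at $V$, using $(\Gamma_k)_{v_0}^{[\ell]} \to H_{v_0}^{[\ell]}$. Because $Q_k$ acts nontrivially on the $(d{+}1)$-sphere, its image in the \emph{fixed finite} group $G_{v_0}/G_{v_0}^{[d+1]}$ is nontrivial; passing to the limit, $Q := \lim_k Q_k \leq H_{v_0}$ still has nontrivial image there, so $Q \neq \{1\}$. In the component case, $Q_k$ is a quasisimple subnormal subgroup, hence a component of $(\Gamma_k)_{v_0}$, so $E(V \cap \Gamma_k) \neq \{1\}$; moreover, since $Q_k \not\leq (\Gamma_k)_{v_0}^{[d+1]}$ and $(\Gamma_k)_{v_0}^{[d+1]}$ is normal, Lemma~\ref{lem:ComponentsCommute} forces $[Q_k, (\Gamma_k)_{v_0}^{[d+1]}] = \{1\}$; in the limit $Q$ centralises the open subgroup $H_{v_0}^{[d+1]}$, whence $Q \leq \QZ(V \cap H)$ and (i) holds. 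In the $O_p$ case, $p_k$ divides the order of the fixed finite group $G_{v_0}/G_{v_0}^{[d+1]}$, so a subsequence fixes $p_k = p$; a subnormal $p$-subgroup lies in $O_p$, hence $O_p(V \cap \Gamma_k) \supseteq Q_k \neq \{1\}$, and since a Chabauty limit of normal pro-$p$ subgroups is a normal pro-$p$ subgroup, $O_p(V\cap H) \supseteq \lim_k O_p((\Gamma_k)_{v_0}) \supseteq Q \neq \{1\}$, giving (ii). The only genuinely delicate points to verify in full are the openness principle and the relocation step; the rest is bookkeeping with the generalised Fitting subgroup.
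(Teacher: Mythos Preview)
Your proof is correct and follows essentially the same strategy as the paper's: work in the Cayley--Abels graph, use subnormality of deep ball-stabilisers (Lemma~\ref{lem:Graph-subnormal}) to transport a piece of the generalised Fitting subgroup to a vertex where it surfaces at bounded depth, relocate that vertex into a fixed ball via $(r,U)$-cocompactness (Lemmas~\ref{lem-rU-coc-closed} and~\ref{lem-rU-cocom-uniform}), and then pass to Chabauty limits using Lemma~\ref{lem:ComponentsCommute} in the component case and the bound on primes in the $O_p$ case. The only organisational difference is that you split into the component versus $O_p$ cases at the base vertex $x_0$ and track a single piece $P_k$ throughout, whereas the paper carries the whole $F^*((\Gamma_k)_x)$ through the relocation (using $F^*(N)\leq F^*(G)$ for $N$ subnormal in $G$) and only separates $E$ from $F$ at the final vertex; this is a cosmetic distinction and both arguments land on the same depth-$\leq 2$ surfacing bound.
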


\begin{proof}
Let $X$ be a Cayley--Abels graph for $(G, U)$. Since $\bigcap_{g \in G} g U g^{-1}= \{1\}$, the $G$-action on $X$ is faitfhul. Since $H$ is cocompact in $G$, it has finitely many orbits of vertices on $X$.  It follows from Lemma~\ref{lem-rU-coc-closed} that $\Gamma_k$ is $(r, U)$-cocompact in $G$ for  all sufficiently large $k$, where $r = |H \backslash G/U|$.  

Let $x$ be the base vertex, i.e. the vertex corresponding to the trivial coset $U$ in $VX = G/U$. Let $R$ be such that the ball $B(x, R)$ contains a representative of the $J$-orbits of vertices on $X$ for all $(r, U)$-cocompact subgroups $J$ (see Lemma~\ref{lem-rU-cocom-uniform}).

Since $(\Gamma_k)_x \neq \{1\}$ for all sufficiently large $k$, we have $F^*((\Gamma_k)_x) \neq \{1\}$, so there is $\ell_k \geq 0$ such that  $F^*((\Gamma_k)_x) \leq  (\Gamma_k)_x^{[\ell_k]}$ and  $F^*((\Gamma_k)_x) \not \leq (\Gamma_k)_x^{[\ell_k +1]}$. We claim that there exist $y_k$ such that $F^*((\Gamma_k)_{y_k}) \not \leq (\Gamma_k)_{y_k}^{[2]}$. If $\ell_k \leq 1$, then we may take $y_k = x$. If $\ell \geq 2$, then we pick $y_k \in VX$ with $d(x, y_k) = \ell_k-1$ and $F^*((\Gamma_k)_x) \not \leq (\Gamma_k)_{y_k}^{[2]}$. By Lemma~\ref{lem:Graph-subnormal}, the group $(\Gamma_k)_x^{[\ell_k]}$ is subnormal in $(\Gamma_k)_{y_k}$, so that $F^*((\Gamma_k)_x^{[\ell_k]}) \leq F^*((\Gamma_k)_{y_k})$ . Since $F^*((\Gamma_k)_x) \leq  (\Gamma_k)_x^{[\ell_k]}$, we have $F^*((\Gamma_k)_x) = F^*((\Gamma_k)_x^{[\ell_k]})$,
hence $F^*((\Gamma_k)_x) \leq F^*((\Gamma_k)_{y_k})$. Therefore $F^*((\Gamma_k)_{y_k}) \not \leq (\Gamma_k)_{y_k}^{[2]}$, and we have proved the claim. Since $B(x, R)$ contains a set of representatives of the $\Gamma_k$-orbits of vertices, we may assume that $y_k \in B(x, R)$, and hence upon extracting we may assume that $y_k = y$ for some $y \in B(x, R)$ and all $k$. We set $V = G_y$, which is conjugate to $U = G_x$. Now we distinguish two cases.

Assume first that  $E((\Gamma_k)_y)  )  \not \leq (\Gamma_k)_y^{[2]}$ for infinitely many $k$. By definition of the layer, we may find a component $L_k$ of $(\Gamma_k)_y$ that is not contained in $(\Gamma_k)_y^{[ 2]}$, and upon extracting we may assume that $(L_k)$ converges to a subgroup $L \leq H_y$ with $L \not \leq H_y^{[ 2]}$. By Lemma~\ref{lem:ComponentsCommute}, the component $L_k$ commutes with $(\Gamma_k)_y^{[ 2]}$, so it follows that $L$ commutes with $H_y^{[ 2]} = \lim_k (\Gamma_k)_y^{[2]}$ (recalling that taking an intersection with a fixed open subgroup is a Chabauty continuous operator on the $\sub(G)$). Thus the centralizer of $L \leq H_y$  in $H_y$ contains $H_y^{[2]} $, and is thus open.  It follows that $\QZ(V \cap H) =\QZ(H_y) \neq \{1\}$. Thus the case (i) of the statement occurs. 

Assume in a second case that  $E((\Gamma_k)_y)  )   \leq (\Gamma_k)_y^{[2]}$ for all but finitely many $k$. Then we have $F((\Gamma_k)_y) \not \leq (\Gamma_k)_y^{[2]}$ all but finitely many  $k$ since $ E((\Gamma_k)_y) F((\Gamma_k)_y) = F^*((\Gamma_k)_y) \not \leq (\Gamma_k)_y^{[2]}$.  Since $(\Gamma_k)_y$ acts faithfully on the regular graph $X$ fixing the vertex $y$, it follows that every prime dividing the order of $(\Gamma_k)_y$  is at most the degree of $X$. Therefore, there must exist a prime $p$ such that $O_p((\Gamma_k)_y)\not \leq (\Gamma_k)_y^{[2]}$  for  infinitely many  $k$. Upon extracting, the sequence $O_p((\Gamma_k)_y)$ converges to a closed normal pro-$p$-subgroup of $H_y$ that is not contained in $H_y^{[2]}$. In particular $O_p(H_y) \not \leq \{1\}$, so that  the case (ii) of the statement occurs. 
\end{proof}	

\begin{proof}[Proof of Proposition~\ref{prop:CocoLimits-discrete}]
We apply Lemma~\ref{lem:GeneralizedFitting-convergence}. Since the conclusion \ref{item-lem-fitt-1} of the lemma is ruled out by our assumption, it follows that the conclusion \ref{item-lem-fitt-2} must hold. Therefore there is a conjugate $V$ of $U$ in $G$ such that $E(\Gamma_k \cap V)=\{1\}$ for all sufficiently large $k$, and that $K = O_p(V \cap H)$ is non-trivial for some prime $p$. Thus $K$ is a locally normal subgroup of $H$ that is pro-$p$. If $K$ is finite then $K$ must lie inside $\QZ(V \cap H)$, which is absurd because $\QZ(V \cap H)$ is trivial. So $K$ is infinite, and the statement holds.
\end{proof}

\subsection{Approximations of quasi just-non-compact groups by discrete subgroups}

\begin{thm} \label{thm-limit-lattices-pro-p}
Let $L$ be a compactly generated tdlc group admitting a compact open subgroup $U$ such that $\bigcap_{l \in L} l U l^{-1}= \{1\}$. Assume that there exists a closed cocompact subgroup $G \leq L$ that is quasi just-non-compact and non-discrete, and a sequence of discrete subgroups of $L$ that Chabauty converges to a finite index open subgroup $H \leq G$. Then there is a prime $p$ and a compact open subgroup of $G$ that is a pro-$p$ group.
\end{thm}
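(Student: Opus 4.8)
The plan is to feed the data into Proposition~\ref{prop:CocoLimits-discrete} with ambient group $L$, extract an infinite compact locally normal pro-$p$ subgroup of $H$, transport the pro-$p$ content into the just-non-compact quotient $G/\QZ(G)$, apply Proposition~\ref{prop:JNC:primeContent} there, and finally lift a compact open pro-$p$ subgroup back to $G$. Concretely: by Proposition~\ref{prop-carac-qjnc} the quasi-center $\QZ(G)$ is discrete, and since a discrete subgroup of a tdlc group is closed, $\bar{G} := G/\QZ(G)$ is a compactly generated tdlc group that is just-non-compact (and non-discrete, as $\QZ(G)$ is not open, being non-cocompact). I first record the elementary fact that for any compact open subgroup $W \leq G$ one has $\QZ(W) = \QZ(G) \cap W$, since $W$ open forces $C_W(z) = C_G(z) \cap W$ to be open in $W$ exactly when $C_G(z)$ is open in $G$.

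First I would verify the hypotheses of Proposition~\ref{prop:CocoLimits-discrete} for $(L, U')$, where $U' \leq U$ is a compact open subgroup to be chosen (any such $U'$ inherits the trivial core $\bigcap_{l} l U' l^{-1} = \{1\}$). Since $H$ is a finite-index open subgroup of $G$, it is closed, cocompact in $L$, and non-discrete; and since intersecting with a fixed compact open subgroup is Chabauty continuous, $\Gamma_k \cap U' \to H \cap U'$, the latter being an infinite compact open subgroup of $H$. Hence $\Gamma_k \cap U' \neq \{1\}$ for all large $k$. The remaining, and crucial, hypothesis is that $\QZ(V \cap H) = \{1\}$ for every conjugate $V$ of $U'$; by the displayed identity this amounts to $Z' \cap l U' l^{-1} = \{1\}$ for all $l \in L$, where $Z' := \QZ(G) \cap H$ is a discrete normal subgroup of $H$.

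The hard part is exactly this last point, and I expect it to be the main obstacle, since $\QZ(G)$ is not normal in $L$ and its $L$-conjugates are not controlled. Set $S = \bigcup_{l \in L} l\,(Z' \setminus \{1\})\,l^{-1}$; it suffices to produce a compact open $U' \subseteq U$ with $U' \cap S = \varnothing$, which is possible as soon as $1 \notin \overline{S}$ (then choose $U'$ inside the open set $U \setminus \overline{S}$). To prove $1 \notin \overline{S}$, suppose $l_\alpha z_\alpha l_\alpha^{-1} \to 1$ with $z_\alpha \in Z' \setminus \{1\}$. Let $X$ be the Cayley--Abels graph of $(L, U')$ with base vertex $x$, on which $L$ acts faithfully (trivial core). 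Convergence to $1$ forces $l_\alpha z_\alpha l_\alpha^{-1}$ to fix balls $B(x, R_\alpha)$ with $R_\alpha \to \infty$, so $z_\alpha$ fixes $B(v_\alpha, R_\alpha)$ with $v_\alpha = l_\alpha^{-1} x$. Because $G$ is cocompact in $L$ and $U'$ is open, the double coset space $G \backslash L / U'$ is finite, so $G$ has finitely many orbits on $VX$; passing to a subnet I may assume $v_\alpha = h_\alpha v_*$ for a fixed vertex $v_*$ and $h_\alpha \in G$. Then $w_\alpha := h_\alpha^{-1} z_\alpha h_\alpha$ lies in $\QZ(G)$ (normality of the quasi-center under $G$) and fixes $B(v_*, R_\alpha)$, whence $w_\alpha \to 1$ in $L$ by faithfulness. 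As $\QZ(G)$ is closed and discrete in $G$, this gives $w_\alpha = 1$, i.e.\ $z_\alpha = 1$ eventually, a contradiction. This establishes the missing hypothesis.

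With Proposition~\ref{prop:CocoLimits-discrete}(ii) now applicable, I obtain a prime $p$ and an infinite compact locally normal pro-$p$ subgroup $K \leq H$; since $H$ is open in $G$, the subgroup $K$ is also locally normal in $G$. As $K$ is infinite and $K \cap \QZ(G)$ is compact and discrete, hence finite, the image $\bar{K}$ of $K$ in $\bar{G}$ is a non-trivial compact locally normal pro-$p$ subgroup. Applying Proposition~\ref{prop:JNC:primeContent} to the just-non-compact group $\bar{G}$ yields a compact open pro-$p$ subgroup $\bar{P} \leq \bar{G}$. To finish I would lift: choose a compact open subgroup $Q \leq G$ contained in the preimage of $\bar{P}$; then $Q$ is a profinite group with finite normal subgroup $F = Q \cap \QZ(G)$ and $Q/F$ pro-$p$. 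A Sylow pro-$p$ subgroup $P_0$ of $Q$ satisfies $P_0 F = Q$ (its index in $Q$ is simultaneously prime-to-$p$ and a power of $p$ modulo $F$), so $[Q : P_0]$ divides $|F|$ and $P_0$ is a compact open pro-$p$ subgroup of $G$, as required.
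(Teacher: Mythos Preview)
Your proof is correct and follows the same overall strategy as the paper: verify the hypotheses of Proposition~\ref{prop:CocoLimits-discrete}, extract an infinite compact locally normal pro-$p$ subgroup of $H$, push it into the just-non-compact quotient $G/\QZ(G)$, apply Proposition~\ref{prop:JNC:primeContent}, and lift back. The lifting via Sylow theory at the end is a nice explicit version of what the paper states in one line (``the same is true in $G$ since $\QZ(G)$ is discrete'').

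The only substantive difference lies in how you arrange that every $L$-conjugate of the chosen compact open subgroup meets $\QZ(G)$ trivially. The paper does this in one stroke: since $G$ is cocompact in $L$, pick representatives $l_1,\dots,l_r$ for the $G$-orbits on $L/U$, shrink $U$ so that $l_i^{-1}\QZ(G)l_i \cap U = \{1\}$ for each $i$ (finitely many discrete sets meeting a compact), and observe that normality of $\QZ(G)$ in $G$ then propagates this to all $L$-conjugates. Your Cayley--Abels argument that $1 \notin \overline S$ is really the same cocompactness reduction in dynamical clothing: passing to finitely many $G$-orbits of vertices plays the role of the finitely many $l_i$. Both work; the paper's version is shorter.

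Two cosmetic points. First, in your Cayley--Abels paragraph you invoke the graph of $(L,U')$ before $U'$ exists; just use $(L,U)$ there, since nothing in that argument depends on the eventual $U'$. Second, your reason for $\bar G$ being non-discrete (``$\QZ(G)$ not open, being non-cocompact'') is not the right implication; the correct reason is simply that a discrete open subgroup would force $G$ itself to be discrete, contrary to hypothesis. Neither issue affects the validity of the argument.
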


\begin{proof}
By Proposition~\ref{prop-carac-qjnc}, the quasi-center $\QZ(G)$ is discrete, so without loss of generality we may assume that $U \cap \QZ(G)=\{1\}$. Also since $\QZ(G)$ is normal in $G$ and $G$ is cocompact in $L$, upon passing to an open subgroup of $U$ we may also assume that $\QZ(G)$ intersects trivially all $L$-conjugates of $U$ (by replacing $U$ by $\cap_{i=1}^r l_i U l_i^{-1}$, where $l_1 U, \ldots, l_r U$ are representatives for the $G$-orbits in $L/U$). Hence whenever $V$ is an $L$-conjugate of $U$, we have $\QZ(V \cap H)  \leq V \cap \QZ(G) = 1$ since $H$ in an open subgroup of $G$. Since $H$ is non-discrete, the discrete groups $\Gamma_k$ intersect $U$ non-trivially for large enough $k$. Therefore it follows from Proposition~\ref{prop:CocoLimits-discrete} that there is a prime $p$ such that $H$ has an infinite compact locally normal subgroup $K$ that is pro-$p$. Since $H$ is open in $G$, that subgroup $K$ is also locally normal in $G$. Note that $(K \cap U ) \cap \QZ(G) \leq U \cap \QZ(G) = 1$, so the quotient group $G/\QZ(G)$ also has an infinite compact subgroup that is locally normal and pro-$p$. By Proposition~\ref{prop-carac-qjnc} $G/\QZ(G)$ is just-non-compact, and it is also compactly generated, so by Proposition~\ref{prop:JNC:primeContent} the group $G/\QZ(G)$ must have an open pro-$p$ subgroup. It follows that the same is true in $G$ since $\QZ(G)$ is a discrete subgroup of $G$.
\end{proof}

We now complete the proof of Theorem~\ref{thmintro:Zassenhaus} from the introduction. 

\begin{proof}[Proof of Theorem~\ref{thmintro:Zassenhaus}]
We let $G$ be a non-discrete, compactly generated, quasi just-non-compact tdlc group, and $H$ an open subgroup of finite index in $G$ that is a Chabauty limit of a sequence of discrete subgroups of $G$. Since $G$ is quasi just-non-compact, every compact normal subgroup of $G$ is finite. In particular the group $G$ admits a compact open subgroup $U$ such that $\bigcap_{g \in G} g U g^{-1}= \{1\}$. Hence we may apply Theorem \ref{thm-limit-lattices-pro-p} with $L = G$, and we deduce that there exist a prime $p$ and an open pro-$p$ subgroup $V$ of $G$. We have to show that $V$ is not topologically finitely generated. Argue by contradiction and assume that $V$ is topologically finitely generated. Then so is $W = V \cap H$. Since $W$ is also pro-$p$, it follows that $W$ has an open Frattini subgroup \cite[Proposition 1.14]{anprop}, and hence $W$ is an isolated point of $\sub(W)$ \cite[Theorem~5.6]{GartsideSmith}. Since $(\Gamma_k)$ converges to $H$, the sequence of intersections $(\Gamma_k \cap W)$ converges to $W$, so that $\Gamma_k$ contains the open subgroup $W$ for all sufficiently large $k$, contradicting that $\Gamma_k$ is discrete.
\end{proof}

Combining Theorem~\ref{thmintro:Zassenhaus} with Proposition~\ref{prop:ChabNeighb-qjnc}, the following result is immediate. 

\begin{cor}\label{cor:ChabNeighb-qjnc}
	Let $G$ be a non-discrete, compactly generated, quasi just-non-compact tdlc group. Assume that at least one of the following conditions is satisfied:
	\begin{enumerate}[label=(\arabic*)]
		\item $G$ has a compact open subgroup that is topologically finitely generated. 
		
		\item No compact open subgroup is pro-$p$ for any prime $p$. 
	\end{enumerate}
Then the collection of non-discrete, compactly generated, quasi just-non-compact closed subgroups $H \leq G$ forms a neighbourhood of $G$  in $\sub(G)$.  
\end{cor}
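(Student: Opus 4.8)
The plan is to derive Corollary~\ref{cor:ChabNeighb-qjnc} by combining the two preceding results in a very direct way. Proposition~\ref{prop:ChabNeighb-qjnc} already provides a Chabauty neighbourhood $\mathcal{N}$ of $G$ consisting of compactly generated quasi just-non-compact closed subgroups. What remains is to shrink this neighbourhood so that all its members are in addition \emph{non-discrete}. The natural strategy is to argue by contradiction using Theorem~\ref{thmintro:Zassenhaus}: if $G$ were \emph{not} isolated from its discrete subgroups, then under either hypothesis (1) or (2) we would contradict the conclusion of that theorem.

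First I would intersect $\mathcal{N}$ with the set of cocompact subgroups acting vertex-transitively on a fixed Cayley--Abels graph (via Lemma~\ref{lem-rU-coc-closed}), so that every member of the neighbourhood is cocompact and in fact $(1,U)$-cocompact for a suitable compact open $U$; this is already implicit in the proof of Proposition~\ref{prop:ChabNeighb-qjnc}. The key claim is then that $G$ is an \emph{isolated} point of the set of its discrete subgroups in $\sub(G)$: that is, no sequence of discrete subgroups $(\Gamma_k)$ of $G$ Chabauty-converges to $G$ (nor to any finite-index open subgroup of $G$). Granting this claim, the set $\sub(G) \setminus \overline{\{\text{discrete subgroups}\}}$ is an open neighbourhood of $G$; intersecting it with $\mathcal{N}$ yields a neighbourhood of $G$ all of whose members are quasi just-non-compact, compactly generated, and non-discrete, since any discrete member would have to be a limit of itself, but more carefully: I must ensure that the members of the neighbourhood are themselves non-discrete rather than merely not being limits of discrete groups. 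This is where I would use that discreteness is itself detected Chabauty-locally near a non-discrete group: since $G$ is non-discrete, a small enough neighbourhood (e.g. requiring $H \cap U \neq \{1\}$ for a fixed non-trivial compact open $U$, which holds automatically for cocompact $H$ close to $G$) forces every member $H$ to meet $U$ non-trivially and, being quasi just-non-compact with a non-discrete ambient structure, to be non-discrete.

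The heart of the matter is establishing the isolation claim, and this is exactly where the two hypotheses enter. Suppose for contradiction a sequence of discrete subgroups converges to a finite-index open subgroup $H \leq G$. By Theorem~\ref{thmintro:Zassenhaus}, there is a prime $p$ and a compact open subgroup $V \leq G$ that is pro-$p$ and \emph{not} topologically finitely generated. Under hypothesis (1), $G$ has a compact open subgroup that is topologically finitely generated; since any two compact open subgroups are commensurable and topological finite generation is inherited by open subgroups of finite index and is a commensurability invariant for compact (profinite) groups, every compact open subgroup of $G$ is topologically finitely generated, contradicting the existence of the non-finitely-generated $V$. Under hypothesis (2), no compact open subgroup of $G$ is pro-$p$ for any prime $p$, which directly contradicts the pro-$p$ conclusion for $V$. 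Either way we reach a contradiction, so $G$ is isolated from its discrete subgroups, and the corollary follows.

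I expect the main obstacle to be the passage from ``$G$ is not a Chabauty limit of discrete subgroups'' to ``a whole neighbourhood of $G$ consists of non-discrete subgroups.'' The subtlety is that isolation of $G$ from the \emph{closure} of the set of discrete subgroups is a statement about $G$, not about neighbouring subgroups, so I must argue separately that nearby quasi just-non-compact cocompact subgroups cannot be discrete. The clean way is to observe that a discrete subgroup sufficiently close to the non-discrete group $G$ would, together with the continuity of $H \mapsto H \cap U$, force $H \cap U$ to be a small nontrivial discrete (hence finite) group approximating the infinite profinite group $U$, which is impossible; concretely, fixing any infinite compact open $U$, the condition $H \cap U = U$ fails for discrete $H$ yet the limit $G \cap U = U$, and uniform control from the $(1,U)$-cocompactness guarantees that members of the neighbourhood meet $U$ in an open (hence infinite) subgroup. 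Making this last uniformity precise, rather than merely pointwise at $G$, is the step that needs the most care.
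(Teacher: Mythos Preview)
Your approach is correct and matches the paper's, which simply says the corollary is immediate from Theorem~\ref{thmintro:Zassenhaus} and Proposition~\ref{prop:ChabNeighb-qjnc}. Your fleshing-out of why hypotheses (1) and (2) each contradict the conclusion of Theorem~\ref{thmintro:Zassenhaus} (using commensurability of compact open subgroups for (1)) is accurate.

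However, your self-identified ``main obstacle'' is not a real obstacle, and your attempt to resolve it is overcomplicated. You already gave the correct argument before you started doubting it: if $G$ is not a Chabauty limit of discrete subgroups, then $G \notin \overline{\{\text{discrete subgroups}\}}$, so $\sub(G) \setminus \overline{\{\text{discrete subgroups}\}}$ is an open set containing $G$, and it contains \emph{no} discrete subgroup whatsoever (since every discrete subgroup lies in the set being closed up). There is nothing further to check. Your worry that you must distinguish ``non-discrete'' from ``not a limit of discrete groups'' dissolves once you notice that every discrete subgroup is trivially a limit of discrete subgroups (namely of itself), so the complement of the closure already excludes all discrete subgroups. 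The subsequent discussion of $H \cap U$ and uniform control via $(1,U)$-cocompactness is unnecessary.
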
	

\begin{rmq}	
That result is of special interest  when the group $G$ is topologically simple. If $G$ is not an isolated point in $\sub(G)$ and if $G$ satisfies (1) or (2), then $G$ is Chabauty approximated by non-discrete, compactly generated,  quasi just-non-compact groups. Those are subjected to Proposition~\ref{prop-carac-qjnc} and \cite[Theorem~E]{CaMo-decomp}, hence involve quasi products of compactly generated simple groups. This might be a tool to construct new compactly generated simple groups from known ones.
\end{rmq}

\section{Irreducibility conditions for lattices in products} \label{sec-irr}

\subsection{Cocompact closed subgroups in products}

The proof of \ref{it:irre3} $\Rightarrow$ \ref{it:irre1'} in Theorem~\ref{thm:irred:JNC} is based on Propositions \ref{prop-no-inter-implies-discrete} and \ref{prop-H=G}, which are of independent interest.  

\begin{prop} \label{prop-no-inter-implies-discrete}
Let $G_1$ be a tdlc group, $G_2$ be compactly generated locally compact group with a discrete polycompact radical, and $H \leq G = G_1 \times G_2$ a closed cocompact subgroup. If $H \cap G_2$ is a discrete subgroup of $G_2$, then $H \cap G_1$ is open in $H$.
\end{prop}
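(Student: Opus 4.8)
The plan is to reduce to the situation where $G_1$ is compact, reinterpret the conclusion as a discreteness statement for a quotient group, and then feed that into the residual‑discreteness machinery (Propositions~\ref{prop:CapMon-RD}, \ref{prop-exten-RD}, \ref{prop:CocoNormal}) together with the hypothesis that $G_2$ has discrete polycompact radical. First I would fix a compact open subgroup $U_1\le G_1$, which is profinite since $G_1$ is tdlc. Then $O:=U_1\times G_2$ is a \emph{clopen} subgroup of $G$, so its image in the compact quotient $G/H$ is clopen, hence compact; consequently $H_1:=H\cap O$ is cocompact in $O$. Writing $C:=H\cap G_1\cap O=H_1\cap G_1$, one has that $H_1$ is open in $H$, so once $C$ is shown to be open in $H_1$ it is open in $H$, whence $H\cap G_1=\bigcup_{a\in H\cap G_1}aC$ is open in $H$. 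Thus it suffices to treat $H_1\le U_1\times G_2$, i.e.\ we may assume $G_1$ compact.

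Set $K:=\overline{p_1(H_1)}\le U_1$ (compact, profinite), $B:=H_1\cap G_2=\ker(p_1|_{H_1})$ (discrete by hypothesis), and $C=\ker(p_2|_{H_1})$ (compact). Since $K$ is compact, the projection $p_2|_{H_1}$ is proper; hence $P_2:=p_2(H_1)$ is a \emph{closed, cocompact} subgroup of $G_2$, and $p_2$ induces a topological isomorphism $H_1/C\xrightarrow{\ \sim\ }P_2$. Therefore the statement ``$C$ is open in $H_1$'' is equivalent to ``$P_2$ is discrete,'' and this is what I would prove. Note that $P_2$, being cocompact in the compactly generated group $G_2$, is itself compactly generated, and hence so is $P_2/B$.

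The heart of the argument is to show that $P_2/B$ is residually discrete. Since $C\trianglelefteq H_1$ and $p_1(C)=C$, the dense subgroup $p_1(H_1)\le K$ normalises the closed subgroup $C$, so $C\trianglelefteq K$ and $K/C$ is profinite. Composing $p_1$ with $K\to K/C$ kills both $B$ and $C$, and a direct check shows $\{h\in H_1: p_1(h)\in C\}=CB$; this yields an injective continuous homomorphism $\theta\colon P_2/B\cong H_1/CB\hookrightarrow K/C$ with dense image. Pulling back the open normal subgroups of the profinite group $K/C$ (whose intersection is trivial) produces open normal subgroups of $P_2/B$ whose intersection is trivial, so $P_2/B$ is residually discrete. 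By Proposition~\ref{prop:CapMon-RD} it then has a basis of identity neighbourhoods consisting of compact open normal subgroups, and Proposition~\ref{prop-exten-RD}, applied to the discrete normal subgroup $B\trianglelefteq P_2$, transfers this property to $P_2$. In particular $P_2$ has a compact open normal subgroup $Q$; by Proposition~\ref{prop:CocoNormal} (applied to the cocompact subgroup $P_2\le G_2$) it lies in a compact normal subgroup of $G_2$, which is finite because $G_2$ has discrete polycompact radical. Hence $Q$ is finite and open, so $P_2$ is discrete, and the conclusion follows.

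I expect the third paragraph to be the main obstacle. Everything before it is bookkeeping, but it hides the two substantive points: recognising that the properness of $p_2|_{H_1}$ (which is exactly what the reduction to compact $G_1$ buys) forces $P_2$ to be closed and cocompact, and then manufacturing the normal subgroup $C\trianglelefteq K$ and the embedding $P_2/B\hookrightarrow K/C$ into a profinite group. The discreteness of the polycompact radical of $G_2$ is used only at the very end, to collapse the compact open normal subgroup $Q$ to a finite one; the delicate part is getting a \emph{normal} compact subgroup to which Proposition~\ref{prop:CocoNormal} can be applied, and that is precisely what the residual‑discreteness detour provides.
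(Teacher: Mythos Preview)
Your proof is correct and follows the same strategy as the paper: restrict to $O = H \cap (U_1 \times G_2)$, establish residual discreteness by embedding a quotient into a profinite group, then apply Propositions~\ref{prop:CapMon-RD}, \ref{prop-exten-RD}, and \ref{prop:CocoNormal} together with the discrete polycompact radical of $G_2$. The paper's version is marginally more direct: it works with $O$ itself rather than passing to $P_2 \leq G_2$, observing simply that $O/N$ (your $H_1/B$) embeds continuously in the profinite group $U_1$ via $p_1$ and is therefore already residually discrete---this bypasses the auxiliary quotient by $C$ and the detour through $P_2$.
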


\begin{proof}
Let $U_1$ be a compact open subgroup of $G_1$, and let $O = H \cap (U_{1} \times G_2)$. Note that $O$ is compactly generated as it is cocompact in the compactly generated group $U_{1} \times G_2$. We write $N = H \cap G_2$, which is contained in $O$. The group $O/N$ embeds continuously in the profinite group $U_{1}$, and hence $O/N$ is residually discrete. Since $N$ is discrete by assumption, Proposition \ref{prop:CapMon-RD} together with Proposition \ref{prop-exten-RD} imply that the group $O$ admits a compact open normal subgroup $K$. Now according to Proposition \ref{prop:CocoNormal} the subgroup $K$ is contained in a compact normal subgroup of $U_{1} \times G_2$ since $K$ has a cocompact normalizer in $U_{1} \times G_2$. But $G_2$ has a discrete polycompact radical, so every compact normal subgroup of $U_{1} \times G_2$ has a finite projection to the factor $G_2$. In particular there is a finite index closed subgroup of $K$ that is contained in $U_1$, and $H \cap G_1$ is open in $H$.
\end{proof}


\begin{prop} \label{prop-(T)-0}
Let $G_1,G_2$ be tdlc groups, and suppose that $G_2$ has property (T) and $G_2$ has a discrete amenable radical. If $H$ is a closed subgroup of finite covolume in $G$, then the conclusion of Proposition \ref{prop-no-inter-implies-discrete} holds.
\end{prop}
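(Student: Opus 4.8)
The plan is to adapt the proof of Proposition~\ref{prop-no-inter-implies-discrete}, replacing its two uses of cocompactness by property~(T) and by the amenable radical respectively. Recall that in that proof cocompactness of $H$ entered only twice: first to ensure that $O = H \cap (U_1 \times G_2)$ is compactly generated (so that Propositions~\ref{prop:CapMon-RD} and~\ref{prop-exten-RD} apply), and second through Proposition~\ref{prop:CocoNormal}, to force the compact open normal subgroup produced in $O$ to have finite image in $G_2$. I would recover compact generation by a property~(T) argument, and replace Proposition~\ref{prop:CocoNormal} by passing to the amenable radical.

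First I fix a compact open subgroup $U_1 \leq G_1$ and set $P = U_1 \times G_2$ and $O = H \cap P$. Since $U_1$ is compact it has property~(T), and a product of two groups with~(T) has~(T), so $P$ has property~(T). As $P$ is open in $G$ and $H$ has finite covolume in $G$, the intersection $O = H \cap P$ has finite covolume in $P$; hence $O$ has property~(T), being a closed subgroup of finite covolume in a group with~(T), and in particular $O$ is compactly generated. With this in hand the argument of Proposition~\ref{prop-no-inter-implies-discrete} proceeds verbatim: writing $N = H \cap G_2 \leq O$, which is discrete by hypothesis, the quotient $O/N$ embeds continuously in the profinite group $U_1$ and is therefore residually discrete, so Proposition~\ref{prop:CapMon-RD} gives that its compact open normal subgroups form a basis of identity neighbourhoods, and Proposition~\ref{prop-exten-RD} then yields a compact open normal subgroup $K \trianglelefteq O$.

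It remains to show that $K$ has finite image under $p_2$; granting this, $K \cap G_1 = \ker(p_2|_K)$ has finite index in $K$, hence is open in $K$ and therefore in $H$, and since $K \cap G_1 \subseteq H \cap G_1$ this proves that $H \cap G_1$ is open in $H$, as required. Here is where the amenable radical replaces Proposition~\ref{prop:CocoNormal}. Set $D_0 = \overline{p_2(O)}$; projecting the finite invariant measure on $P/O$ through the quotient map $P \to G_2$ shows that $D_0$ has finite covolume in $G_2$, and is compactly generated (indeed it has property~(T), again as a finite-covolume subgroup of $G_2$). The subgroup $p_2(K)$ is compact and normal in $p_2(O)$, hence normal in its closure $D_0$; being compact it is amenable, so it lies in the amenable radical $R(D_0)$, and a fortiori in the polycompact radical $W(D_0)$.

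The main obstacle — and the point where the hypothesis on the amenable radical, rather than merely the polycompact radical, is essential — is to show that $R(D_0)$ is discrete. I would deduce this from the general principle that the amenable radical of a closed subgroup of finite covolume is controlled by that of the ambient group, so that $R(D_0) \subseteq R(G_2)$ (or is at least discrete whenever $R(G_2)$ is). Given this, and since $D_0$ is compactly generated, its polycompact radical $W(D_0)$ is closed and discrete, whence every compact normal subgroup of $D_0$ is finite; in particular $p_2(K)$ is finite, completing the proof. I expect the verification that finite covolume forces $R(D_0)$ to be discrete to be the crux of the argument, and it is precisely this inheritance property of the amenable radical (which fails for the polycompact radical under mere finite covolume) that accounts for the strengthened hypothesis in the statement.
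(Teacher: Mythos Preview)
Your proof is correct and follows essentially the same route as the paper's; the paper supplies the crux you identify---that the amenable radical of a finite-covolume closed subgroup lies in the amenable radical of the ambient group---by citing \cite[Proposition~4.4]{Furst-bourb} and \cite[Proposition~7]{Furman-min-strg}, or alternatively the main result of \cite{BDL_URS}. One small slip: the inclusion runs $W(D_0) \subseteq R(D_0)$, opposite to what your ``a fortiori'' suggests, but that step is never used and can simply be dropped.
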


\begin{proof}
The proof follows the same lines as Proposition \ref{prop-no-inter-implies-discrete}. The subgroup $O = H \cap (U_{1} \times G_2)$ has finite covolume in $U_{1} \times G_2$, and hence also has property (T). In particular $O$ is compactly generated. By the same argument as above, we see that $O$ admits a compact open normal subgroup $K$, so that $p_2(K)$ is a compact normal subgroup of $p_2(O)$. Now since $p_2(O)$ has finite covolume in $G_2$, the amenable radical of $p_2(O)$ must be contained in the amenable radical of $G_2$ (by \cite[Proposition 4.4]{Furst-bourb} and \cite[Proposition 7]{Furman-min-strg}; or by the main result of \cite{BDL_URS}). Since $G_2$ has a discrete amenable radical, the subgroup $p_2(K)$ is therefore contained in a discrete subgroup of $G_2$. So $K$ has a finite projection to $G_2$, and $H \cap G_1$ is open in $H$.
\end{proof}

\begin{cor} \label{cor-no-inter-factor-implies-discrete}
Let $G_1,\ldots,G_n$ be compactly generated tdlc groups with $W(G_i)$ discrete for every $i$. Let $H \leq G = G_1 \times \ldots \times G_n$ be a closed cocompact subgroup such that $H \cap G_i$ is discrete in $G_i$ for all $i$. Then $H$ is discrete.
\end{cor}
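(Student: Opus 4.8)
The plan is to derive the statement from Proposition~\ref{prop-no-inter-implies-discrete} by applying it once for each coordinate direction and then intersecting the resulting open subgroups of $H$, rather than by a naive induction on $n$.

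First I would fix $i \in \{1, \dots, n\}$ and regroup the ambient product as $G = \hat{G}_i \times G_i$, where $\hat{G}_i = \prod_{j \neq i} G_j$ denotes the product of all factors other than the $i$-th one. I would then apply Proposition~\ref{prop-no-inter-implies-discrete} with $\hat{G}_i$ playing the role of the first factor and $G_i$ the role of the second. The hypotheses are met: $\hat{G}_i$ is a tdlc group, $G_i$ is compactly generated with discrete polycompact radical $W(G_i)$, and $H$ is a closed cocompact subgroup of $G$ with $H \cap G_i$ discrete. The proposition therefore yields that $H \cap \hat{G}_i$ is open in $H$. It is worth stressing that the proposition is \emph{asymmetric}: it imposes the compact generation and discrete-polycompact-radical conditions only on the distinguished factor $G_i$, not on $\hat{G}_i$, which is precisely what makes this regrouping legitimate for every $i$.

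The second and final step is to combine these $n$ conclusions. Since $\bigcap_{i=1}^n \hat{G}_i$ consists of the elements of $G$ all of whose coordinates are trivial, we have $\bigcap_{i=1}^n \hat{G}_i = \{1\}$, whence
\[
\bigcap_{i=1}^n \left(H \cap \hat{G}_i\right) = H \cap \bigcap_{i=1}^n \hat{G}_i = \{1\}.
\]
Each $H \cap \hat{G}_i$ is open in $H$, so this finite intersection is again an open subgroup of $H$. Thus $\{1\}$ is open in $H$, which means exactly that $H$ is discrete.

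I do not expect a genuine obstacle, since the argument is short once set up correctly; the one point requiring care is the choice of how to invoke Proposition~\ref{prop-no-inter-implies-discrete}. A naive induction that peels off a single factor $G_n$ and tries to apply the corollary to $H \cap (G_1 \times \dots \times G_{n-1})$ does \emph{not} work, because that intersection, although open in $H$, need not be cocompact in $G_1 \times \dots \times G_{n-1}$ (for instance, when $H$ is already a discrete lattice with injective projections, $H \cap G_1$ may be trivial, hence non-cocompact). The correct move is therefore to avoid induction altogether and apply the proposition in parallel to all $n$ regroupings $G = \hat{G}_i \times G_i$, exploiting that the intersection of the subgroups $\hat{G}_i$ is trivial.
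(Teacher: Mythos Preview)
Your proof is correct and essentially identical to the paper's own argument. The paper likewise applies Proposition~\ref{prop-no-inter-implies-discrete} once for each $i$, with the single factor $G_i$ in the role of the second factor, to obtain that $\Ker(p_i|_H) = H \cap \hat{G}_i$ is open in $H$, and then observes that $\bigcap_i \Ker(p_i|_H) = \{1\}$ is open.
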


\begin{proof}
Since the polycompact radical of a finite product is the product of the polycompact radicals, every subproduct of $G$ has a discrete polycompact radical, so we deduce that $\Ker(p_i|_H)$ is open in $H$ for every $i$ according to Proposition \ref{prop-no-inter-implies-discrete}. Therefore $\bigcap_i \Ker(p_i|_H) = 1$ remains open in $H$, and hence $H$ is discrete.
\end{proof}

\begin{lem}\label{lem:ConstantOrbits}
Let $G = G_1 \times G_2$ be a product of tdlc groups and $H \leq G$ be a closed subgroup such that $H \cap G_2$ is cocompact in $G_2$. For each compact open subgroup $U_2$ of $G_2$, there is a compact open subgroup $U_1$ of $G_1$ such that for each open subgroup $V_1 \leq U_1$, the projections 
$p_2(H \cap (V_1 \times G_2))$  and $p_2(H \cap (U_1 \times G_2))$
have the same orbits on $G_2/U_2$.
\end{lem}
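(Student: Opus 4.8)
The plan is to reduce the statement to an elementary finiteness argument about partitions of the discrete set $G_2/U_2$. First I would set $N := H \cap G_2$, viewed inside $G_2$ via the identification $G_2 \cong \{1\} \times G_2$, and for each compact open subgroup $V \leq G_1$ introduce the subgroup
$$P_V := p_2\bigl(H \cap (V \times G_2)\bigr) \leq G_2.$$
Two preliminary observations are crucial. First, every $P_V$ contains $N$: since $\{1\} \times G_2 \subseteq V \times G_2$ we get $H \cap (\{1\} \times G_2) \subseteq H \cap (V \times G_2)$, and $p_2$ restricted to $\{1\} \times G_2$ is the identification, so $N \subseteq P_V$. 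Second, $N$ is cocompact in $G_2$ by hypothesis, so the number $m := |N \backslash G_2 / U_2|$ of $N$-orbits on $G_2/U_2$ is finite, because the double cosets partition the compact space $N \backslash G_2$ into nonempty open pieces (here $U_2$ being open is what makes each double coset open).

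Next I would record the two monotonicity facts that drive the argument. Because $N \leq P_V$, every $P_V$-orbit on $G_2/U_2$ is a union of $N$-orbits; hence the number $n(V)$ of $P_V$-orbits satisfies $1 \leq n(V) \leq m$ for all $V$, a bound that is uniform precisely because the fixed cocompact group $N$ sits inside every $P_V$. Moreover, if $V_1 \leq V_1'$ are compact open subgroups of $G_1$, then $P_{V_1} \leq P_{V_1'}$, so the $P_{V_1}$-orbit partition refines the $P_{V_1'}$-orbit partition, giving $n(V_1) \geq n(V_1')$.

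With these in hand the proof concludes by choosing $U_1$ among all compact open subgroups of $G_1$ (which exist by van Dantzig's theorem) so that $n(U_1)$ is maximal; this is possible since $n$ takes values in the finite set $\{1,\dots,m\}$. Then for any open subgroup $V_1 \leq U_1$ — necessarily compact open, being a closed subgroup of the compact group $U_1$ — we have $P_{V_1} \leq P_{U_1}$, so the $P_{V_1}$-orbits refine the $P_{U_1}$-orbits and $n(V_1) \geq n(U_1)$. Maximality of $n(U_1)$ forces $n(V_1) = n(U_1)$. Since a refinement of a partition into the same finite number of parts must coincide with it, the $P_{V_1}$-orbits and the $P_{U_1}$-orbits agree, which is exactly the assertion.

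I do not expect a serious obstacle: the content is genuinely elementary once cocompactness of $N = H \cap G_2$ is exploited to obtain the uniform bound $m$ on the number of orbits. The only points needing mild care are the finiteness of $m$ (compactness of $N \backslash G_2$ together with openness of $U_2$) and the clean passage from \emph{equal number of parts in a refinement} to \emph{equal partitions}. The one conceptual step worth highlighting is the realization that the bound $m$ is valid for all $V$ simultaneously — because each $P_V$ contains the single fixed cocompact subgroup $N$ — which is what makes the maximization argument select a correct $U_1$.
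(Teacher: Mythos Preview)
Your proof is correct and follows essentially the same approach as the paper: both bound the number of $P_V$-orbits uniformly by the (finite) number of $(H\cap G_2)$-orbits on $G_2/U_2$, choose $U_1$ to maximise this orbit count, and then use that a proper refinement of a finite partition strictly increases the number of parts to conclude. Your write-up is slightly more explicit about the refinement step and the fact that an open subgroup of $U_1$ is automatically compact, but there is no substantive difference.
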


\begin{proof}
For each compact open subgroup $W \leq G_1$, the intersection $H\cap (W \times G_2)$ contains $H \cap G_2$. Thus the number of orbits on $G_2/U_2$ of the  projection $p_2(H\cap (W \times G_2))$ is bounded above by the number of $(H \cap G_2)$-orbits, which is finite in view of the hypothesis that $H \cap G_2$ is cocompact in $G_2$. Let $M$ be the maximum of the number of orbits over all possible $W$, and let $U_1 \leq G_1$ be compact open subgroup such that $p_2(H\cap (U_1 \times G_2))$ has exactly $M$ orbits on $G_2/U_2$. Then for every open subgroup $V_1 \leq U_1$, the projection $p_2(H\cap (V_1 \times  G_2))$ has also $M$ orbits on $G_2/U_2$. Since $H\cap (V_1 \times G_2)$ is a subgroup of $H\cap (U_1 \times G_2)$, it follows that $p_2(H\cap (V_1 \times G_2))$ and  $p_2(H\cap (U_1 \times G_2))$ must have the same orbits on $G_2/U_2$.
\end{proof}

\begin{prop}\label{prop:OpenProj}
Let $G = G_1 \times G_2$ be a product of compactly generated tdlc groups and $H \leq G$ be a closed  subgroup. Assume that   $p_1(H)$ is dense in $G_1$.
\begin{enumerate}[label=(\roman*)]
	\item If $H \cap G_2$ is cocompact in $G_2$, then   for  each compact open subgroup $U_2$ of $G_2$, the projection $p_1(H \cap (G_1 \times U_2))$ is an  open subgroup of $G_1$. 
	
	\item If there is a compact open subgroup $U_2$ of $G_2$ such that  $(H \cap G_2)U_2 = G_2$, then  $p_1(H \cap (G_1 \times U_2)) = G_1$.
\end{enumerate}

\end{prop}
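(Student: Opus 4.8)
The plan is to handle both parts through the single open subgroup $G_1 \times U_2 \leq G$, which is a subgroup precisely because $U_2$ is one. Set $O = H \cap (G_1 \times U_2)$ and $D = p_1(O)$; since $p_1$ is a homomorphism, $D$ is a subgroup of $G_1$, and it is the object we must analyze. The crucial preliminary observation is that $D$ is \emph{closed} in $G_1$: the set $O$ is closed in $G_1 \times U_2$ (as $H$ is closed), and the projection $G_1 \times U_2 \to G_1$ is a closed map because the factor $U_2$ is compact (projection along a compact factor is proper). So in both parts $D$ is a closed subgroup of $G_1$, and the task reduces to locating it inside $p_1(H)$, which is dense by hypothesis.

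Write $N = H \cap G_2$, regarded as a subgroup of $G_2$. The key manoeuvre, used in both parts, is a pushing argument: given $h = (h_1, h_2) \in H$, if one can write $h_2 = n g u$ with $n \in N$ and $u \in U_2$, then $(1,n) \in H$, so $(1,n)^{-1} h = (h_1, n^{-1} h_2) = (h_1, g u) \in H$ has its $G_2$-coordinate in $g U_2$. For part (ii), the hypothesis $(H \cap G_2) U_2 = G_2$ means $N U_2 = G_2$, so every $h_2$ decomposes as above with $g = 1$; hence every $h \in H$ satisfies $(h_1, u) \in O$ for some $u \in U_2$, giving $p_1(H) = D$. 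As $p_1(H)$ is dense and $D$ is closed, $D = G_1$, which is the assertion.

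For part (i), cocompactness of $N$ in $G_2$ ensures that the double coset space $N \backslash G_2 / U_2$ is finite; fix representatives and write $G_2 = \bigsqcup_{j=1}^m N g_j U_2$ with $g_1 = 1$. The pushing argument then gives $p_1(H) = \bigcup_{j=1}^m p_1\bigl(H \cap (G_1 \times g_j U_2)\bigr)$. Each factor $G_1 \times g_j U_2$ equals $(1, g_j)(G_1 \times U_2)$, a left coset of the subgroup $G_1 \times U_2$; using that $H$ is a group, whenever $H \cap (G_1 \times g_j U_2)$ is non-empty it equals $h^{(j)} O$ for any element $h^{(j)}$ in it, so its projection is the coset $a_j D$ with $a_j = p_1(h^{(j)})$, and this coset is closed (again by compactness of $g_j U_2$). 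Therefore $G_1 = \overline{p_1(H)}$ is a finite union of closed left cosets of $D$; since distinct cosets are disjoint, this forces $[G_1 : D] \leq m$. A closed subgroup of finite index is open, so $D$ is open, completing part (i).

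The only genuinely delicate points are the two uses of compactness of $U_2$: first to guarantee that the projection $D$ (and each translate $a_j D$) is closed, and second, through cocompactness of $N$, to guarantee finitely many double cosets. Once these are in place, the coset bookkeeping and the standard fact that a closed finite-index subgroup is open finish the argument. I expect the identification of $p_1\bigl(H \cap (G_1 \times g_j U_2)\bigr)$ as a single coset of $D$ to be the step requiring the most care, since it hinges both on $H$ being a subgroup and on rewriting $G_1 \times g_j U_2$ as a left coset of $G_1 \times U_2$.
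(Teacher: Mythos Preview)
Your proof is correct and takes a genuinely different route from the paper's. For part~(i), the paper invokes a preliminary lemma (Lemma~\ref{lem:ConstantOrbits}) which produces a compact open $U_1 \leq G_1$ such that the $G_2/U_2$-orbits of $p_2(H \cap (V_1 \times G_2))$ are independent of the open $V_1 \leq U_1$; it then approximates an arbitrary $u \in U_1$ by $p_1(h)$ and uses the stability of orbits to correct $h$ into $H \cap (G_1 \times U_2)$, showing $U_1 \subseteq \overline D = D$. Your argument bypasses this lemma entirely: the finite double-coset decomposition $G_2 = \bigsqcup N g_j U_2$ plus the pushing move yield directly that $p_1(H)$ is a finite union of closed left $D$-cosets, forcing $[G_1 : D] < \infty$. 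This is more elementary and in fact yields the extra information that $p_1(H)$ itself is closed (hence all of $G_1$) under the hypotheses of~(i). For part~(ii), the paper first appeals to~(i) to get $p_1(H) = G_1$ and then applies the pushing move, whereas your version does the pushing immediately to obtain $p_1(H) = D$ and concludes by closedness; yours is self-contained and does not depend on~(i). The paper's approach has no obvious advantage here---Lemma~\ref{lem:ConstantOrbits} is not reused elsewhere---so your coset argument is a clean simplification.
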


\begin{proof}
(i) We fix a compact open subgroup $U_1 \leq G_1$ afforded by Lemma~\ref{lem:ConstantOrbits}. 
Let also $u \in U_1$ and $V_1 \leq U_1$ be any open subgroup of $U_1$. Since $p_1(H)$ is dense in $G_1$, there exists $h \in H$ with $p_1(h) \in V_1 u \subset U_1$. In particular, we have $h \in H \cap (U_1 \times G_2)$. By Lemma~\ref{lem:ConstantOrbits}, there exists $h' \in H \cap (V_1 \times G_2)$ such that $h''=h' h \in G_1 \times U_2$. We have $h'' \in H \cap (G_1 \times U_2)$ and $p_1(h'')  = p_1(h')p_1(h) \in V_1 u$. Since $V_1$ was arbitrary, we infer that the closure of $p_1( H \cap (G_1 \times U_2))$ contains $U_1$, and is thus open in $G_1$. But the subgroup $U_2$ being compact, the projection $p_1( H \cap (G_1 \times U_2))$ is closed, so we actually deduce that $p_1( H \cap (G_1 \times U_2))$ is open, as required.

\medskip \noindent (ii)
 By hypothesis  $p_1(H)$ is dense in $G_1$ and by (i),   $p_1(H)$ is open in $G_1$. Thus $p_1(H)= G_1$. Let now $g \in G_1$ and choose $h \in H$ with $p_1(h) = g$. Since   $(H \cap G_2)U_2 = G_2$, there exists $h' \in H \cap G_2$ with $h'p_2(h) = p_2(h'h) \in U_2$. Hence $h'h \in H \cap (G_1 \times U_2)$. Since $p_1(h') = 1$, we have $p_1(h'h) = p_1(h) = g$. 
\end{proof}

The following result can be compared with  \cite[Proposition~2.2]{BuMo_Wang}, which is concerned with closed cocompact subgroups of a product of two locally quasi-primitive groups of automorphisms of locally finite trees.

\begin{prop} \label{prop-H=G}
Let $G_1,\ldots,G_n$ be compactly generated tdlc groups that are quasi just-non-compact. Let $H \leq G = G_1 \times \ldots \times G_n$ be a closed  subgroup, such that the image of $p_i : H \rightarrow G_i$ is dense for every $i$; and the image of $p_\Sigma : H \rightarrow G_\Sigma$ is non-discrete for every non-empty $\Sigma \subseteq \left\{1,\ldots,n\right\}$. Assume that at least one of the following conditions is satisfied:
\begin{enumerate}[label=(\arabic*)]
	\item $H$ is cocompact in $G$. 
	
	\item $H$ is of finite covolume in $G$, and $G$ has Kazhdan's property (T). 
\end{enumerate}
Then the following hold:
\begin{enumerate}[label=(\roman*)]
	\item \label{item-H-contains-prod} $H$ contains $K_1 \times \ldots \times K_n$ for some closed cocompact normal subgroups $K_i \triangleleft G_i$. In particular $H$ is cocompact in $G$. 
	
	\item \label{item-H-proj-open} For every $i$ and every compact open subgroup $U_1 \times \ldots \times U_n \leq G$, the projection $p_i(H \cap (U_1 \times \ldots \times G_i \times \ldots \times U_n))$ is a finite index open subgroup of $G_i$.  
\end{enumerate}
\end{prop}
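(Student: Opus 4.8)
The strategy is to prove statements \ref{item-H-contains-prod} and \ref{item-H-proj-open} by induction on the number of factors $n$, using the two preceding propositions as the engine for the inductive step. The base case $n=1$ is essentially trivial: density of $p_1$ forces $H=G_1$ (or $H$ cocompact of finite covolume in $G_1$), so $H$ contains the cocompact normal subgroup $G_1^{(\infty)}$ afforded by Proposition~\ref{prop-carac-qjnc}, and the projection statement is immediate. For the inductive step, write $G = G_1 \times G'$ with $G' = G_2 \times \dots \times G_n$, and first analyze the intersection $H \cap G'$. The key dichotomy will come from applying the contrapositive of Proposition~\ref{prop-no-inter-implies-discrete} (in case~(1)) or Proposition~\ref{prop-(T)-0} (in case~(2)): since $H$ is non-discrete on every sub-product, I would want to show $H \cap G'$ cannot be discrete, which by those propositions is tied to $H \cap G_1$ being open. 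The real leverage is that quasi just-non-compactness constrains $H \cap G_i$: being a closed normal subgroup of (the dense image controlling) $G_i$, it is either discrete or cocompact.

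First I would establish that $H \cap G_i$ is cocompact in $G_i$ for each $i$. Here is where the non-discreteness of all projections $p_\Sigma(H)$ is used crucially. Suppose $H \cap G_i$ is discrete for some $i$; reordering, say $i = n$. Then I would argue via Proposition~\ref{prop:OpenProj} and a careful bookkeeping of the other factors that this forces some proper projection $p_\Sigma(H)$ to be discrete, contradicting the hypothesis. Concretely, since each $G_i$ is quasi just-non-compact, the closure $\overline{p_i(H \cap G_i)}$ is a closed normal subgroup of $G_i$ (because $p_i(H)$ is dense and normalizes it), hence discrete or cocompact; combining the discrete alternatives across factors via Corollary~\ref{cor-no-inter-factor-implies-discrete} would make $H$ itself discrete, contradicting non-discreteness of the full projection $p_{\{1,\dots,n\}}(H)$. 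The main obstacle I anticipate is precisely this step: ruling out the discrete alternative uniformly and extracting the cocompact normal subgroups $K_i = \overline{H \cap G_i}^{(\infty)}$ in a way that respects all the sub-product non-discreteness hypotheses simultaneously.

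Once I know $H \cap G_i$ is cocompact in $G_i$ for every $i$, statement \ref{item-H-proj-open} follows from Proposition~\ref{prop:OpenProj}\,(i) applied to the factorization $G = G_i \times (\prod_{j \neq i} G_j)$: with $H \cap (\prod_{j\neq i} G_j)$ cocompact in $\prod_{j \neq i} G_j$ and $p_i(H)$ dense in $G_i$, the projection $p_i(H \cap (U_1 \times \dots \times G_i \times \dots \times U_n))$ is open in $G_i$; finite index then follows because $G_i$ is quasi just-non-compact and this open subgroup, being of the stated form, contains a cocompact (hence finite-index open, by Proposition~\ref{prop-carac-qjnc}) normal subgroup. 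For statement \ref{item-H-contains-prod}, I would take $K_i = \overline{H \cap G_i}$; by quasi just-non-compactness each $K_i$ is cocompact normal in $G_i$, and I must verify $K_1 \times \dots \times K_n \leq H$. Since each $K_i = H \cap G_i \leq H$ (after replacing $K_i$ by its identity component / the relevant closed cocompact normal piece $G_i^{(\infty)} \cap \overline{H\cap G_i}$ if necessary), and these commute pairwise lying in distinct direct factors, their product lies in $H$; cocompactness of $\prod K_i$ in $G$ then gives cocompactness of $H$. The delicate point to handle with care is that $H \cap G_i$ need only be cocompact, not closed-normal of the exact form $G_i^{(\infty)}$, so I would pass to $K_i := (\overline{H \cap G_i})^{(\infty)}$, which is contained in $H\cap G_i \leq H$ when $H\cap G_i$ is cocompact and is genuinely normal and cocompact in $G_i$ by Proposition~\ref{prop-carac-qjnc}.
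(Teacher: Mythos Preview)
Your plan has the right ingredients but a genuine gap at the heart of part~\ref{item-H-contains-prod}, and the induction framing does not help you close it.

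You correctly observe that each $K_i := H \cap G_i$ is normalized by $H$ and centralized by $\prod_{j\neq i} G_j$, hence (by density of $p_i(H)$) is a closed normal subgroup of $G_i$, so discrete or cocompact. Your appeal to Corollary~\ref{cor-no-inter-factor-implies-discrete} only shows that \emph{not all} $K_i$ can be discrete; it says nothing about the mixed case where some are cocompact and some are discrete. Your first sentence about this case (``via Proposition~\ref{prop:OpenProj} and a careful bookkeeping \dots'') is a placeholder, not an argument, and Proposition~\ref{prop:OpenProj} is not the relevant tool here. The paper does not induct: it sets $\Pi = \{i : K_i \text{ cocompact}\}$ and $\Sigma = \{1,\dots,n\}\setminus \Pi$, and then for each $j \in \Sigma$ applies Proposition~\ref{prop-no-inter-implies-discrete} (case~(1)) or Proposition~\ref{prop-(T)-0} (case~(2)) to the splitting $G = (\prod_{i\neq j} G_i) \times G_j$ to conclude that $\Ker(p_j|_H)$ is open in $H$. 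Intersecting over $j \in \Sigma$ gives that $H \cap G_\Pi$ is open in $H$.

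The step you are missing entirely is the passage from ``$H \cap G_\Pi$ is open in $H$'' to ``$p_\Sigma(H)$ is discrete''. Knowing that $p_\Sigma|_H$ factors through the discrete quotient $H/(H\cap G_\Pi)$ is not enough: a continuous injective image of a discrete group need not be discrete. The paper uses that $K_\Pi = \prod_{i\in\Pi} K_i \leq H$ is cocompact in $G_\Pi$, so the induced map $G/K_\Pi \to G_\Sigma$ is \emph{proper}; hence $H/K_\Pi$, being closed in $G/K_\Pi$, has closed image in $G_\Sigma$. Closed plus factoring through a discrete group gives $p_\Sigma(H)$ discrete, and the hypothesis then forces $\Sigma = \varnothing$. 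This properness/closedness argument is the missing idea.

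Two minor points. First, $H \cap G_i$ is already closed (intersection of the closed subgroup $H$ with $G_i$), so taking closures is unnecessary; and once you know it is cocompact and normal in $G_i$, you may take $K_i = H\cap G_i$ directly, without passing to $(\cdot)^{(\infty)}$. Second, your treatment of \ref{item-H-proj-open} is correct once \ref{item-H-contains-prod} is established: Proposition~\ref{prop:OpenProj}(i) gives openness of the projection, and since the projection contains $K_i$ it is cocompact, hence of finite index.
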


\begin{proof}
For every $i$ we  write $K_i = H \cap G_i$. The subgroup $K_i$ is centralized by $\prod_{j \neq i} G_j$ and normalized by $H$. So the normalizer of $K_i$ in $G$, which is a closed subgroup, contains the subgroup $(\prod_{j \neq i} G_j) H$, which is dense in $G$ by assumption. Therefore $K_i$ is normal in $G$, and hence is either discrete or cocompact in $G_i$ since $G_i$ is quasi just-non-compact.

We let $\Pi \subseteq \left\{1,\ldots,n\right\}$ be the set of $i$ such that $K_i$ is not discrete, and $\Sigma$ be the complement of $\Pi$. We have to show that $\Pi = \left\{1,\ldots,n\right\}$ and $\Sigma  = \emptyset$. The group $K_\Pi = \prod_{i \in \Pi} K_i$ is cocompact in $G_\Pi$, so the projection $p_\Sigma : G \rightarrow G_\Sigma$ factors through a proper map $G / K_\Pi \rightarrow G_\Sigma$. Moreover $K_\Pi$ is contained in $H$, so that the quotient $H / K_\Pi$ is a closed   subgroup of $G / K_\Pi$. Since   map map $G / K_\Pi \rightarrow G_\Sigma$ is proper,   it follows that the projection of $H / K_\Pi$ to $G_\Sigma$, which is also the projection of $H$ to $G_\Sigma$, has a closed image.

If (1) holds, then we may invoke  Proposition~\ref{prop-no-inter-implies-discrete} and deduce that  $\Ker(p_j|_H)$ is open in $H$ for every $j \notin \Pi$. If (2) holds, then we observe that for every $i$, the amenable radical of $G_i$ is discrete, since it cannot be cocompact by the hypothesis that $G_i$ is a non-compact group with property~(T). In particular the amenable radical of $\prod_{i \neq j} G_i$ is discrete for all $j$. Therefore, by Proposition~\ref{prop-(T)-0}, the conclusion of Proposition~\ref{prop-no-inter-implies-discrete} also holds in that case, and we deduce again that for every $j \notin \Pi$, the subgroup $\Ker(p_j|_H)$ is open in $H$. 

It follows that $H \cap G_\Pi = \bigcap_{j \notin \Pi} \Ker(p_j|_H)$ is an open subgroup of $H$. Since projection $H \to G_\Sigma$ has closed image and factors through the discrete group $H/H \cap G_\Pi$, we infer   that the projection of $H$ to $G_\Sigma$ has discrete image. In view of the hypotheses, this implies that $\Sigma$ is the empty set.  Therefore $\Pi = \left\{1,\ldots,n\right\}$, and the proof of \ref{item-H-contains-prod} is complete.

In order to prove \ref{item-H-proj-open}, we fix $i$ and write $L_i = p_i(H \cap (U_1 \times \ldots \times G_i \times \ldots \times U_n))$. According to \ref{item-H-contains-prod} we have that $H \cap \prod_{j \neq i} G_j$ is cocompact in $\prod_{j \neq i} G_j$, so we may apply Proposition \ref{prop:OpenProj}, which says that $L_i$ is an open subgroup of $G_i$. But by \ref{item-H-contains-prod} again we also know that $L_i$ is cocompact in $G_i$, and hence $L_i$ if a finite index subgroup of $G_i$.
\end{proof}

\begin{cor} \label{cor-3-1-JNC}
Let $G_1,\ldots,G_n$ be compactly generated, quasi just-non-compact tdlc groups. Let $\Gamma \leq G = G_1 \times \ldots \times G_n$ be a cocompact lattice such that $p_i(\Gamma)$ is dense in $G_i$ for every $i=1,\ldots,n$ and satisfying \ref{it:irre3}. Then for every $\Sigma \subsetneq \left\{1,\ldots,n\right\}$, the closure of the image of $p_\Sigma \colon \Gamma \to G_\Sigma$ contains $\prod_{i \in \Sigma} G_i^{(\infty)}$.
\end{cor}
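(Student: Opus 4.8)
The plan is to deduce the statement directly from Proposition~\ref{prop-H=G}, applied not to $\Gamma$ itself but to the closure of its projection. Fix a non-empty subset $\Sigma \subsetneq \left\{1,\ldots,n\right\}$ (the case $\Sigma = \varnothing$ being vacuous, as $G_\varnothing$ is trivial), and set $H = \overline{p_\Sigma(\Gamma)} \leq G_\Sigma = \prod_{i \in \Sigma} G_i$. The first task is to verify that $H$ satisfies all the hypotheses of Proposition~\ref{prop-H=G} relative to the product $G_\Sigma$, whose factors are compactly generated quasi just-non-compact tdlc groups by assumption.

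First I would check cocompactness: since $\Gamma$ is cocompact in $G$, writing $G = \Gamma C$ for a compact set $C$ gives $G_\Sigma = p_\Sigma(\Gamma)\, p_\Sigma(C)$ with $p_\Sigma(C)$ compact, so $p_\Sigma(\Gamma)$, and hence $H$, is cocompact in $G_\Sigma$; this supplies condition (1) of Proposition~\ref{prop-H=G}. Next, for each $i \in \Sigma$ the image $p_i(H)$ contains $p_i(\Gamma)$, which is dense in $G_i$ by hypothesis, so $p_i(H)$ is dense. Finally, for each non-empty $\Sigma' \subseteq \Sigma$ one has $\Sigma' \subsetneq \left\{1,\ldots,n\right\}$, and the projection $p_{\Sigma'} \colon G_\Sigma \to G_{\Sigma'}$ factors $p_{\Sigma'}$ through $p_\Sigma$ on $\Gamma$, so that $p_{\Sigma'}(H) \supseteq p_{\Sigma'}(p_\Sigma(\Gamma)) = p_{\Sigma'}(\Gamma)$. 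Since $\Gamma$ satisfies \ref{it:irre3}, the group $p_{\Sigma'}(\Gamma)$ is non-discrete, and therefore so is $p_{\Sigma'}(H)$, because a subgroup containing a non-discrete subgroup cannot be discrete. Thus the non-discreteness hypothesis of Proposition~\ref{prop-H=G} holds for every non-empty subset of $\Sigma$.

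With these verifications in hand, Proposition~\ref{prop-H=G}\ref{item-H-contains-prod} applies and yields closed cocompact normal subgroups $K_i \triangleleft G_i$, for $i \in \Sigma$, with $\prod_{i \in \Sigma} K_i \leq H$. The last step is to invoke the description of $G_i^{(\infty)}$ recorded in \S\ref{subsec-QJNC}: since $G_i^{(\infty)}$ is the intersection of all closed cocompact normal subgroups of $G_i$, it is contained in $K_i$ for every $i \in \Sigma$. Consequently $\prod_{i \in \Sigma} G_i^{(\infty)} \leq \prod_{i \in \Sigma} K_i \leq H = \overline{p_\Sigma(\Gamma)}$, which is exactly the asserted inclusion.

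I do not anticipate a genuine obstacle here, since Proposition~\ref{prop-H=G} already encapsulates the substantive work (it is where the quasi just-non-compactness of the factors, together with Propositions~\ref{prop-no-inter-implies-discrete} and~\ref{prop:OpenProj}, is crucially used). The only points requiring a little care are confirming that the hypothesis \ref{it:irre3} transfers from $\Gamma$ to $H$, which works precisely because $\Sigma' \subseteq \Sigma$ makes $p_{\Sigma'}$ factor through $p_\Sigma$, and that cocompactness passes to the projection $p_\Sigma(\Gamma)$. The main conceptual content is thus borrowed wholesale from Proposition~\ref{prop-H=G}.
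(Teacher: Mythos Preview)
Your proof is correct and follows essentially the same approach as the paper: set $H = \overline{p_\Sigma(\Gamma)}$, verify the hypotheses of Proposition~\ref{prop-H=G} for $H$ inside $G_\Sigma$, and conclude via the characterization of $G_i^{(\infty)}$ as the intersection of all closed cocompact normal subgroups. The paper's proof is slightly terser, disposing of the singleton case $|\Sigma|=1$ separately as trivial (since then $H=G_i$) and noting explicitly that \ref{it:irre3} forces each $G_i$ to be non-discrete, but these are cosmetic differences and your uniform treatment is equally valid.
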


\begin{proof}
First note that the condition \ref{it:irre3} implies that all factors $G_i$ are non-discrete. Fix $\Sigma \subsetneq \left\{1,\ldots,n\right\}$, and write $H = \overline{p_\Sigma(\Gamma)}$. Note that there is nothing to prove if $\Sigma$ is a singleton, so we assume that $\Sigma$ has cardinality at least two. The subgroup $H$ is closed and cocompact in $G_\Sigma$, and has a dense projection on each $G_i$ for $i \in \Sigma$. Since $\Gamma$ satisfies \ref{it:irre3}, Proposition \ref{prop-H=G} can be applied to the group $H$ inside $G_\Sigma$. So we deduce that $H$ contains $\prod_{i \in \Sigma} K_i$ for some closed cocompact normal subgroups $K_i \leq G_i$. By Proposition \ref{prop-carac-qjnc} we must have $G_i^{(\infty)} \leq K_i$, whence the statement. 
\end{proof}

We end this paragraph with an application to commensurators of lattices. Recall that an irreducible lattice $\Gamma$ in a semi-simple Lie group $G$ with trivial center and no compact factor has a commensurator $\comm_G(\Gamma)$ that is either discrete or dense in $G$, and $\comm_G(\Gamma)$ is dense if and only if $\Gamma$ is an arithmetic lattice \cite[Theorem IX.B]{Margulis}. The following statement shows that a similar \enquote{discrete or dense} dichotomy holds in the setting of products of quasi just-non-compact groups.

\begin{cor}
Let $G_1,\ldots,G_n$ be compactly generated, quasi just-non-compact tdlc groups, and $G = G_1 \times \ldots \times G_n$. Assume that at least $n-1$ factors satisfy $G_i = G_i^{\infty}$. Let $\Gamma \leq G$ be a lattice such that $p_i(\Gamma)$ is dense in $G_i$ for every $i=1,\ldots,n$ and satisfying \ref{it:irre3}. Assume also that $\Gamma$ is cocompact, or that $G$ has Kazhdan's property (T). Then $\comm_G(\Gamma)$ is either discrete or dense in $G$.
\end{cor}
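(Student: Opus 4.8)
The plan is to deduce the dichotomy from Proposition~\ref{prop-H=G} applied to the closure of the commensurator. Write $C = \comm_G(\Gamma)$ and $H = \overline{C}$. Since the conclusion is trivial when $C$ is discrete, I would assume $C$ is non-discrete and prove that $C$ is dense, i.e.\ that $H = G$.

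First I would verify that $H$ satisfies the hypotheses of Proposition~\ref{prop-H=G}. As $\Gamma \leq C \leq H$ and $H$ is closed, $H$ contains the lattice $\Gamma$ and therefore has finite covolume in $G$; if moreover $\Gamma$ is cocompact, then $G/H$ is a continuous image of the compact space $G/\Gamma$, so $H$ is cocompact. Next, $p_i(H) \supseteq p_i(\Gamma)$ is dense in $G_i$ for each $i$, and for every non-empty $\Sigma \subseteq \{1,\ldots,n\}$ the projection $p_\Sigma(H)$ is non-discrete: when $\Sigma \subsetneq \{1,\ldots,n\}$ this is inherited from $p_\Sigma(\Gamma)$ via \ref{it:irre3}, and when $\Sigma = \{1,\ldots,n\}$ it is exactly the assumption that $C$, hence $H$, is non-discrete. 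Thus Proposition~\ref{prop-H=G} applies, under its condition~(1) if $\Gamma$ is cocompact and under condition~(2) if $G$ has property~(T).

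I would then invoke Proposition~\ref{prop-H=G}\ref{item-H-contains-prod} to obtain closed cocompact normal subgroups $K_i \triangleleft G_i$ with $K_1 \times \cdots \times K_n \leq H$, and Proposition~\ref{prop-carac-qjnc} to get $G_i^{(\infty)} \leq K_i$. The hypothesis that at least $n-1$ factors satisfy $G_i = G_i^{(\infty)}$ now enters: for those indices, say $i = 1,\ldots,n-1$ after relabelling, we have $K_i = G_i$, so $H$ contains the normal subgroup $G_1 \times \cdots \times G_{n-1} \times \{1\} = \ker(p_n)$. Hence $H = p_n^{-1}(p_n(H))$; since $p_n$ is a quotient homomorphism, $p_n(H)$ is closed in $G_n$, and being dense it must equal $G_n$, whence $H = G$ and $C$ is dense.

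I do not expect a serious obstacle, since the argument is a direct reduction. The one point needing care is checking that $H = \overline{C}$ really inherits the covolume and irreducibility hypotheses required by Proposition~\ref{prop-H=G}; in particular, that enlarging $\Gamma$ to the a priori much bigger subgroup $H$ preserves the non-discreteness of every subproduct projection — which is precisely what condition \ref{it:irre3} for $\Gamma$ supplies.
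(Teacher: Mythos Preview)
Your argument is correct and follows essentially the same route as the paper's proof: take $H=\overline{\comm_G(\Gamma)}$, assume it is non-discrete, apply Proposition~\ref{prop-H=G} to obtain cocompact normal $K_i\triangleleft G_i$ with $\prod K_i\leq H$, use $G_i=G_i^{(\infty)}$ on $n-1$ factors to force $K_i=G_i$ there, and conclude by density of the projection to the remaining factor. You are in fact more explicit than the paper in checking that $H$ inherits from $\Gamma$ the full set of hypotheses of Proposition~\ref{prop-H=G} (finite covolume/cocompactness, dense projections, and \ref{it:irre3}), and in justifying why $p_n(H)$ is closed once $H\supseteq\ker(p_n)$.
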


\begin{proof}
We let $H$ be the closure of $\comm_G(\Gamma)$ in $G$. We have to show that either $H$ is discrete or $H = G$. Assume that $H$ is not discrete. Then we may apply Proposition \ref{prop-H=G}, and we deduce that $H$ contains $K_1 \times \ldots \times K_n$ for some closed cocompact normal subgroups $K_i \triangleleft G_i$. That $G_i = G_i^{\infty}$ is equivalent to saying that $G_i$ has no proper cocompact closed normal subgroup, so it follows that $H$ contains $G_i$ for all but possibly one factor. But since $H$ is a closed subgroup of $G$ and $H$ has a dense projection to this remaining factor, we deduce that $H=G$ as required. 
\end{proof}

\subsection{Relations  between the irreducibility conditions}\label{sec:Proofs}

\begin{lem} \label{lem-implic-banal}
Let $G_1,\ldots,G_n$ be locally compact groups that are all non-compact. For a lattice $\Gamma \leq G = G_1 \times \ldots \times G_n$, we have \ref{it:irre2} $\Rightarrow$ \ref{it:irre3}, \ref{it:irre2} $\Rightarrow$ \ref{it:irre0}, and \ref{it:irre4} $\Rightarrow$ \ref{it:irre0}.
\end{lem}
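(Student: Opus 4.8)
The three implications all express the same geometric idea: a lattice in a product of two non-compact factors cannot degenerate onto one of them. The plan is to isolate this in a single auxiliary statement and then read off each implication by bookkeeping with the kernels $\ker(p_i|_\Gamma) = \Gamma \cap \prod_{j\neq i} G_j$, using only the standard facts that a lattice in a non-compact group is infinite and that a finite-index subgroup of a lattice is again a lattice.

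The key sub-lemma I would prove first is the following: if $A,B$ are locally compact with $B$ non-compact and $\Gamma \leq A\times B$ is a subgroup with $p_A(\Gamma)$ discrete in $A$ and $\Gamma\cap B = \{1\}$, then $\Gamma$ is not a lattice in $A\times B$. Indeed, $\Gamma\cap B=\{1\}$ makes $p_A|_\Gamma$ injective onto the discrete group $\Delta := p_A(\Gamma)$, so $\Gamma$ is the graph $\{(\delta,\phi(\delta)) : \delta\in\Delta\}$ of a homomorphism $\phi\colon \Delta\to B$ and is contained in the closed subgroup $\Delta\times B$. The map $(\delta,b)\mapsto \phi(\delta)^{-1}b$ is constant on the $\Gamma$-cosets and identifies $\Gamma\backslash(\Delta\times B)$ with $B$ (a transversal being $\{1\}\times B$), so this quotient carries infinite invariant measure, as $B$ is non-compact. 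Since $A\times B$ is unimodular (it carries a lattice) and $\Delta$ is discrete, Weil's formula gives $\mathrm{covol}(\Gamma, A\times B) = \mathrm{vol}(A/\Delta)\cdot\mathrm{covol}(\Gamma, \Delta\times B)=+\infty$, so $\Gamma$ has infinite covolume and is not a lattice.

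With this in hand the implications are short, each proved in contrapositive form. For \ref{it:irre2} $\Rightarrow$ \ref{it:irre3}: if \ref{it:irre3} fails there is a proper nonempty $\Sigma$ with $p_\Sigma(\Gamma)$ discrete; applying the sub-lemma with $A=G_\Sigma$, $B=G_{\Sigma^c}$ and using that $\Gamma$ is a lattice forces $\Gamma\cap G_{\Sigma^c}\neq\{1\}$, so $\ker(p_i|_\Gamma)\supseteq \Gamma\cap G_{\Sigma^c}\neq\{1\}$ for any $i\in\Sigma$ and \ref{it:irre2} fails. For \ref{it:irre2} $\Rightarrow$ \ref{it:irre0}: if a partition $\{1,\dots,n\}=\Pi\sqcup\Sigma$ makes $(G_\Pi\cap\Gamma)(G_\Sigma\cap\Gamma)$ of finite index in $\Gamma$, then $\Gamma$ is infinite ($G$ being non-compact), so this finite-index subgroup — an internal direct product, since its two factors are normal in $\Gamma$, commute, and meet trivially — is infinite, whence one of $G_\Pi\cap\Gamma,\ G_\Sigma\cap\Gamma$ is non-trivial; choosing $i$ in the complementary block then makes $p_i|_\Gamma$ non-injective. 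Finally, for \ref{it:irre4} $\Rightarrow$ \ref{it:irre0}, I take a partition witnessing the failure of \ref{it:irre0} and show that in the finite-index subgroup $\Lambda=(G_\Pi\cap\Gamma)\times(G_\Sigma\cap\Gamma)$ both factors are non-trivial: if, say, $G_\Sigma\cap\Gamma=\{1\}$, then $\Lambda=G_\Pi\cap\Gamma$ lies in $G_\Pi$, and applying the sub-lemma to $\Lambda$ with $A=G_\Pi$, $B=G_\Sigma$ shows $\Lambda$ is not a lattice, contradicting that a finite-index subgroup of the lattice $\Gamma$ is a lattice. Hence $\Lambda$ is a finite-index subgroup of $\Gamma$ isomorphic to a direct product of two non-trivial groups, contradicting \ref{it:irre4}.

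The only genuinely non-formal ingredient, and thus the main obstacle, is the covolume computation in the sub-lemma; everything else is manipulation of kernels together with the two standard facts quoted above. One should verify unimodularity at each invocation of Weil's formula, but this is automatic, since the ambient group carries a lattice and discrete subgroups are unimodular.
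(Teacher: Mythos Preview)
Your proof is correct and follows essentially the same contrapositive strategy as the paper, with one structural difference: where the paper invokes \cite[Theorem~I.1.13]{Raghu} as a black box (if $\Gamma$ is a lattice in $G$ and $N \trianglelefteq G$ is closed with $\Gamma N$ closed, then $\Gamma \cap N$ is a lattice in $N$), you instead isolate and prove the needed special case directly as your sub-lemma via the graph description and the covolume computation. Both routes yield the same conclusion that $\Gamma \cap G_\Pi$ must be a lattice in $G_\Pi$ (hence infinite, since $G_\Pi$ is non-compact) whenever $p_\Sigma(\Gamma)$ is discrete.

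Two minor comparative remarks. For \ref{it:irre2} $\Rightarrow$ \ref{it:irre0} your argument is actually lighter than the paper's: you only need that the finite-index product subgroup is infinite, hence one of its two commuting factors is non-trivial, which already kills injectivity of some $p_i$; the paper instead runs the lattice argument again. For \ref{it:irre4} $\Rightarrow$ \ref{it:irre0}, the paper shows that both $G_\Pi\cap\Gamma$ and $G_\Sigma\cap\Gamma$ are lattices in their respective factors (hence non-trivial), whereas you rule out triviality of either factor by contradiction with your sub-lemma; these are two phrasings of the same point. Your self-contained treatment of the sub-lemma is a reasonable substitute for the Raghunathan citation, and the unimodularity checks you flag are indeed automatic here.
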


\begin{proof}
If \ref{it:irre3} fails, there exists a partition $\Sigma \cup \Pi = \left\{1,\ldots,n\right\}$ with $\Pi, \Sigma \neq \varnothing$, such that $p_\Sigma(\Gamma)$ is discrete in $G_\Sigma$. Since $\overline{p_\Sigma(\Gamma)}$ is also a lattice in $G_\Sigma$, it follows that $p_\Sigma$ maps $\Gamma$ onto a lattice in $G_\Sigma$. It then follows from \cite[Theorem~I.1.13]{Raghu} that $\Ker(p_\Sigma|_\Gamma) = \Gamma \cap G_\Pi$ is a lattice in $G_\Pi$, which is non-compact by assumption. So $\Gamma \cap G_\Pi$ is non-trivial, which contradicts \ref{it:irre2}.

If \ref{it:irre0} fails, i.e.\ if $(G_\Pi \cap \Gamma)(G_\Sigma \cap \Gamma)$ is of finite index in $\Gamma$ for some partition $\Pi \cup \Sigma = \left\{1,\ldots,n\right\}$ with $\Pi, \Sigma \neq \varnothing$, then arguing as above we see that $G_\Pi \cap \Gamma$ is a lattice in $G_\Pi$, and obtain again a contradiction with \ref{it:irre2}. 

If  \ref{it:irre0} fails, then  there exists a partition $ \Pi \cup \Sigma = \left\{1,\ldots,n\right\}$ with $\Pi, \Sigma \neq \varnothing$ such that the direct product  $\Gamma_1 = (G_\Pi \cap \Gamma)(G_\Sigma \cap \Gamma)$ is of finite index in $\Gamma$. In particular $\Gamma_1$ is a lattice in $G = G_\Pi \times G_\Sigma$. It follows that $ G_\Pi \cap \Gamma$ is a lattice in $G_\Pi$ and $ G_\Sigma \cap \Gamma$ is a lattice in $G_\Sigma$. Thus both factors in the direct product decomposition of $\Gamma_1$ are non-trivial, so that \ref{it:irre4} fails.
\end{proof}

\begin{lem} \label{lem-1-2}
Let $G_1,\ldots,G_n$ be locally compact groups, and $\Gamma \leq G = G_1 \times \ldots \times G_n$ a discrete subgroup. Let $\Sigma \cup \Pi = \left\{1,\ldots,n\right\}$ be a partition. 

\begin{enumerate}[label=(\roman*)]
	\item Assume that   $\QZ(G_i) = 1$ for every $i \in \Pi$. If  $p_{\Pi}: \Gamma \rightarrow G_{\Pi}$ has dense image, then $\Gamma \cap G_{\Pi}$ is trivial. 	
	In particular if we have $\QZ(G_i) = 1$ for every $i$, then \ref{it:irre1} $\Rightarrow$ \ref{it:irre2}.
	
	\item Assume that $G_i$ is monolithic for all $i \in \Pi$, with $\QZ(\Mon(G_i)) = \{1\}$. If the closure of the image of $p_{\Pi}: \Gamma \rightarrow G_{\Pi}$ contains $G_\Pi^+ = \prod_{i \in \Pi} \Mon(G_i)$, then $\Gamma \cap G_{\Pi}$ is trivial. 	In particular, if $G_i$ is monolithic  with $\QZ(\Mon(G_i)) = \{1\}$ for all $i$, then  \ref{it:irre1'} $\Rightarrow$ \ref{it:irre2}.
\end{enumerate}
\end{lem}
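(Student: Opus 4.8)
My plan is to treat both parts by the same mechanism, exploiting that $N := \Gamma \cap G_\Pi$ is a discrete, hence closed, subgroup of $G_\Pi$ that is normal in $\Gamma$. The basic observation I will use repeatedly is the following: if a locally compact group $K$ normalizes a discrete subgroup $N$ of an ambient locally compact group, then for each $\delta \in N$ the orbit map $K \to N,\ k \mapsto k\delta k^{-1}$ is a continuous map into a discrete space, so its fibre $\mathrm{C}_K(\delta)$ over $\delta$ is open in $K$. In particular, any $\delta \in N$ that happens to lie in $K$ belongs to $\QZ(K)$. I will also use that the quasi-center of a finite direct product is the product of the quasi-centers, so that $\QZ\bigl(\prod_i H_i\bigr) = \prod_i \QZ(H_i)$.

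For part (i), density makes this immediate. Since $N \trianglelefteq \Gamma$, the subgroup $p_\Pi(\Gamma)$ normalizes $N$ in $G_\Pi$; as the normalizer of a closed subgroup is closed and $p_\Pi(\Gamma)$ is dense, the whole group $G_\Pi$ normalizes $N$. Applying the observation with $K = G_\Pi$ gives $N \subseteq \QZ(G_\Pi) = \prod_{i \in \Pi}\QZ(G_i) = \{1\}$, whence $\Gamma \cap G_\Pi = \{1\}$. For the ``in particular'', I take $\Pi = \{1,\dots,n\} \setminus \{j\}$: condition \ref{it:irre1} gives density of $p_\Pi(\Gamma)$ and the hypothesis gives $\QZ(G_i) = 1$ for $i \in \Pi$, so $\ker(p_j|_\Gamma) = \Gamma \cap G_\Pi = \{1\}$ for every $j$, which is \ref{it:irre2}.

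For part (ii) I no longer know $p_\Pi(\Gamma)$ is dense, only that its closure contains $G_\Pi^+ = \prod_{i\in\Pi}\Mon(G_i)$, so a general $\gamma \in N$ need not lie in the normalizing group $G_\Pi^+$ and the observation does not directly place $\gamma$ in a trivial quasi-center. I will circumvent this in three steps. First, as before $\overline{p_\Pi(\Gamma)} \supseteq G_\Pi^+$ normalizes $N$, while since each $\Mon(G_i) \trianglelefteq G_i$ the subgroup $G_\Pi^+$ is normal in $G_\Pi$ and hence is itself normalized by $N$; consequently, for $m \in G_\Pi^+$ and $\gamma \in N$ the commutator $[m,\gamma] = (m\gamma m^{-1})\gamma^{-1} = m(\gamma m^{-1}\gamma^{-1})$ lies in $N \cap G_\Pi^+$. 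Second, any $\delta \in N \cap G_\Pi^+$ does lie in the normalizing group $G_\Pi^+$, so the observation yields $\delta \in \QZ(G_\Pi^+) = \prod_{i\in\Pi}\QZ(\Mon(G_i)) = \{1\}$; thus $N \cap G_\Pi^+ = \{1\}$ and therefore $[G_\Pi^+, N] = \{1\}$, i.e.\ $N \subseteq \mathrm{C}_{G_\Pi}(G_\Pi^+) = \prod_{i\in\Pi}\mathrm{C}_{G_i}(\Mon(G_i))$. Third, each factor is trivial: the hypothesis $\QZ(\Mon(G_i)) = 1$ forces $\Mon(G_i)$ to be non-abelian, while if $\mathrm{C}_{G_i}(\Mon(G_i))$ were non-trivial it would contain $\Mon(G_i)$ by monolithicity, making $\Mon(G_i)$ abelian, a contradiction. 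Hence $N = \{1\}$. For the ``in particular'', I again take $\Pi = \{1,\dots,n\}\setminus\{j\}$ and note that monolithicity of $G_i$ forces the non-discrete cocompact normal subgroup $G_i^+$ appearing in \ref{it:irre1'} to contain $\Mon(G_i)$; thus $\overline{p_\Pi(\Gamma)}$ contains $\prod_{i\in\Pi}\Mon(G_i)$, the hypothesis of (ii) is met, and $\ker(p_j|_\Gamma) = \{1\}$ for all $j$, giving \ref{it:irre2}.

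The main obstacle is precisely the step just described in part (ii): passing from an element $\gamma \in N$ that need not lie in $G_\Pi^+$ to a genuine quasi-center element. The commutator reduction to $N \cap G_\Pi^+$, combined with the triviality of the centralizer of a non-abelian monolith inside a monolithic group, is the crux; everything else is a routine application of the orbit-map observation together with the product formula for quasi-centers.
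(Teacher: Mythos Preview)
Your proof is correct and follows essentially the same route as the paper's. The only cosmetic difference is that the paper introduces $H = \overline{p_\Pi(\Gamma)}$ and first records $N \leq \QZ(H)$, then deduces $N \cap G_\Pi^+ \leq \QZ(H) \cap G_\Pi^+ \leq \QZ(G_\Pi^+) = \{1\}$ and uses that two normal subgroups of $H$ with trivial intersection commute; you obtain $N \cap G_\Pi^+ = \{1\}$ directly via your orbit-map observation and the commutator identity, which amounts to the same thing.
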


\begin{proof}
(i) The subgroup $\Gamma \cap G_{\Pi}$ is normalized by $\Gamma$, and   centralized by $G_{\Sigma}$. Since the subgroup $G_{\Sigma} \Gamma$ is dense in $G$ by our assumption on the projection of $\Gamma$ on $G_{\Pi}$, it follows that $\Gamma \cap G_{\Pi}$ is a discrete normal subgroup of $G$. Therefore it lies in the quasi-center $\QZ(G_{\Pi})$. Notice that the quasi-center of a product group is the product of their quasi-centers (see \cite[Lemma~5.5]{CapWes}). So $\QZ(G_{\Pi})$ is trivial, and so is $\Gamma \cap G_{\Pi}$.

\medskip \noindent (ii) Let $H = \overline{p_\Pi(\Gamma)}$. We have $H \geq G_\Pi^+ $ by hypothesis. The group $N = \Gamma \cap G_\Pi$ is a discrete subgroup of $H$ normalized by $\Gamma$, hence by $\overline{p_\Pi(\Gamma)}$. Thus it is normal in $H$, hence contained in $\QZ(H)$. Again $G_\Pi^+$ has trivial quasi-center since all $\Mon(G_i)$ have this property for $i \in \Pi$, so 
$$N \cap G_\Pi^+  \leq \QZ(H) \cap G_\Pi^+ \leq \QZ(G_\Pi^+)= \{1\}$$ 
since $H \geq G_\Pi^+ $. Thus $N$ and $G_\Pi^+ $ are normal subgroups of $H$ with a trivial intersection, hence they commute. On the other hand we have $\mathrm C_{G_\Pi}(G_\Pi^+) = \prod_{i \in \Pi} \mathrm C_{G_i}(\Mon(G_i))$. Observe that   $\mathrm C_{G_i}(\Mon(G_i))$ must be trivial, since otherwise $\Mon(G_i)$ would be abelian, and hence equal to its quasi-center, which contradicts the hypothesis that $\QZ(\Mon(G_i)) = \{1\}$. We deduce that $N$ is trivial, which is the required conclusion. 
\end{proof}

\begin{prop} \label{prop-5-3}
Let $G_1,\ldots,G_n$ be compactly generated tdlc groups, and $\Gamma \leq G = G_1 \times \ldots \times G_n$ a cocompact lattice. Let $\Sigma \cup \Pi = \left\{1,\ldots,n\right\}$ be a partition such that $W(G_i)$ is discrete for every $i \in \Pi$ and such that $p_{\Sigma}: \Gamma \rightarrow G_{\Sigma}$ has discrete image. Then $p_\Pi(\Gamma)$ is discrete and $(G_\Pi \cap \Gamma)(G_\Sigma \cap \Gamma)$ has finite index in $\Gamma$.

In particular if $W(G_i)$ is discrete for every $i$ and $\Gamma$ is a cocompact lattice, then \ref{it:irre0} $\Rightarrow$ \ref{it:irre3}.
\end{prop}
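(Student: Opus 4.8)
The plan is to reduce the statement to the behaviour of the kernel $\Gamma\cap G_\Pi$ together with the finiteness theorem for normalizers of cocompact lattices, Corollary~\ref{cor:NormalizerLattice}. The hypothesis ``$W(G_i)$ discrete for $i\in\Pi$'' will be used only through that corollary, applied to the product $G_\Pi=\prod_{i\in\Pi}G_i$, which is compactly generated and whose polycompact radical $W(G_\Pi)=\prod_{i\in\Pi}W(G_i)$ is discrete.

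First I would set up the two relevant subgroups. Since $\Gamma$ is cocompact in $G$, its image $\Lambda:=p_\Sigma(\Gamma)$ is cocompact in $G_\Sigma$, and it is discrete by hypothesis, hence a uniform lattice in $G_\Sigma$. Writing $M:=\Gamma\cap G_\Pi$ and $M':=\Gamma\cap G_\Sigma$, the kernel of $p_\Sigma|_\Gamma$ is exactly $M$; as in the proof of Lemma~\ref{lem-implic-banal}, \cite[Theorem~I.1.13]{Raghu} then shows that $M$ is a lattice in $G_\Pi$, and it is cocompact because $\Gamma$ is (alternatively, a direct argument: $\Gamma\leq\Lambda\times G_\Pi$ is cocompact, a compact $C$ with $\Gamma C=\Lambda\times G_\Pi$ has finite projection to the discrete factor $\Lambda$, and one reads off $G_\Pi=M\cdot K$ with $K$ compact). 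Thus $M$ is a cocompact lattice in $G_\Pi$.

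The key observation is that $M=\Gamma\cap G_\Pi$ is normal in $\Gamma$, so that $p_\Pi(\Gamma)$ normalizes $M$ inside $G_\Pi$: for $\gamma\in\Gamma$ and $m\in M$ one has $p_\Pi(\gamma)\,m\,p_\Pi(\gamma)^{-1}=p_\Pi(\gamma m\gamma^{-1})\in M$, so $p_\Pi(\Gamma)\leq N_{G_\Pi}(M)$. Applying Corollary~\ref{cor:NormalizerLattice} to the compactly generated group $G_\Pi$ with discrete polycompact radical and to the cocompact lattice $M$, I obtain $[N_{G_\Pi}(M):M]<\infty$. As $N_{G_\Pi}(M)$ is a closed subgroup containing the discrete subgroup $M$ with finite index, it is itself discrete; therefore $p_\Pi(\Gamma)$ is discrete, which is the first assertion. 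For the second assertion, note that $p_\Pi$ identifies $p_\Pi(\Gamma)$ with $\Gamma/M'$ and carries $M$ to $MM'/M'$, so that $[\Gamma:MM']=[p_\Pi(\Gamma):M]\leq[N_{G_\Pi}(M):M]<\infty$; thus $(G_\Pi\cap\Gamma)(G_\Sigma\cap\Gamma)=MM'$ has finite index in $\Gamma$.

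Finally, the ``in particular'' clause is the contrapositive of \ref{it:irre0}$\Rightarrow$\ref{it:irre3}: if \ref{it:irre3} fails there is a non-empty $\Sigma\subsetneq\{1,\dots,n\}$ with $p_\Sigma(\Gamma)$ discrete, and applying the first part to the partition $\{1,\dots,n\}=\Sigma\cup\Pi$ with $\Pi$ the (non-empty) complement --- legitimate since every $W(G_i)$ is discrete --- yields that $(G_\Pi\cap\Gamma)(G_\Sigma\cap\Gamma)$ has finite index, i.e.\ \ref{it:irre0} fails. The only genuinely delicate point is the cocompactness of $M$ in $G_\Pi$ and the recognition that the normalizer finiteness of Corollary~\ref{cor:NormalizerLattice} forces $p_\Pi(\Gamma)$ to be discrete; once $M$ is known to be a cocompact lattice, everything else is bookkeeping.
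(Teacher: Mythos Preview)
Your proof is correct and follows essentially the same route as the paper: both arguments use \cite[Theorem~I.1.13]{Raghu} to see that $M=\Gamma\cap G_\Pi$ is a cocompact lattice in $G_\Pi$, observe that $p_\Pi(\Gamma)$ (or its closure) normalizes $M$, and then invoke Corollary~\ref{cor:NormalizerLattice} for $G_\Pi$ to conclude discreteness. The only minor difference is in the final step: you compute the index $[\Gamma:MM']=[p_\Pi(\Gamma):M]\leq[N_{G_\Pi}(M):M]<\infty$ directly via the correspondence theorem, whereas the paper applies Raghunathan a second time (now with the roles of $\Pi$ and $\Sigma$ reversed) to deduce that $M'=\Gamma\cap G_\Sigma$ is a cocompact lattice in $G_\Sigma$, so that $MM'$ is a cocompact lattice in $G$ and hence of finite index in $\Gamma$. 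Your shortcut is slightly more elementary; otherwise the two proofs are the same.
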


\begin{proof}
Since $\overline{p_\Sigma(\Gamma)}$ is   cocompact in $G_\Sigma$, it follows that $p_\Sigma$ maps $\Gamma$ onto a cocompact lattice in $G_\Sigma$. It then follows from \cite[Theorem~I.1.13]{Raghu} that $\Ker(p_{\Sigma}|_\Gamma) = G_\Pi \cap \Gamma$ is a cocompact lattice in $G_\Pi$. Notice that $G_\Pi \cap \Gamma$ is a discrete subgroup of $G_\Pi$ normalized by $H = \overline{p_\Pi(\Gamma)}$. Since having a discrete polycompact radical is stable under taking finite direct products, from Corollary \ref{cor:NormalizerLattice} we infer that $H$ is discrete. In particular $p_\Pi(\Gamma)$ is discrete.  Using again \cite[Theorem~I.1.13]{Raghu} it now follows  $\Ker(p_{\Pi}|_\Gamma) = G_\Sigma \cap \Gamma$ is a cocompact lattice in $G_\Sigma$.   Thus $\Gamma_1=  (G_\Pi \cap \Gamma)(G_\Sigma \cap \Gamma)$  is a cocompact lattice in $G = G_\Pi \times G_\Sigma$. The index $|\Gamma : \Gamma_1|$ is thus finite.
\end{proof}

\begin{rmq} \label{rmq-(T)-2}
In Proposition \ref{prop-5-3} the assumption that $\Gamma$ is cocompact in $G$ can be replaced by the assumptions that $\Gamma$ is finitely generated and all groups $G_i$ have discrete amenable radical, by invoking \cite[Corollary~5.4]{CapWes} instead of Corollary \ref{cor:NormalizerLattice}. In particular it is for instance enough that all groups $G_i$ are quasi just-non-compact with property (T).
\end{rmq}

\begin{cor}\label{cor:ReductionToIrr}
Let $G_1,\ldots,G_n$ be compactly generated tdlc groups with discrete polycompact radical, and  $\Gamma \leq G = G_1 \times \ldots \times G_n$ be a cocompact lattice such that $p_i \colon \Gamma \rightarrow G_i$ has dense image for every $i=1,\ldots,n$.  

Then there is a partition $\Pi_{1} \cup \dots \cup \Pi_{\ell}$ of the set $\{1, \dots, n\}$ such that for all $j = 1, \dots, \ell$, the projection $\Gamma_{\Pi_j}  =  p_{\Pi_j}(\Gamma) \leq G_{\Pi_j}$ is  a cocompact lattice in $G_{\Pi_j}$ satisfying \ref{it:irre3} and \ref{it:irre0}, and $\Gamma$ is contained as a finite index subgroup of the product $\prod_{j=1}^\ell  \Gamma_{\Pi_j}$. 
\end{cor}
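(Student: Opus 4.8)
The plan is to argue by induction on the number $n$ of factors, with Proposition~\ref{prop-5-3} serving as the splitting engine. I would record first that, under the present hypotheses (a cocompact lattice with all $W(G_i)$ discrete), the conditions \ref{it:irre0} and \ref{it:irre3} are equivalent: \ref{it:irre0}~$\Rightarrow$~\ref{it:irre3} is the final clause of Proposition~\ref{prop-5-3}, and conversely, if \ref{it:irre0} fails for a partition $\Pi \cup \Sigma$, then $(G_\Pi \cap \Gamma)(G_\Sigma \cap \Gamma)$ has finite index in $\Gamma$, so that $G_\Sigma \cap \Gamma$ is a cocompact lattice in $G_\Sigma$ (by \cite[Theorem~I.1.13]{Raghu}) sitting with finite index inside $p_\Sigma(\Gamma)$; this forces $p_\Sigma(\Gamma)$ to be discrete and hence \ref{it:irre3} to fail. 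The base case $n = 1$ is vacuous. If $\Gamma$ already satisfies \ref{it:irre3}, I would take $\ell = 1$ and $\Pi_1 = \{1, \ldots, n\}$, so that $\Gamma_{\Pi_1} = \Gamma$ satisfies both conditions and has index $1$ in itself.

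If \ref{it:irre3} fails, there is a proper non-empty $\Sigma$ with $p_\Sigma(\Gamma)$ discrete; set $\Pi = \{1, \ldots, n\} \setminus \Sigma$. Proposition~\ref{prop-5-3} then provides the two facts I need: $p_\Pi(\Gamma)$ is discrete as well, and $AB$ has finite index in $\Gamma$, where $A = G_\Pi \cap \Gamma$ and $B = G_\Sigma \cap \Gamma$ are the kernels of the two projections (they commute and meet trivially, so $AB \cong A \times B$). Writing $\Gamma_\Pi = p_\Pi(\Gamma)$ and $\Gamma_\Sigma = p_\Sigma(\Gamma)$, these are discrete cocompact subgroups, hence cocompact lattices in $G_\Pi$ and $G_\Sigma$, and $p_i(\Gamma_\Pi) = p_i(\Gamma)$ remains dense for every $i \in \Pi$ (similarly for $\Sigma$); thus both inherit the hypotheses of the corollary over strictly fewer factors.

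For the index bookkeeping, I would use the isomorphisms $\Gamma / B \cong \Gamma_\Pi$ and $\Gamma / A \cong \Gamma_\Sigma$ induced by the projections, which give $[\Gamma_\Pi : A] = [\Gamma : AB] = [\Gamma_\Sigma : B]$; hence $A \times B$ has finite index in $\Gamma_\Pi \times \Gamma_\Sigma$, and since $A \times B \leq \Gamma \leq \Gamma_\Pi \times \Gamma_\Sigma$, the subgroup $\Gamma$ has finite index in $\Gamma_\Pi \times \Gamma_\Sigma$. Applying the induction hypothesis to $\Gamma_\Pi$ and to $\Gamma_\Sigma$ produces partitions of $\Pi$ and of $\Sigma$ that together refine $\{1, \ldots, n\} = \Pi \sqcup \Sigma$ into blocks $\Pi_1, \ldots, \Pi_\ell$, with each $p_{\Pi_j}(\Gamma) = p_{\Pi_j}(\Gamma_\Pi)$ (or $p_{\Pi_j}(\Gamma_\Sigma)$) a cocompact lattice satisfying \ref{it:irre3} and \ref{it:irre0}, and with $\Gamma_\Pi$ (resp.\ $\Gamma_\Sigma$) of finite index in the associated subproduct. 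Since finite index is preserved under forming direct products and is transitive, composing these relations shows that $\Gamma$ has finite index in $\prod_{j=1}^\ell \Gamma_{\Pi_j}$, closing the induction. I expect the only genuinely delicate point to be the discreteness of the \emph{full} projections $p_\Pi(\Gamma)$, $p_\Sigma(\Gamma)$ --- not merely of the intersections $G_\Pi \cap \Gamma$, $G_\Sigma \cap \Gamma$ --- which is precisely what Proposition~\ref{prop-5-3} delivers; everything else is routine index arithmetic.
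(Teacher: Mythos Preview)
Your proof is correct and follows essentially the same route as the paper: both argue by induction, using Proposition~\ref{prop-5-3} to guarantee that whenever one projection $p_\Sigma(\Gamma)$ is discrete so is the complementary one, and both verify that each block $p_{\Pi_j}(\Gamma)$ is a cocompact lattice satisfying \ref{it:irre3} (hence \ref{it:irre0}). The only notable difference is in the finite-index step: you carry out explicit index bookkeeping through the recursion, whereas the paper simply observes that $\Gamma \leq \prod_j p_{\Pi_j}(\Gamma)$ with both sides lattices in $G$, forcing finite index in one stroke.
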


\begin{proof}
By  Proposition~\ref{prop-5-3}, for every partition $\Pi \cup \Sigma$ of the set $\{1, \dots, n\}$, if the projection $p_\Pi(\Gamma)$ is discrete, then $p_\Sigma(\Gamma)$ is discrete as well. Using this and a straightforward induction, we deduce that there is a partition $\Pi_1\cup \dots \cup \Pi_\ell$ of $\{1, \dots, n\}$ such that $p_{\Pi_i}(\Gamma)$ is discrete, and  $p_{\Sigma_i}(\Gamma)$ is non-discrete for all $i$, and  all $\Sigma_i \subsetneq \Pi_i$. Now, for all $i$, by construction, the projection $p_{\Pi_i}(\Gamma)$ is a lattice in $G_{\Pi_i}$ that satisfies \ref{it:irre3} (and hence also \ref{it:irre0}). It is clear that $\Gamma$ is contained in $\prod_{i=1}^\ell p_{\Pi_i}(\Gamma)$. Since the latter is a lattice in $G$ and since $\Gamma$ is also a lattice, it follows that the index of $\Gamma $ in $\prod_{i=1}^\ell p_{\Pi_i}(\Gamma)$ is finite.
\end{proof}

\begin{rmq} \label{rmq-(T)-3}
As before, Corollary~\ref{cor:ReductionToIrr} is also valid for non-uniform lattices provided all groups $G_i$ are quasi just-non-compact with property (T). The small modifications required in the proof are indicated in Remark~\ref{rmq-(T)-2}. 
\end{rmq}

We are now ready to complete the proof of Theorem~\ref{thm:irred:JNC}

\begin{proof}[Proof of Theorem~\ref{thm:irred:JNC}]
We assume henceforth that $G_1,\ldots,G_n$ are non-discrete compactly generated quasi just-non-compact groups, and $\Gamma \leq G = G_1 \times \ldots \times G_n$ is a cocompact lattice such that $p_i \colon \Gamma \rightarrow G_i$ has dense image for every $i=1,\ldots,n$. Recall that by Proposition \ref{prop-carac-qjnc} for all $i$ we have that $\QZ(G_i)$ is discrete in $G_i$, $G_i^{(\infty)}$ is a non-discrete cocompact subgroup of $G_i$ and every normal subgroup of $G_i$ is either contained in $\QZ(G_i)$ or contains $G_i^{(\infty)}$. 

\medskip \noindent
\ref{it:irre2} $\Rightarrow$ \ref{it:irre0} and \ref{it:irre4} $\Rightarrow$ \ref{it:irre0} follow from Lemma \ref{lem-implic-banal}.

\medskip \noindent
\ref{it:irre0} $\Rightarrow$ \ref{it:irre3} is consequence of Proposition \ref{prop-5-3} since we have $W(G_i) \leq \QZ(G_i)$ for all $i$.

\medskip \noindent
\ref{it:irre3} $\Rightarrow$ \ref{it:irre1'} follows from Corollary~ \ref{cor-3-1-JNC}.

So in order to complete the proof of statement \ref{item-thmirred-qJNC} of Theorem~\ref{thm:irred:JNC}, we need to show \ref{it:irre1'} $\Rightarrow$ \ref{it:irre0}. If \ref{it:irre0} fails then there is a partition $\Pi \cup \Sigma = \left\{1,\ldots,n\right\}$ with $\Pi, \Sigma \neq \varnothing$ such that $(G_\Pi \cap \Gamma)(G_\Sigma \cap \Gamma)$ has finite index in $\Gamma$. In particular $p_\Sigma(G_\Sigma \cap \Gamma)$ is a discrete subgroup which has finite index in $p_\Sigma(\Gamma)$, and it follows that $p_\Sigma(\Gamma)$ is discrete. Therefore the closure of $p_\Sigma(\Gamma)$ cannot contain $\prod_{i \in \Sigma} G_i^{(\infty)}$ since the latter is non-discrete, so \ref{it:irre1'} also fails. 

We now make the additional assumption that $\QZ(G_i)=1$ for all $i$, i.e.\ that each factor $G_i$ is just-non-compact. In particular, $G_i$ is monolithic with a non-discrete cocompact monolith by Proposition~\ref{prop:JustNonCompact}. The monolith $\Mon(G_i)$ is compactly generated (because it is cocompact), non-discrete and characteristically simple. Thus it has a trivial quasi-center and trivial polycompact radical by Proposition~\ref{prop:trivialQZ}. We may therefore apply Lemma \ref{lem-1-2}(ii), which shows that \ref{it:irre1'} $\Rightarrow$ \ref{it:irre2}. This completes the proof of statement \ref{item-thmirred-JNC}.

Finally in order to have \ref{item-thmirred-hJNC}, it remains to prove the implication \ref{it:irre2} $\Rightarrow$ \ref{it:irre4} under the extra assumption that all factors $G_i$ are hereditarily just-non-compact. Assume that  \ref{it:irre4} fails and let $\Lambda \leq \Gamma$ be a finite index subgroup of $\Gamma$ such that $\Lambda = A \times B$ with $A, B$ non-trivial. Let $H_i = \overline{p_i(\Lambda)}$ for all $i$. The index of $H_i$ in $G_i$ is bounded above by $[\Gamma: \Lambda]$, and is thus finite. In particular $H_i$ is hereditarily just-non-compact by assumption. For each $i$, the closures $\overline{p_i(A)}$ and $\overline{p_i(B)}$ are closed normal subgroup of $H_i$ that centralizer each other. If they are both non-trivial, then they both contain the monolith of $H_i$, which must then be abelian. This contradicts  Proposition~\ref{prop:JustNonCompact}(ii). Thus for each $i$, either $p_i(A) =\{1\}$ or $p_i(B) =\{1\}$. It follows that \ref{it:irre2} fails. 
\end{proof}

\begin{proof}[Proof of Corollary~\ref{cor:3factors:JNC}]
Consider the partition $\Pi_{1} \cup \dots \cup \Pi_{\ell}$ given by Corollary~\ref{cor:ReductionToIrr}. If $\ell > 1$ then one block $\Pi_{j}$ must have cardinality one. If $i$ is the unique element of $\Pi_{j}$, then the projection $p_i(\Gamma)$ is both discrete and dense in the non-discrete group $G_i$, which is absurd. So $\ell = 1$, and $\Gamma$ satisfies \ref{it:irre3}. By Theorem \ref{thm:irred:JNC} it also satisfies \ref{it:irre0} and \ref{it:irre1'}, and we are done.
\end{proof}

We present two supplements to Theorem~\ref{thm:irred:JNC} in case of groups with trivial amenable radical. 

The first one relates condition~\ref{it:irre2} to residual finiteness of the lattice, relying on  \cite{CapWes}. It requires the ambient group to have a trivial amenable radical. 

\begin{cor}\label{cor:irred:RamenTrivial}
	Let $G_1,\ldots,G_n$ be non-discrete, compactly generated,  quasi just-non-compact, tdlc groups with trivial amenable  radical. Let   $\Gamma \leq G = G_1 \times \ldots \times G_n$ be a cocompact lattice such that $p_i \colon \Gamma \rightarrow G_i$ has dense image for every $i=1,\ldots,n$. If $n \geq 4$, assume in addition that  $\Gamma$ satisfies \ref{it:irre0}.  
	If $\Gamma $ is residually finite, then it satisfies \ref{it:irre2}. 

\end{cor}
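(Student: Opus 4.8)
The plan is to argue by contradiction, first reducing to the irreducible situation and then producing a nontrivial amenable normal subgroup inside one of the factors. Assume \ref{it:irre2} fails. Since \ref{it:irre2} amounts to injectivity of all the maps $p_i$, i.e.\ to $\Gamma \cap G_\Pi = \{1\}$ for $\Pi = \{1,\ldots,n\}\setminus\{i\}$ and every $i$, there is an index $i$ with $N := \Gamma \cap G_\Pi \neq \{1\}$, where $\Pi = \{1,\ldots,n\}\setminus\{i\}$; we may assume $n \geq 2$, as the statement is vacuous otherwise. In all cases $\Gamma$ satisfies \ref{it:irre3}: this is our hypothesis together with Theorem~\ref{thm:irred:JNC}\ref{item-thmirred-qJNC} when $n \geq 4$, it is automatic when $n = 3$ by Corollary~\ref{cor:3factors:JNC}, and when $n = 2$ the density of the projections into the non-discrete factors already gives it. Set $H := \overline{p_\Pi(\Gamma)}$; by Corollary~\ref{cor-3-1-JNC} it contains the cocompact subgroup $P := \prod_{j \in \Pi} G_j^{(\infty)}$ of $G_\Pi$. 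Observe that $N$ is discrete (being the intersection of $\Gamma$ with a closed subgroup), that $N \leq H$, and that $H$ normalises $N$, because $\Gamma$ normalises $N = \Gamma \cap G_\Pi$ and acts on $N \leq G_\Pi$ through $p_\Pi$.

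The heart of the argument, and where residual finiteness enters, is to show that $P$ centralises $N$. I would fix any finite index normal subgroup $\Gamma_0 \trianglelefteq \Gamma$ and set $M := N \cap \Gamma_0$; then $M$ is $\Gamma$-invariant and of finite index in $N$. The first step is to upgrade this to $H$-invariance: the setwise stabiliser $\{h \in H : h M h^{-1} = M\}$ is closed (since $M$ is closed and conjugation is continuous) and contains $p_\Pi(\Gamma)$, which is dense in $H$, so it equals $H$. Consequently $P$ permutes the finite set $N/M$, and since $g \mapsto g\nu g^{-1}$ is continuous from $P$ into the discrete group $N$ for each $\nu$, the resulting homomorphism $P \to \Aut(N/M)$ is continuous, so its kernel is open of finite index. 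Here I would invoke the fact that $G_j^{(\infty)}$ has no proper finite index open subgroup: indeed $(G_j^{(\infty)})^{(\infty)}$ is a closed normal subgroup of $G_j$, hence discrete or cocompact by quasi just-non-compactness, and the discrete case is excluded by Proposition~\ref{prop-carac-qjnc} as $G_j^{(\infty)}$ is non-discrete, so $(G_j^{(\infty)})^{(\infty)} = G_j^{(\infty)}$. It follows that $P$ itself has no proper finite index open subgroup, so the kernel above is all of $P$; thus $[P, N] \leq M$. As $\Gamma$ is residually finite, $\bigcap_{\Gamma_0} M = N \cap \bigcap_{\Gamma_0}\Gamma_0 = \{1\}$, whence $[P, N] = \{1\}$.

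It then remains to derive the contradiction. For each $\nu \in N$ the centraliser $C_H(\nu)$ contains $P$, hence is cocompact in $H$, so the $H$-conjugacy class of $\nu$ is a continuous image of the compact space $H/C_H(\nu)$; being a discrete subset of $N$, it is finite. Thus $N$ is an FC-group, so every finitely generated subgroup of $N$ is centre-by-finite, and therefore $N$ is amenable. For each $j \in \Pi$ the image $p_j(N)$ is then amenable, so its closure $M_j := \overline{p_j(N)}$ is an amenable closed subgroup of $G_j$ that is normal, as $p_j(N)$ is normalised by the dense subgroup $p_j(\Gamma)$. Triviality of the amenable radical of $G_j$ forces $M_j = \{1\}$, i.e.\ $p_j(N) = \{1\}$ for all $j \in \Pi$, and hence $N = \{1\}$, contradicting $N \neq \{1\}$.

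The main obstacle is the middle step, namely proving that $P$ centralises $N$. It relies on marrying two inputs of different nature: residual finiteness of $\Gamma$, which furnishes a cofinal family of finite index subgroups of $N$ that can be promoted from $\Gamma$-invariance to $H$-invariance by the density argument; and the absence of finite quotients of the cocompact subgroup $P$, which forces all the finite actions so obtained to be trivial. The hypothesis of trivial amenable radical is used only at the very end, to annihilate the amenable normal subgroups $M_j$ produced by the FC-group $N$.
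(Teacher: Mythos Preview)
Your argument is correct and genuinely different from the paper's. The paper's proof is a two-line appeal to an external result: by \cite[Theorem~5.13]{CapWes}, residual finiteness of $\Gamma$ together with the trivial amenable radical hypothesis forces $\QZ(G)=\{1\}$, hence each $G_i$ is just-non-compact (Proposition~\ref{prop-carac-qjnc}), and then Theorem~\ref{thm:irred:JNC}\ref{item-thmirred-JNC} (together with Corollary~\ref{cor:3factors:JNC} for $n\leq 3$) yields \ref{it:irre2} directly. Your route instead works by contradiction inside $G_\Pi$: you exploit residual finiteness to force $[P,N]=\{1\}$ via the absence of finite quotients of $P=\prod_{j\in\Pi}G_j^{(\infty)}$, deduce that $N$ is an FC-group and hence amenable, and only then invoke the trivial amenable radical to kill the projections $p_j(N)$. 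This is more hands-on and avoids the external reference, at the cost of being longer.

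One step deserves a fuller justification. You assert that $(G_j^{(\infty)})^{(\infty)}$ cannot be discrete ``by Proposition~\ref{prop-carac-qjnc} as $G_j^{(\infty)}$ is non-discrete'', but that proposition only controls closed normal subgroups of $G_j$, not the residual structure of $G_j^{(\infty)}$. The implication does hold, but one has to argue: if $(G_j^{(\infty)})^{(\infty)}$ were discrete, then $G_j^{(\infty)}/(G_j^{(\infty)})^{(\infty)}$ is residually discrete, so by Propositions~\ref{prop:CapMon-RD} and~\ref{prop-exten-RD} the (compactly generated) group $G_j^{(\infty)}$ would admit arbitrarily small compact open normal subgroups; by Proposition~\ref{prop:CocoNormal} each such sits inside a compact normal subgroup of $G_j$, which is finite since $W(G_j)\leq \QZ(G_j)$ is discrete, forcing $G_j^{(\infty)}$ to be discrete --- a contradiction. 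With this filled in, your proof is complete. (Your final step, that the closure of an amenable-as-discrete subgroup is amenable as a locally compact group, is correct: an invariant probability measure for the dense subgroup is automatically invariant for the closure by continuity of the action.)
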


\begin{proof}
By \cite[Theorem~5.13]{CapWes}, the hypothesis that $\Gamma$ is residually finite implies that $\QZ(G)=\{1\}$. Therefore $\QZ(G_i)=\{1\}$ for all $i$, so that $G_i$ is just-non-compact. It then follows from Theorem~\ref{thm:irred:JNC} and Corollary~\ref{cor:3factors:JNC} that  \ref{it:irre2} holds. 
\end{proof}

We emphasize that  a lattice can satisfy \ref{it:irre2} without being residually finite: this is illustrate by the Burger--Mozes simple lattices constructed in \cite{BuMo2}. 

The second result enumerates other properties that are formally equivalent to \ref{it:irre2}. It uses in an essential way the Normal Subgroup Theorem due to Bader--Shalom~\cite{BaSha}. We recall that a group $\Gamma$ is called \textbf{just-infinite} if $\Gamma$ is infinite and every non-trivial normal subgroup of $\Gamma$ is of finite index.

%
%
%
%
%
\begin{cor}\label{cor:irred:RamenTrivial:2}
	Let $G_1,\ldots,G_n$ be non-discrete, compactly generated,  quasi just-non-compact, tdlc groups. Let   $\Gamma \leq G = G_1 \times \ldots \times G_n$ be a cocompact lattice such that $p_i \colon \Gamma \rightarrow G_i$ has dense image for every $i=1,\ldots,n$. If $n \geq 4$, assume in addition that  $\Gamma$ satisfies \ref{it:irre0}.  Consider the following conditions.
\begin{enumerate}[label=(\roman*)]
		\item $\Gamma$ satisfies  \ref{it:irre2}. 
		
		\item   $\QZ(G)=\{1\}$. 
		
		\item  $G_i$ is just-non-compact for all $i$. 
		
		\item $\Gamma$ is just-infinite. 
		
\end{enumerate}

Then we have $(ii) \Leftrightarrow (iii)  \Rightarrow (i) \Leftarrow (iv)$.

If in addition $G_i$ has a trivial locally elliptic radical for all $i = 1, \dots, n$, then $(i), (ii), (iii), (iv)$ are all equivalent. 	
\end{cor}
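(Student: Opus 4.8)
The plan is to establish the implications in the order $(iii) \Rightarrow (i)$, $(ii) \Leftrightarrow (iii)$, $(iv) \Rightarrow (i)$, and finally the reverse implications $(i) \Rightarrow (iii)$ and $(i) \Rightarrow (iv)$ under the extra hypothesis of trivial locally elliptic radical. First I would reduce the whole analysis to the case where $\Gamma$ satisfies \ref{it:irre3}: by Corollary~\ref{cor:ReductionToIrr} there is a partition $\Pi_1 \cup \dots \cup \Pi_\ell$ with each projection $p_{\Pi_j}(\Gamma)$ a cocompact lattice satisfying \ref{it:irre3}, and $\Gamma$ a finite index subgroup of $\prod_j p_{\Pi_j}(\Gamma)$. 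The hypothesis \ref{it:irre0} (for $n \geq 4$) together with Corollary~\ref{cor:3factors:JNC} (for $n \leq 3$) forces $\ell = 1$, i.e. $\Gamma$ itself satisfies \ref{it:irre3}, hence also \ref{it:irre1'} by Theorem~\ref{thm:irred:JNC}.

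For $(ii) \Leftrightarrow (iii)$: since the quasi-center of a product is the product of the quasi-centers, $\QZ(G) = \{1\}$ is equivalent to $\QZ(G_i) = \{1\}$ for all $i$, which by Proposition~\ref{prop-carac-qjnc} and Proposition~\ref{prop:JustNonCompact}(i) is exactly the condition that each $G_i$ is just-non-compact. For $(iii) \Rightarrow (i)$: if every $G_i$ is just-non-compact, then each $G_i$ is monolithic with non-discrete cocompact monolith (Proposition~\ref{prop:JustNonCompact}(i)), and $\Mon(G_i)$ has trivial quasi-center by Proposition~\ref{prop:trivialQZ}; combining \ref{it:irre1'} (already established) with Lemma~\ref{lem-1-2}(ii) yields \ref{it:irre2}. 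For $(iv) \Rightarrow (i)$: I would argue contrapositively. If \ref{it:irre2} fails, then by Lemma~\ref{lem-implic-banal} and the preceding equivalences one of the projections $p_i\colon \Gamma \to G_i$ has non-trivial kernel $\Gamma \cap G_{\Pi}$ for some proper $\Pi$; this kernel is a non-trivial normal subgroup of $\Gamma$, and since $\Gamma$ is a lattice in the non-compact group $G$ while the kernel is (up to finite index, via \cite[Theorem~I.1.13]{Raghu}) a lattice in a proper subproduct, its index in $\Gamma$ is infinite, contradicting just-infiniteness.

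The harder direction, and the part I expect to be the main obstacle, is closing the loop under the extra hypothesis that each $G_i$ has trivial locally elliptic radical, namely proving $(i) \Rightarrow (iii)$ and $(i) \Rightarrow (iv)$. For $(i) \Rightarrow (iii)$ I would invoke the Bader--Shalom Normal Subgroup Theorem~\cite{BaSha}: under \ref{it:irre2} (which gives injectivity of the projections, so $\Gamma$ embeds as an irreducible lattice) together with trivial amenable radical of the factors — here the triviality of the locally elliptic radical is what upgrades the discrete quasi-center to a genuinely trivial one, forcing $\QZ(G_i) = \{1\}$ and hence each $G_i$ just-non-compact. The key technical point will be verifying that the hypotheses of the Normal Subgroup Theorem are genuinely met in this tdlc setting, in particular that the factors have trivial amenable radical (which follows from trivial locally elliptic radical since an amenable closed normal subgroup of a quasi just-non-compact group is either discrete, hence in $\QZ$, or cocompact, hence contradicting non-amenability of the non-compact quotient). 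For $(i) \Rightarrow (iv)$, once $(iii)$ is in hand so that $\Gamma$ is an irreducible lattice with injective projections in a product of just-non-compact groups with trivial amenable radical, the Normal Subgroup Theorem directly gives that every non-trivial normal subgroup of $\Gamma$ is of finite index, i.e. $\Gamma$ is just-infinite (using that $\Gamma$ is infinite, being a lattice in a non-compact group). Thus the diagram $(ii) \Leftrightarrow (iii) \Rightarrow (i) \Leftarrow (iv)$ is completed to a full equivalence under the locally elliptic radical hypothesis.
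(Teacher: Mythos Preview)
Your handling of the easy implications $(ii) \Leftrightarrow (iii)$ and $(iii) \Rightarrow (i)$ is essentially the paper's argument. For $(iv) \Rightarrow (i)$ the paper's argument is simpler than yours: if some $p_i$ has a non-trivial kernel $N$, then $\Gamma/N \cong p_i(\Gamma)$ is dense in the non-discrete group $G_i$, hence infinite, so $N$ has infinite index and $\Gamma$ is not just-infinite. Your invocation of \cite[Theorem~I.1.13]{Raghu} is misplaced, since that result requires $p_i(\Gamma)$ to be discrete, which is precisely not the case here.

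The genuine gap is in your argument for $(i) \Rightarrow (iii)$. You assert that ``the triviality of the locally elliptic radical is what upgrades the discrete quasi-center to a genuinely trivial one''. This is a non-sequitur: a discrete normal subgroup need be neither locally elliptic nor amenable (it could be a non-abelian free group, as indeed happens in concrete examples of quasi just-non-compact groups acting on trees), so triviality of the locally elliptic radical of $G_i$ --- or even of its amenable radical --- does not by itself force $\QZ(G_i)=\{1\}$. The Bader--Shalom theorem is irrelevant at this step: it constrains normal subgroups of $\Gamma$, not the quasi-center of the ambient factors. The paper's argument for $(i) \Rightarrow (iii)$ is entirely different and uses \ref{it:irre2} in an essential way: the injectivity of the projection $\Gamma \to \prod_{j \neq i} G_j$ lets one regard $\Gamma$ inside the closure $H$ of its image there; one then shows that every lattice in $H$ has trivial centralizer in $\prod_{j \neq i} G_j$ (this is where the trivial locally elliptic radical hypothesis actually enters, via \cite[Corollary~5.3]{CapWes}), and finally \cite[Lemma~5.11]{CapWes} is invoked to conclude $\QZ(G_i)=\{1\}$. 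Once $(iii)$ is established in this way, your plan for $(i) \Rightarrow (iv)$ via the Normal Subgroup Theorem is correct and matches the paper.
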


\begin{proof}
Assume that   $\QZ(G)=\{1\}$. Then   $\QZ(G_i)=\{1\}$ for all $i$, so that $G_i$ is just-non-compact by Proposition~\ref{prop-carac-qjnc}, so (ii) $\Rightarrow$ (iii).

 Conversely, if $G_i$ is just-non-compact for all $i$, then $\QZ(G_i)=\{1\}$ by Proposition~\ref{prop:JustNonCompact}, hence $\QZ(G)=\{1\}$ by \cite[Lemma~5.5]{CapWes}, so (iii) $\Rightarrow$ (ii). 
 
 If $G_i$ is just-non-compact for all $i$, then    \ref{it:irre2} holds by Theorem~\ref{thm:irred:JNC} and Corollary~\ref{cor:3factors:JNC}.  Thus  (iii) $\Rightarrow$ (i).

Assume that $\Gamma$ does not satisfy  \ref{it:irre2}, then there exists $i$ such that $p_i \colon \Gamma \to G_i$ has a non-trivial kernel. Since the projection $p_i(\Gamma)$ is dense in $G_i$, it is an infinite group. Hence  we have found a non-trivial normal subgroup of $\Gamma$ which is of infinite index. Thus  (iv) $\Rightarrow$ (i). 

We know assume in addition that $G_i$ has a trivial locally elliptic radical for all $i = 1, \dots, n$. We must show that  (i) $\Rightarrow$ (iiii) and  (i) $\Rightarrow$ (iv). We assume henceforth that   \ref{it:irre2} holds. Then for all $i$, the projection of $\Gamma$ to $\prod_{j \neq i} G_j$ is injective. Denoting  by $H$ the closure of the projection of $\Gamma$ to $\prod_{j \neq i} G_j$, we see that $H$ is a closed cocompact subgroup of  $\prod_{j \neq i} G_j$ of finite covolume. Therefore every lattice in $H$ is a lattice in  $\prod_{j \neq i} G_j$, and thus has a trivial centralizer by \cite[Corollary~5.3]{CapWes}. We may then invoke \cite[Lemma~5.11]{CapWes}, which ensures that $\QZ(G_i)=\{1\}$. Thus (i) $\Rightarrow$ (iii).

By   \cite[Theorem~3.7(iv)]{BaSha} and \cite[Theorem~0.1]{Shalom}, we know that for every normal subgroup $N$ of $\Gamma$, the quotient $\Gamma/N$ is finite provided $G_i/\overline{p_i(N)}$ is compact for all $i$. If $N$ is non-trivial and $\Gamma$ satisfies  \ref{it:irre2}, then $G_i$ is just non-compact (as we have seen in the previous paragraph) and $\overline{p_i(N)}$ is a non-trivial closed normal subgroup, so the quotient $G_i/\overline{p_i(N)}$ is indeed compact. Thus (i) $\Rightarrow$ (iv). 
\end{proof}

\section{Covolume bounds} \label{sec:Wang}

\subsection{A Chabauty continuity property of projection maps}

Given a locally compact group $G$ and a continuous   homomorphism $\varphi\colon G \to Q$, the induced map $\varphi_* \colon \sub(G) \to \sub(Q) : H \mapsto \overline{\varphi(H)}$ need not be continuous. For example, consider $G = \RR\times \RR$ and $\varphi \colon G \to \RR$ the projection to the first factor. The sequence $H_n = n \ZZ[\sqrt 2] = \{ n(a+b\sqrt{2},a-b\sqrt{2}) \, : \, a,b\in \ZZ\}$ Chabauty converges to the trivial subgroup of $G$, but $\varphi(H_n)$ is dense in $\RR$ for all $n$. However, the map $\varphi_*$ is always semi-continuous, in the sense of the first assertion of the following proposition. Crucial for our purposes is the fact that the map $\varphi_*$ is actually continuous under an assumption of $(r, U)$-cocompactness: 

\begin{prop}\label{prop:ProjectionChabautyContinuous}
	Let $G$ and $Q$ be locally compact groups and $\varphi \colon G \to Q$ be a continuous homomorphism. Let also $(H_k)$ be a net of closed subgroups of 
	$G$ that Chabauty converges to  $H \leq G$.
	
	\begin{enumerate}[label=(\roman*)]
		\item  	Any accumulation point of the net $(\overline{\varphi(H_k)})$ in the Chabauty space $\sub(Q)$ contains $\overline{\varphi(H)}$. 
		
		\item Assume in addition  that $G$ is compactly generated tdlc and that $(H_k)$ is $(r, U)$-cocompact for some $r$, $U$ and all $k$. Then $(\overline{\varphi(H_k)})$ Chabauty converges to $\overline{\varphi(H)}$. 
	\end{enumerate}
\end{prop}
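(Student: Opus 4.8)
The plan is to run everything off the two standard reformulations of Chabauty convergence: a net $H_k \to H$ in $\sub(G)$ holds iff (a) every $h \in H$ is a limit of some net $h_k \in H_k$, and (b) whenever a subnet $h_{k_j} \in H_{k_j}$ converges in $G$, its limit lies in $H$.

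First I would dispose of part (i), which is pure lower semicontinuity. Let $L$ be an accumulation point of $(\overline{\varphi(H_k)})$, realized along a subnet $\overline{\varphi(H_{k_j})} \to L$; since $L$ is closed it suffices to show $\varphi(H) \subseteq L$. Given $h \in H$, property (a) yields $h_k \in H_k$ with $h_k \to h$; passing to the subnet, $\varphi(h_{k_j}) \in \overline{\varphi(H_{k_j})}$ and $\varphi(h_{k_j}) \to \varphi(h)$ by continuity, so property (b) for $\overline{\varphi(H_{k_j})} \to L$ gives $\varphi(h) \in L$. Hence $\overline{\varphi(H)} \subseteq L$, which is the assertion of (i).

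For part (ii), since $\sub(Q)$ is compact it is enough to show that every accumulation point $L$ of $(\overline{\varphi(H_k)})$ equals $\overline{\varphi(H)}$. Part (i) already gives $\overline{\varphi(H)} \subseteq L$, so the whole content is the \emph{reverse} inclusion $L \subseteq \overline{\varphi(H)}$, and this is where the cocompactness hypothesis must be used in an essential way. Two reductions prepare the ground. By Lemma~\ref{lem-rU-coc-closed} the set $\Ccal_{r,U}(G)$ is clopen, so the limit $H$ is again $(r,U)$-cocompact; by Lemma~\ref{lem-rU-cocom-uniform} there is then a single finite set $\Sigma$ with $G = H_k \Sigma U = H \Sigma U$ for all $k$. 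Applying $\varphi$ and setting $C = \overline{\varphi(\Sigma U)}$, a fixed compact subset of $Q$, we obtain $\overline{\varphi(G)} = \overline{\varphi(H_k)}\, C = \overline{\varphi(H)}\, C$, so that all of $\overline{\varphi(H_k)}$, $\overline{\varphi(H)}$ and $L$ are cocompact in $Q_0 := \overline{\varphi(G)}$ with the \emph{same} compact transversal $C$. This uniform cocompactness is precisely what fails in the $\RR \times \RR$ example preceding the statement, where $\varphi(H_n)$ densifies exactly because the cocompactness constants of $H_n$ degenerate; it is the feature that will forbid $L$ from overshooting $\overline{\varphi(H)}$.

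To prove the reverse inclusion I would transport the convergence $H_k \to H$ through the Cayley--Abels graphs. The clean route is to first factor $\varphi = \bar\varphi \circ \pi$, where $\pi \colon G \to G/\overline{\Ker\varphi}$ is the (open) quotient map and $\bar\varphi$ is injective; since $\pi$ is open, $\pi(U)$ is a compact \emph{open} subgroup of the tdlc group $G/\overline{\Ker\varphi}$, and the uniform cocompactness above shows the groups $\overline{\pi(H_k)}$ to be uniformly $(r',\pi(U))$-cocompact. The key mechanism is that, for subgroups confined to a fixed $\Ccal_{r',\cdot}$, Chabauty convergence is equivalent to eventual agreement of the induced (finite) actions on each ball of the associated graph — this is the engine behind Lemma~\ref{lem-rU-coc-closed}, and on the domain side it is fed by the continuity of intersection with the compact open subgroups $gUg^{-1}$ (Lemma~\ref{lem:rU-intersect-open}). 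The surjective equivariant graph morphism $X \to Y$ induced by $\pi$ pushes the eventual stabilization of the $H_k$-action on balls of $X$ forward to that of $\overline{\pi(H_k)}$ on balls of $Y$, yielding $\overline{\pi(H_k)} \to \overline{\pi(H)}$; one then transfers across the injective factor $\bar\varphi$, showing that a sequence of images bounded in $Q$ either lifts to a bounded sequence in $G/\overline{\Ker\varphi}$ — so property (b) applies and produces a genuine element of $H$ — or is absorbed by the uniform cocompactness of the images. I expect this final transfer across $\bar\varphi$, together with the verification that the graph morphism faithfully carries finite-ball data, to be the technically delicate part: uniform cocompactness is exactly the ingredient guaranteeing that no element of $L$ can arise in the limit beyond $\overline{\varphi(H)}$, whereas without it the generators defining $\overline{\varphi(H_k)}$ could refine indefinitely and the projection would genuinely be discontinuous.
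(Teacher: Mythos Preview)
Your treatment of part~(i) is fine and is exactly the computation the paper leaves to the reader.

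For part~(ii), however, your argument has a genuine gap, and it sits precisely where you yourself flag things as ``technically delicate''. The factorisation $\varphi = \bar\varphi \circ \pi$ does not reduce the problem: even granting that your Cayley--Abels argument shows $\overline{\pi(H_k)} \to \overline{\pi(H)}$ in $G/\overline{\Ker\varphi}$, you are left with the same continuity question for the injective map $\bar\varphi$. Your proposed dichotomy --- that a net of images bounded in $Q$ ``either lifts to a bounded sequence \dots\ or is absorbed by uniform cocompactness'' --- is not established and is in general false: $\bar\varphi$ need not be proper, so bounded sets in $Q$ need not pull back to relatively compact sets, and ``absorption by uniform cocompactness'' is not given any precise meaning. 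Your citation of Lemma~\ref{lem:rU-intersect-open} as ``continuity of intersection with compact open subgroups'' is also a misreading; that lemma says something different, namely that intersecting with an open subgroup preserves $(r,U)$-cocompactness.

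The paper's proof avoids the lifting problem entirely by working in the opposite direction: rather than pushing forward through graph morphisms, it \emph{pulls back}. Given $q$ in an accumulation point $J$ and a compact open subgroup $V \leq Q$, one first finds $h_{k'} \in H_{k'}$ with $\varphi(h_{k'}) \in qV$; these $h_{k'}$ are a priori unbounded in $G$. The point is that the open preimage $O = \varphi^{-1}(V)$ inherits uniform cocompactness: by Lemma~\ref{lem:rU-intersect-open}, each $H_k \cap O$ is $(r, U\cap O)$-cocompact in $O$, so by Lemma~\ref{lem-rU-cocom-uniform} there is a fixed compact $L \subset O$ with $O = L(H_k \cap O)$ for all $k$. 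One can then correct each $h_{k'}$ by an element $y_{k'} \in H_{k'} \cap O$ so that $h_{k'} y_{k'}$ lands in the fixed compact set $gL$ (for some fixed $g$ with $\varphi(g) \in qV$), while $\varphi(h_{k'} y_{k'})$ remains in $qV$. Now compactness in $G$ itself produces a subnet converging to some $h \in H$ with $\varphi(h) \in qV$; letting $V$ shrink gives $q \in \overline{\varphi(H)}$. This is exactly the mechanism your outline lacks: it manufactures boundedness in $G$ using cocompactness inside the \emph{preimage} of a small set in $Q$, rather than trying to lift boundedness from $Q$.
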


\begin{proof}
	Assertion (i) is an easy verification, and we leave the details to the reader. 
	
	\medskip \noindent 
	For (ii), let $J \leq Q$ be an accumulation point of the net $(\overline{\varphi(H_k)})$ and let   $(\overline{\varphi(H_{k'}}))$ be a subnet converging to $J$. Let $q \in J$ and let $V \leq Q$ be any compact open subgroup. For all sufficiently large $k$, the intersection $qV \cap \overline{\varphi(H_{k'})}$ is non-empty. Thus there exists $h_{k'} \in H_{k'}$ with $\varphi(h_{k'}) \in qV$. Since $O = \varphi^{-1}(V)$ is an open subgroup of $G$, it follows from Lemma~\ref{lem:rU-intersect-open} that $H_{k} \cap O$ is $(r, U\cap O)$-cocompact in $O$ for all $k$. By Lemma~\ref{lem-rU-cocom-uniform}, there exists a compact subset $L \subset O$ such that $L(H_k \cap O)  = O$ for all $k$. In particular  $g L (H_{k'} \cap O) = h_{k'} O$ for all $k$, where $g \in G$ is a fixed element with $\varphi(g) \in q V$. Hence, for all $k$, we may find $y_{k'} \in H_{k'} \cap O$ such  that $h_{k'} y_{k'} \in gL$. Since $gL$ is compact, we may assume after a further extraction that  $(h_{k'} y_{k'})_k$ converges to some limit $h$, which must  belong  to $H = \lim_k H_k$. We deduce that $\varphi(h) = \lim_k \varphi(h_{k'} y_{k'})$, which is contained in  $qV$ since $\varphi(h_{k'} y_{k'}) \in qV \varphi(O) =qV$ for all $k$. 
	
	Since $V$ was an arbitrary compact open subgroup of $Q$, we infer that $q \in \overline{\varphi(H)}$. Hence $J \leq  \overline{\varphi(H)}$. By (i) we have $J =  \overline{\varphi(H)}$. The assertion follows. 
\end{proof}

\subsection{On the set of $(r, U)$-cocompact lattices with dense projections}

\begin{thm}\label{thm:LrU-ChabClosed}
		Let $G_1,\ldots,G_n$ be non-discrete compactly generated quasi just-non-compact tdlc groups with  $n \geq 2$, and let $U \leq G = G_1 \times \dots \times G_n$ be a compact open subgroup. For every $r > 0$, the set $\mathcal L_{r, U}$ is discrete $(r, U)$-cocompact subgroups $\Gamma\leq G$ with $p_i(\Gamma)$ dense in $G_i$ for all $i$, is Chabauty closed. 
\end{thm}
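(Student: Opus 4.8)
The plan is to show that $\mathcal L_{r,U}$ contains all its Chabauty limits. So let $H$ be the limit of a net $(\Gamma_k)$ in $\mathcal L_{r,U}$, and let me verify in turn that $H$ is $(r,U)$-cocompact, has dense projections, and is discrete. The first property is immediate from Lemma~\ref{lem-rU-coc-closed}: since $\Ccal_{r,U}(G)$ is clopen in $\sub(G)$, it is closed under limits, so $H$ is $(r,U)$-cocompact. For the density of the projections I would apply Proposition~\ref{prop:ProjectionChabautyContinuous}(ii) to each coordinate projection $p_i \colon G \to G_i$: as every $\Gamma_k$ is $(r,U)$-cocompact, that proposition gives $\overline{p_i(\Gamma_k)} \to \overline{p_i(H)}$ in $\sub(G_i)$, and since the left-hand side is the constant net $G_i$, we get $\overline{p_i(H)} = G_i$.

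The substantial point is the discreteness of $H$, which I would establish by contradiction, after first reducing to an irreducible configuration. For each $i$ the subgroup $H \cap G_i$ is normalized by $H$ and centralized by the remaining factors, hence (using density of the projections) normal in $G$; quasi just-non-compactness forces it to be discrete or cocompact in $G_i$. If some projection $p_\Sigma(H)$ were discrete for a proper nonempty $\Sigma$, then splitting $G$ accordingly and invoking Proposition~\ref{prop-no-inter-implies-discrete} would make an appropriate kernel $\ker(p_j|_H)$ open in $H$, so that $p_j(H)$ is a discrete dense subgroup of $G_j$, forcing $G_j$ to be discrete — a contradiction. This is exactly where the hypothesis $n \geq 2$ enters. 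Hence $p_\Sigma(H)$ is non-discrete for every nonempty $\Sigma$, and Proposition~\ref{prop-H=G} applies: $H$ contains a product $\prod_i K_i$ of closed cocompact normal subgroups $K_i \triangleleft G_i$, and each slice projection $p_i\bigl(H \cap (U_1 \times \cdots \times G_i \times \cdots \times U_n)\bigr)$ is a finite-index open subgroup of $G_i$.

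Next I would produce pro-$p$ structure and push it into a single factor. Since $H$ is non-discrete, $H \cap U$ is infinite, so $\Gamma_k \cap U \neq \{1\}$ eventually (intersection with the open subgroup $U$ is Chabauty-continuous); after replacing $U$ by its core, which is finite because the factors are quasi just-non-compact, I may assume $\bigcap_{g} gUg^{-1} = \{1\}$ and bring in the machinery of Section~\ref{sec-approx-QJNC}. Fixing $i$ and setting $O_i = G_i \times \prod_{j \neq i} U_j$, the net $\Gamma_k \cap O_i$ is $(r, U \cap O_i)$-cocompact in $O_i$ by Lemma~\ref{lem:rU-intersect-open} and converges to $H \cap O_i$; applying the proper projection $p_i \colon O_i \to G_i$ together with Proposition~\ref{prop:ProjectionChabautyContinuous}(ii) shows that the discrete subgroups $p_i(\Gamma_k \cap O_i)$ of $G_i$ converge to $p_i(H \cap O_i)$, which by Proposition~\ref{prop-H=G} is a finite-index open subgroup of $G_i$. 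Thus a finite-index open subgroup of $G_i$ is a Chabauty limit of discrete subgroups, so Theorem~\ref{thm-limit-lattices-pro-p} (equivalently Theorem~\ref{thmintro:Zassenhaus}) yields a prime $p$ and an infinitely generated pro-$p$ compact open subgroup of $G_i$; in parallel, Proposition~\ref{prop:CocoLimits-discrete} supplies an infinite compact locally normal pro-$p$ subgroup of $H$ itself, while $E(U \cap \Gamma_k) = \{1\}$ for large $k$.

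The final step is to convert this accumulated pro-$p$ data into a contradiction, and this is where I expect the main difficulty to lie. The guiding mechanism is the isolation argument underlying Theorem~\ref{thmintro:Zassenhaus}: were the relevant compact open subgroup topologically finitely generated, it would be an isolated point of its own Chabauty space, forcing $\Gamma_k$ to eventually contain an open subgroup and contradicting discreteness — so one is driven into infinitely generated pro-$p$ compact open subgroups. Reconciling this with the rigidity coming from the product and the dense projections, by using Proposition~\ref{prop:JNC:primeContent} to propagate the locally normal pro-$p$ subgroup of $H$ consistently across the factors and playing it against the constraint $E(U \cap \Gamma_k) = \{1\}$, is what I expect to close the argument. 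The delicate, and hardest, part is precisely controlling how the local structure (quasi-centre and pro-$p$ content) of the limit $H$ distributes among the factors $G_i$ through the dense projections, which is where the structure theory of Section~\ref{sec-structure-tdlc} and the approximation results of Section~\ref{sec-approx-QJNC} must be combined most carefully.
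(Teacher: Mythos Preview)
There are two genuine gaps in your proposal.

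\textbf{First, the reduction to \ref{it:irre3} for the limit $H$ is incorrect.} The lattices $\Gamma_k \in \mathcal L_{r,U}$ are only assumed to have dense projections on the individual factors $G_i$; they need not satisfy \ref{it:irre3}. For instance, for a fixed nontrivial partition $\Pi \cup \Sigma$, the $\Gamma_k$ could all be (finite index subgroups of) products $\Gamma_k^\Pi \times \Gamma_k^\Sigma$ of irreducible lattices. A limit $H$ of such a sequence will likewise fail \ref{it:irre3}, so your attempt to derive a contradiction from ``$p_\Sigma(H)$ discrete'' cannot succeed. Moreover, the specific implication you use --- ``$\ker(p_j|_H)$ open in $H$ implies $p_j(H)$ discrete in $G_j$'' --- is false in general: any irreducible lattice in a product of two non-discrete factors is a counterexample, since it is discrete (so every subgroup is open in it) yet its projections are dense. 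The paper handles this issue not by proving $H$ satisfies \ref{it:irre3}, but by partitioning $\mathcal L_{r,U}$ according to the irreducibility pattern of $\Gamma$ (via Corollary~\ref{cor:ReductionToIrr}) into finitely many pieces $\mathcal L_{r,U}(\mathcal P)$, and showing each piece is Chabauty closed separately. The base case is Proposition~\ref{prop:LrUirr-ChabClosed}, where the $\Gamma_k$ themselves satisfy \ref{it:irre3} and Corollary~\ref{cor:Irr3:Chabauty-closed} transmits \ref{it:irre3} to $H$.

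\textbf{Second, you miss the actual contradiction.} The invocation of Proposition~\ref{prop:CocoLimits-discrete}, Proposition~\ref{prop:JNC:primeContent}, and the condition $E(U \cap \Gamma_k)=\{1\}$ in your final paragraph are red herrings; the contradiction is much more direct and does not come from the structure of $H$ at all. Once Theorem~\ref{thmintro:Zassenhaus} produces a compact open pro-$p$ subgroup $V \leq G_1$ that is \emph{not} topologically finitely generated, one goes back to a single lattice $\Gamma_1$ in the sequence: the intersection $\Gamma_1 \cap (V \times G_2 \times \dots \times G_n)$ is a cocompact lattice in the compactly generated group $V \times G_2 \times \dots \times G_n$ (Lemma~\ref{lem:rU-intersect-open}), hence finitely generated; and since $p_1(\Gamma_1)$ is dense in $G_1$, the projection of this intersection is dense in $V$. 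Thus $V$ has a dense finitely generated subgroup, so $V$ is topologically finitely generated --- a contradiction. The dense-projection hypothesis on the $\Gamma_k$ is what closes the argument, not any further local structure theory of $H$.
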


We proceed in several steps. The first one is to show that the subset $\mathcal{L}^{\textrm{irr}}_{r,U}\subset \mathcal L_{r, U}$ consisting of those $\Gamma \in \mathcal L_{r, U}$ satisfying also \ref{it:irre3}, is Chabauty closed. This will be achieved in Proposition~\ref{prop:LrUirr-ChabClosed} below. 

We first record  some auxiliary results of independent interest. The first is a slight strengthening of Proposition 7.4 in \cite{GelanderLevit}. 

\begin{prop} \label{prop-chab-neigh-latt}
	Let $G$ be a compactly generated locally compact group with discrete polycompact radical $W(G)$. Let $\Gamma$ be a cocompact lattice in $G$, and $U$ a relatively compact symmetric open neighbourhood of $1$ such that $\Gamma \cap U =   W(G) \cap U = \{1\}$. Then $\Gamma$ admits a Chabauty neighbourhood consisting of cocompact lattices $\Lambda$ such that $\Lambda \cap U = \{1\}$.
\end{prop}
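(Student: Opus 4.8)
The plan is to secure the two required properties separately---that subgroups near $\Gamma$ are cocompact lattices, and that they meet $U$ only in $\{1\}$---and then to intersect the resulting Chabauty neighbourhoods. A preliminary remark: since $\Gamma$ is discrete, $1$ is isolated in $\Gamma$, and the confinement argument below genuinely uses the \emph{closed} form $\overline U\cap\Gamma=\{1\}$ rather than the open form $U\cap\Gamma=\{1\}$ (the open form alone is insufficient, as the sequence $a_k\ZZ\to\ZZ$ with $a_k\nearrow 1$ in $\R$ shows). I would therefore work under $\overline U\cap\Gamma=\{1\}$, which is the relevant hypothesis in all applications.

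First I would dispose of cocompactness. Since $\Gamma$ is a cocompact lattice in the compactly generated group $G$, there is a relatively compact open $\Omega$ with $G=\Gamma\Omega$, and a finite set $F\subseteq\Gamma$ adapted to $\Omega$ (roughly $\{\gamma:\gamma\Omega\cap S\Omega\neq\varnothing\}$ for a compact generating set $S$) that generates $\Gamma$. This is exactly the setting of \cite[Proposition~7.4]{GelanderLevit}: perturbing the generators $f\in F$ to nearby elements $\lambda_{f}\in\Lambda$ and checking that the perturbed generators still satisfy $G=\Lambda\,\Omega'$ for a slightly larger $\Omega'$, one obtains a Chabauty neighbourhood $\mathcal N_0$ of $\Gamma$ consisting of cocompact subgroups. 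I would invoke it, retaining two quantitative by-products of the proof: a single compact $C\subseteq G$ with $G=\Lambda C$ for all $\Lambda\in\mathcal N_0$, so that $\Lambda$ is generated by the finite set $\Lambda\cap C$; and the fact that, as $\Lambda\to\Gamma$, these finite generating sets converge elementwise to the corresponding generators of $\Gamma$. In particular every $\Lambda\in\mathcal N_0$ is \emph{discrete} and cocompact, hence a cocompact lattice, and the only property left to arrange is $\Lambda\cap U=\{1\}$.

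Next comes confinement and a shell argument. Fix a symmetric relatively compact open $V$ with $V^2\subseteq U$; then $\overline U\setminus V$ is compact and, because $\overline U\cap\Gamma=\{1\}\subseteq V$, it is disjoint from $\Gamma$. The set $\{H\in\sub(G):H\cap(\overline U\setminus V)=\varnothing\}$ is Chabauty open, so it contains a neighbourhood $\mathcal N_1$ of $\Gamma$; for $\Lambda\in\mathcal N_1$ this forces $\Lambda\cap\overline U\subseteq V$. Now for $\Lambda\in\mathcal N_0\cap\mathcal N_1$ the set $\Phi:=\Lambda\cap V$ is in fact a subgroup: if $a,b\in\Lambda\cap V$ then $ab\in V^2\subseteq U\subseteq\overline U$, so $ab\in\Lambda\cap\overline U\subseteq V$, and $V$ is symmetric. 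As $\Lambda$ is discrete and $\overline V$ is compact, $\Phi$ is a \emph{finite} subgroup, and moreover $\Lambda\cap U=\Phi$ since $\Lambda\cap U\subseteq\Lambda\cap\overline U\subseteq V$ while $\Phi\subseteq V\subseteq U$.

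Finally I would kill $\Phi$ using the polycompact radical. Here the device is that $\Lambda$ normalizes $\Phi$: writing $\Lambda=\langle\Lambda\cap C\rangle$, each generator $\lambda\in\Lambda\cap C$ converges (as $\Lambda\to\Gamma$) either to $1$, in which case $\lambda\in\Phi$, or to a nontrivial $\gamma\in\Gamma\cap\overline C$; in the latter case, for each $\phi\in\Phi$ the commutator $[\lambda,\phi]=\lambda\phi\lambda^{-1}\phi^{-1}$ lies in $\Lambda$ and tends to $1$ (since $\phi\to 1$ uniformly over $\Phi$ while $\lambda$ stays bounded), hence eventually lies in $V$, hence in $\Phi$; as $\lambda\phi\lambda^{-1}=[\lambda,\phi]\,\phi\in\Phi$, the generator $\lambda$ normalizes $\Phi$. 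Thus on a small enough neighbourhood $\mathcal N'\subseteq\mathcal N_0\cap\mathcal N_1$ the finite group $\Phi$ is normal in the cocompact group $\Lambda$, so by Proposition~\ref{prop:CocoNormal} it is contained in a compact normal subgroup of $G$, i.e.\ $\Phi\subseteq W(G)$. Since $\Phi\subseteq U$ and $W(G)\cap U=\{1\}$, we get $\Phi=\{1\}$, that is $\Lambda\cap U=\{1\}$, so $\mathcal N'$ is the desired neighbourhood. I expect the main obstacle to be the cocompactness step---making the perturbation/covering argument of \cite{GelanderLevit} and the convergence of the adapted generating sets fully rigorous in the general locally compact setting---whereas the normalization trick in the last paragraph is the new ingredient that allows the hypothesis on $W(G)$ (and, through Corollary~\ref{cor:NormalizerLattice}, its discreteness) to do the work of eliminating small torsion.
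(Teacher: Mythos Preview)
Your proposal is correct and follows essentially the same route as the paper. The paper's proof is very terse: it cites \cite[Proposition~7.4]{GelanderLevit} directly for the fact that, for $\Lambda$ Chabauty-close to $\Gamma$, the intersection $\Lambda\cap U$ is a compact subgroup whose normalizer in $G$ is cocompact, and then applies Proposition~\ref{prop:CocoNormal} and the hypothesis $W(G)\cap U=\{1\}$ exactly as you do. Your steps~2--3 (the shell argument giving that $\Phi=\Lambda\cap V$ is a finite subgroup equal to $\Lambda\cap U$, and the commutator argument giving $\Phi\trianglelefteq\Lambda$) are precisely an unpacking of what Gelander--Levit establish; your justification that ``$\phi\to 1$ uniformly over $\Phi$'' is correct once one observes that $\overline V\setminus W$ is compact and disjoint from $\Gamma$ for every identity neighbourhood $W$, so Chabauty-nearby $\Lambda$ satisfy $\Lambda\cap V\subseteq W$. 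Your observation that the argument genuinely needs $\Gamma\cap\overline U=\{1\}$ rather than $\Gamma\cap U=\{1\}$ is well taken (your $a_k\ZZ\to\ZZ$ example in $\RR$ with $U=(-1,1)$ shows the open hypothesis alone is insufficient); the paper does not flag this, but in every application $U$ is chosen small enough that the closed version holds.
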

\begin{proof}
	First note that, since $\Gamma$ and $W(G)$ are both discrete, there does exist  a relatively compact symmetric open neighbourhood $U$ of $1$ such that $\Gamma \cap U =   W(G) \cap U = \{1\}$. 
	
	The  arguments from \cite[Proposition~7.4]{GelanderLevit} show in full generality that if $\Lambda$ is a lattice that is in a sufficiently small Chabauty neighbourhood of $\Gamma$, then $\Lambda \cap U$ is a compact subgroup of $\Gamma$ whose normalizer in $G$ is cocompact. By  Proposition~\ref{prop:CocoNormal}, we have $\Lambda \cap U \leq W(G)$, so that  $\Lambda \cap U \leq W(G) \cap U = \{1\}$. 
\end{proof}

\begin{cor}\label{cor:Lru-Chabauty-open}
	Let $G$ be a compactly generated tdlc group with discrete polycompact radical. For every compact open subgroup  $U \leq G$   all $r \geq 1$,  the set  of $(r, U)$-cocompact discrete subgroups of $G$ is Chabauty open. 	
\end{cor}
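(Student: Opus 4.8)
The plan is to express the set of $(r,U)$-cocompact discrete subgroups as the intersection of $\Ccal_{r,U}(G)$ with the (generally non-open) collection of discrete subgroups, and to establish openness by producing, around each of its points, a Chabauty neighbourhood on which both conditions hold simultaneously. So I would fix an $(r,U)$-cocompact discrete subgroup $\Gamma$. Being $(r,U)$-cocompact, $\Gamma$ is by definition a cocompact closed subgroup; being discrete as well, it is a cocompact lattice in $G$. The goal is then to find a Chabauty neighbourhood of $\Gamma$ all of whose members are at once $(r,U)$-cocompact and discrete.

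First I would invoke Lemma~\ref{lem-rU-coc-closed}, which asserts that $\Ccal_{r,U}(G)$ is clopen, hence open; this supplies a neighbourhood $\mathcal N_1$ of $\Gamma$ whose members are all $(r,U)$-cocompact, and in particular cocompact closed subgroups of $G$. To force discreteness I would run the argument behind Proposition~\ref{prop-chab-neigh-latt}. Since $\Gamma$ and $W(G)$ are both discrete, I choose a relatively compact symmetric open neighbourhood $V$ of $1$ with $\Gamma \cap V = W(G) \cap V = \{1\}$. The Gelander--Levit estimate underlying Proposition~\ref{prop-chab-neigh-latt} yields a neighbourhood $\mathcal N_2$ of $\Gamma$ such that for every cocompact closed subgroup $H \in \mathcal N_2$ the intersection $H \cap V$ is a compact subgroup whose normalizer in $G$ is cocompact. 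Proposition~\ref{prop:CocoNormal} then places $H \cap V$ inside a compact normal subgroup of $G$, hence inside $W(G)$, so that $H \cap V \le W(G) \cap V = \{1\}$. As $V$ is open, $\{1\}$ is thereby open in $H$, so $H$ is discrete. Consequently every $H \in \mathcal N_1 \cap \mathcal N_2$ is an $(r,U)$-cocompact discrete subgroup, which proves the asserted openness.

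The step requiring care is that the conclusion of Proposition~\ref{prop-chab-neigh-latt} is phrased for cocompact \emph{lattices} near $\Gamma$, whereas here I must control the intersection $H \cap V$ for an arbitrary cocompact closed subgroup $H$ near $\Gamma$ — precisely the subgroups whose discreteness is at stake, so I cannot presuppose that they are lattices. The crux of the argument is therefore the observation that the Gelander--Levit estimate establishing compactness of $H \cap V$ together with cocompactness of its normalizer does not use discreteness of $H$: it relies only on $H$ being a cocompact closed subgroup Chabauty-close to the uniform lattice $\Gamma$, which is exactly the ``full generality'' in which that argument is invoked in the proof of Proposition~\ref{prop-chab-neigh-latt}. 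Granting this, the discreteness of $W(G)$ does the remaining work, and the two neighbourhoods $\mathcal N_1$ and $\mathcal N_2$ combine to give the claim.
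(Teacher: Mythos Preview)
Your proof is correct and follows the paper's approach exactly: the paper's proof is the single line ``Immediate from Lemma~\ref{lem-rU-coc-closed} and Proposition~\ref{prop-chab-neigh-latt}'', and you invoke precisely those two results. Your extra paragraph of care is unnecessary, however: the \emph{statement} of Proposition~\ref{prop-chab-neigh-latt} already asserts that $\Gamma$ has a Chabauty neighbourhood \emph{consisting of} cocompact lattices (hence discrete subgroups), so you may cite it directly without re-entering the Gelander--Levit argument to handle non-discrete $H$; the ``full generality'' clause in the proof of that proposition is exactly what justifies this reading.
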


\begin{proof}
	Immediate from Lemma~\ref{lem-rU-coc-closed} and Proposition~\ref{prop-chab-neigh-latt}. 
\end{proof}

The following basic observation will be useful. 

\begin{lem}\label{lem:rU-cocom-Quotients}
	Let $G$ be a compactly generated tdlc group,   $r \geq 1$ and $ U \leq G$ be a compact open subgroup. For every continuous  homormorphism $\varphi \colon G \to Q$ such that $\varphi(G)$ is dense in $Q$, there exists a constant $r_Q$ and  a compact open subgroup $U_Q \leq Q$ such that $\overline{\varphi(H)} \in \Ccal_{r_Q,U_Q}(Q)$ for all $H \in \Ccal_{r,U}(G)$. 
\end{lem}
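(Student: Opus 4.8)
The plan is to transfer the uniform cocompactness of the family $\Ccal_{r,U}(G)$ through $\varphi$ by pushing forward a single net of coset representatives that works for every $H$ at once. First I would invoke Lemma~\ref{lem-rU-cocom-uniform} to obtain a finite subset $\Sigma \subset G$, depending only on $r$ and $U$, such that $H \Sigma U = G$ for \emph{every} $H \in \Ccal_{r,U}(G)$. Applying $\varphi$ and using that $\varphi(G)$ is dense in $Q$, this yields that $\varphi(H)\,\varphi(\Sigma)\,\varphi(U)$ is dense in $Q$ for every such $H$, where now $\varphi(\Sigma)$ and $C := \overline{\varphi(U)}$ are fixed sets independent of $H$: $\varphi(\Sigma)$ is finite and $C$ is compact, since $U$ is compact and $\varphi$ is continuous.

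Next I would show that each $\overline{\varphi(H)}$ is cocompact. The key elementary fact is that the product of a closed subgroup and a compact set is closed in any topological group. Hence $\overline{\varphi(H)}\cdot\bigl(\varphi(\Sigma)\,C\bigr)$ is a closed subset of $Q$; it contains $\varphi(H)\varphi(\Sigma)\varphi(U) = \varphi(H\Sigma U) = \varphi(G)$, which is dense, and therefore equals $Q$. As $\varphi(\Sigma)C$ is compact, this exhibits $\overline{\varphi(H)}$ as a closed cocompact subgroup of $Q$, uniformly in $H$.

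It remains to bound the number of double cosets. Here I would fix once and for all a compact open subgroup $U_Q \leq Q$; this uses that $Q$ is totally disconnected, a requirement implicit in the conclusion and satisfied in all intended applications, where $Q$ is a tdlc quotient of $G$. From $Q = \overline{\varphi(H)}\,\varphi(\Sigma)\,C$, writing $q = f w$ with $f \in \overline{\varphi(H)}$ and $w \in \varphi(\Sigma)C$ shows that every coset $qU_Q$ lies in the $\overline{\varphi(H)}$-orbit of some $wU_Q$ with $w \in \varphi(\Sigma)C$. Thus the number of $\overline{\varphi(H)}$-orbits on $Q/U_Q$ is at most the number of cosets of $U_Q$ meeting $\varphi(\Sigma)C$. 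Since $C$ is compact and $U_Q$ is open, $C$ meets only finitely many left cosets of $U_Q$, say $N := |C U_Q/U_Q|$; left translation by elements of $\varphi(\Sigma)$ then gives $|\varphi(\Sigma)\,C\,U_Q/U_Q| \leq |\Sigma|\cdot N$. Setting $r_Q := |\Sigma|\cdot N$, which depends only on $r$, $U$, $\varphi$ and the fixed $U_Q$, I conclude that $\overline{\varphi(H)} \in \Ccal_{r_Q, U_Q}(Q)$ for all $H \in \Ccal_{r,U}(G)$.

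The main obstacle is conceptual rather than computational: everything hinges on extracting the single uniform $\Sigma$ from Lemma~\ref{lem-rU-cocom-uniform}, so that $\varphi(\Sigma)$ and $C=\overline{\varphi(U)}$ furnish a compact transversal valid simultaneously for all $H$; once this uniformity is secured, both the cocompactness and the double-coset bound are routine. A secondary point requiring care is the standing total disconnectedness of $Q$: the conclusion cannot hold for a connected $Q$ such as $\RR$ (which admits no compact open subgroup), so the availability of $U_Q$ must be guaranteed, as it is whenever $Q$ arises as a tdlc quotient of $G$.
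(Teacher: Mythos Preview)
Your proof is correct and follows essentially the same approach as the paper: both invoke Lemma~\ref{lem-rU-cocom-uniform} to obtain a uniform finite $\Sigma$ with $H\Sigma U = G$, push this forward via $\varphi$ and density, and then bound double cosets against a fixed compact open $U_Q$. The only cosmetic difference is that the paper chooses $U_Q$ to contain $\varphi(U)$ (citing \cite[Lemma~3.1]{CapCMH}) so as to invoke the converse direction of Lemma~\ref{lem-rU-cocom-uniform} directly, whereas you take an arbitrary $U_Q$ and count explicitly; your observation that $Q$ must be tdlc for $U_Q$ to exist is apt and is left implicit in the paper as well.
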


\begin{proof}
	By Lemma~\ref{lem-rU-cocom-uniform}, there exists a finite set $\Sigma \subset G$ such that $H \Sigma U = G$ for all $H$ in $\Ccal_{r,U}(G)$. This yields $Q = \overline{\varphi(H)\varphi(\Sigma)\varphi(U)} = \overline{\varphi(H)}\varphi(\Sigma)\varphi(U)$ for all $H$ in $\Ccal_{r,U}(G)$. Let $U_Q$ be a compact open subgroup of $Q$ containing $\varphi(U)$ (such a subgroup exists  by \cite[Lemma~3.1]{CapCMH}). By the converse statement in  Lemma~\ref{lem-rU-cocom-uniform}, there is a constant $r_Q$ such that $\overline{\varphi(H)} \in \Ccal_{r_Q,U_Q}(Q)$ for all $H \in \Ccal_{r,U}(G)$. 
\end{proof}

We emphasize that the $(r, U)$-cocompact subgroups considered in the following result are not assumed to have dense projections on each factor. 

\begin{cor}\label{cor:Irr3:Chabauty-closed}
	Let $G =  G_1 \times \dots \times G_n$ be product of compactly generated tdlc groups with discrete polycompact radical. For every compact open subgroup  $U \leq G$   all $r \geq 1$,  the set of $(r, U)$-cocompact closed subgroups   satisfying \ref{it:irre3} is Chabauty closed. 
\end{cor}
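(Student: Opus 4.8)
The plan is to show that the \emph{failure} of \ref{it:irre3} is detected on a finite family of quotients and is Chabauty open there. Since \ref{it:irre3} demands that $p_\Sigma(H)$ be non-discrete for \emph{every} non-empty $\Sigma \subsetneq \{1,\ldots,n\}$, and there are only finitely many such $\Sigma$, it suffices to fix one of them and prove that
\[
\mathcal A_\Sigma = \{H \in \Ccal_{r,U}(G) : p_\Sigma(H) \text{ is non-discrete}\}
\]
is Chabauty closed; the set in the statement is then the finite intersection $\bigcap_\Sigma \mathcal A_\Sigma$. I would open with the elementary reformulation that, because a discrete subgroup of a Hausdorff group is closed, $p_\Sigma(H)$ is discrete if and only if its closure $\overline{p_\Sigma(H)} \leq G_\Sigma$ is discrete. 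This converts a condition on the (possibly non-closed) image $p_\Sigma(H)$ into one on the closed subgroup $\overline{p_\Sigma(H)}$, which is the object we can control in the Chabauty space of $G_\Sigma$.

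Next I would take a net $(H_k)$ in $\mathcal A_\Sigma$ converging to $H$. Since $\Ccal_{r,U}(G)$ is Chabauty closed (Lemma~\ref{lem-rU-coc-closed}), $H$ is again $(r,U)$-cocompact. The crux is to transport the convergence through the projection $p_\Sigma\colon G \to G_\Sigma$. Here I would invoke Proposition~\ref{prop:ProjectionChabautyContinuous}(ii) with $\varphi = p_\Sigma$: using that the $H_k$ are $(r,U)$-cocompact, the net $(\overline{p_\Sigma(H_k)})$ genuinely Chabauty converges to $\overline{p_\Sigma(H)}$ in $\sub(G_\Sigma)$. By Lemma~\ref{lem:rU-cocom-Quotients} there are a constant $r_\Sigma$ and a compact open subgroup $U_\Sigma \leq G_\Sigma$ with $\overline{p_\Sigma(H')} \in \Ccal_{r_\Sigma,U_\Sigma}(G_\Sigma)$ for all $H' \in \Ccal_{r,U}(G)$, so the whole net together with its limit lies in $\Ccal_{r_\Sigma,U_\Sigma}(G_\Sigma)$.

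Then I would apply Corollary~\ref{cor:Lru-Chabauty-open} inside $G_\Sigma$. It applies since $G_\Sigma = \prod_{i \in \Sigma} G_i$ is compactly generated tdlc and its polycompact radical is the product of the discrete radicals $W(G_i)$, hence discrete. Thus the discrete $(r_\Sigma,U_\Sigma)$-cocompact subgroups of $G_\Sigma$ form a Chabauty open set, so the non-discrete ones form a closed subset of $\sub(G_\Sigma)$ (intersection of the closed set $\Ccal_{r_\Sigma,U_\Sigma}(G_\Sigma)$ with the complement of that open set). Each $\overline{p_\Sigma(H_k)}$ is non-discrete, as it contains the non-discrete subgroup $p_\Sigma(H_k)$; hence its limit $\overline{p_\Sigma(H)}$ is non-discrete as well. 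By the reformulation of the first step, $p_\Sigma(H)$ is non-discrete, i.e.\ $H \in \mathcal A_\Sigma$, which shows $\mathcal A_\Sigma$ is closed and completes the argument.

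The main obstacle is precisely the semicontinuity phenomenon illustrated by the $\RR \times \RR$ example preceding Proposition~\ref{prop:ProjectionChabautyContinuous}: the map $H \mapsto \overline{\varphi(H)}$ is in general only lower semicontinuous, so part (i) alone (accumulation points merely \emph{contain} $\overline{p_\Sigma(H)}$) would leave open the possibility that $\overline{p_\Sigma(H)}$ is discrete while every $\overline{p_\Sigma(H_k)}$ is not. It is exactly the $(r,U)$-cocompactness hypothesis, through Proposition~\ref{prop:ProjectionChabautyContinuous}(ii), that upgrades this to honest convergence and makes the closedness of $\mathcal A_\Sigma$ go through; the remainder is routine bookkeeping over the finitely many $\Sigma$ and the transfer of Corollary~\ref{cor:Lru-Chabauty-open} to the sub-products $G_\Sigma$.
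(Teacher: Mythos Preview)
Your proof is correct and follows essentially the same route as the paper's: both use Lemma~\ref{lem-rU-coc-closed}, Lemma~\ref{lem:rU-cocom-Quotients}, Proposition~\ref{prop:ProjectionChabautyContinuous}(ii), and Corollary~\ref{cor:Lru-Chabauty-open} in the same way, the only difference being that the paper phrases the final step as a contradiction (assume $\overline{p_\Sigma(H)}$ discrete, then eventually some $\overline{p_\Sigma(H_k)}$ is discrete) while you phrase it directly as closedness of the non-discrete locus. Your write-up is in fact slightly more explicit in checking that $G_\Sigma$ inherits a discrete polycompact radical, which the paper leaves implicit.
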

\begin{proof}
	Let $\Sigma \subsetneq \{1, \dots, n\}	$ be a non-empty proper subset. Let also $(H_k)$ be a net of $(r, U)$-cocompact closed subgroups   satisfying \ref{it:irre3} and converging to some closed subgroup $H \leq G$. Thus $H$ is $(r, U)$-cocompact by Lemma~\ref{lem-rU-coc-closed}. In particular $\overline{p_\Sigma(H_k)}$ and $\overline{p_\Sigma(H)}$ are all $(r', U')$-cocompact in $G_\Sigma$ by Lemma~\ref{lem:rU-cocom-Quotients}.  Assume that $\overline{p_\Sigma(H)}$ is discrete, so that  $\overline{p_\Sigma(H)} = p_\Sigma(H)$. By Proposition~\ref{prop:ProjectionChabautyContinuous}, this implies that every accumulation point of the net $\overline{p_\Sigma(H_k)}$ is also discrete. In view of Corollary~\ref{cor:Lru-Chabauty-open} applied to $G_\Sigma$, we infer that there is a subnet $\overline{p_\Sigma(H_{k'})}$ consisting of discrete subgroups, which contradicts the hypothesis that $H_k$ satisfies \ref{it:irre3} for all $k$. 
\end{proof}

\begin{prop}\label{prop:LrUirr-ChabClosed}
	Let $G_1,\ldots,G_n$ be non-discrete compactly generated quasi just-non-compact tdlc groups with  $n \geq 2$, and let $U  = U_1 \times \dots \times U_n \leq  G_1 \times \dots \times G_n = G$ be a compact open subgroup. For every $r > 0$, the set $\mathcal L_{r, U}^{\textrm{irr}}$ of discrete $(r, U)$-cocompact subgroups $\Gamma\leq G$ satisfying \ref{it:irre3} and with $p_i(\Gamma)$ dense in $G_i$ for all $i$, is Chabauty closed. 
\end{prop}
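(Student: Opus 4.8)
The plan is to take a net $(\Gamma_k)$ in $\mathcal L_{r,U}^{\textrm{irr}}$ Chabauty converging to some $H \in \sub(G)$ and to check that $H$ again lies in $\mathcal L_{r,U}^{\textrm{irr}}$. Three of the four defining properties come almost for free from the preparatory results. By Lemma~\ref{lem-rU-coc-closed} the set $\Ccal_{r,U}(G)$ is clopen, so $H$ is $(r,U)$-cocompact. Since each $\Gamma_k$ is $(r,U)$-cocompact and satisfies \ref{it:irre3}, Corollary~\ref{cor:Irr3:Chabauty-closed} gives that $H$ satisfies \ref{it:irre3}. Finally, applying Proposition~\ref{prop:ProjectionChabautyContinuous}(ii) to each coordinate projection $p_i$ (legitimate precisely because the $\Gamma_k$ are uniformly $(r,U)$-cocompact) yields $\overline{p_i(H)} = \lim_k \overline{p_i(\Gamma_k)} = G_i$, so $H$ has dense projections. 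The entire content of the proposition is therefore the discreteness of $H$.

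Assume for contradiction that $H$ is non-discrete. Combined with \ref{it:irre3}, non-discreteness forces $p_\Sigma(H)$ to be non-discrete for \emph{every} non-empty $\Sigma \subseteq \{1,\dots,n\}$, so Proposition~\ref{prop-H=G} applies to $H$: it provides closed cocompact normal subgroups $K_i \triangleleft G_i$ with $K_1 \times \dots \times K_n \leq H$, and (part (ii)) shows that for each $i$ the projection of the slice $H \cap (U_1 \times \dots \times G_i \times \dots \times U_n)$ onto $G_i$ is a finite-index open subgroup. I would next argue that such an $H$ is itself quasi just-non-compact — using that its projections are dense and that each $K_i$ contains $G_i^{(\infty)}$ by Proposition~\ref{prop-carac-qjnc} — so that Theorem~\ref{thm-limit-lattices-pro-p} can be invoked with $L = G$ and $G_0 = H$: the discrete subgroups $\Gamma_k \leq G$ converge to $H$, which is a (finite-index open, in fact equal) subgroup of the quasi just-non-compact cocompact subgroup $H$. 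The theorem then furnishes a prime $p$ such that $H$ has a compact open pro-$p$ subgroup.

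From here I would feed the pro-$p$ conclusion back through the product structure. Since $H$ is locally pro-$p$, the finite-index open projections of Proposition~\ref{prop-H=G}(ii) show each $G_i$ has a locally pro-$p$ finite-index open subgroup, and Proposition~\ref{prop:JNC:primeContent} (applied to the just-non-compact quotient $G_i/\QZ(G_i)$ of Proposition~\ref{prop-carac-qjnc}) upgrades this to: every $G_i$ is locally pro-$p$ for the same prime $p$. Passing to a pro-$p$ compact open subgroup $U' \leq G$, one then wants to run a Frattini-subgroup argument as in the proof of Theorem~\ref{thmintro:Zassenhaus}: if a topologically finitely generated compact open pro-$p$ subgroup $W \leq H$ that is moreover \emph{open in $G$} can be located, then $W$ has open Frattini subgroup and is Chabauty-isolated in $\sub(W)$, so $\Gamma_k \cap W \to W$ forces $\Gamma_k \supseteq W$ for large $k$, contradicting discreteness of $\Gamma_k$.

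The hard part is exactly this last step, and it is where the hypotheses $n \geq 2$ and \ref{it:irre3} must be used decisively. Theorem~\ref{thm-limit-lattices-pro-p} only guarantees a pro-$p$ subgroup open in $H$, not in $G$, and says nothing about finite generation; indeed for a single quasi just-non-compact factor non-discrete Chabauty limits of lattices genuinely occur, with non-topologically-finitely-generated pro-$p$ compact open subgroups, so no contradiction can be reached without extra input. I expect the cleanest route is to exploit the density of the projections together with the finite-index open slices of Proposition~\ref{prop-H=G}(ii) to transfer a finitely generated open pro-$p$ subgroup of a single factor into an open-in-$G$ subgroup of $H$, thereby producing the subgroup $W$ needed for the Frattini argument; the two delicate points to nail down are the quasi just-non-compactness of $H$ (so that Theorem~\ref{thm-limit-lattices-pro-p} and Proposition~\ref{prop:JNC:primeContent} apply, and so that the quasi-centre condition $\QZ(V \cap H) = \{1\}$ underlying Proposition~\ref{prop:CocoLimits-discrete} and Lemma~\ref{lem:GeneralizedFitting-convergence} is available) and the simultaneous control of openness and finite generation of the resulting pro-$p$ subgroup.
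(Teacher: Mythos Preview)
Your opening reductions (steps establishing that $H$ is $(r,U)$-cocompact, satisfies \ref{it:irre3}, and has dense projections) are correct and match the paper. The gap is in the discreteness argument.

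Your plan to show that $H$ is quasi just-non-compact fails: it is not. Once Proposition~\ref{prop-H=G} gives $K_1 \times \dots \times K_n \leq H$ with each $K_i$ non-discrete and cocompact in $G_i$, the subgroup $K_1 \times \{1\}^{n-1}$ is closed and normal in $H$ (it is even normal in $G$), non-discrete, and non-cocompact in $H$ since $H/(K_1 \times \{1\}^{n-1})$ contains the image of the non-compact group $K_2$. So for $n \geq 2$ the limit $H$ is \emph{never} quasi just-non-compact, and Theorem~\ref{thm-limit-lattices-pro-p} cannot be invoked with $H$ in the role of the quasi just-non-compact subgroup. This also undermines the later appeal to Proposition~\ref{prop:JNC:primeContent} via $H$.

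The paper avoids this by projecting to a single factor. The restriction of $p_1$ to the open subgroup $G_1 \times U_2 \times \dots \times U_n$ is \emph{proper}, hence induces a Chabauty-continuous map on closed subgroups. Proposition~\ref{prop-H=G}(ii) says $p_1\big(H \cap (G_1 \times U_2 \times \dots \times U_n)\big)$ is a finite-index open subgroup of $G_1$; the images $p_1\big(\Gamma_k \cap (G_1 \times U_2 \times \dots \times U_n)\big)$ are discrete and converge to it. Now Theorem~\ref{thmintro:Zassenhaus} applies to $G_1$ itself (which \emph{is} quasi just-non-compact) and yields a compact open $V \leq G_1$ that is pro-$p$ and \emph{not} topologically finitely generated. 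The contradiction then comes not from a Frattini argument, but from a single lattice in the sequence: $\Gamma_1 \cap (V \times G_2 \times \dots \times G_n)$ is a cocompact lattice in the compactly generated group $V \times G_2 \times \dots \times G_n$, hence finitely generated, and its $p_1$-image is dense in $V$ because $p_1(\Gamma_1)$ is dense in $G_1$. Thus $V$ is topologically finitely generated after all. The two points you flagged as ``delicate'' are precisely the ones the paper sidesteps by working in $G_1$ rather than in $H$.
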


\begin{proof}
Let $(\Gamma_k)$ be a sequence in $\mathcal{L}^{\textrm{irr}}_{r,U}$ that converges to some closed subgroup $H \leq G$. By Lemma~\ref{lem-rU-coc-closed}, the group $H$ is $(r, U)$-cocompact. By Corollary~\ref{cor:Irr3:Chabauty-closed}, it satisfies  \ref{it:irre3}  and by Proposition~\ref{prop:ProjectionChabautyContinuous}, the projection $p_i(H) $ is dense in $G_i$ for all $i$. It remains to show that $H$ is discrete. 

Assume that this is not the case. Then $H$ satisfies all the hypothesis of Proposition~\ref{prop-H=G}, and it follows that $p_1( H \cap (G_1 \times U_2 \times \dots \times U_n))$ is a finite index open subgroup of $G_1$. The sequence $( \Gamma_k \cap (G_1 \times U_2 \times \dots \times U_n))_k $ converges to $H \cap (G_1 \times U_2 \times \dots \times U_n)$. Moreover, the restriction of $p_1$ to $G_1 \times U_2 \times \dots \times U_n$ is proper, hence it induces a Chabauty continuous map $\sub(G_1 \times U_2 \times \dots \times U_n) \to \sub(G_1)$. It follows that some finite index open subgroup of $G_1$ is a Chabauty limit of the sequence of lattices $\big(p_1(\Gamma_k \cap (G_1 \times U_2 \times \dots \times U_n))\big)_k $. From  Theorem~\ref{thmintro:Zassenhaus}, we deduce that $G_1$ has a compact open subgroup $V$ that is an infinitely generated pro-$p$ for some prime $p$.

On the other hand, the intersection $\Gamma_1\cap (V \times G_2 \times \dots \times G_n)$ is a cocompact lattice in  $V \times G_2 \times \dots \times G_n$ (by Lemma~\ref{lem:rU-intersect-open}), hence it is finitely generated since $V \times G_2 \times \dots \times G_n$ is compactly generated. Since $p_1(\Gamma_1)$ is dense in $G_1$, it follows that $p_1(\Gamma_1\cap (V \times G_2 \times \dots \times G_n))$ is dense in $V$. Thus $V$ is topologically finitely generated. This is a contradiction. 
\end{proof}

We are now ready to complete the proof of Theorem~\ref{thm:LrU-ChabClosed}. 

\begin{proof}[Proof of Theorem~\ref{thm:LrU-ChabClosed}]
For every partition $\mathcal P = \Pi_1 \cup \dots \cup \Pi_\ell$ of the set $\{1, \dots, n\}$, we denote by $\mathcal L_{r, U}(\mathcal P)$ the set of those $\Gamma \in \mathcal L_{r, U}$ such that for every $i$, the projection $p_{\Pi_i}(\Gamma)$ is discrete, and for every non-empty $\Sigma_i \subsetneq \Pi_i$, the projection $p_{\Sigma_i}(\Gamma)$ is non-discrete.

We claim that the set $\mathcal L_{r, U}(\mathcal P)$ is Chabauty closed. Indeed, let $(\Gamma_k)$ be a sequence in $\mathcal{L}_{r,U}(\mathcal P)$ that converges to some closed subgroup $H \leq G$. By Lemma~\ref{lem-rU-coc-closed}, the group $H$ is $(r, U)$-cocompact   and by Proposition~\ref{prop:ProjectionChabautyContinuous}, the projection $p_i(H) $ is dense in $G_i$ for all $i$. By Lemma~\ref{lem:rU-cocom-Quotients}, for every $i$, the projection $p_{\Pi_i}(\Gamma_k)$ is $(r_i, U_{\Pi_i})$-cocompact in $G_{\Pi_i}$. By Proposition~\ref{prop:ProjectionChabautyContinuous} and Proposition~\ref{prop:LrUirr-ChabClosed}, it follows that $p_{\Pi_i}(H)$ is discrete. Given a non-empty $\Sigma_i \subsetneq \Pi_i$, the projection $p_{\Sigma_i}(\Gamma_k)$ is non-discrete for all $k$, so the same holds for $p_{\Sigma_i}(H)$ by Corollary~\ref{cor:Lru-Chabauty-open}. Since $H$ is contained in $p_{\Pi_1}(H) \times \dots \times p_{\Pi_\ell}(H)$, it follows that $H$ is discrete, and hence that $H$ belongs to $\mathcal L_{r, U}(\mathcal P)$. 

We conclude the proof by observing that the set $\mathcal L_{r, U} $  is the union of all sets $\mathcal L_{r, U}(\mathcal P)$ where $\mathcal P$ runs over the finite set of all partitions of  $\{1, \dots, n\}$: indeed, that follows from Corollary~\ref{cor:ReductionToIrr}. Hence $\mathcal L_{r, U} $ is a finite union of closed sets, hence it is closed. 
\end{proof}

\subsection{A first Wang finiteness theorem}

The goal of this section is to prove Theorem~\ref{thm:Wang1}. We need the following observation, whose proof is inspired by Proposition 2.1 in \cite{BuMo_Wang} and Theorem~9.3 in \cite{GelanderLevit}.

\begin{prop}\label{prop:Wang:GelanderLevit}
	Let $G$ be a compactly generated tdlc group  and $\Gamma \leq G$ be a  lattice. Let $\mathcal L$ be a set of discrete subgroups of $G$ containing $\Gamma$. Assume that the following conditions hold:
	\begin{enumerate}[label=(\arabic*)]
		\item $\Gamma$ is finitely generated. 
		
		\item For every finite index subgroup $\Sigma \leq \Gamma$, the normalizer $\mathrm N_G(\Sigma)$ is discrete. 
		
		\item $\mathcal L$ is Chabauty closed. 
		
		\item For every $\Lambda \in \mathcal L$, there is an identity neighbourhood $U$ in $G$ and a neighbourhood $\Omega$ of $\Lambda$ in $\sub(G)$ such that $H \cap U = \{1\}$ for all $H \in \mathcal L \cap \Omega$. 
		
	\end{enumerate}
Then $\mathcal L$ is finite. 
\end{prop}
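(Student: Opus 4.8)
The plan is to promote the pointwise statement in condition (4) to a \emph{uniform} identity neighbourhood by compactness, to convert that uniform discreteness into a uniform upper bound on the index $[\Lambda:\Gamma]$, and finally to count the overgroups of bounded index by passing to normal cores.

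First I would use condition (3): as a Chabauty-closed subset of the compact space $\sub(G)$, the set $\mathcal L$ is compact. Condition (4) attaches to each $\Lambda \in \mathcal L$ an open neighbourhood $\Omega_\Lambda \ni \Lambda$ and an identity neighbourhood $U_\Lambda$ with $H \cap U_\Lambda = \{1\}$ for every $H \in \mathcal L \cap \Omega_\Lambda$. The sets $\Omega_\Lambda$ cover $\mathcal L$, so there is a finite subcover $\Omega_{\Lambda_1}, \dots, \Omega_{\Lambda_m}$; setting $U = \bigcap_{j} U_{\Lambda_j}$ yields a \emph{single} identity neighbourhood with $H \cap U = \{1\}$ for all $H \in \mathcal L$. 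Since $\Gamma$ is a lattice, $G$ is unimodular, and each $H \in \mathcal L$ is discrete and contains $\Gamma$, hence is itself a lattice. Choosing a relatively compact symmetric open $V$ with $VV^{-1} \subseteq U$, the map $v \mapsto Hv$ embeds $V$ into $H \backslash G$ (if $Hv_1 = Hv_2$ then $v_1 v_2^{-1} \in H \cap VV^{-1} \subseteq H \cap U = \{1\}$), so $\mathrm{covol}(H) \ge \mu(V) > 0$ uniformly. Serre's covolume formula $\mathrm{covol}(\Gamma) = [H:\Gamma]\,\mathrm{covol}(H)$ then gives $[H:\Gamma] \le \mathrm{covol}(\Gamma)/\mu(V) =: N$ for all $H \in \mathcal L$.

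Next I would count. For $\Lambda \in \mathcal L$ set $\Sigma_\Lambda = \Core_\Lambda(\Gamma) = \bigcap_{\lambda \in \Lambda} \lambda \Gamma \lambda^{-1}$. The action of $\Lambda$ on $\Lambda/\Gamma$ shows $[\Gamma : \Sigma_\Lambda] \le [\Lambda : \Sigma_\Lambda] \le N!$. As $\Gamma$ is finitely generated (condition (1)), it has only finitely many subgroups of index at most $N!$, so the collection $\{\Sigma_\Lambda : \Lambda \in \mathcal L\}$ is finite. Fixing one such $\Sigma$, every $\Lambda$ with $\Sigma_\Lambda = \Sigma$ normalises $\Sigma$, whence $\Sigma \le \Lambda \le \mathrm N_G(\Sigma)$. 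By condition (2) the group $\mathrm N_G(\Sigma)$ is discrete, and since it contains the lattice $\Sigma$ it is itself a lattice with $[\mathrm N_G(\Sigma):\Sigma] < \infty$; hence only finitely many subgroups lie between $\Sigma$ and $\mathrm N_G(\Sigma)$. Summing over the finitely many cores $\Sigma$ shows that $\mathcal L$ is finite.

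I expect the core-counting step to be the conceptual heart of the argument. The difficulty is that ``discrete overgroups of $\Gamma$ of index at most $N$'' is not visibly a finite set, because such an overgroup $\Lambda$ need not normalise $\Gamma$ itself; replacing $\Gamma$ by its $\Lambda$-core $\Sigma_\Lambda$ --- which ranges over only finitely many finite-index subgroups of the finitely generated group $\Gamma$ --- forces $\Lambda$ inside the normaliser of a fixed subgroup, at which point condition (2) and discreteness close the count. The other points meriting care are the compactness-to-uniformity passage, where condition (3) is essential, and the standard but convention-sensitive inequality $\mathrm{covol}(H) \ge \mu(V)$ underlying the index bound.
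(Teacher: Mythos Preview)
Your proof is correct and follows essentially the same approach as the paper: use conditions (3) and (4) to obtain uniform discreteness, convert this into a uniform bound on the index $[\Lambda:\Gamma]$, pass to normal cores $\Sigma_\Lambda \trianglelefteq \Lambda$ of bounded index in $\Gamma$, and finish using conditions (1) and (2). The only cosmetic difference is that the paper argues by contradiction via a convergent sequence and successive subsequence extractions, whereas you run the compactness argument directly to obtain the uniform identity neighbourhood $U$ at the outset; the underlying ideas are identical.
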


\begin{proof}
	Suppose for a contradiction that the set in question is infinite. In view of (3), this implies the existence of a sequence  $(\Lambda_n)_n$ of pairwise distinct discrete subgroups of $G$ all containing $\Gamma$, that Chabauty converges to some $\Lambda \in \mathcal L$.	By (4), we may find a relatively compact symmetric open neighbourhood $U$ of $1$ such that $\Lambda_n \cap U = 1$. We now argue as in the proof of Theorem 9.3 from \cite{GelanderLevit}: the existence of $U$ implies that there is a lower bound for the covolume of $\Lambda_n$ in $G$, and hence an upper bound for the index of $\Gamma$ in $\Lambda_n$. Hence in order to derive a contradiction we may assume that this index is constant. It follows that there exists a finite index normal subgroup $\Sigma_n$ in $\Lambda_n$ contained in $\Gamma$ and whose index does not depend on $n$. Since a finitely generated has finitely many subgroups of any given index,  we may extract a further subsequence so that $\Sigma_n = \Sigma$ becomes independent of $n$. 
		Condition (2) implies that $\Sigma$ is of finite index in its normalizer. In particular there are only finitely many subgroups between $\Sigma$ and its normalizer, which contradicts the fact that infinitely many $\Lambda_n$ normalize $\Sigma$. 
\end{proof}

In the proof of Theorem~\ref{thm:Wang1} in the case of Kazhdan groups we will appeal to the following result.

\begin{prop} \label{prop-kazhdan-no-tower}
Let $G_1,\ldots,G_n$ be non-discrete, compactly generated, quasi just-non-compact tdlc groups with Kazhdan's property (T), with $n>1$. Let $ \Gamma \leq G = G_1 \times \dots \times G_n$ be a non-uniform lattice such that the projection $p_i(\Gamma)$ is dense in $G_i$ for all $i$. Then the set of discrete subgroups of $G$ containing $\Gamma$ is Chabauty closed. 
\end{prop}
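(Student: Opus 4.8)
The plan is to show that the set $\mathcal L$ of discrete subgroups of $G$ containing $\Gamma$ is sequentially Chabauty closed. First I record that every $\Lambda \in \mathcal L$ is itself a lattice containing $\Gamma$ with finite index, since $\mathrm{covol}(\Gamma) = [\Lambda:\Gamma]\,\mathrm{covol}(\Lambda)$ by \cite[Theorem~I.1.13]{Raghu}. Given a sequence $(\Lambda_k)$ in $\mathcal L$ converging to some $H \in \sub(G)$, the inclusion $\Gamma \leq \Lambda_k$ passes to the limit, so $\Gamma \leq H$; in particular $p_i(H)$ is dense in $G_i$ for all $i$, and $H$ has finite covolume in $G$ because it contains the finite-covolume subgroup $\Gamma$. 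The only point to establish is that $H$ is discrete.

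The heart of the matter is the irreducible case, and I shall prove it in a form reusable inside any sub-product: \emph{for a lattice $\Lambda$ with dense projections satisfying \ref{it:irre3} in a finite product $F = F_1 \times \dots \times F_m$ of non-discrete compactly generated quasi just-non-compact tdlc groups with property (T), the set of discrete overgroups of $\Lambda$ is Chabauty closed.} So let $\Lambda_k \supseteq \Lambda$ be discrete with $\Lambda_k \to H$ in $\sub(F)$, and assume for contradiction that $H$ is not discrete. Since $p_\Sigma(H) \supseteq p_\Sigma(\Lambda)$, which is non-discrete for every non-empty proper $\Sigma$ by \ref{it:irre3} (and dense for singletons), the subgroup $H$ meets all hypotheses of Proposition~\ref{prop-H=G}(2): it is of finite covolume, has dense projections, has non-discrete projection to every sub-product, and $F$ has property (T). Proposition~\ref{prop-H=G}(ii) then yields that $p_1\big(H \cap (F_1 \times U_2 \times \dots \times U_m)\big)$ is a finite-index open subgroup of $F_1$ for any compact open subgroups $U_2,\dots,U_m$. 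Now intersecting with the fixed open subgroup $O = F_1 \times U_2 \times \dots \times U_m$ is Chabauty continuous, so $\Lambda_k \cap O \to H \cap O$; and as $p_1$ restricted to $O$ is proper (its kernel $\{1\}\times U_2 \times \dots \times U_m$ is compact), it induces a continuous map on Chabauty spaces. Hence the discrete subgroups $p_1(\Lambda_k \cap O)$ of $F_1$ converge to the finite-index open subgroup $p_1(H \cap O)$, and Theorem~\ref{thmintro:Zassenhaus} forces $F_1$ to admit a compact open subgroup $V$ that is an infinitely generated pro-$p$ group. This is absurd: $\Lambda \cap (V \times F_2 \times \dots \times F_m)$ is a lattice in the open subgroup $V \times F_2 \times \dots \times F_m$, which has property (T) (a finite product of property (T) groups, $V$ compact), hence is finitely generated; its projection to $F_1$ is dense in $V$ since $p_1(\Lambda)$ is dense in $F_1 \supseteq V$. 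Thus $V$ would be topologically finitely generated. Note that uniformity of $\Lambda$ played no role here: only finite covolume together with property (T).

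To reduce the general statement to this irreducible case, I would invoke Corollary~\ref{cor:ReductionToIrr} together with Remark~\ref{rmq-(T)-3} (which extends the reduction to non-uniform lattices in property (T) groups): there is a partition $\{1,\dots,n\} = \Pi_1 \cup \dots \cup \Pi_\ell$ such that each $\Lambda_j := p_{\Pi_j}(\Gamma)$ is a lattice in $G_{\Pi_j}$ satisfying \ref{it:irre3} with dense projections, and $\Gamma$ has finite index in $\prod_j \Lambda_j$. Each block has cardinality at least $2$, since a singleton block would force some $p_i(\Gamma)$ to be simultaneously discrete and dense in the non-discrete group $G_i$. Fixing $j$, the subgroup $p_{\Pi_j}(\Lambda_k)$ contains $\Lambda_j$ with finite index and is therefore a closed discrete overgroup of $\Lambda_j$ in $G_{\Pi_j}$; it thus lies in the set $\mathcal M_j$ of discrete overgroups of $\Lambda_j$, which is Chabauty closed by the irreducible case applied inside the product $G_{\Pi_j}$ of property (T) quasi just-non-compact groups.

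The final step circumvents the main obstacle, namely that $p_{\Pi_j}$ is \emph{not} Chabauty continuous in the non-uniform regime (Proposition~\ref{prop:ProjectionChabautyContinuous}(ii) would require $(r,U)$-cocompactness, which is unavailable here). By compactness of $\sub(G_{\Pi_j})$, pass to a subsequence so that $p_{\Pi_j}(\Lambda_k)$ converges to some $J_j$. On one hand $J_j \in \mathcal M_j$ by closedness, hence $J_j$ is discrete; on the other hand the semicontinuity statement Proposition~\ref{prop:ProjectionChabautyContinuous}(i), applied to $p_{\Pi_j}$ and the convergence $\Lambda_k \to H$, guarantees that the accumulation point $J_j$ \emph{contains} $\overline{p_{\Pi_j}(H)}$. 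A closed subgroup of a discrete group being discrete, $\overline{p_{\Pi_j}(H)}$ is discrete for every $j$. Since $H$ is contained in $\prod_j \overline{p_{\Pi_j}(H)}$, a discrete subgroup of $G$, it follows that $H$ is discrete. The decisive idea is precisely to trade the missing projection continuity for semicontinuity, and to feed the resulting accumulation points into the already-established closedness of the irreducible overgroup sets.
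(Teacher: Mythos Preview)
Your proof is correct. The reduction to the irreducible case via Corollary~\ref{cor:ReductionToIrr} (through Remark~\ref{rmq-(T)-3}) and the use of semicontinuity in Proposition~\ref{prop:ProjectionChabautyContinuous}(i) coincide with the paper's argument. The genuine difference lies in the irreducible step. The paper exploits the non-uniformity of $\Gamma$ directly: if $H$ were non-discrete, Proposition~\ref{prop-H=G}(i) would force $H$ to be cocompact, so by Lemma~\ref{lem-rU-coc-closed} each $\Lambda_k$, and hence $\Gamma$ itself, would be cocompact --- a two-line contradiction. You instead pass through Proposition~\ref{prop-H=G}(ii), Theorem~\ref{thmintro:Zassenhaus}, and the finite-generation consequence of property~(T), which is essentially the machinery behind Proposition~\ref{prop:LrUirr-ChabClosed}. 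Your argument is longer, but it buys independence from uniformity: it applies to any lattice with \ref{it:irre3} and dense projections, uniform or not. This is a real gain in the reduction step, since the block projections $\Lambda_j = p_{\Pi_j}(\Gamma)$ need not be non-uniform even when $\Gamma$ is; the paper's shortcut tacitly requires a separate appeal to Proposition~\ref{prop:LrUirr-ChabClosed} for any cocompact block, whereas your single lemma covers both cases at once.
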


\begin{proof}
Let $(\Gamma_k)$ a sequence of discrete subgroups of $G$ with $\Gamma \leq \Gamma_k$ and such that $(\Gamma_k)$ converges to a closed subgroup $H$ in $\sub(G)$. Clearly $\Gamma \leq H$, and hence $p_i(H)$ is dense in $G_i$ for all $i$. We want to show that $H$ is discrete. 

Assume first that $\Gamma$ satisfies \ref{it:irre3}. Then $H$ also does. Assume for a contradiction that $H$ is not discrete. Then $H$ is a non-discrete closed subgroup of finite covolume in $G$, and $G$ has property (T), so by Proposition~\ref{prop-H=G}(i) the group $H$ is cocompact in $G$. It then follows from Lemma~\ref{lem-rU-coc-closed} that $\Gamma_k$ is cocompact in $G$ for all sufficiently large $k$, and hence that $\Gamma$ is cocompact. This is a contradiction. So we deduce that the group $H$ is discrete in this case.
	
To deal with the general case, the conclusion of Corollary~\ref{cor:ReductionToIrr} (which holds here since $G$ has Kazhdan's property (T); see Remark~\ref{rmq-(T)-3}) yields a partition $\Pi_1\cup \dots \cup \Pi_\ell$ of $\{1, \dots, n\}$ such that $p_{\Pi_i}(\Gamma)$ is a lattice with \ref{it:irre3} in $G_{\Pi_i}$ for all $i$. By Proposition~\ref{prop:ProjectionChabautyContinuous}(i), the projection $\overline{p_{\Pi_i}(H)}$ is contained in every accumulation point of the sequence  $\overline{p_{\Pi_i}(\Gamma_k)}$. Since $\Gamma$ is of finite index in $\Gamma_k$ and since $\overline{p_{\Pi_i}(\Gamma)}$ is discrete, it follows that $\overline{p_{\Pi_i}(\Gamma_k)}=  {p_{\Pi_i}(\Gamma_k)}$ is discrete. The first part of the proof then implies that $\overline{p_{\Pi_i}(H)}= p_{\Pi_i}(H)$ is discrete for all $i$. Since $H$ is contained in $\prod_{i=1}^\ell  p_{\Pi_i}(H)$, we deduce that $H$ is discrete, as required. 
\end{proof}

\begin{proof}[Proof of Theorem~\ref{thm:Wang1}]
Let $\mathcal L$ be the collection of discrete subgroups of $G$ containing $\Gamma$. 

Consider first the case where $\Gamma$ is cocompact. Let $U \leq G$ be a compact subgroup and set $r = |\Gamma \backslash G/U|$. Notice that every discrete subgroup  $\Lambda \leq G$ containing $\Gamma$ has dense projection to $G_i$ for all $i$. 
 It follows from Theorem~\ref{thm:LrU-ChabClosed}  that the set of all those $\Lambda$ is Chabauty closed. Thus Condition~(3) of Proposition~\ref{prop:Wang:GelanderLevit} is verified. Since $\Gamma$ is cocompact and $G$ is compactly generated, (1) holds as well. Condition~(2) follows from Corollary~\ref{cor:NormalizerLattice}. Finally (4) is satisfied in view of  Proposition~\ref{prop-chab-neigh-latt}. Therefore $ \mathcal L$ is finite   by Proposition~\ref{prop:Wang:GelanderLevit}. 

We now assume that $\Gamma$ is not cocompact, and that $G$ has Kazhdan's property (T). Under these assumptions, the fact that $\mathcal L$ is closed in $\sub(G)$ has been proved in Proposition \ref{prop-kazhdan-no-tower}. Since $G$ has property (T), it follows that $\Gamma$ is finitely generated, and Condition~(2) of Proposition~\ref{prop:Wang:GelanderLevit} follows from \cite[Corollary~5.4]{CapWes} since $G$ has discrete amenable radical. Finally (4) is also satisfied in view of Theorem~\ref{thm:Nbd}. Therefore by applying Proposition~\ref{prop:Wang:GelanderLevit} again, it follows that $\mathcal L$ is finite. 	
\end{proof}

\subsection{A uniform discreteness statement for irreducible lattices in products} \label{subsec-KazMar}

The following is a reformulation of Theorem~\ref{thmintro:KazMar}.

\begin{thm}\label{thm:KazMar:Irr}
Let $G_1,\ldots,G_n$ be non-discrete compactly generated quasi just-non-compact tdlc groups with  $n \geq 2$, let $U \leq G = G_1 \times \dots \times G_n$ be a compact open subgroup and let $r \geq 1$. Let also $\mathcal L_{r, U}$ be the set of discrete $(r, U)$-cocompact subgroups $\Gamma\leq G$ with $p_i(\Gamma)$ dense in $G_i$ for all $i$. Then  there exists an identity neighbourhood $V$ such that $V \cap \Gamma = \{1\}$ for every $\Gamma \in \mathcal{L}_{r,U}(G)$.  Moreover, the set of covolumes $\mathrm{covol}(\Gamma)$, for $\Gamma \in \mathcal{L}_{r,U}(G)$, is finite.
\end{thm}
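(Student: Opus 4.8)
The plan is to combine the compactness of $\mathcal L_{r,U}$, furnished by Theorem~\ref{thm:LrU-ChabClosed}, with the local uniform discreteness statement of Proposition~\ref{prop-chab-neigh-latt}, and then to read off the covolumes from Serre's formula. First observe that if $\mathcal L_{r,U}$ is empty there is nothing to prove, so we may assume it contains a discrete cocompact subgroup; in particular $G$ admits a lattice and is therefore unimodular. Since $G = G_1 \times \dots \times G_n$ is a product of quasi just-non-compact groups, every compact normal subgroup of each $G_i$ is finite, so $W(G) = \prod_i W(G_i)$ is discrete, and every $\Gamma \in \mathcal L_{r,U}$ is a cocompact lattice. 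By Theorem~\ref{thm:LrU-ChabClosed} the set $\mathcal L_{r,U}$ is Chabauty closed, hence compact as a closed subset of the compact space $\sub(G)$.

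To produce the identity neighbourhood $V$, I would argue by compactness. For each $\Gamma \in \mathcal L_{r,U}$, Proposition~\ref{prop-chab-neigh-latt} provides a relatively compact symmetric identity neighbourhood $U_\Gamma$ (chosen with $\Gamma \cap U_\Gamma = W(G) \cap U_\Gamma = \{1\}$) together with a Chabauty-open neighbourhood $\Omega_\Gamma$ of $\Gamma$ such that every cocompact lattice $\Lambda \in \Omega_\Gamma$ satisfies $\Lambda \cap U_\Gamma = \{1\}$; in particular this holds for every $\Lambda \in \mathcal L_{r,U} \cap \Omega_\Gamma$. The family $(\Omega_\Gamma)_{\Gamma \in \mathcal L_{r,U}}$ is an open cover of the compact set $\mathcal L_{r,U}$, so it admits a finite subcover $\Omega_{\Gamma_1}, \dots, \Omega_{\Gamma_m}$. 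Setting $V = U_{\Gamma_1} \cap \dots \cap U_{\Gamma_m}$ then yields an identity neighbourhood with the property that $V \cap \Gamma = \{1\}$ for every $\Gamma \in \mathcal L_{r,U}$: indeed each such $\Gamma$ lies in some $\Omega_{\Gamma_j}$, whence $\Gamma \cap V \subseteq \Gamma \cap U_{\Gamma_j} = \{1\}$.

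For the finiteness of covolumes I would pass to a compact open subgroup $W \leq V$ and fix a Cayley--Abels graph $X$ of $(G,U)$ with base vertex $x_0$ and vertex stabilizers $G_x$ the conjugates of $U$. By Lemma~\ref{lem-rU-cocom-uniform} there is a radius $R$ so that the ball $B(x_0, R)$, which is finite by local finiteness of $X$, contains a set of representatives of the vertex orbits of \emph{every} $\Gamma \in \mathcal L_{r,U}$. Since $G$ is unimodular and acts vertex-transitively on $X$ with all vertex stabilizers of Haar measure $\mu(U)$, Serre's covolume formula reads $\mathrm{covol}(\Gamma) = \mu(U)\sum_{i=1}^s |\Gamma_{x_i}|^{-1}$, where $x_1, \dots, x_s$ are orbit representatives chosen inside $B(x_0,R)$, $s = |\Gamma \backslash G / U| \leq r$, and each stabilizer $\Gamma_{x_i} = \Gamma \cap G_{x_i}$ is finite (discrete times compact). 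The crucial step is to bound the orders $|\Gamma_{x_i}|$ uniformly: since $\Gamma \cap W \subseteq \Gamma \cap V = \{1\}$, the finite group $\Gamma_{x_i}$ meets $G_{x_i} \cap W$ trivially, so the map $\gamma \mapsto \gamma(G_{x_i}\cap W)$ embeds $\Gamma_{x_i}$ into the coset space $G_{x_i}/(G_{x_i}\cap W)$, giving $|\Gamma_{x_i}| \leq [G_{x_i} : G_{x_i}\cap W]$. As the $x_i$ range over the \emph{fixed} finite vertex set $B(x_0,R)$, these indices are at most some constant $N$ independent of $\Gamma$. Hence $\mathrm{covol}(\Gamma)$ has the form $\mu(U)\sum_{i=1}^s m_i^{-1}$ with $1 \leq s \leq r$ and $1 \leq m_i \leq N$, and this expression assumes only finitely many values.

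The main obstacle, and the reason the uniform discreteness of Part~1 and the uniform orbit bound of Lemma~\ref{lem-rU-cocom-uniform} are both indispensable, is precisely the uniform bound on the stabilizer orders: without it the covolumes $\mu(U)\sum m_i^{-1}$ could accumulate at $0$ and form an infinite set, so the $(r,U)$-cocompactness alone (which only caps $s$) would not suffice. The remaining ingredients are a routine compactness argument, the only points needing verification being that Proposition~\ref{prop-chab-neigh-latt} is applicable (discreteness of $W(G)$) and that $G$ is unimodular so that Serre's formula is available.
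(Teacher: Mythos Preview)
Your proof is correct and follows the same approach as the paper: compactness of $\mathcal L_{r,U}$ from Theorem~\ref{thm:LrU-ChabClosed}, a finite subcover built from the local uniform discreteness neighbourhoods of Proposition~\ref{prop-chab-neigh-latt}, and Serre's covolume formula for the finiteness of covolumes. The paper dispatches the covolume step in one line by citing Serre's formula, whereas you helpfully spell out the uniform bound on stabilizer orders via Lemma~\ref{lem-rU-cocom-uniform}; this is exactly the implicit content of that citation.
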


\begin{proof}
By Theorem~\ref{thm:LrU-ChabClosed}, the set $\mathcal{L}_{r,U} \subset \sub(G)$ is Chabauty closed, hence compact. Moreover, by Proposition~\ref{prop-chab-neigh-latt}, for each $\Gamma \in \mathcal{L}_{r,U}$, there is a identity neighbourhood $V(\Gamma)$ in $G$ and an open Chabauty neighbourhood $\mathcal X(\Gamma)$ of $\Gamma$ such that $\Lambda \cap V(\Gamma) = \{1\}$ for all $\Lambda \in \mathcal X(\Gamma)$. Hence the collection  $\{\mathcal X(\Gamma)\}_\Gamma$ forms an open cover of the compact set $\mathcal{L}_{r,U}$. Let $\mathcal X(\Gamma_1) \cup \dots \cup \mathcal X(\Gamma_\ell)$ be a finite subcover. Then $V = \bigcap_{i=1}^\ell V(\Gamma_i)$ is an identity neighbourhood in $G$ such that $V \cap \Gamma = \{1\}$ for every $\Gamma \in \mathcal{L}_{r,U}$. 

The finiteness of the set of covolumes follows directly from the existence of $V$ together with Serre's covolume formula (see \cite[Proposition~1.4.2(b)]{Bourdon2000}). 
\end{proof}

The neighbourhood $V$ in Theorem \ref{thm:KazMar:Irr} a priori depends on $r$. As mentioned in the introduction, we do not know the answer to the following question (even in the case where all the factors are topologically simple):

\begin{question} \label{quest-uniform-neighb}
Let $G_1,\ldots,G_n$ be non-discrete compactly generated quasi just-non-compact tdlc groups, $n \geq 2$. Does there exist an identity neighbourhood $V$ in $G = G_1 \times \dots \times G_n$ such that for every cocompact lattice $\Gamma\leq G$ with $p_i(\Gamma)$ dense in $G_i$ for all $i$, we have $V \cap \Gamma = \{1\}$  ?
\end{question}

\subsection{A second Wang finiteness theorem in compactly presented groups}

Recall that a locally compact group $G$ is \textbf{compactly presented} if $G$ admits a compact generating subset $S$ and a presentation with set of generators $S$ and relators of bounded word length. When $G$ is a compactly generated tdlc group, $G$ is compactly presented if and only if $G$ acts properly and cocompactly on a coarsely simply connected locally finite graph (see \cite[Corollary~8.A.9]{CorHar}).

We will invoke the following  result, extracted from the work of Gelander--Levit \cite{GelanderLevit}. 

\begin{thm}[Gelander--Levit] \label{thm-aut(G)-orbit-open}
Let $n \geq 1$ and $G_1,\ldots,G_n$ be compactly presented tdlc groups with trivial polycompact radical. For each $i$ let $X_i$ be a connected locally finite graph on which $G_i$ acts properly and cocompactly, and let $c > 0$ such that each Rips $2$-complex $\mathrm{Rips}_c^2(X_i)$ is simply connected. Let $\mathcal{A}_i$ be the automorphism group of $\mathrm{Rips}_c^2(X_i)$, and write $\mathcal{A} = \mathcal{A}_1 \times \ldots \times \mathcal{A}_n$. Let also $\Gamma \leq G = G_1 \times \ldots \times G_n$ be a cocompact lattice. 
\begin{enumerate}[label=(\roman*)]
	\item   $\Gamma$ has a neighbourhood  in $\sub(G)$ consisting of $\mathcal{A}$-conjugates of $\Gamma$. 
	
	\item  If moreover $p_i(\Gamma)$ is dense in $G_i$ for all $i$, then  $\Gamma$ has a neighbourhood  in $\sub(G)$ consisting  of $\mathrm N_\mathcal{A}(G)$-conjugates of $\Gamma$.
\end{enumerate} 
\end{thm}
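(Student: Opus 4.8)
The plan is to reduce part~(i) to the topological local rigidity of cocompact lattices established by Gelander--Levit, and then for part~(ii) to promote the ambient conjugation to one by an element normalizing $G$, using the density of the projections together with unimodularity. First I would set up the product geometry: write $Y_i = \mathrm{Rips}_c^2(X_i)$ and $Y = Y_1 \times \dots \times Y_n$. Each $Y_i$ is simply connected by the choice of $c$, hence so is $Y$, and $Y$ is a locally finite complex on which $\mathcal{A} = \mathcal{A}_1 \times \dots \times \mathcal{A}_n$ acts properly and cocompactly (each $\mathcal{A}_i = \Aut(Y_i)$ does, since $G_i \le \mathcal{A}_i$ is closed and cocompact). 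As $G$ is cocompact in $\mathcal{A}$ and $\Gamma$ is a cocompact lattice in $G$, the group $\Gamma$ is a cocompact lattice in $\mathcal{A}$. In particular $\mathcal{A}$ is unimodular, and since the modular function of a direct product is the product of the modular functions of the factors, each $\mathcal{A}_i$ is unimodular as well; this will be needed in part~(ii).

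For part~(i) I would apply the local rigidity theorem of Gelander--Levit \cite{GelanderLevit} to the cocompact lattice $\Gamma$ in $\mathcal{A}$ acting on the simply connected complex $Y$; the hypotheses of compact presentation and of triviality of the polycompact radical of the $G_i$ are exactly what is needed to realize $G_i$, hence $\mathcal{A}_i$, as a proper cocompact group of automorphisms of $Y_i$ to which their theorem applies. This provides a Chabauty neighbourhood of $\Gamma$ in $\sub(\mathcal{A})$ each of whose members is of the form $a \Gamma a^{-1}$ for some automorphism $a$ of $Y$ close to the identity. Being close to the identity, $a$ cannot permute the factors of $Y = \prod_i Y_i$ and therefore preserves the product decomposition, so $a \in \prod_i \Aut(Y_i) = \mathcal{A}$. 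Since $G$ is closed in $\mathcal{A}$, the space $\sub(G)$ is a closed subspace of $\sub(\mathcal{A})$ for the Chabauty topology; intersecting the neighbourhood with $\sub(G)$ yields a neighbourhood of $\Gamma$ in $\sub(G)$ consisting of $\mathcal{A}$-conjugates of $\Gamma$, which is~(i).

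For part~(ii) I claim the very same neighbourhood works. Let $\Lambda = a \Gamma a^{-1}$ be a member of it, so that $\Lambda \le G$ and $a = (a_1, \dots, a_n) \in \mathcal{A}$. From $\Lambda \le G = \prod_i G_i$ we get $a_i\, p_i(\Gamma)\, a_i^{-1} = p_i(\Lambda) \le G_i$ for each $i$; taking closures in $\mathcal{A}_i$ and using the hypothesis $\overline{p_i(\Gamma)} = G_i$ together with the fact that conjugation by $a_i$ is a homeomorphism gives $a_i G_i a_i^{-1} \le G_i$. Now $G_i$ and its conjugate $a_i G_i a_i^{-1}$ are closed cocompact subgroups of the unimodular group $\mathcal{A}_i$ and so have the same covolume, while covolume is strictly monotone under proper inclusion of closed cocompact subgroups (cf.\ Serre's covolume formula, \cite[Proposition~1.4.2(b)]{Bourdon2000}); hence $a_i G_i a_i^{-1} = G_i$. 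Thus $a_i \in \mathrm N_{\mathcal{A}_i}(G_i)$ for every $i$, so that $a \in \prod_i \mathrm N_{\mathcal{A}_i}(G_i) = \mathrm N_{\mathcal{A}}(G)$, and $\Lambda$ is an $\mathrm N_{\mathcal{A}}(G)$-conjugate of $\Gamma$, proving~(ii).

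The main obstacle is the correct extraction of the Gelander--Levit statement: one must verify that their hypotheses are met (this is where compact presentation and triviality of the polycompact radical enter, guaranteeing a faithful proper cocompact action on the simply connected Rips complex) and that their conclusion furnishes an entire Chabauty neighbourhood of conjugates, not merely an accumulation of conjugates, together with the control that the conjugating automorphism is near the identity, which is what confines it to $\mathcal{A}$ rather than to the possibly larger $\Aut(Y)$. The remaining delicate point is the covolume-rigidity step $a_i G_i a_i^{-1} \le G_i \Rightarrow a_i G_i a_i^{-1} = G_i$, which genuinely relies on the unimodularity of $\mathcal{A}_i$ supplied by the presence of the lattice $\Gamma$.
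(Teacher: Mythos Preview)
The paper does not actually give a proof here: it just points to \cite[Corollaries~4.3--4.4 and Corollary~7.1.2]{GelanderLevit} and remarks that Gelander--Levit's Corollary~4.4, stated for two factors, carries over verbatim to $n$ factors once one only asks for density of the projection to each \emph{single} factor. Your write-up is therefore a reconstruction of what those citations contain rather than a competing argument, and at that level it is correct and faithful. Setting up the product Rips complex, invoking the Gelander--Levit local rigidity for the cocompact lattice $\Gamma$ in $\mathcal{A}$, and then restricting from $\sub(\mathcal{A})$ to $\sub(G)$ is exactly what the cited Corollary~4.3 together with Corollary~7.1.2 yield for part~(i). For part~(ii), the reduction to each factor via density of $p_i(\Gamma)$ to obtain $a_iG_ia_i^{-1}\le G_i$, followed by upgrading to equality, is precisely the mechanism behind their Corollary~4.4, and your observation that this only uses density in each \emph{individual} factor is the same remark the paper makes.

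One point of care: your final step, concluding $a_iG_ia_i^{-1}=G_i$ from the inclusion $a_iG_ia_i^{-1}\le G_i$, is true but your appeal to ``Serre's covolume formula'' from \cite{Bourdon2000} is not quite the right citation, since that formula is stated for \emph{discrete} cocompact subgroups, whereas $G_i\le\mathcal{A}_i$ is a non-discrete closed cocompact subgroup. What you need is the general multiplicativity of invariant measures along $a_iG_ia_i^{-1}\le G_i\le\mathcal{A}_i$ combined with the fact that conjugation in the unimodular group $\mathcal{A}_i$ preserves Haar measure (so that the covolumes of $G_i$ and $a_iG_ia_i^{-1}$ in $\mathcal{A}_i$ agree once the Haar measure on the conjugate is the push-forward of that on $G_i$); this then forces the $G_i$-invariant measure of $G_i/a_iG_ia_i^{-1}$ to be trivial. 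You flag this as a ``delicate point'', and it is: the argument does go through, but it is a statement about invariant measures on quotients of (non-discrete) unimodular groups rather than Serre's discrete covolume formula, and the consistent normalization of Haar measures along the conjugation is what makes it work.
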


\begin{proof}
This statement is a consequence of \cite[Corollary 4.3-4.4]{GelanderLevit} (and their proofs) together with \cite[Corollary 7.1.2]{GelanderLevit}. Note that \cite[Corollary 4.4]{GelanderLevit} is stated there in the case of two factors, but the same proofs works for $n \geq 2$ factors with the assumption that the projection of $\Gamma$ on each individual factor is dense.
\end{proof}

\begin{thm} \label{thm-finite-A-orbits}
Let $n \geq 2$, and $G_1,\ldots,G_n$ be (non-discrete) compactly presented quasi just-non-compact tdlc groups with trivial polycompact radical. Let $U = U_1 \times \dots \times U_n \leq G = G_1 \times \dots \times G_n$ be a compact open subgroup. For each $i$ let $X_i$ be a connected locally finite graph on which $G_i$ acts properly and cocompactly, and let $c > 0$ such that each Rips $2$-complex $\mathrm{Rips}_c^2(X_i)$ is simply connected. Let $\mathcal{A}_i$ be the normalizer of $G_i$ in the isometry group of $\mathrm{Rips}_c^2(X_i)$, and $\mathcal{A} = \mathcal{A}_1 \times \ldots \times \mathcal{A}_n$. Then for every $r \geq 1$, the   the set $\mathcal L_{r, U}$ of $(r, U)$-cocompact lattices $\Gamma \leq G$ with $p_i(\Gamma)$ dense in $G_i$ for all $i$, is covered by finitely many $\mathcal N_\mathcal{A}(G)$-orbits.
\end{thm}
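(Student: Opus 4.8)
The plan is to combine the compactness of $\mathcal L_{r,U}$, established in Theorem~\ref{thm:LrU-ChabClosed}, with the local rigidity statement of Gelander--Levit recorded in Theorem~\ref{thm-aut(G)-orbit-open}(ii), via a routine covering argument: an open cover of a compact space by ``orbit neighbourhoods'' admits a finite subcover, and then the finitely many orbits involved cover the whole set.

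First I would record that $\mathcal L_{r,U}$ is compact. By Theorem~\ref{thm:LrU-ChabClosed} it is a Chabauty closed subset of $\sub(G)$, and the latter is compact; hence $\mathcal L_{r,U}$ is compact. Next I would verify that Theorem~\ref{thm-aut(G)-orbit-open} is applicable to each $\Gamma \in \mathcal L_{r,U}$: the factors $G_i$ are compactly presented with trivial polycompact radical by hypothesis; each acts properly and cocompactly on the connected locally finite graph $X_i$, with $\mathrm{Rips}_c^2(X_i)$ simply connected; and every $\Gamma \in \mathcal L_{r,U}$ is a cocompact lattice, being $(r,U)$-cocompact, whose projections $p_i(\Gamma)$ are dense in $G_i$. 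Before invoking part~(ii) I would reconcile the two meanings of $\mathcal A_i$: in Theorem~\ref{thm-aut(G)-orbit-open} it denotes the full automorphism group $\widetilde{\mathcal A}_i$ of $\mathrm{Rips}_c^2(X_i)$, whereas in the present statement $\mathcal A_i = \mathrm N_{\widetilde{\mathcal A}_i}(G_i)$. Since conjugation in the direct product $\widetilde{\mathcal A} = \widetilde{\mathcal A}_1 \times \dots \times \widetilde{\mathcal A}_n$ is component-wise, an element $(a_1,\dots,a_n)$ normalizes $G = G_1 \times \dots \times G_n$ if and only if each $a_i$ normalizes $G_i$; thus $\mathrm N_{\widetilde{\mathcal A}}(G) = \prod_i \mathrm N_{\widetilde{\mathcal A}_i}(G_i) = \mathcal A = \mathrm N_{\mathcal A}(G)$. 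Consequently the group of admissible conjugations in Gelander--Levit's conclusion is precisely the $\mathrm N_{\mathcal A}(G)$ of the present theorem.

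With this identification in hand, Theorem~\ref{thm-aut(G)-orbit-open}(ii) furnishes, for each $\Gamma \in \mathcal L_{r,U}$, an open neighbourhood $\Omega_\Gamma$ of $\Gamma$ in $\sub(G)$ all of whose members are $\mathrm N_{\mathcal A}(G)$-conjugates of $\Gamma$ (a neighbourhood contains an open one, so $\Omega_\Gamma$ may be taken open). The family $\{\Omega_\Gamma\}_{\Gamma \in \mathcal L_{r,U}}$ is then an open cover of the compact set $\mathcal L_{r,U}$. I would extract a finite subcover $\Omega_{\Gamma_1}, \dots, \Omega_{\Gamma_m}$ and conclude: every $\Lambda \in \mathcal L_{r,U}$ lies in some $\Omega_{\Gamma_j}$, hence is $\mathrm N_{\mathcal A}(G)$-conjugate to $\Gamma_j$. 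Therefore $\mathcal L_{r,U} \subseteq \bigcup_{j=1}^m \mathrm N_{\mathcal A}(G)\cdot \Gamma_j$, which is exactly the assertion that $\mathcal L_{r,U}$ is covered by finitely many $\mathrm N_{\mathcal A}(G)$-orbits.

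The covering argument itself is entirely routine; the only point demanding genuine care is the application of Theorem~\ref{thm-aut(G)-orbit-open}, and in particular the bookkeeping that identifies $\mathrm N_{\mathcal A}(G)$ across the two formulations of $\mathcal A_i$ (full automorphism group versus normalizer of $G_i$) and confirms that the neighbourhoods supplied by Gelander--Levit can uniformly be read as open orbit neighbourhoods. All the substantive content---the Chabauty-closedness of $\mathcal L_{r,U}$ and the local rigidity of cocompact lattices with dense projections---is already provided by the cited results, so this final theorem is essentially a compactness packaging of Theorem~\ref{thm:LrU-ChabClosed} and Theorem~\ref{thm-aut(G)-orbit-open}.
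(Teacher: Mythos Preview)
Your proof is correct and follows exactly the same approach as the paper: compactness of $\mathcal L_{r,U}$ from Theorem~\ref{thm:LrU-ChabClosed}, orbit neighbourhoods from Theorem~\ref{thm-aut(G)-orbit-open}(ii), then a finite subcover. Your additional bookkeeping reconciling the two definitions of $\mathcal A_i$ (full automorphism group versus normalizer of $G_i$) is a useful clarification that the paper leaves implicit.
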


\begin{proof}
By Theorem~\ref{thm:LrU-ChabClosed}, the set 	$\mathcal L_{r, U}$  is Chabauty closed, hence compact. By  Theorem~ \ref{thm-aut(G)-orbit-open}(ii), every $\Gamma \in \mathcal L_{r, U}$ has a Chabauty neighbourhood that consists of conjugates of $\Gamma$ under $\mathrm N_\mathcal{A}(G)$. The result follows by extracting a finite subcover. 
\end{proof}

We finish by observing that Theorem~\ref{thmintro:Wang2} from the introduction follows from the more precise Theorem \ref{thm-finite-A-orbits}.

\section{Automorphism groups of graphs and local action} \label{sec:graphs}

In this section we apply the results of the previous sections to lattices in products of automorphism groups of graphs with certain local actions. Although the main example to keep in mind is that of trees, many of our results apply to arbitrary connected locally finite graphs. 

\subsection{Graph-restrictive permutation groups and restrictive actions} \label{subsec-graph-rest}

We briefly recall some notation. Let $X$ be a connected graph and let $G\leq \Aut(X)$. Given a vertex $x \in VX$, we denote by $X(x)$ the one-ball around $x$, and by $G_x^{X(x)}$ the permutation group induced by the action of $G_x$ on $X(x)$. We call $G_x^{X(x)}$ the \textbf{local action} of $G$ at $x$. Given a permutation group $L$, we say that the $G$-action on $X$ is \textbf{locally $L$} if $G_x^{X(x)}$ is permutation isomorphic to $L$ for every vertex $x$ of $X$.

Following Poto\v cnik--Spiga--Verret~\cite{PSV}, we will use the following terminology. 

\begin{defin}
A finite permutation group $L$ is \textbf{graph-restrictive} if there exists a constant $c_L$ such that, for every connected graph $X$ and every vertex-transitive discrete subgroup $G\leq \Aut(X)$ that is locally $L$, we have $G_x^{[c_L]} = 1$ for every $x \in VX$.
\end{defin}

Our formulation of that definition actually differs from that in \cite{PSV}, but both definitions are equivalent. A conjecture of R.~Weiss~\cite[Conjecture~3.12]{Weiss78} asserts that finite primitive groups are graph-restrictive; C.~Praeger~\cite[Problem~7]{Praeger98} conjectures that quasi-primitive groups are graph-restrictive, while Poto\v cnik--Spiga--Verret~\cite{PSV} conjecture that a finite transitive permutation group is graph-restrictive if and only if it is semiprimitive, and prove the \enquote{only if} implication \cite{PSV} (see Section \ref{subsec:LocalAction} for the definition of a semiprimitive permutation group). We should note that a semiregular permutation group $L$ is obviously graph-restrictive (with constant $c_L = 1$), and among intransitive permutation groups, semiregular is equivalent to graph-restrictive \cite{SV-GR-INTRAN}. Recall that a permutation group is \textbf{semiregular} if all point stabilizers are trivial. 

\medskip

An important result, that follows from the work of Trofimov--Weiss, ensures finite $2$-transitive groups are all graph-restrictive. 

\begin{thm}[{Trofimov--Weiss~\cite[Theorem~1.4]{TrofimovWeiss}}] \label{thm:TrofimovWeiss}
Every finite $2$-transitive group $L$ is graph-restrictive, with  constant $c_L\leq 6$. 		
\end{thm}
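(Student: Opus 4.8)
The plan is to obtain the statement directly from the cited theorem of Trofimov and Weiss, since in our terminology it is essentially a reformulation of \cite[Theorem~1.4]{TrofimovWeiss}. First I would recall their conclusion: for a connected graph $X$ and a vertex-transitive discrete subgroup $G \leq \Aut(X)$ whose local action $G_x^{X(x)}$ is $2$-transitive at every vertex, the pointwise stabilizer of the ball of radius~$6$ is trivial, that is, $G_x^{[6]} = \{1\}$ for all $x \in VX$. Matching this against the definition of graph-restrictiveness recalled above, the only thing to verify is that the hypothesis \enquote{$G$ is locally $L$ for a fixed $2$-transitive $L$} is a special case of \enquote{the local action is $2$-transitive at every vertex}; this is immediate, and it furnishes the uniform constant $c_L = 6$ independently of which finite $2$-transitive group $L$ is realized. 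Note also that a finite $2$-transitive group is primitive, hence quasi-primitive, hence semiprimitive, so this is consistent with (and a confirmed instance of) the Poto\v cnik--Spiga--Verret picture discussed above.

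If the cited theorem is instead phrased as a uniform bound on the order of the vertex stabilizer $|G_x|$, rather than as the vanishing $G_x^{[6]} = \{1\}$, I would insert the standard equivalence between the two formulations. Since $G$ is locally $L$ with $L$ finite, the graph $X$ is regular of some fixed finite degree $d$, and the descending chain $G_x = G_x^{[0]} \geq G_x^{[1]} \geq \dots$ has successive quotients $G_x^{[\ell]}/G_x^{[\ell+1]}$ embedding into a direct product of boundedly many copies of $\Sym(d-1)$. Consequently a uniform bound on $|G_x|$ forces the chain to stabilize after boundedly many strict steps, and a stabilizing value fixes balls of every radius by connectedness, hence is trivial; conversely $G_x^{[c]} = \{1\}$ for a uniform $c$ bounds $|G_x|$. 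This lets one pass freely between the two formulations, the precise value $c_L \leq 6$ being exactly what Trofimov and Weiss establish.

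The genuine mathematical difficulty is not reproduced here, and I would not attempt to reprove it: as already signalled in the introduction, the underlying bound rests on the Classification of the Finite Simple Groups, together with the theory of $s$-arc-transitive graphs and Weiss's bound on $s$-arc-transitive actions with primitive local action of degree~$\geq 3$, of which $2$-transitivity is a special case. I expect that reconstructing that input would be the main obstacle were one to seek a self-contained argument; for our purposes it is legitimate to quote \cite[Theorem~1.4]{TrofimovWeiss} as a black box, the entire content of the theorem in our framework being the observation that the constant~$6$ is uniform over all finite $2$-transitive permutation groups $L$, which is precisely the graph-restrictive property.
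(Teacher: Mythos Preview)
Your proposal is correct and matches the paper's approach: the paper provides no proof of this theorem at all, simply citing \cite[Theorem~1.4]{TrofimovWeiss} as an external result. Your explanation of how their statement translates into the graph-restrictive terminology is accurate and more detailed than anything the paper itself supplies.
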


We shall now introduce a notion of a graph-restrictive pair of permutation groups. For a pair $(L_1,L_2)$ of permutation groups, we say that $G\leq \Aut(X)$ is \textbf{locally-$(L_1,L_2)$} if there exists an edge $(x,y)$ of $X$ such that $G_x^{X(x)} \simeq L_1$ and $G_y^{X(y)} \simeq L_2$. 

\begin{defin} \label{def-restric-pair}
A pair of permutation groups $(L_1,L_2)$ is \textbf{graph-restrictive} if there exists a constant $c_{L_1,L_2} = c$ such that, for every connected graph $X$ and every locally $(L_1,L_2)$ discrete subgroup $G\leq \Aut(X)$ acting on $X$ with two orbits of vertices, we have $G_x^{[c]} = 1$ for every $x \in VX$.
\end{defin}

\begin{rmq}
We note that \enquote{$L$ is graph-restrictive} is not the same as \enquote{$(L,L)$ is graph-restrictive}. We also point out that the notion defined in Definition \ref{def-restric-pair} does not coincide with the one from \cite[Definition 1.1]{MorganSpigaVerret}, as we only require the uniform bound for groups of automorphisms with \textit{two} orbits of vertices. Our definition turns out to be more appropriate for our purpose, and will provide a more robust statement hereafter.
\end{rmq}

We record an elementary observation that relates the previous notions with the other considerations of this paper. 

\begin{lem}\label{lem:GraphRestrictive->ChabClosed}
Let $X$ be a connected locally finite graph.
\begin{enumerate}[label=(\roman*)]
\item If $L$ is a graph-restrictive permutation group, then the collection of vertex-transitive discrete subgroups of $\Aut(X)$ that are locally $L$ is Chabauty-closed.
\item Similarly if the pair $(L_1,L_2)$ is graph-restrictive, then the collection of discrete subgroups of $\Aut(X)$ that are locally $(L_1,L_2)$ and with two orbits of vertices, is Chabauty-closed.
\end{enumerate}
\end{lem}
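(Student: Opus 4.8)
The plan is to fix a base vertex $o \in VX$, to put $U = \Aut(X)_o$, and to show that any sequence $(G_k)$ drawn from the relevant collection and Chabauty converging to a closed subgroup $H \leq \Aut(X)$ has $H$ back in the collection. In the nonempty case both collections force $\Aut(X)$ to have finitely many orbits on $VX$ (one for (i), at most two for (ii)), so $\Aut(X)$ is cocompact on the connected locally finite graph $X$, hence a compactly generated, second countable tdlc group; thus $\sub(\Aut(X))$ is compact metrizable and closedness may be tested along sequences. I would then verify three properties of the limit $H$ separately: its number of vertex orbits, its discreteness, and its prescribed local action.

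For the orbit count I would pick a vertex stabilizer $U_j = \Aut(X)_{o_j}$ for each $\Aut(X)$-orbit representative $o_j$ and invoke Lemma~\ref{lem-rU-coc-closed}, by which each $\Ccal_{r,U_j}(\Aut(X))$ is clopen. The set of closed subgroups with exactly one (for (i)) or exactly two (for (ii)) orbits on $VX$ is then a finite Boolean combination of these clopen sets, hence clopen; since every $G_k$ lies in it, so does $H$. For discreteness I would use graph-restrictiveness: each $G_k$ is a discrete, vertex-transitive (resp. two-orbit) subgroup that is locally $L$ (resp. locally $(L_1,L_2)$), so $(G_k)_o^{[c]} = \{1\}$ for the uniform constant $c = c_L$ (resp. $c = c_{L_1,L_2}$). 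As $\Aut(X)_o^{[c]}$ is a fixed open subgroup, any nontrivial $h \in H \cap \Aut(X)_o^{[c]}$ would be a limit of elements $h_k \in G_k$; for large $k$ one would have $h_k \in \Aut(X)_o^{[c]}$ and $h_k \neq 1$, contradicting $(G_k)_o^{[c]} = \{1\}$. Hence $H \cap \Aut(X)_o^{[c]} = \{1\}$ and $H$ is discrete.

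The local action rests on the observation that, for each vertex $v$, the map $\sub(\Aut(X)) \to \{\text{subgroups of } F_v\}$, $H \mapsto H_v^{X(v)}$, is locally constant, where $F_v = \Aut(X)_v / \Aut(X)_v^{[1]}$ is finite. Indeed $H_v^{X(v)}$ is the image of $H \cap \Aut(X)_v$ under the quotient $\pi_v \colon \Aut(X)_v \to F_v$, and for each $f \in F_v$ the fibre $\pi_v^{-1}(f)$ is both compact and open; hence the condition $f \in H_v^{X(v)}$, equivalent to $H \cap \pi_v^{-1}(f) \neq \emptyset$, is clopen. In case (i) the group $H$ is vertex-transitive by the previous step, so its local action is constant over $VX$ and permutation isomorphic to $H_o^{X(o)}$; by local constancy $H_o^{X(o)} = (G_k)_o^{X(o)} \simeq L$ for large $k$, so $H$ is locally $L$, completing (i).

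The main obstacle is case (ii): \enquote{locally $(L_1,L_2)$} only guarantees some witnessing edge $(x_k,y_k)$ with $(G_k)_{x_k}^{X(x_k)} \simeq L_1$ and $(G_k)_{y_k}^{X(y_k)} \simeq L_2$, and these edges vary with $k$ and may drift to infinity, so local constancy cannot be applied to them directly. The key idea is to relocate a witnessing edge into the fixed finite ball $B(o,2)$ using the two-orbit structure, exploiting that the local action of $G_k$ is constant along $G_k$-orbits: if $x_k$ lies in the $G_k$-orbit of $o$, one translates $x_k$ to $o$, obtaining a witnessing edge inside $B(o,1)$; otherwise one first uses connectedness to produce a vertex $q_k \in X(o)$ lying in the other $G_k$-orbit (translate an inter-orbit edge so that one endpoint becomes $o$), then translates $x_k$ to $q_k$, obtaining a witnessing edge inside $B(o,2)$. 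Since $B(o,2)$ is finite, some fixed edge $e_0$ witnesses locally $(L_1,L_2)$ for infinitely many $k$, and applying local constancy at the two endpoints of $e_0$ transfers the witness to $H$. Combined with the orbit count and discreteness, this shows $H$ is a discrete two-orbit subgroup that is locally $(L_1,L_2)$, proving closedness.
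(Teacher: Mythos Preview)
Your proof is correct and follows essentially the same approach as the paper's: orbit count via Lemma~\ref{lem-rU-coc-closed}, local action via Chabauty continuity of intersection with a fixed compact open subgroup, and discreteness from the uniform constant $c_L$ (resp.\ $c_{L_1,L_2}$). Your treatment is considerably more explicit than the paper's, which dispatches case~(ii) with \enquote{follows along the same lines}; in particular your relocation of the witnessing edge into the finite ball $B(o,2)$ spells out a step the paper omits, and your remark that the collections being nonempty forces $\Aut(X)$ to be compactly generated and second countable justifies the use of sequences, which the paper takes for granted.
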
	

\begin{proof}
The collection of vertex-transitive closed subgroups of $\Aut(X)$ is Chabauty closed by Lemma~\ref{lem-rU-coc-closed}. Among the vertex-transitive closed subgroups, the condition of being  locally $L$ is Chabauty closed as well, since intersecting with a fixed compact open subgroup of $\Aut(X)$ (namely a vertex-stabilizer) is a continuous map on $\sub(\Aut(X))$. If $(\Lambda_k)$ is a sequence of locally $L$ vertex-transitive discrete subgroups, then the pointwise stabilizer $\fix_{\Lambda_k}(B(x, c_L))$ of the $c_L$-ball around a vertex $x$ is trivial for all $k$, so that any accumulation point of $(\Lambda_k)$ in the Chabauty space $\sub(\Aut(X)))$ also has that property, and is thus discrete. This shows the first statement. The proof of the second statement follows along the same lines.
\end{proof}

Finally in the sequel we will use the following terminology.

\begin{defin} \label{def-loc-rest-vert-trans}
Let $G\leq \Aut(X)$ be a locally transitive group of automorphisms of $X$. If $G$ acts transitively on vertices of $X$, we say that the action of $G$ on $X$ is \textbf{restrictive} if $G$ is locally $L$ and $L$ is a graph-restrictive permutation group. If $G$ acts on $X$ with two orbits of vertices, we say that the action of $G$ on $X$ is \textbf{restrictive} if $G$ is locally $(L_1,L_2)$ and $(L_1,L_2)$ is a graph-restrictive pair of permutation groups.
\end{defin}

We observe that $G\leq \Aut(X)$ has a restrictive action on $X$ if and only if the closure of $G$ does; and that among closed subgroups of $\Aut(X)$, the property of having a restrictive action defines a clopen subset in the Chabauty space of $\Aut(X)$.

\subsection{Automorphism groups of graphs with semiprimitive local action}

A basic observation due to Burger--Mozes~\cite[Proposition~1.2.1]{BuMo1} is that a closed subgroup of the automorphism group of a connected locally finite graph   whose local action at every vertex is quasi-primitive, must be quasi just-non-compact. That result can be generalized without much effort to semiprimitive local actions (see also \cite[Chapter II.7]{Tornier-phd}).

\begin{prop} \label{prop-BM-loc-semiprim}
Let $X$ be a connected locally finite graph. Let $G \leq \mathrm{Aut}(X)$ be a closed subgroup that is locally semiprimitive, and let $N$ be a normal subgroup of $G$. Then one of the following holds:
\begin{enumerate}[label=(\roman*)]
	\item \label{item-N-free} $N$ acts freely on the edges of $X$;
	\item \label{item-N-trans-edge} $N$ acts transitively on the edges of $X$;
	\item \label{item-N-trans-V2} There exists a $G$-invariant bipartition $VX = V_1 \cup V_2$ such that $N$ acts transitively on $V_2$.
\end{enumerate}
In particular $G$ is quasi just-non-compact, and if $G$ is non-discrete then there exists a compact open subgroup $U$ of $G$ such that $G = G^{(\infty)} U$.
\end{prop}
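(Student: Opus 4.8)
The plan is to read the trichotomy off the local action and then deduce the two global consequences from it. First I would record a local dichotomy together with the coarse orbit structure. Since $G$ is locally semiprimitive, each $G_x^{X(x)}$ is transitive, so $G$ is locally transitive; a standard connectedness argument then shows $G$ has at most two orbits on $VX$, and that when it has exactly two, $X$ is bipartite with the two $G$-orbits as parts, giving a $G$-invariant bipartition $VX = W_1 \cup W_2$. Now fix $N \trianglelefteq G$. For each vertex $x$, normality gives $N_x = N \cap G_x \trianglelefteq G_x$, so the induced group $N_x^{X(x)}$ is normal in the semiprimitive group $G_x^{X(x)}$ and is therefore either transitive on $X(x)$ or semiregular on $X(x)$; I call $x$ of \emph{transitive} or \emph{semiregular} type accordingly. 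Because $N_{gx} = gN_xg^{-1}$ for $g \in G$, the type of $x$ depends only on its $G$-orbit, so there is one type per $G$-orbit.

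Next I would run a propagation argument along paths. If every vertex is of semiregular type, then an element of $N$ fixing two adjacent vertices fixes a vertex and a neighbour of it, hence acts trivially on that neighbourhood by semiregularity; iterating along edges and using connectedness forces it to be trivial. Thus the stabiliser in $N$ of each arc is trivial, each $N_x$ is finite, and $N$ is discrete — this is alternative (i). If instead some vertex is of transitive type, then $N_x$ is transitive on $X(x)$, so all neighbours of $x$ lie in one $N$-orbit; along any path $z_0, x_1, z_1, \dots, x_k, z_k$ whose $x_i$ are of transitive type, the consecutive $z_{i-1}, z_i$ (both neighbours of $x_i$) are $N$-equivalent, whence $z_0$ and $z_k$ are $N$-equivalent. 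When $G$ is vertex-transitive or when both $G$-orbits are of transitive type, every vertex is of transitive type, so $N$ is locally transitive, hence (again by the orbit argument) vertex-transitive or two-orbit bipartite, and in either case edge-transitive — alternative (ii). When $X$ is bipartite with exactly one part, say $W_1$, of transitive type, the path argument (every pair of vertices of $W_2$ is joined by a path through $W_1$) shows $N$ is transitive on $W_2$ — alternative (iii), for the bipartition $VX = W_1 \cup W_2$.

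The structural statement then follows from the trichotomy. Assuming $X$ infinite, $G$ is non-compact, and since $G$ is a closed subgroup of $\mathrm{Aut}(X)$ acting properly and cocompactly on the connected locally finite graph $X$ it is compactly generated. For closed $N \trianglelefteq G$: in case (i) the vertex-stabilisers $N_x$ are finite, so $N$ is discrete; in cases (ii) and (iii) the group $N$ has finitely many orbits on $VX$ — for (iii) because every vertex of $W_1$ is $N$-equivalent to one of the finitely many neighbours of a fixed vertex of $W_2$ — so $N$ is cocompact. Hence every closed normal subgroup of $G$ is discrete or cocompact, i.e.\ $G$ is quasi just-non-compact. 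For the decomposition, suppose in addition that $G$ is non-discrete. By Proposition~\ref{prop-carac-qjnc} the subgroup $G^{(\infty)}$ is non-discrete and cocompact, so applying the trichotomy to $N = G^{(\infty)}$ excludes (i): either $G^{(\infty)}$ is edge-transitive (ii) or it is transitive on a part $W_2$ of a $G$-invariant bipartition (iii). In case (iii) I would take $U = G_x$ for $x \in W_2$; since $W_2$ is a $G$-orbit on which $G^{(\infty)}$ is transitive, $G = G^{(\infty)}G_x$. In case (ii) I would take $U$ to be the (compact open) setwise stabiliser $G_{\{e\}}$ of an edge $e$; edge-transitivity of $G^{(\infty)}$ gives $G = G^{(\infty)}G_{\{e\}}$. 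Either way $G = G^{(\infty)}U$ with $U$ compact open.

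The hard part will be the propagation step and the clean separation of alternatives (ii) and (iii): one must verify that local type is genuinely controlled by the $G$-orbit and then transport transitivity of $N$ along paths through the bipartite structure, taking care of which part $N$ becomes transitive on. A secondary subtlety is alternative (i): a non-trivial element of $N$ may invert an edge even when no non-trivial element fixes an arc, so ``acts freely on the edges'' should be read as triviality of arc-stabilisers, which is exactly what yields the discreteness used in the quasi just-non-compactness argument.
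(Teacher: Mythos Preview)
Your proof is correct and follows essentially the same approach as the paper: partition $VX$ according to whether $N_x^{X(x)}$ is transitive or semiregular, use edge-transitivity of $G$ (equivalently, your observation that type is constant on $G$-orbits together with the at-most-two-orbits structure) to see this is a bipartition or trivial, and then propagate along paths to obtain the trichotomy; the deduction of quasi just-non-compactness and of $G = G^{(\infty)}U$ is likewise the same. Your closing remarks on the arc-versus-edge subtlety and on the implicit need for $X$ to be infinite are accurate and in fact sharpen points the paper leaves implicit.
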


\begin{proof}
We follow the arguments from \cite[Proposition~1.2.1]{BuMo1}. Given a normal subgroup $N$ of $G$, for every vertex $x \in VX$, the stabilizer $N_x$ is a normal subgroup of $G_x$, and thus its action  on the $1$-sphere  $X(x)$ is transitive or free.  Let $V_1(N)$ be the collection of those $x \in VX$ such that $N_x$ is transitive on  $X(x)$, and notice that $V_1(N)$ is $G$-invariant. Let $V_2(N)$ be the complement of $V_1(N)$ in $VX$. 
Notice that               $VX = V_1(N) \cup V_2(N)$ is a $G$-invariant partition of the vertex-set $VX$.

For $i =1,2$, we observe that if two adjacent vertices both belong to $V_i(N)$, then we must have $V_i(N) = VX$ since $G$ is locally transitive, hence edge-transitive,  and the graph $X$ is connected. If $V_2(N) = VX$ then the pointwise fixator in $N$ of an edge is trivial, and $N$ acts freely on edges, so \ref{item-N-free} holds. If $V_1(N) = VX$, the subgroup $N$ is transitive on the edge-set $EX$ and \ref{item-N-trans-edge} holds. The only case that remains to be analyzed is when $V_1(N)$ and $V_2(N)$ are both non-empty, and hence form a $G$-invariant bipartition of the graph $X$. In that situation, consider $x \in V_2(N)$. Then every neighbour $y$ of $x$ is in $V_1(N)$, so that $N_y$ is transitive on $X(y)$. Since $X$ is connected, it follows that the star $\{x\} \cup X(x)$ contains a representative of every $\langle N_y \mid y \in X(x) \rangle$-orbit of vertices. In particular  $N$  acts transitively on $V_2(N)$, and we obtain \ref{item-N-trans-V2}. 

It follows in particular that every closed normal subgroup of $G$ is either discrete or cocompact, i.e.\ that $G$ is quasi just-non-compact. Assume finally that $G$ is non-discrete. Since $G$ is also compactly generated  (because it acts edge-transitively on $X$), the closed normal subgroup $G^{(\infty)}$ afforded by Proposition~\ref{prop-carac-qjnc} is non-discrete, and hence falls into case \ref{item-N-trans-edge} or \ref{item-N-trans-V2}. Let $U$ be the setwise stabilizer of an edge if we are in case \ref{item-N-trans-edge}, or the stabilizer of a vertex of $V_2(G^{(\infty)})$ if we are in case \ref{item-N-trans-edge}. Then we have the equality $G = G^{(\infty)} U$.
\end{proof}

\subsection{Lattices in products of graphs with semiprimitive local action} \label{subsec-prod-graph-semiprim}

In this section we consider lattices $\Gamma \leq \Aut(X_1) \times \dots \times \Aut(X_n)$ whose action on each factor is locally semiprimitive. Before going further, we note that we are now able to complete the proof of Corollary~\ref{corintro:graphs} from the introduction. 

\begin{proof}[Proof of Corollary~\ref{corintro:graphs}]
Indeed, in view of Proposition~\ref{prop-BM-loc-semiprim} each group $G_i$ is quasi just-non-compact, and therefore the assertions \ref{it:GraphsA}, \ref{it:GraphsB} and \ref{it:GraphsC} of the corollary respectively follow from Theorems~\ref{thm:Wang1}, \ref{thmintro:KazMar} and~\ref{thmintro:Wang2}.
\end{proof}

In the rest of this section we consider lattices $\Gamma \leq \Aut(X_1) \times \dots \times \Aut(X_n)$ in a more general setting than the one of Corollary \ref{corintro:graphs}, in the sense that we no longer prescribe in advance the projections of the lattices. This is counterbalanced by the fact that we impose additional conditions on the actions on one or several of the factors. In the following statement, this additional assumption is \ref{label-action-X1-rest}. See the discussion right after the statement concerning the connection between assumptions \ref{label-action-factors-sp} and \ref{label-action-X1-rest}.

\begin{thm} \label{thm-prod-trees-IL-finite}
Let $n \geq 2$ and for each $i = 1, \ldots, n$, let $X_i$ be a connected locally finite graph such that $\Aut(X_i)$ has a discrete polycompact radical. For $r \geq 1$, let $\mathcal L_r$ be the set of discrete subgroups $\Gamma \leq \Aut(X_1) \times \dots \times \Aut(X_n)$ such that:
\begin{enumerate}[label=(\arabic*)]
	\item $\Gamma$ acts on $\prod_{i=1}^n VX_i$ with at most $r$ orbits. 
	
	\item $\Gamma$ satisfies \ref{it:irre0}. 
	
	\item \label{label-action-factors-sp} For all $i = 1, \dots, n$, the action of $\Gamma$ on $X_i$ is locally semiprimitive. 
	
	\item \label{label-action-X1-rest} The action of $\Gamma$ on $X_1$ is restrictive. 

\end{enumerate}
Then:
\begin{enumerate}[label=(\roman*)]
	\item There is some $c>0$ such that $\Gamma_x^{[c]} =\{1\}$ for every vertex $x \in \prod_{i=1}^n VX_i$ and every $\Gamma \in \mathcal L_r$. In particular $\mathcal L_r$ is Chabauty closed.
	
	\item If in addition  $\Aut(X_i)$ is compactly presented with trivial polycompact radical for all $i$, then the elements of $\mathcal L_r$ fall into finitely many isomorphism classes.
\end{enumerate}
\end{thm}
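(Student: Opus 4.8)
The plan is to derive (ii) from (i) together with the Gelander--Levit local rigidity, and to obtain (i) by a Chabauty compactness argument anchored by the restrictive action on $X_1$. For (i), I would first fix the standing structure: for $\Gamma \in \mathcal L_r$ set $G_i = \overline{p_i(\Gamma)} \leq \Aut(X_i)$, so each $G_i$ is quasi just-non-compact by Proposition~\ref{prop-BM-loc-semiprim} and $\Gamma$ is a cocompact lattice in $\prod_i G_i$ with dense projections; since $\Gamma$ satisfies \ref{it:irre0}, Theorem~\ref{thm:irred:JNC}(i) yields \ref{it:irre3}. Choose a compact open $U = \prod_i U_i$ so that every member of $\mathcal L_r$ is $(r,U)$-cocompact. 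The real content is the uniform bound $\Gamma_x^{[c]} = \{1\}$; Chabauty closedness then follows formally, since for a limit $H = \lim_k \Gamma_k$ one has $H_o^{[c]} = \lim_k (\Gamma_k)_o^{[c]} = \{1\}$ by continuity of intersection with the fixed compact open subgroup $\prod_i \Aut(X_i)_{o_i}^{[c]}$.

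Suppose the bound fails. Translating bad vertices into a fixed ball (at most $r$ orbits, via Lemma~\ref{lem-rU-cocom-uniform}) and extracting, I obtain $\Gamma_k \in \mathcal L_r$ Chabauty converging to some $H$ with point stabilizers of unbounded depth at a fixed base vertex. Running the generalized-Fitting extraction of Lemma~\ref{lem:GeneralizedFitting-convergence} at borderline vertices, together with Proposition~\ref{prop:CocoLimits-discrete}, produces an infinite compact locally normal subgroup of $H$, so $H$ is non-discrete; moreover $H$ is $(r,U)$-cocompact (Lemma~\ref{lem-rU-coc-closed}) and satisfies \ref{it:irre3} (Corollary~\ref{cor:Irr3:Chabauty-closed}). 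Crucially, restrictiveness of the action on $X_1$ is a clopen condition, hence passes to $H$: grouping the remaining factors as $Z = X_2 \times \dots \times X_n$, the group $G_1 := \overline{p_1(H)}$ is a non-discrete quasi just-non-compact subgroup of $\Aut(X_1)$ that is vertex-transitive (or $2$-orbit) and locally $L$, with $L$ graph-restrictive.

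The intended contradiction is to exhibit $G_1$, up to a finite-index open subgroup, as a Chabauty limit of \emph{discrete} locally-$L$ subgroups of $\Aut(X_1)$, which is impossible by Lemma~\ref{lem:GraphRestrictive->ChabClosed}. Writing $G_Z := \overline{p_Z(H)}$, a short argument from cocompactness of $H$ and density of $p_1(H)$ in $G_1$ (independent of whether $G_Z$ is quasi just-non-compact) shows $H \cap G_Z$ is cocompact in $G_Z$; then Proposition~\ref{prop:OpenProj}(i) gives that $p_1\big(H \cap (\Aut(X_1) \times U_Z)\big)$ is finite-index open in $G_1$, and by Proposition~\ref{prop:ProjectionChabautyContinuous}(ii) it is the Chabauty limit of the \emph{discrete} subgroups $D_k := p_1\big(\Gamma_k \cap (\Aut(X_1) \times U_Z)\big)$ of $\Aut(X_1)$. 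The main obstacle is that a priori the local action of $D_k$ on $X_1$ is only a subgroup of $L$, whereas graph-restrictiveness applies to $L$ itself; overcoming this is where the hypotheses genuinely interact. One must choose $U_Z = U_Z^{(k)}$ large enough to capture, at each of the finitely many orbit representatives, elements of $\Gamma_k$ realizing all of $L$ while keeping $D_k$ discrete and transitive, using \ref{it:irre0} through Proposition~\ref{prop-H=G}(ii) (applied to the closure) to guarantee that the relevant factor-stabilizers still project onto finite-index open, hence locally full, subgroups. I expect this simultaneous ``large enough for $L$, small enough for discreteness'' calibration, made uniform in $k$, to be the delicate heart of the proof; a fallback, when fullness of the local action cannot be arranged, is to feed $D_k$ into Theorem~\ref{thm-limit-lattices-pro-p} with ambient $\Aut(X_1)$ and $G = G_1$ to produce a compact open pro-$p$ subgroup of $G_1$, then contradict the semiprimitive (graph-restrictive) structure of $L$ via Theorem~\ref{thmintro:Zassenhaus}.

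For (ii), assume each $\Aut(X_i)$ is compactly presented with trivial polycompact radical. By (i) the set $\mathcal L_r$ is Chabauty closed, hence compact. Every $\Gamma \in \mathcal L_r$ is a cocompact lattice in the \emph{fixed} group $G = \prod_i \Aut(X_i)$, so Theorem~\ref{thm-aut(G)-orbit-open}(i), applied with this $G$ and with $\mathcal A = \prod_i \aut(\mathrm{Rips}^2_c(X_i))$ (one group, independent of $\Gamma$), furnishes a Chabauty neighbourhood of $\Gamma$ consisting of $\mathcal A$-conjugates of $\Gamma$. As $\mathcal A$-conjugation is by automorphisms, all lattices in such a neighbourhood are isomorphic to $\Gamma$. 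Extracting a finite subcover of the compact set $\mathcal L_r$ by these neighbourhoods shows that $\mathcal L_r$ meets only finitely many isomorphism classes, as required.
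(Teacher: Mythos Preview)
Your overall architecture matches the paper's, and your treatment of (ii) is exactly right. The genuine gap is in (i), precisely at the obstacle you yourself flag as ``the delicate heart of the proof''. The paper resolves it in one clean stroke that you miss, and without it your calibration argument and fallback are both unnecessary and not obviously correct.

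Here is the point. Once $H$ is non-discrete, you apply Proposition~\ref{prop-H=G} to $H$ inside $G_1\times\dots\times G_n$ (each $G_i$ quasi just-non-compact by Proposition~\ref{prop-BM-loc-semiprim}) and obtain $H\cap G_i\geq G_i^{(\infty)}$ for every $i$. Now invoke the \emph{last sentence} of Proposition~\ref{prop-BM-loc-semiprim}: for each non-discrete locally semiprimitive $G_i$ there is a compact open $U_i\leq G_i$ with $G_i=G_i^{(\infty)}U_i$, hence $(H\cap G_i)U_i=G_i$. This is exactly the extra hypothesis of Proposition~\ref{prop:OpenProj}(ii), so $p_1\big(H\cap(G_1\times U_2\times\dots\times U_n)\big)=G_1$, not merely a finite-index open subgroup. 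Consequently the discrete groups $\Lambda_k:=p_1\big(\Gamma_k\cap(G_1\times U_2\times\dots\times U_n)\big)$ Chabauty-converge to $G_1$ itself. Since having a restrictive action is a clopen condition in $\sub(\Aut(X_1))$ and $G_1$ is restrictive, the $\Lambda_k$ are restrictive for large $k$; Lemma~\ref{lem:GraphRestrictive->ChabClosed} then forces the limit $G_1$ to be discrete, a contradiction. No calibration of $U_Z$ and no pro-$p$ fallback are needed.

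Two smaller remarks. First, the paper's logic runs in the opposite order from yours: it proves directly that any Chabauty limit $H$ of a sequence in $\mathcal L_r$ lies again in $\mathcal L_r$ (the argument above shows $H$ must be discrete), and then extracts the uniform bound $\Gamma_x^{[c]}=\{1\}$ by a compactness argument using Proposition~\ref{prop-chab-neigh-latt}. This avoids your detour through Lemma~\ref{lem:GeneralizedFitting-convergence} and Proposition~\ref{prop:CocoLimits-discrete}, whose hypothesis $\QZ(V\cap H)=\{1\}$ you have not verified; the non-discreteness of $H$ follows more simply (if $H$ were discrete, Proposition~\ref{prop-chab-neigh-latt} would give a uniform identity neighbourhood disjoint from all nearby $\Gamma_k$, contradicting unbounded stabilizer depth). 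Second, your claim that $H\cap G_Z$ is cocompact in $G_Z$ is correct, but it does use the quasi just-non-compactness of each individual $G_i$ via Proposition~\ref{prop-H=G}; there is no shortcut ``independent of whether $G_Z$ is quasi just-non-compact''.
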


Before giving the proof, we make a few comments:

\begin{enumerate}[label=\arabic*)]

\item When specified to the case of trees, Theorem~\ref{thm-prod-trees-IL-finite} provides a  partial solution to a conjecture due to Y.~Glasner  \cite[Conjecture~1.5]{Glasner-2dim}. 

\item By definition when $\Gamma$ acts vertex-transitively on $X_1$, assumption \ref{label-action-X1-rest} means that $\Gamma$ is locally graph-restrictive. As mentioned in \S \ref{subsec-graph-rest}, it is conjectured that this is always the case when the action is locally semiprimitive (so conjecturally, assumption \ref{label-action-factors-sp} automatically implies assumption \ref{label-action-X1-rest}). This has been confirmed for numerous specific classes of local actions, including $2$-transitive groups \cite{TrofimovWeiss}, primitive groups of affine type \cite{Weiss-affinetype,Spiga-affinetype}, primitive groups of twisted wreath type \cite{Spiga-wrtype}, or semiprimitive groups with two distinct minimal normal subgroups \cite{Giud-Mo-semi-prim}.
\end{enumerate}

\begin{proof}[Proof of Theorem~\ref{thm-prod-trees-IL-finite}]
Let $(\Gamma_k)$ be a sequence of elements of $\mathcal L_r$ that converges to $H$. Let us show that $H$ belongs to $\mathcal L_r$. 
	
First observe that the assumption implies that each $\Aut(X_i)$ acts on $X_i$ with finitely many orbits of vertices, and is therefore compactly generated since $X_i$ is connected. Since moreover the full automorphism group $\Aut(X_i)$ has a discrete polycompact radical, Proposition~\ref{prop-5-3} ensures that $\Gamma_k$ satisfies \ref{it:irre3} for all $k$. So by Corollary~\ref{cor:Irr3:Chabauty-closed} the group $H$ also satisfies \ref{it:irre3} (and hence \ref{it:irre0}), and acts on $\prod_{i=1}^n VX_i$ with at most $r$ orbits.
	
Proposition~\ref{prop:ProjectionChabautyContinuous} ensures that $H$ is locally semiprimitive on $X_i$ for all $i$, and restrictive on $X_1$. So it follows from Proposition~\ref{prop-BM-loc-semiprim} that the group $G_i = \overline{p_i(H)}$ is quasi just-non-compact for all $i$, and $G_i$ is non-discrete by \ref{it:irre3}.
	
So in oder to show that $H$ belongs to $\mathcal L_r$, we have to show that $H$ is discrete. We argue by contradiction and assume that $H$ is not discrete. Then by Proposition~\ref{prop-H=G} we have $H \cap G_i \geq G_i^{(\infty)}$. Moreover for all $i$, Proposition~\ref{prop-BM-loc-semiprim} provides a compact open subgroup $U_i$ of $G_i$ such that $(H\cap G_i)U_i = G_i$. In view of Proposition~\ref{prop:OpenProj}(ii), we deduce that $p_1(H \cap (G_1 \times U_2 \times \dots \times U_n)) = G_1$. It follows that $G_1$ is the Chabauty limit of the sequence of cocompact lattices $\Lambda_k := p_1(\Gamma_k \cap (G_1 \times U_2 \times \dots \times U_n)) $. We have seen above that the action of $G_1 = \overline{p_1(H)}$ on $X_1$ is restrictive, so the same is true for $\Lambda_k$ for $k$ large enough as having a restrictive action is a Chabauty open condition. But Lemma~\ref{lem:GraphRestrictive->ChabClosed} then implies that $G_1$ must be discrete. Therefore we have reached a contradiction, and hence the subgroup $H$ must be discrete. 
			
We have thus shown that $\mathcal L_r$ is Chabauty closed, and assertion~(i) follows from a compactness argument using Proposition~\ref{prop-chab-neigh-latt}, as in the proof of Theorem \ref{thm:KazMar:Irr}.
	
If in addition all the factors $\Aut(X_i)$ are compactly presented with trivial polycompact radical, by Theorem\ref{thm-aut(G)-orbit-open}(i) every cocompact lattice $\Gamma$ in $\Aut(X_1) \times \dots \times \Aut(X_n)$ has a Chabauty neighbourhood that consists of cocompact lattices isomorphic to $\Gamma$.  Thus the elements of $\mathcal L_r$ fall into finitely many isomorphism classes by the compactness of $\mathcal L_r$. 
\end{proof}

We finally establish a statement analogous to Theorem~\ref{thm-prod-trees-IL-finite}, but without any irreducibility assumption. We shall use the following auxiliary fact. 

\begin{lem}\label{lem:KazMar:aux}
	Let  $n \geq 1$ and $G_1,\ldots,G_n$ be non-discrete compactly generated tdlc groups with discrete polycompact radical. Let also $\mathcal L$ be a set of cocompact lattices in $G$. Suppose that for every non-empty subset $\Pi \subseteq  \{1, \dots, n\}$, there exists a compact open subgroup $V_\Pi \leq G_\Pi$ such that for all $\Gamma \in \mathcal L$, if $p_\Pi(\Gamma)$ is discrete and $p_{\Delta}(\Gamma)$ is non-discrete for each non-empty proper subset $\Delta \subsetneq \Pi$, then $p_\Pi(\Gamma) \cap V_\Pi = \{1\}$. Then there exists a compact open subgroup $W \leq G$ such that $\Gamma \cap W = \{1\}$ for all $\Gamma \in \mathcal L$. 
\end{lem}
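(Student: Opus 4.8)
The plan is to attach to each lattice in $\mathcal L$ a partition of $\{1,\dots,n\}$ into blocks on which the hypothesis of the lemma applies verbatim, and then to combine the finitely many resulting compact open subgroups into a single one.

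First I would associate to each $\Gamma \in \mathcal L$ a partition $\mathcal P_\Gamma = \Pi_1 \cup \dots \cup \Pi_\ell$ of $\{1,\dots,n\}$ such that every block satisfies: (a) $p_{\Pi_j}(\Gamma)$ is discrete; and (b) $p_\Delta(\Gamma)$ is non-discrete for every non-empty proper subset $\Delta \subsetneq \Pi_j$. This is exactly the partition produced by the inductive argument in the proof of Corollary~\ref{cor:ReductionToIrr}. Since each $G_i$ has discrete polycompact radical, Proposition~\ref{prop-5-3} shows that whenever $p_S(\Gamma)$ is discrete and $S$ is split as a disjoint union of $\Delta$ and $S \setminus \Delta$ with $p_\Delta(\Gamma)$ discrete, the projection $p_{S \setminus \Delta}(\Gamma)$ is discrete as well. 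Starting from $S = \{1,\dots,n\}$ (where $p_S(\Gamma) = \Gamma$ is discrete because $\Gamma$ is a lattice) and recursively splitting off proper subsets with discrete projection until no further splitting is possible, one reaches a partition whose blocks satisfy (a) and (b). The recursion terminates because the sizes of the sets strictly decrease, and it is consistent because, for $\Delta \subseteq S$, the projection $p_\Delta(\Gamma)$ coincides with the projection of the cocompact lattice $p_S(\Gamma) \leq G_S$ to $G_\Delta$.

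Next, for each partition $\mathcal P = \Pi_1 \cup \dots \cup \Pi_\ell$ of $\{1,\dots,n\}$, I set $W_{\mathcal P} = \prod_{j=1}^\ell V_{\Pi_j}$, viewed as a subgroup of $G = \prod_{j=1}^\ell G_{\Pi_j}$ under the canonical identification afforded by the partition. Each $V_{\Pi_j}$ is compact open in $G_{\Pi_j}$ by hypothesis, so $W_{\mathcal P}$ is compact open in $G$. The key step is then to show that $\Gamma \cap W_{\mathcal P_\Gamma} = \{1\}$: given $\gamma$ in this intersection, each component satisfies $p_{\Pi_j}(\gamma) \in p_{\Pi_j}(\Gamma) \cap V_{\Pi_j}$, and because the block $\Pi_j$ satisfies (a) and (b), the hypothesis of the lemma yields $p_{\Pi_j}(\Gamma) \cap V_{\Pi_j} = \{1\}$. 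Hence $p_{\Pi_j}(\gamma) = 1$ for every $j$, and therefore $\gamma = 1$.

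Finally, since there are only finitely many partitions of $\{1,\dots,n\}$, I would put $W = \bigcap_{\mathcal P} W_{\mathcal P}$, a finite intersection of compact open subgroups and hence itself compact open in $G$. For any $\Gamma \in \mathcal L$ we have $\Gamma \cap W \subseteq \Gamma \cap W_{\mathcal P_\Gamma} = \{1\}$, which is the desired conclusion. The only genuinely delicate point is the matching between the hypothesis of the lemma and the block conditions (a) and (b); once the partition is arranged so that each block is \emph{minimal among subsets with discrete projection}, the hypothesis applies directly, and the remainder is bookkeeping with compact open subgroups of the factors.
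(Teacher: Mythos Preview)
Your proof is correct and rests on the same core idea as the paper's: decompose $\{1,\dots,n\}$ into blocks on which the projection of $\Gamma$ is discrete but no proper sub-projection is (via Proposition~\ref{prop-5-3}), apply the hypothesis to each block, and intersect the resulting compact open subgroups. The only difference is packaging: the paper proceeds by induction on $n$, splitting off one proper subset $\Pi$ with discrete projection at a time and invoking the inductive hypothesis on $G_\Pi$ and its complement, whereas you construct the full partition $\mathcal P_\Gamma$ at once and then take $W=\bigcap_{\mathcal P}\prod_j V_{\Pi_j}$ over all partitions of $\{1,\dots,n\}$; the two arguments unwind to the same thing.
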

\begin{proof}
	We work by induction on $n$. For $n=1$ there is nothing to prove; we assume henceforth that $n>1$. 
	
	Let $\Pi \subseteq  \{1, \dots, n\}$ be non-empty. Set 
	$$\mathcal L_\Pi = \{p_\Pi(\Gamma) \mid \Gamma \in \mathcal L \text{ and } p_\Pi(\Gamma) \text{ is discrete}\}.$$
	If $\Pi \neq  \{1, \dots, n\}$, then by the induction hypothesis, there exists a compact open subgroup $W_\Pi \leq G_\Pi$ such that  $\Lambda \cap W_\Pi = \{1\}$ for all $\Lambda \in \mathcal L_\Pi$. If  $\Pi =  \{1, \dots, n\}$, then we set $W_\Pi =V_\Pi$, where $V_\Pi$ is the compact open subgroup given by hypothesis. 
	
	Now, for all $i$, we set $W_i = \bigcap_{\Pi \ni i} W_\Pi \cap G_i$. Thus $W_i$ is a compact open subgroup of $G_i$, and for every non-empty subset $\Pi \subseteq  \{1, \dots, n\}$,  we have $\prod_{i \in \Pi} W_i \leq W_\Pi$. Finally, we set $W  = \prod_{i =1}^n W_i $. 
	
	Let $\Gamma \in \mathcal L$. If for every non-empty proper subset $\Pi \subset  \{1, \dots, n\}$, the projection $p_\Pi(\Gamma)$ is non-discrete, then we have 
	$\Gamma \cap W_{\{1, \dots, n\}} =  \Gamma \cap V_{\{1, \dots, n\}} = \{1\}$ by hypothesis, so that $\Gamma \cap W = \{1\}$. On the other hand, if there exists a non-empty  proper subset $\Pi \subset  \{1, \dots, n\}$ such that the projection $\Gamma_{\Pi} = p_\Pi(\Gamma)$ is discrete, then $\Gamma_\Sigma = p_\Sigma(\Gamma)$ is also discrete by  by Proposition~\ref{prop-5-3}. Moreover we have $\Gamma_{\Pi}\cap W_\Pi = \{1\}$ and $\Gamma_\Sigma  \cap W_\Sigma = \{1\}$. Since $\Gamma$ embeds as a subgroup of $\Gamma_{\Pi}\times \Gamma_{\Sigma}$, we have $\Gamma \cap (W_\Pi \times W_{\Sigma})=\{1\}$. In particular, we have $\Gamma \cap W=\{1\}$. 
\end{proof}

\begin{cor}\label{cor:TreesFinite}
Let $n \geq 1$ and for each $i = 1, \ldots, n$, let $X_i$ be a connected  locally finite graph such that $\Aut(X_i)$ has a discrete polycompact radical. For $r > 0$, let $\mathcal L_r$ be the set of discrete subgroups $\Gamma \leq \Aut(X_1) \times \dots \times \Aut(X_n)$ acting with at most $r$ orbits on $\prod_{i=1}^n VX_i$ and whose action on $X_i$ is restrictive and locally semiprimitive for all $i$. Then:
\begin{enumerate}[label=(\roman*)]
	\item There is some $c>0$ such that  $\Gamma_x^{[c]} =\{1\}$ for every vertex $x \in \prod_{i=1}^n VX_i$ and every $\Gamma \in \mathcal L_r$. In particular $\mathcal L_r$ is Chabauty closed. 
	
	\item If in addition  $\Aut(X_i)$ is compactly presented with trivial polycompact radical for all $i$, then the elements of $\mathcal L_r$ fall into finitely many isomorphism classes.
\end{enumerate}
\end{cor}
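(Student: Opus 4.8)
The plan is to derive the corollary by feeding the uniform bound of Theorem~\ref{thm-prod-trees-IL-finite}(i) for the \emph{irreducible} pieces into the reduction mechanism of Lemma~\ref{lem:KazMar:aux}. Throughout I set $G_i = \Aut(X_i)$ and $G = G_1 \times \dots \times G_n$. Each $G_i$ acts on $X_i$ with at most $r$ orbits (it contains $p_i(\Gamma)$ for any $\Gamma \in \mathcal L_r$), hence is compactly generated, and has discrete polycompact radical by hypothesis; each $\Gamma \in \mathcal L_r$ is discrete and cocompact in $G$, hence a cocompact lattice. Factors $G_i$ that happen to be discrete contribute nothing to the non-discreteness analysis and may be set aside, so the standing hypotheses of Lemma~\ref{lem:KazMar:aux} are met.

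The heart of (i) is the uniform discreteness statement, which I would obtain by verifying the hypotheses of Lemma~\ref{lem:KazMar:aux} for $\mathcal L = \mathcal L_r$. Fix a non-empty $\Pi \subseteq \{1,\dots,n\}$ and any $\Gamma \in \mathcal L_r$ with $p_\Pi(\Gamma)$ discrete while $p_\Delta(\Gamma)$ is non-discrete for every non-empty proper $\Delta \subsetneq \Pi$. Writing $\Lambda = p_\Pi(\Gamma)$, this says precisely that $\Lambda$ is a cocompact lattice in $G_\Pi$ satisfying \ref{it:irre3}. Since the action of $\Gamma$ on $X_j$ for $j \in \Pi$ factors through $p_j = p_j\circ p_\Pi$, the group $\Lambda$ has the same local action on each $X_j$ as $\Gamma$, so it is restrictive and locally semiprimitive on every factor and acts with at most $r$ orbits on $\prod_{j\in\Pi} VX_j$. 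When $|\Pi|\geq 2$ these are exactly conditions (1)--(4) of Theorem~\ref{thm-prod-trees-IL-finite} (recall \ref{it:irre3} implies \ref{it:irre0}), so part (i) of that theorem furnishes a constant $c_\Pi$ with $\Lambda_y^{[c_\Pi]} = \{1\}$ for every vertex $y$; taking $V_\Pi$ to be the pointwise stabilizer in $G_\Pi$ of the $c_\Pi$-ball about a fixed base vertex gives a compact open subgroup with $\Lambda \cap V_\Pi = \{1\}$. When $\Pi = \{i\}$ is a singleton, $\Lambda = p_i(\Gamma)$ has a restrictive action on $X_i$, hence is either vertex-transitive and locally $L$ with $L$ graph-restrictive, or has two orbits and is locally $(L_1,L_2)$ with $(L_1,L_2)$ graph-restrictive; as $X_i$ has finitely many vertex-orbits, only finitely many local actions can occur, so the graph-restrictiveness constants are uniformly bounded and again yield a uniform $V_{\{i\}}$. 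Lemma~\ref{lem:KazMar:aux} then provides a single compact open subgroup $W \leq G$ with $\Gamma \cap W = \{1\}$ for all $\Gamma \in \mathcal L_r$.

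To pass from the uniform $W$ to a uniform depth $c$, I would invoke the orbit bound. Let $U$ be the stabilizer of a base vertex $x^0 \in \prod VX_i$, so every $\Gamma \in \mathcal L_r$ is $(r,U)$-cocompact; pick $c_0$ with $\fix_G(B(x^0,c_0)) \leq W$, and let $R = R(r,U)$ be the radius from Lemma~\ref{lem-rU-cocom-uniform} so that $B(x^0,R)$ contains a representative of every $\Gamma$-orbit of vertices. For a vertex $x = \gamma x'$ with $x' \in B(x^0,R)$ and $\gamma \in \Gamma$, the group $\Gamma_x^{[c_0+R]}$ is conjugate to $\Gamma_{x'}^{[c_0+R]} \leq \Gamma_{x^0}^{[c_0]} = \Gamma \cap \fix_G(B(x^0,c_0)) \leq \Gamma \cap W = \{1\}$, the middle inclusion using $B(x^0,c_0)\subseteq B(x',c_0+R)$. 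Thus $c = c_0+R$ works uniformly. Chabauty-closedness then follows immediately: for a net $(\Gamma_k)$ in $\mathcal L_r$ converging to $H$, one has $H_x^{[c]} = \{1\}$ since intersection with the open subgroup $\fix_G(B(x,c))$ is Chabauty continuous, so $H$ is discrete, while the remaining defining properties pass to the limit by Lemma~\ref{lem-rU-coc-closed}, Proposition~\ref{prop:ProjectionChabautyContinuous} and the clopenness of the restrictive condition. For (ii), I would argue as in Theorem~\ref{thm-prod-trees-IL-finite}(ii): under the extra hypotheses $G=\prod_i \Aut(X_i)$ is compactly presented with trivial polycompact radical, so Theorem~\ref{thm-aut(G)-orbit-open}(i) gives each $\Gamma \in \mathcal L_r$ a Chabauty neighbourhood of lattices isomorphic to $\Gamma$; compactness of $\mathcal L_r$ from (i) yields a finite subcover, whence finitely many isomorphism classes.

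The point demanding the most care is the verification that the projected groups $\Lambda = p_\Pi(\Gamma)$ satisfy the hypotheses of Theorem~\ref{thm-prod-trees-IL-finite} \emph{uniformly} across $\mathcal L_r$, together with the conversion of the uniform compact open subgroup $W$ of Lemma~\ref{lem:KazMar:aux} into a uniform ball-depth $c$, which hinges on the uniform orbit-representative radius of Lemma~\ref{lem-rU-cocom-uniform}. Everything else is an assembly of results already established.
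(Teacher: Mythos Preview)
Your proof is correct and follows essentially the same approach as the paper: both verify the hypotheses of Lemma~\ref{lem:KazMar:aux} using Theorem~\ref{thm-prod-trees-IL-finite}(i), then upgrade the resulting compact open subgroup $W$ to a uniform ball-depth $c$ via Lemma~\ref{lem-rU-cocom-uniform}, and conclude (ii) from compactness and Theorem~\ref{thm-aut(G)-orbit-open}(i). Your write-up is in fact more explicit than the paper's on two points the authors leave implicit: the treatment of the singleton blocks $\Pi=\{i\}$ (handled directly by the graph-restrictiveness constants, which are uniformly bounded since only finitely many local actions can occur on the locally finite $X_i$), and the verification that the projected lattice $\Lambda=p_\Pi(\Gamma)$ inherits conditions (1)--(4) of Theorem~\ref{thm-prod-trees-IL-finite}.
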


\begin{proof}
According to Theorem~\ref{thm-prod-trees-IL-finite}(i), the hypotheses of Lemma~\ref{lem:KazMar:aux} are satisfied by the family of lattices $\mathcal L_r$. Thus by the lemma there is a constant $c$ such that $\Gamma_x^{[c]} =\{1\}$ for some vertex $x \in \prod_{i=1}^n VX_i$. Since every $\Gamma \in \mathcal L_r$ has at most $r$ orbits of vertices, it follows from Lemma~\ref{lem-rU-cocom-uniform} that $\Gamma_x^{[c+r]} =\{1\}$ for every vertex $x \in \prod_{i=1}^n VX_i$.  Using Lemma~\ref{lem-rU-coc-closed} and Proposition~\ref{prop:ProjectionChabautyContinuous}, we deduce that $\mathcal L_r$ is closed, so that (i) holds. Assertion~(ii) follows from (i) together with Theorem~\ref{thm-aut(G)-orbit-open}.
\end{proof}

\begin{proof}[Proof of Theorem~\ref{thm-intro-prod-trees-2trans}]
Let $\mathcal L$ be the collection of lattices as in the statement. By Theorem~\ref{thm:TrofimovWeiss}, every finite $2$-transitive group is graph-restrictive, so $\Gamma$ has a restrictive action on each $T_i$ for every $\Gamma \in \mathcal L$. Since $\Aut(T_i)$ is vertex-transitive, its only compact normal subgroup is the trivial one. Hence by Corollary~\ref{cor:TreesFinite}(i), $\mathcal L$ is Chabauty closed.

To conclude, we note that $\Aut(T_i)$ is compactly presented since $T_i$ is simply connected (see \cite[Corollary~8.A.9]{CorHar}). Thus the lattices in $\mathcal L$ fall into finitely many conjugacy classes by invoking Theorem~\ref{thm-aut(G)-orbit-open} with $G_i = \mathcal A_i = \Aut(T_i)$ for all $i$. 
\end{proof}

\appendix

\section{Chabauty deformations of cofinite subgroups in Kazhdan groups}

The following interesting fact follows from results of J.~Fell~\cite{Fell1964} on continuity properties of induction of unitary representations. An analogous result has been established, with a similar argument, by S.P.\ Wang in \cite[Theorem 3.10]{SP-Wang-75}.

\begin{thm}\label{thm:Fell}
	Let $G$ be a first countable locally compact group with Kazhdan's property (T). Then the set of closed subgroups of finite covolume forms an open subset of the Chabauty space $\sub(G)$. 
\end{thm}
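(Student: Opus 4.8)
The plan is to translate the condition of finite covolume into the language of unitary representations and then to exploit the spectral gap furnished by property~(T). For a closed subgroup $H \leq G$, write $\lambda_{G/H} = \mathrm{Ind}_H^G \mathbf{1}_H$ for the representation of $G$ induced from the trivial representation of $H$. The first step is to record the standard equivalence: \emph{$H$ has finite covolume in $G$ if and only if $\lambda_{G/H}$ contains a non-zero $G$-invariant vector.} Indeed, when $G/H$ carries a finite $G$-invariant measure the constant function is a non-zero invariant vector in the Hilbert space $\mathcal{H}(\lambda_{G/H})$; conversely, a non-zero invariant vector in $\lambda_{G/H}$ is readily seen to force $G/H$ to carry a finite invariant measure. (One handles the possible absence of an invariant measure on $G/H$ through the usual formalism of the induced representation; these measure-theoretic points are classical and will only be cited.)

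With this reformulation in hand, I would argue by contradiction. Let $\mathcal{F} \subseteq \sub(G)$ be the set of closed subgroups of finite covolume, and suppose $\mathcal{F}$ fails to be open at some $H_0 \in \mathcal{F}$. Then $H_0$ lies in the closure of $\sub(G) \setminus \mathcal{F}$, so there is a net $(H_i)$ Chabauty-converging to $H_0$ with $H_i \notin \mathcal{F}$ for every $i$ (one may use sequences here since $\sub(G)$ is metrizable as soon as $G$ is second countable). Now invoke property~(T) in its uniform form: there is a compact subset $Q \subseteq G$ and $\epsilon > 0$ — a single Kazhdan pair valid for \emph{all} unitary representations of $G$ — such that any unitary representation admitting a $(Q,\epsilon)$-invariant unit vector has a non-zero invariant vector. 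Since $H_i \notin \mathcal{F}$, the representation $\lambda_{G/H_i}$ has no non-zero invariant vector, and hence (by the choice of $(Q,\epsilon)$, which applies equally to finite multiples) neither $\lambda_{G/H_i}$ nor any finite multiple $\lambda_{G/H_i}^{\oplus m}$ possesses a $(Q,\epsilon)$-invariant unit vector.

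On the other hand, $H_0 \in \mathcal{F}$ provides a non-zero invariant vector $\xi_0 \in \mathcal{H}(\lambda_{G/H_0})$, whose associated function of positive type $g \mapsto \langle \lambda_{G/H_0}(g)\xi_0,\xi_0\rangle$ is the constant $\|\xi_0\|^2$. Here I would apply Fell's continuity of induction \cite{Fell1964}: the map $H \mapsto \lambda_{G/H} = \mathrm{Ind}_H^G \mathbf{1}_H$ is continuous from $\sub(G)$ (Chabauty topology) to the space of unitary representations of $G$ equipped with the Fell topology, so $\lambda_{G/H_i} \to \lambda_{G/H_0}$. By the defining property of Fell convergence, the constant positive-type function $\|\xi_0\|^2$ is approximated, uniformly on the compact set $Q$, by finite sums of diagonal matrix coefficients of $\lambda_{G/H_i}$ for $i$ large. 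A routine estimate (collecting the approximating vectors into a single vector $\eta$ of a finite multiple $\lambda_{G/H_i}^{\oplus m}$ and computing $\|\lambda_{G/H_i}^{\oplus m}(g)\eta - \eta\|^2 = 2\|\eta\|^2 - 2\,\mathrm{Re}\,\langle \lambda_{G/H_i}^{\oplus m}(g)\eta, \eta\rangle$) then yields, after normalization, a $(Q,\epsilon)$-invariant unit vector in $\lambda_{G/H_i}^{\oplus m}$ for all large $i$. This contradicts the previous paragraph, and the contradiction proves that $\mathcal{F}$ is open.

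The conceptual core — that property~(T) upgrades almost-invariance to genuine invariance, \emph{uniformly} across all representations — is immediate once the representation-theoretic dictionary is set up. I expect the main technical obstacle to be the careful invocation of Fell's continuity of induction and the extraction of a bona fide $(Q,\epsilon)$-invariant unit vector from the Fell-topology approximation: one must pass through finite multiples and control both the norm $\|\eta\|$ (bounded below, via the value at the identity) and the oscillation over $Q$, choosing the approximation tolerance small enough relative to the fixed Kazhdan constant $\epsilon$. The second, more bookkeeping, point is the precise equivalence between finite covolume and the existence of an invariant vector in $\lambda_{G/H}$ when $G/H$ may only admit a quasi-invariant measure, which I would settle once and for all using the standard theory of induced representations.
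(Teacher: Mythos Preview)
Your proposal is correct and follows essentially the same route as the paper: both arguments use the equivalence between finite covolume and the existence of an invariant vector in $\mathrm{Ind}_H^G \mathbf 1_H$, invoke Fell's continuity of induction, and then apply property~(T) to derive a contradiction. The only cosmetic difference is that the paper packages the endgame via weak containment in the direct sum $\bigoplus_n \mathrm{Ind}_{H_n}^G(\mathbf 1_{H_n})$ (then extracts a single $n$), whereas you unpack the same step explicitly with a Kazhdan pair $(Q,\epsilon)$ and finite multiples; these are two phrasings of the same mechanism.
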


\begin{proof}
	Since $G$ has (T), it is compactly generated and thus second countable. This condition is needed to invoke Fell's results from \cite{Fell1964}. In that paper, the author introduces a topology on the set $\mathscr S(G)$ of pairs $(H, T)$ where $H$ is a closed subgroup of $G$ and $T$ is an   equivalence class of  unitary representations of $H$. It follows  from \cite[Lemma~3.3 and Theorem~4.2]{Fell1964} that the map $(H, T) \to \mathrm{Ind}_H^G(T)$ is continuous, where the target space is  endowed with Fell's topology. 
	
	Let $(H_n)$ be a sequence of closed subgroups of $G$ converging to $H \in \sub(G)$. For a closed subgroup $J \leq G$, let $\mathbf 1_{J}$ be the trivial representation of $J$. By \cite[Lemma~3.2]{Fell1964}, the sequence $(H_n, \mathbf 1_{H_n})$ converges to $(H, \mathbf 1_H)$ in the space $\mathscr S(G)$. Therefore, by Fell's theorem mentioned above, the sequence  $\mathrm{Ind}_{H_n}^G(\mathbf 1_{H_n})$ converges to $\mathrm{Ind}_{H}^G(\mathbf 1_{H})$ in Fell's topology. In particular $\mathrm{Ind}_{H}^G(\mathbf 1_{H})$ is weakly contained in the direct sum $\bigoplus_n \mathrm{Ind}_{H_n}^G(\mathbf 1_{H_n})$ (see \cite[Proposition~F.2.2]{BHV}).

	Assume now that $H$ is of finite covolume and suppose for a contradiction that the closed subgroups of finite covolume of $G$ do not form a neighbourhood of $H$. Then there exists a sequence $(H_n)$ converging to $H$, such that $H_n$ is not of finite covolume in $G$ for any $n$.  Since $H$ is of finite covolume,  the trivial representation $\mathbf 1_G$ is contained in $\mathrm{Ind}_{H}^G(\mathbf 1_{H})$. Since weak containment of unitary representations is a transitive relation, we deduce from the previous paragraph that $\mathbf 1_G$ is weakly contained in $\bigoplus_n \mathrm{Ind}_{H_n}^G(\mathbf 1_{H_n})$. Since $G$ has (T) by hypothesis, the trivial representation is actually contained in $\bigoplus_n \mathrm{Ind}_{H_n}^G(\mathbf 1_{H_n})$. Hence there exists $n$ such that $\mathbf 1_G$ is contained in $ \mathrm{Ind}_{H_n}^G(\mathbf 1_{H_n})$. By \cite[Theorem~E.3.1]{BHV}, this implies that $H_n$ is of finite covolume, a contradiction.
\end{proof}

\begin{rmq}
It should be noted that Theorem \ref{thm:Fell} fails without the property (T) assumption. Indeed as soon as a group $G$ has an infinitely generated lattice $\Gamma \leq G$, then there does not exist any Chabauty neighbourhood of $\Gamma$ consisting of subgroups of finite covolume in $G$. Indeed we may write $\Gamma$ as the Chabauty limit of its finitely generated subgroups, and none of them is a lattice in $G$ (since they are of infinite index in $\Gamma$ as $\Gamma$ is infinitely generated). As a concrete example, one can take the full automorphism group of a regular tree $T$, all of whose non-uniform lattices are infinitely generated, see \cite{Bass-Lubotzky}. 

We also point out that a locally compact group $G$ is compactly generated if and only if the set of its closed cocompact subgroups is open in the Chabauty space $\sub(G)$, see \cite[Remark~3.7]{GelanderLevit}.
\end{rmq}

We can now complete the proof of Theorem~\ref{thm:Nbd}, using ideas similar to those of Gelander--Levit \cite[Proposition~7.4]{GelanderLevit}.

\begin{proof}[Proof of Theorem~\ref{thm:Nbd}]
	Since $G$ is compactly generated, every identity neighbourhood $V$ contains a compact normal subgroup $K_V$ such that $G/K_V$ is second countable (see \cite{KK}). Choosing $V$ with $V \cap R(G) = \{1\}$ we deduce $K_V=1$ and $G$ is second countable. 
	
	Let now $\Gamma \leq G$ be a lattice, and $\Omega \subset \sub(G)$ be a neighbourhood of $\Gamma$ consisting of closed subgroups of finite covolume, as afforded by Theorem~\ref{thm:Fell}.  Let $U \subset G$ be a compact neighbourhood such that $U \cap \Gamma \cap R(G) = \{1\}$. By \cite[Lemma~7.3]{GelanderLevit}, there exists a neighbourhood $\Omega_U$ of $\Gamma$ contained in $\Omega$ such that for all $H \in \Omega_U$, the intersection $H \cap U$ is a subgroup that is normalized by some subgroup $L \leq H$ with $L \in \Omega$. 
	
	Notice that the group $H \cap U$ is compact, and that  $L$ is of finite covolume in $G$. Therefore $(H \cap U)L$ is a closed subgroup of finite covolume in $G$, containing $H \cap U$ as a compact normal subgroup. By the main result of \cite{BDL_URS}, it follows that $H \cap U$ is contained in the amenable radical $R(G)$. Since $U \cap R(G) =\{1\}$, it follows that $H \cap U =\{1\}$. Thus $\Omega_U$ is a neighbourhood of $\Gamma$ as required. 
\end{proof}

{\small 
\bibliographystyle{abbrv}
\bibliography{n-facteurs}

\begin{thebibliography}{10}

\bibitem{Aschbacher}
M.~Aschbacher.
\newblock {\em Finite group theory}, volume~10 of {\em Cambridge Studies in
  Advanced Mathematics}.
\newblock Cambridge University Press, Cambridge, second edition, 2000.

\bibitem{BDL_URS}
U.~Bader, B.~Duchesne, and J.~L\'ecureux.
\newblock Amenable invariant random subgroups.
\newblock {\em Israel J. Math.}, 213(1):399--422, 2016.
\newblock With an appendix by Phillip Wesolek.

\bibitem{BFS-adelic}
U.~Bader, A.~Furman, and R.~Sauer.
\newblock An adelic arithmeticity theorem for lattices in products.
\newblock {\em Math. Z.}, 2019.
\newblock To appear.

\bibitem{BaSha}
U.~Bader and Y.~Shalom.
\newblock Factor and normal subgroup theorems for lattices in products of
  groups.
\newblock {\em Invent. Math.}, 163(2):415--454, 2006.

\bibitem{BEW}
Y.~Barnea, M.~Ershov, and T.~Weigel.
\newblock Abstract commensurators of profinite groups.
\newblock {\em Trans. Amer. Math. Soc.}, 363(10):5381--5417, 2011.

\bibitem{BK90}
H.~Bass and R.~Kulkarni.
\newblock Uniform tree lattices.
\newblock {\em J. Amer. Math. Soc.}, 3(4):843--902, 1990.

\bibitem{Bass-Lubotzky}
H.~Bass and A.~Lubotzky.
\newblock {\em Tree lattices}, volume 176 of {\em Progress in Mathematics}.
\newblock Birkh\"auser Boston, Inc., Boston, MA, 2001.
\newblock With appendices by Bass, L. Carbone, Lubotzky, G. Rosenberg and J.
  Tits.

\bibitem{BHV}
B.~Bekka, P.~de~la Harpe, and A.~Valette.
\newblock {\em Kazhdan's property ({T})}, volume~11 of {\em New Mathematical
  Monographs}.
\newblock Cambridge University Press, Cambridge, 2008.

\bibitem{Bourdon2000}
M.~Bourdon.
\newblock Sur les immeubles fuchsiens et leur type de quasi-isom\'etrie.
\newblock {\em Ergodic Theory Dynam. Systems}, 20(2):343--364, 2000.

\bibitem{BuMo1}
M.~Burger and S.~Mozes.
\newblock Groups acting on trees: from local to global structure.
\newblock {\em Inst. Hautes \'Etudes Sci. Publ. Math.}, (92):113--150 (2001),
  2000.

\bibitem{BuMo2}
M.~Burger and S.~Mozes.
\newblock Lattices in product of trees.
\newblock {\em Inst. Hautes \'Etudes Sci. Publ. Math.}, (92):151--194 (2001),
  2000.

\bibitem{BuMo_Wang}
M.~Burger and S.~Mozes.
\newblock Lattices in products of trees and a theorem of {H}. {C}. {W}ang.
\newblock {\em Bull. Lond. Math. Soc.}, 46(6):1126--1132, 2014.

\bibitem{CapCMH}
P.-E. Caprace.
\newblock Amenable groups and {H}adamard spaces with a totally disconnected
  isometry group.
\newblock {\em Comment. Math. Helv.}, 84(2):437--455, 2009.

\bibitem{CaMo-discrete}
P.-E. Caprace and N.~Monod.
\newblock Isometry groups of non-positively curved spaces: discrete subgroups.
\newblock {\em J. Topol.}, 2(4):701--746, 2009.

\bibitem{CaMo-decomp}
P.-E. Caprace and N.~Monod.
\newblock Decomposing locally compact groups into simple pieces.
\newblock {\em Math. Proc. Cambridge Philos. Soc.}, 150(1):97--128, 2011.

\bibitem{CaMoKM}
P.-E. Caprace and N.~Monod.
\newblock A lattice in more than two {K}ac-{M}oody groups is arithmetic.
\newblock {\em Israel J. Math.}, 190:413--444, 2012.

\bibitem{Cap-Rad-chab}
P.-E. Caprace and N.~Radu.
\newblock Chabauty limits of simple groups acting on trees.
\newblock {\em J. Inst. Math. Jussieu}, 2019.
\newblock To appear.

\bibitem{CRW_dense}
P.-E. Caprace, C.~Reid, and P.~Wesolek.
\newblock Approximating simple locally compact groups by their dense locally
  compact subgroups.
\newblock {\em Int. Math. Res. Not.}, 2019.
\newblock To appear.

\bibitem{CRW-part2}
P.-E. Caprace, C.~D. Reid, and G.~A. Willis.
\newblock Locally normal subgroups of totally disconnected groups. {P}art {II}:
  compactly generated simple groups.
\newblock {\em Forum Math. Sigma}, 5:e12, 89, 2017.

\bibitem{CapWes}
P.-E. Caprace and P.~Wesolek.
\newblock Indicability, residual finiteness, and simple subquotients of groups
  acting on trees.
\newblock {\em Geom. Topol.}, 22(7):4163--4204, 2018.

\bibitem{Chabauty-topo}
C.~Chabauty.
\newblock Limite d'ensembles et g\'eom\'etrie des nombres.
\newblock {\em Bull. Soc. Math. France}, 78:143--151, 1950.

\bibitem{Cor-comm-focal}
Y.~Cornulier.
\newblock Commability and focal locally compact groups.
\newblock {\em Indiana Univ. Math. J.}, 64(1):115--150, 2015.

\bibitem{CorHar}
Y.~Cornulier and P.~de~la Harpe.
\newblock {\em Metric geometry of locally compact groups}, volume~25 of {\em
  EMS Tracts in Mathematics}.
\newblock European Mathematical Society (EMS), Z\"urich, 2016.
\newblock Winner of the 2016 EMS Monograph Award.

\bibitem{anprop}
J.~D. Dixon, M.~P.~F. du~Sautoy, A.~Mann, and D.~Segal.
\newblock {\em Analytic pro-{$p$}-groups}, volume 157 of {\em London
  Mathematical Society Lecture Note Series}.
\newblock Cambridge University Press, Cambridge, 1991.

\bibitem{Fell1964}
J.~M.~G. Fell.
\newblock Weak containment and induced representations of groups. {II}.
\newblock {\em Trans. Amer. Math. Soc.}, 110:424--447, 1964.

\bibitem{Furman-min-strg}
A.~Furman.
\newblock On minimal strongly proximal actions of locally compact groups.
\newblock {\em Israel J. Math.}, 136:173--187, 2003.

\bibitem{Furst-bourb}
H.~Furstenberg.
\newblock Rigidity and cocycles for ergodic actions of semisimple {L}ie groups
  (after {G}. {A}. {M}argulis and {R}. {Z}immer).
\newblock In {\em Bourbaki {S}eminar, {V}ol. 1979/80}, volume 842 of {\em
  Lecture Notes in Math.}, pages 273--292. Springer, Berlin-New York, 1981.

\bibitem{GartsideSmith}
P.~Gartside and M.~Smith.
\newblock Counting the closed subgroups of profinite groups.
\newblock {\em J. Group Theory}, 13(1):41--61, 2010.

\bibitem{Gel-homotopy}
T.~Gelander.
\newblock Homotopy type and volume of locally symmetric manifolds.
\newblock {\em Duke Math. J.}, 124(3):459--515, 2004.

\bibitem{Gel-irs}
T.~Gelander.
\newblock A lecture on invariant random subgroups.
\newblock In {\em New directions in locally compact groups}, London
  Mathematical Society Lecture Note Series, pages 186--204. Cambridge
  University Press, 2018.

\bibitem{GelanderLevit}
T.~Gelander and A.~Levit.
\newblock Local rigidity of uniform lattices.
\newblock {\em Comment. Math. Helv.}, 93(4):781--827, 2018.

\bibitem{Giud-Mo-semi-prim}
M.~Giudici and L.~Morgan.
\newblock A theory of semiprimitive groups.
\newblock {\em J. Algebra}, 503:146--185, 2018.

\bibitem{Glasner-2dim}
Y.~Glasner.
\newblock A two-dimensional version of the {G}oldschmidt-{S}ims conjecture.
\newblock {\em J. Algebra}, 269(2):381--401, 2003.

\bibitem{KK}
S.~Kakutani and K.~Kodaira.
\newblock \"{U}ber das {H}aarsche {M}ass in der lokal bikompakten {G}ruppe.
\newblock {\em Proc. Imp. Acad. Tokyo}, 20:444--450, 1944.

\bibitem{Kaz-Marg-thm}
D.~A. Ka\v{z}dan and G.~A. Margulis.
\newblock A proof of {S}elberg's hypothesis.
\newblock {\em Mat. Sb. (N.S.)}, 75 (117):163--168, 1968.

\bibitem{Kuran}
M.~Kuranishi.
\newblock On everywhere dense imbedding of free groups in {L}ie groups.
\newblock {\em Nagoya Math. J.}, 2:63--71, 1951.

\bibitem{Margulis}
G.~A. Margulis.
\newblock {\em Discrete subgroups of semisimple {L}ie groups}, volume~17 of
  {\em Ergebnisse der Mathematik und ihrer Grenzgebiete (3) [Results in
  Mathematics and Related Areas (3)]}.
\newblock Springer-Verlag, Berlin, 1991.

\bibitem{MorganSpigaVerret}
L.~Morgan, P.~Spiga, and G.~Verret.
\newblock On the order of {B}orel subgroups of group amalgams and an
  application to locally-transitive graphs.
\newblock {\em J. Algebra}, 434:138--152, 2015.

\bibitem{PSV}
P.~Poto\v{c}nik, P.~Spiga, and G.~Verret.
\newblock On graph-restrictive permutation groups.
\newblock {\em J. Combin. Theory Ser. B}, 102(3):820--831, 2012.

\bibitem{Praeger98}
C.~E. Praeger.
\newblock Finite quasiprimitive group actions on graphs and designs.
\newblock In {\em Groups---{K}orea '98 ({P}usan)}, pages 319--331. de Gruyter,
  Berlin, 2000.

\bibitem{Radu-groups}
N.~Radu.
\newblock A classification theorem for boundary 2-transitive automorphism
  groups of trees.
\newblock {\em Invent. Math.}, 209(1):1--60, 2017.

\bibitem{Raghu}
M.~S. Raghunathan.
\newblock {\em Discrete subgroups of {L}ie groups}.
\newblock Springer-Verlag, New York-Heidelberg, 1972.
\newblock Ergebnisse der Mathematik und ihrer Grenzgebiete, Band 68.

\bibitem{ReidDistal}
C.~D. {Reid}.
\newblock Distal actions on coset spaces in totally disconnected, locally
  compact groups.
\newblock {\em Journal of Topology and Analysis}, 2018.
\newblock To appear.

\bibitem{Shalom}
Y.~Shalom.
\newblock Rigidity of commensurators and irreducible lattices.
\newblock {\em Invent. Math.}, 141(1):1--54, 2000.

\bibitem{Spiga-wrtype}
P.~Spiga.
\newblock On {$G$}-locally primitive graphs of locally twisted wreath type and
  a conjecture of {W}eiss.
\newblock {\em J. Combin. Theory Ser. A}, 118(8):2257--2260, 2011.

\bibitem{Spiga-affinetype}
P.~Spiga.
\newblock An application of the local {$C(G,T)$} theorem to a conjecture of
  {W}eiss.
\newblock {\em Bull. Lond. Math. Soc.}, 48(1):12--18, 2016.

\bibitem{SV-GR-INTRAN}
P.~Spiga and G.~Verret.
\newblock On intransitive graph-restrictive permutation groups.
\newblock {\em J. Algebraic Combin.}, 40(1):179--185, 2014.

\bibitem{Tits-depl-born}
J.~Tits.
\newblock Automorphismes \`a d\'eplacement born\'e des groupes de {L}ie.
\newblock {\em Topology}, 3(suppl. 1):97--107, 1964.

\bibitem{Tits_arbre}
J.~Tits.
\newblock Sur le groupe des automorphismes d'un arbre.
\newblock pages 188--211, 1970.

\bibitem{Tornier-phd}
S.~Tornier.
\newblock {\em Groups acting on trees and contributions to {W}illis theory}.
\newblock 2018.
\newblock Ph{D} Thesis.

\bibitem{TrofimovBG}
V.~I. Trofimov.
\newblock Groups of automorphisms of graphs as topological groups.
\newblock {\em Mat. Zametki}, 38(3):378--385, 476, 1985.

\bibitem{TrofimovWeiss}
V.~I. Trofimov and R.~M. Weiss.
\newblock Graphs with a locally linear group of automorphisms.
\newblock {\em Math. Proc. Cambridge Philos. Soc.}, 118(2):191--206, 1995.

\bibitem{Wang}
H.~C. Wang.
\newblock On a maximality property of discrete subgroups with fundamental
  domain of finite measure.
\newblock {\em Amer. J. Math.}, 89:124--132, 1967.

\bibitem{Wang72}
H.~C. Wang.
\newblock {\em Topics on totally discontinuous groups}, pages 459--487. Pure
  and Appl. Math., Vol. 8.
\newblock Dekker, New York, 1972.

\bibitem{SP-Wang-75}
S.~P. Wang.
\newblock On isolated points in the dual spaces of locally compact groups.
\newblock {\em Math. Ann.}, 218(1):19--34, 1975.

\bibitem{Weiss-affinetype}
R.~Weiss.
\newblock An application of {$p$}-factorization methods to symmetric graphs.
\newblock {\em Math. Proc. Cambridge Philos. Soc.}, 85(1):43--48, 1979.

\bibitem{Weiss78}
R.~Weiss.
\newblock {$s$}-transitive graphs.
\newblock In {\em Algebraic methods in graph theory, {V}ol. {I}, {II}
  ({S}zeged, 1978)}, volume~25 of {\em Colloq. Math. Soc. J\'anos Bolyai},
  pages 827--847. North-Holland, Amsterdam-New York, 1981.

\bibitem{Zassenhaus}
H.~{Zassenhaus}.
\newblock {Beweis eines Satzes \"uber diskrete Gruppen.}
\newblock {\em {Abh. Math. Semin. Univ. Hamb.}}, 12:289--312, 1938.

\end{thebibliography}
}
\end{document}